\newcommand{\R}{\mathbb{R}}
\newcommand{\C}{\mathbb{C}}
\newcommand{\N}{\mathbb{N}}
\newcommand{\Su}{\mathbb{S}}
\newcommand{\SL}{{\rm SL}}
\newcommand{\GL}{{\rm GL}}
\newcommand{\gl}{{\rm Mat}}
\newcommand{\Mat}{{\rm Mat}}
\newcommand{\sabs}[1]{\left| #1 \right|} 
\newcommand{\abs}[1]{\bigl| #1 \bigr|} 
\newcommand{\norm}[1]{\lVert#1\rVert} 
\newcommand{\less}{\lesssim}
\newcommand{\ep}{\epsilon} 
\newcommand{\ka}{\kappa}
\newcommand{\Gr}{{\rm Gr}}
\newcommand{\Pp}{\mathbb{P}}
\newcommand{\FF}{\mathscr{F}}
\newcommand{\SO}{{\rm SO}}
\newcommand{\hatv}{\hat{v}}
\newcommand{\hatw}{\hat{w}}
\newcommand{\hatu}{\hat{u}}
\newcommand{\hatp}{\hat{p}}
\newcommand{\hatq}{\hat{q}}
\newcommand{\minexp}{\mathfrak{m}}
\newcommand{\filt}{F}
\newcommand{\Decompsp}{\mathscr{D}}
\newcommand{\mostexp}{\overline{\mathfrak{v}}}
\newcommand{\leastexp}{\underline{\mathfrak{v}}}
\newcommand{\aangle}{\alpha}
\newcommand{\bangle}{\beta}
\newcommand{\sgap}{\sigma}
\newcommand{\rgap}{{\rm gr}}
\newcommand{\rift}{\rho}
\newcommand{\orthC}[1]{\Sigma({#1})}
\newcommand{\Ker}{\rm K}
\newcommand{\Range}{\rm R}
\newsavebox{\@brx}
\newcommand{\llangle}[1][]{\savebox{\@brx}{\(\m@th{#1\langle}\)}%
  \mathopen{\copy\@brx\mkern2mu\kern-0.9\wd\@brx\usebox{\@brx}}}
\newcommand{\rrangle}[1][]{\savebox{\@brx}{\(\m@th{#1\rangle}\)}%
  \mathclose{\copy\@brx\mkern2mu\kern-0.9\wd\@brx\usebox{\@brx}}}
\newcommand{\linspan}[1]{\llangle {#1} \rrangle}
\newcommand{\normtwo}[1]{
{\left\vert\kern-0.25ex\left\vert\kern-0.25ex\left\vert #1 
    \right\vert\kern-0.25ex\right\vert\kern-0.25ex\right\vert} } 
\newcommand{\proj}{\hat \pi}
\theoremstyle{plain}
\newtheorem{theorem}{Theorem}[section]
\newtheorem{proposition}{Proposition}[section]
\newtheorem{corollary}[proposition]{Corollary}
\newtheorem{lemma}[proposition]{Lemma}
\newtheorem{definition}{Definition}[section]
\numberwithin{equation}{section}
\newtheorem{remark}{Remark}[section]
\title[The Avalanche Principle]{The Avalanche Principle and other estimates on Grassmann manifolds}
\date{}
\begin{document}

\author[P. Duarte]{Pedro Duarte}
\address{Departamento de Matem\'atica and CMAFIO\\
Faculdade de Ci\^encias\\
Universidade de Lisboa\\
Portugal 
}
\email{pmduarte@fc.ul.pt}

\author[S. Klein]{Silvius Klein}
\address{ Department of Mathematical Sciences\\
Norwegian University of Science and Technology (NTNU)\\
Trondheim, Norway\\ 
and IMAR, Bucharest, Romania }
\email{silvius.klein@math.ntnu.no}

\begin{abstract} The main result of this paper, called the {\em Avalanche Principle} (AP),   relates the expansion of a long product of matrices with the product of expansions of the individual matrices. This principle was introduced by  M. Goldstein and W. Schlag  in the context of $\SL(2,\C)$ matrices.  
Besides extending the AP  to matrices of arbitrary dimension, possibly non-invertible, the geometric approach we use here  provides  a relation between  the most expanding (singular) directions of such a long product of matrices and the corresponding singular directions of the first and last matrices in the product. The AP along with other estimates on the action of matrices on Grassmann manifolds will play  a fundamental role in~\cite{LEbook}  
to establish the continuity the Lyapunov exponents and  of the Oseledets decomposition for linear cocycles.

This is the draft of a chapter  in our forthcoming research monograph~\cite{LEbook}.  
\end{abstract}

\maketitle

\section{Grassmann Geometry}
\label{grassmann}
\newcommand{\deltah}{\delta_{{\rm H}}}
\newcommand{\deltamin}{\delta_{{\rm min}}}

Grassmann geometry is the geometric study of manifolds of linear subspaces  of an Euclidean space, and the  action of linear groups (and algebras)  on them.
Its foundations lie on the masterpiece  `Die lineale Ausdehnungslehre' of Hermann Grassmann, whose full geniality is still miscomprehended (see~\cite{Rota}).

\subsection{Projective spaces}
\label{subsection projective spaces}
The projective space is the simplest compact model to study the action of a linear map.
Given a $n$-dimensional Euclidean space $V$, consider the  equivalence relation defined on   $V\setminus\{0\}$ by
$u\equiv v$ if and only if $u=\lambda\,v$ for some $\lambda\neq 0$.
For $v\in V\setminus\{0\}$, the set 
$\hatv:=\{\, \lambda\,v\,\colon\, \lambda\in\R\setminus\{0\}\,\}$ is the equivalence class of the vector $v$ by this relation.
The {\em projective space of $V$} is the quotient  \, $\Pp(V):=\{\, \hatv\,\colon\, v\in V\setminus\{0\}\, \}$ of $V\setminus\{0\}$ by this  equivalence relation.
It is a compact topological space when endowed with the quotient topology.

The unit sphere
$\Su(V):=\{\, v\in V\,\colon\, \norm{v}=1\,\}$ is a compact Riemannian manifold of constant curvature $1$ and diameter $\pi$.
The natural projection $\proj:\Su(V)\to \Pp(V)$, $\proj(v)=\hatv$,
is a (double) covering map. Hence the projective space $\Pp(V)$ 
has a natural smooth  Riemannian structure for which the
covering map $\proj$ is a local isometry.
Thus $\Pp(V)$ is a compact Riemannian manifold with constant curvature $1$ and diameter $\frac{\pi}{2}$.

Given a linear map $g\in \mathcal{L}(V)$ define
$\Pp(g):=\{\, \hatv\in \Pp(V)\,\colon\, g\,v\neq 0\,\}$.
We refer to the linear map $\varphi_g:\Pp(g)\subset \Pp(V)\to\Pp(V)$, $\varphi_g(\hatv) := \proj(\frac{g\,v}{\norm{g\,v}})$, as the {\em projective action of $g$ on } $\Pp(V)$.
If $g$ is invertible then $\varphi_g: \Pp(V)\to\Pp(V)$ is a diffeomorphism with inverse $\varphi_{g^{-1}}: \Pp(V)\to\Pp(V)$.
Through these maps, the group $\GL(V)$, of all linear automorphisms on $V$, acts transitively on the projective space $\Pp(V)$.

We will consider three different metrics on the projective space $\Pp(V)$. The Riemannian distance, $\rho$, measures the length of an arc connecting two points in the sphere.
More precisely, given $u,v\in\Su(V)$,
\begin{equation}
\label{projective Riemannian metric}
 \rho(\hatu,\hat v):=\min\{\, \angle(u,v),  \angle(u,-v)\}\;.
\end{equation}
The second metric, $d$, corresponds to the Euclidean distance measured in the sphere. More precisely, given $u,v\in\Su(V)$,
\begin{equation}
\label{projective Euclidean metric}
 d(\hatu,\hat v):=\min\{\, \norm{u-v},  \norm{u+v}\}   
\end{equation}
measures the smallest chord of the arcs between $u$ and $v$, and between $u$ and $-v$. The third metric, $\delta$, measures the sine of the arc between two points in the sphere. More precisely, given $u,v\in\Su(V)$,
\begin{equation}
\label{projective sine metric}
\delta(\hatu,\hatv):= \frac{\norm{u\wedge v}}{\norm{u}\,\norm{v}}  = \sin(\angle(u,v))\;.
\end{equation} 
The fact that $\delta$ is a metric on $\Pp(V)$ follows from the sine addition law, which implies that
$\sin (\theta+\theta') \leq \sin \theta + \sin\theta'$,
for all $\theta,\theta'\in [0,\frac{\pi}{2}]$.

These three distances are equivalent. For all $\hatu,\hatv\in\Pp(V)$,
\begin{equation}\label{metric relations}
\delta(\hatu,\hatv)=\sin \rho (\hatu,\hatv)\quad \text{ and }\quad d(\hatu,\hatv)={\rm chord} \, \rho (\hatu,\hatv)\;. 
\end{equation}
The inequalities
$$ \frac{2\,\theta}{\pi} \leq \sin \theta \leq {\rm chord}\,\theta= 2\, \sin ({\theta}/{2}) \leq \theta \qquad \forall\, 0\leq \theta\leq \frac{\pi}{2}$$
imply  that
\begin{equation}\label{metric equivalence}
\frac{2}{\pi}\,\rho(\hatu,\hatv)\leq \delta(\hatu,\hatv)  \leq d(\hatu,\hatv) \leq \rho(\hatu,\hatv) \;.  
\end{equation}
Because of ~\eqref{metric relations}, these three metrics determine the same group of isometries on the projective space.

\bigskip

\subsection{Exterior algebra}\label{exterior algebra}
Exterior Algebra was introduced by H. Grassmann in the `Ausdehnungslehre'. We present here an informal description of some of its properties. See the book of Shlomo Stenberg ~\cite{Stenberg-book} for a rigorous treatment of the subject.

Let $V$ be a finite $n$-dimensional Euclidean space.
Given $k$ vectors $v_1,\ldots, v_k\in V$, their $k$-th exterior product is a formal skew-symmetric product
$v_1\wedge \ldots \wedge v_k$, in the sense that
for any permutation $\sigma=(\sigma_1,\ldots,\sigma_k) \in {\rm S}_k$,
$$  v_{\sigma_1} \wedge \ldots \wedge v_{\sigma_k} 
= (-1)^{{\rm sgn}(\sigma)} v_1\wedge \ldots  \wedge v_k
 \;.  $$
These formal products are elements of  an anti-commutative and associative graded algebra  $(\wedge_\ast V, +,\wedge)$,  called the {\em exterior algebra} of $V$.
Formal products $v_1\wedge \ldots \wedge v_k$ are called
  {\em simple $k$-vectors} of $V$.
The {\em $k$-th exterior power of $V$}, denoted by $\wedge_k  V$,
is the linear  span of all  simple $k$ vectors of $V$. 
Elements of $\wedge_k  V$ are called {\em $k$-vectors}.

An easy consequence of this formal definition is that
$v_1\wedge \ldots \wedge v_k=0$ if and only if $v_1,\ldots, v_k$
are linearly dependent. Another simple consequence is that given
two bases $\{v_1,\ldots, v_k\}$ and  $\{w_1,\ldots, w_k\}$ of the same $k$-dimensional linear subspace of $V$, if for some real matrix
$A=(a_{ij})$  we have $w_i=\sum_{j=1}^k a_{ij}\,v_j$
for all $i=1,\ldots, k$, then 
$$ w_1\wedge \ldots \wedge w_k = (\det A)\,v_1\wedge \ldots \wedge v_k\;.$$
More generally, two families  
$\{v_1,\ldots, v_k\}$ and  $\{w_1,\ldots, w_k\}$ of linearly independent vectors  span the same
$k$-di\-men\-sional subspace if and only if for some  real number
$\lambda\neq 0$, $ w_1\wedge \ldots \wedge w_k = \lambda \,v_1\wedge \ldots \wedge v_k$. 
Hence  we identify  the line spanned by a simple $k$-vector $v=v_1\wedge \ldots \wedge v_k$,
 i.e., the projective point   $\hat{v}\in \Pp(\wedge_k  V)$ determined by  $v$,  with the $k$-di\-men\-sional subspace spanned by the vectors $\{v_1,\ldots, v_k\}$,  
denoted hereafter  by $\linspan{ v_1\wedge \ldots \wedge v_k }$.

The subspaces $\wedge_k  V$ induce the grading structure of the exterior algebra $\wedge_\ast V$, i.e.,
 we have the direct sum decomposition $\wedge_\ast V=\oplus_{k=0}^{\dim V} \wedge_k V$ 
with $(\wedge_k V)\wedge (\wedge_{k'} V)\subset \wedge_{k+k'} V$
for all $0\leq k,k'\leq \dim V$.
Geometrically, the exterior product operation
$\wedge:\wedge_k V\times   \wedge_{k'} V \to \wedge_{k+k'} V$
corresponds to the algebraic sum of linear subspaces, in the sense that given families $\{v_1,\ldots, v_k\}$ and  $\{w_1,\ldots, w_k\}$ of linearly independent vectors such that
 $\linspan{ v_1\wedge \ldots \wedge v_k } \cap 
\linspan{ w_1\wedge \ldots \wedge w_{k'} }=0$, then
$$\linspan{ v_1\wedge \ldots \wedge v_k\, \wedge \,  w_1\wedge \ldots \wedge w_{k'} } = 
\linspan{ v_1\wedge \ldots \wedge v_k } +
\linspan{ w_1\wedge \ldots \wedge w_{k'} } \;. $$

Let $\Lambda_k^n$ be the set of all $k$-subsets 
$I=\{i_1,\ldots, i_k\}\subset \{1,\ldots, n\}$,
with $i_1<\ldots <i_k$,
and  order it lexicographically. 
Given a basis $\{e_1, \ldots, e_n\}$ of $V$,
define for each  $I\in\Lambda^n_k$, the $k$-th exterior product $e_I=e_{i_1}\wedge \ldots\wedge e_{i_k}$.
Then the ordered family $\{e_I\,\colon \,I\in\Lambda^n_k\}$ is a basis of $\wedge_k V$. In particular\,
$\dim \wedge_k V=\binom{n}{k}$.

The exterior algebra $\wedge_\ast V$ inherits an Euclidean
structure from $V$. More precisely, there is a unique inner product
on $\wedge_\ast V$ such that for any orthonormal basis $\{e_1, \ldots, e_n\}$ of $V$, the family
$\{\, e_I\,\colon\, I\in \Lambda^n_k, \; 0\leq k\leq n\,\}$
is  an orthonormal basis of the exterior algebra $\wedge_\ast V$.

A  simple $k$-vector
$v_1\wedge \ldots \wedge v_k$ of norm one will be called a {\em unit $k$-vector}. From the previous considerations
the correspondence
$v_1\wedge \ldots \wedge v_k\mapsto \linspan{v_1\wedge \ldots \wedge v_k}$ is one-to-one, between the set of unit $k$-vectors in $\wedge_k V$ and the set of oriented $k$-dimensional linear subspaces of $V$.
In particular, if $V$ is an oriented Euclidean space then the $1$-dimensional space $\wedge_n V$ has a canonical unit $n$-vector,
denoted by $\omega$, and called the {\em volume element} of $\wedge_n V$. In this case there is a unique operator,
 called the {\em Hodge star} operator,
$\ast:\wedge_\ast V\to \wedge_\ast V$ defined by  
$$ v\wedge (\ast w) = \langle v, w\rangle\, \omega,\quad \text{ for all }\;  v,w\in \wedge_\ast V\;. $$
The Hodge star operator maps
$\wedge_k V$ isomorphically, and isometrically, onto $\wedge_{n-k} V$, for all
$0\leq k\leq n$.
Geometrically it corresponds to the
orthogonal complement operation on  linear subspaces, i.e., for any simple $k$-vector,
$$ \linspan{\ast (v_1\wedge \ldots \wedge v_k) } 
= \linspan{ v_1\wedge \ldots \wedge v_k  }^\perp\;. $$

A dual product operation $\vee :\wedge_\ast V\times \wedge_\ast V\to \wedge_\ast V$ can  be defined by
$$ v \vee w := \ast( (\ast v)\wedge (\ast w) ),\quad
\text{ for all }\; v,w\in \wedge_\ast V \;.$$
This operation maps 
$\wedge_k V\times   \wedge_{k'} V$ to $\wedge_{k+k'-n} V$, and describes   the intersection operation on linear subspaces, 
in the sense that given families $\{v_1,\ldots, v_k\}$ and  $\{w_1,\ldots, w_k\}$ of linearly independent vectors with
 $\linspan{ v_1\wedge \ldots \wedge v_k } + 
\linspan{ w_1\wedge \ldots \wedge w_{k'} }= V$, then
$$\linspan{ (v_1\wedge \ldots \wedge v_k) \vee (w_1\wedge \ldots \wedge w_{k'}) } = 
\linspan{ v_1\wedge \ldots \wedge v_k } \cap 
\linspan{ w_1\wedge \ldots \wedge w_{k'} } \;. $$
By duality, this interpretation of the $\vee$-operation  reduces to the previous ones on sums ($\wedge$) and complements ($\ast$).

Any linear map $g:V\to V$ induces 
a linear map $\wedge_k g:\wedge_k V\to \wedge_k V$,
called the {\em $k$-th exterior power} of $g$,
such that  for all $v_1,\ldots, v_k\in V$,
$$\wedge_k g(v_1\wedge \ldots \wedge v_k)=
g(v_1)\wedge \ldots \wedge g(v_k)\;.$$
This construction is functorial
in the sense that for all linear maps $g,g':V\to V$,
$$\wedge_k{\rm id}_V = {\rm id}_{\wedge_k V},  \;\; \wedge_k(g'\circ g)=\wedge_k g'\circ  \wedge_k g\quad \;\; \text{ and }\;\;
\wedge_k g^\ast = (\wedge_k g)^\ast \;,$$
where $g^\ast:V\to V$ denotes the {\em adjoint operator}.

A clear consequence of these properties is that if $g:V\to V$ is an {\em orthogonal automorphism}, i.e.,
$g^\ast\circ g={\rm id}_V$,  then so is
$\wedge_k g: \wedge_k V\to \wedge_k V$.

Consider a matrix $A\in\Mat_n(\R)$.
Given $I,J\in\Lambda^ n_k$,
we denote by  $A_{I\times J}$  the square sub-matrix of $A$
 indexed in  $I\times J$.
If a linear map $g:V\to V$ is represented by  
$A$ relative to a basis $\{e_1, \ldots, e_n\}$,
 then the $k$-th exterior power
$\wedge_k g:\wedge_k V\to \wedge_k V$ is represented
by the matrix $\wedge_k A:= (\det A_{I\times J})_{I,J}$  relative to  the basis $\{e_I\,:\,I\in\Lambda^n_k\}$. The matrix $\wedge_k A$ is called the {\em $k$-th exterior power} of $A$.
Obviously, matrix exterior powers satisfy the same functorial properties
as linear maps, i.e., for all $A,A'\in\Mat_n(\R)$,
 $$\wedge_k{\rm I}_n = {\rm I}_{\binom{n}{k}},  \;\; \wedge_k(A' A)=(\wedge_k A') (\wedge_k g)\quad \;\; \text{ and }\;\;
\wedge_k A^\ast = (\wedge_k A)^\ast \;,$$
where $A^\ast$ denotes the {\em transpose} matrix of $A$.

\bigskip

\subsection{Grassmann manifolds} \label{grassmannians}

Grassmannians, like projective spaces, are compact Riemannian manifolds which stage the action of linear maps.
For each $0\leq k\leq n$,  the {\em Grassmannian} $\Gr_k(V)$ is the space of all $k$-dimensional linear subspaces of $V$.
Notice that the projective space  $\Pp(V)$ and the Grassmannian $\Gr_1(V)$ are the same object if we identify each point $\hatv\in\Pp(V)$ with the line $\langle v\rangle= \{\, \lambda\,v\,\colon\, \lambda\in\R \,\}$. The full Grassmannian $\Gr(V)$ is the union of all Grassmannians $\Gr_k(V)$ with $0\leq k\leq n$. Denote by $\mathcal{L}(V)$   the algebra of linear endomorphisms on $V$, and consider the map
$\pi:\Gr(V)\to \mathcal{L}(V)$, $E\mapsto \pi_E$, that assigns the orthogonal projection $\pi_E$ onto $E$,
to each subspace $E\in\Gr(V)$. This map is one-to-one, and we endow $\Gr(V)$ with the unique topology that makes the map $\pi:\Gr(V)\to \pi(\Gr(V))$ a homeomorphsim. With it, $\Gr(V)$ becomes a compact space, and
each Grassmannian $\Gr_k(V)$ is a closed connected subspace of $\Gr(V)$.

The   group $\GL(V)$ acts transitively on each Grassmannian.
The action of $\GL(V)$ on $\Gr_k(V)$ is given by  
$\cdot:\GL(V)\times \Gr_k(V)\to \Gr_k(V)$,
$(g,E)\mapsto g\,E$. 
The special orthogonal group $\SO(V)$,
of orientation preserving orthogonal automorphisms, acts transitively on   Grassmannians too. All Grassmannians are compact homogeneous spaces.

For each $0\leq k\leq n$,  the {\em Pl\"ucker} embedding is the map
$\psi:\Gr_k(V)\to \Pp(\wedge_k V)$  that to each subspace $E$ in $\Gr_k(V)$
assigns the projective point $\hatv\in \Pp(\wedge_k V)$, 
where $v=v_1\wedge \ldots \wedge v_k$ is any simple $k$-vector formed as exterior product of a basis $\{ v_1,\ldots, v_k \}$  of $E$. This map is one-to-one and  equivariant, i.e.,
for all $g\in\GL(V)$ and $E\in\Gr(V)$,
\begin{equation}\label{Plucker equivariance}
 \psi(g\,E) = \varphi_{ \wedge_k g }\psi(E) \;. 
\end{equation}

We will consider the metrics 
$\rho, d,\delta :\Gr_k(V) \times \Gr_k(V)\to [0,+\infty)$  defined for any given $E,F\in\Gr_k(V)$  by
\begin{align}
\label{Grassmann:arc distance}
\rho(E,F) &:= \rho(\psi(E),\psi(F))  \;,\\
\label{Grassmann:distance}
d(E,F) &:= d(\psi(E),\psi(F))  \;,\\
\label{Grassmann:sine distance}
\delta(E,F) &:= \delta(\psi(E),\psi(F))   \;.
\end{align}
which assign diameter $\frac{\pi}{2}$, $\sqrt{2}$ and $1$, respectively, to the manifold $\Gr_k(V)$.
These distances are preserved by orthogonal linear maps in $\SO(V)$.

We also define the {\em minimum distance} between 
any two subspaces $E,F\in\Gr(V)$, 
$$ \deltamin(E,F):=  \min_{u\in E\setminus\{0\}, v\in F\setminus\{0\}} \delta(\hatu,\hatv)\;,  $$
and the {\em Hausdorff distance} between 
subspaces $E,F\in \Gr_k(V)$,
$$ \deltah(E,F):= \max\left\{ \,
\max_{u\in E\setminus\{0\}} \deltamin(\hatu,F),\,
\max_{v\in F\setminus\{0\}} \deltamin(\hatv,E)\,
\right\}\;. $$

\begin{definition}
\label{def orthog proj}
Given  $E,F\in\Gr(V)$, we denote by
$\pi_F:V\to V$ the orthogonal projection onto $F$,
and by $\pi_{E,F}:E\to F$ the restriction  of $\pi_F$ to $E$.
\end{definition}

\begin{proposition}
\label{delta, deltamin, deltaH}
Given $E,F\in\Gr_k(V)$,
\begin{enumerate}
\item[(a)]\;  $\displaystyle  \delta(E,F) = \sqrt{1-\det(\pi_{E,F})^2} =
\sqrt{1-\det(\pi_{F,E})^2} $,
\item[(b)]\;  $\displaystyle \deltah(E,F)= \norm{\pi_{E,F^\perp}} = \norm{\pi_{F,E^\perp}} $,
\item[(c)]\;  $\displaystyle \deltah(E,F) \leq  \delta(E,F) $.
\end{enumerate}

\end{proposition}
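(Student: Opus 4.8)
The plan is to reduce all three statements to a computation in a pair of \emph{adapted orthonormal bases}. Fix arbitrary orthonormal bases $\{u_i'\}_{i=1}^k$ of $E$ and $\{w_j'\}_{j=1}^k$ of $F$ and consider the $k\times k$ matrix $M=(\langle u_i',w_j'\rangle)_{i,j}$, which represents $\pi_{F,E}$ (and whose transpose represents $\pi_{E,F}$) in these bases. Since $\pi_{F,E}$ is a restriction of the orthogonal projection $\pi_E$, its singular values $\sigma_1\ge\cdots\ge\sigma_k$ lie in $[0,1]$. Applying the singular value decomposition $M=P\,\mathrm{diag}(\sigma_1,\dots,\sigma_k)\,Q^{\ast}$ with $P,Q$ orthogonal, and absorbing $P$ and $Q$ into the choice of bases, we obtain orthonormal bases $\{u_i\}$ of $E$ and $\{w_j\}$ of $F$ with $\langle u_i,w_j\rangle=\sigma_i\,\delta_{ij}$. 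In these bases the matrices of both $\pi_{E,F}$ and $\pi_{F,E}$ are $\mathrm{diag}(\sigma_1,\dots,\sigma_k)$, so $\det(\pi_{E,F})^2=\det(\pi_{F,E})^2=\prod_i\sigma_i^2$ (only the square is basis-independent, but only the square appears in the statement); this already yields the second equalities in (a) and (b).

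For (a), recall $\psi(E)=\widehat{u_1\wedge\cdots\wedge u_k}$ and $\psi(F)=\widehat{w_1\wedge\cdots\wedge w_k}$ are unit $k$-vectors, so by the definition of $\delta$ on $\Pp(\wedge_k V)$ together with $\|x\wedge y\|^2=\|x\|^2\|y\|^2-\langle x,y\rangle^2$ one gets $\delta(E,F)^2=1-\langle u_1\wedge\cdots\wedge u_k,\,w_1\wedge\cdots\wedge w_k\rangle^2$. The inner product of these simple $k$-vectors equals $\det(\langle u_i,w_j\rangle)=\prod_i\sigma_i$ by the Cauchy--Binet formula applied to the coordinate matrices in an orthonormal basis of $V$, using the orthonormality convention on $\wedge_\ast V$ from Section~\ref{exterior algebra}. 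Hence $\delta(E,F)^2=1-\prod_i\sigma_i^2=1-\det(\pi_{E,F})^2$.

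For (b), I first compute, for a unit $u\in E$, that $\deltamin(\hatu,F)=\min_{v\in F\setminus\{0\}}\sin\angle(u,v)=\sqrt{1-\max_{v\in F,\,\|v\|=1}\langle u,v\rangle^2}=\sqrt{1-\|\pi_F u\|^2}=\|\pi_{F^\perp}u\|$; taking the maximum over unit $u\in E$ gives $\max_{u\in E\setminus\{0\}}\deltamin(\hatu,F)=\|\pi_{E,F^\perp}\|$, and symmetrically for the other term in the definition of $\deltah$. It then remains to identify $\|\pi_{E,F^\perp}\|$ with $\|\pi_{F,E^\perp}\|$. For this I compute $\pi_{F^\perp}u_i=u_i-\sigma_i w_i$ and check, using $\langle u_i,w_j\rangle=\sigma_i\delta_{ij}$, that $\langle u_i-\sigma_i w_i,\,u_j-\sigma_j w_j\rangle=(1-\sigma_i^2)\,\delta_{ij}$; thus $\pi_{E,F^\perp}$ carries the orthonormal basis $\{u_i\}$ to an orthogonal family of norms $\sqrt{1-\sigma_i^2}$, so its singular values are exactly $\{\sqrt{1-\sigma_i^2}\}_i$ and $\|\pi_{E,F^\perp}\|=\sqrt{1-\sigma_k^2}$. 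By the symmetry of the construction $\|\pi_{F,E^\perp}\|=\sqrt{1-\sigma_k^2}$ as well, which proves (b) and moreover shows $\deltah(E,F)=\sqrt{1-\min_i\sigma_i^2}$.

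Finally, (c) is immediate from the two formulas obtained above: $\deltah(E,F)^2=1-\min_i\sigma_i^2\le 1-\prod_i\sigma_i^2=\delta(E,F)^2$, since every $\sigma_i^2\in[0,1]$ forces $\prod_i\sigma_i^2\le\min_i\sigma_i^2$. The only genuinely non-formal ingredient, and the step I expect to be the main obstacle, is the adapted-basis construction in the first paragraph (equivalently, the existence of principal angles / the CS decomposition between $E$ and $F$); everything afterwards is a short, direct computation. The degenerate case in which some $\sigma_i=0$, i.e.\ $\pi_{E,F}$ is singular, needs no separate treatment, as the identities $\prod_i\sigma_i^2=0$ and $\det(\pi_{E,F})=0$ remain valid there.
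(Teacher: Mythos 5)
Your proof is correct, and it follows a genuinely different route from the paper's. You construct principal (adapted) orthonormal bases of $E$ and $F$ by taking an SVD of the cross--Gram matrix $(\langle u_i',w_j'\rangle)$, after which $\pi_{E,F}$ and $\pi_{F,E}$ are both $\mathrm{diag}(\sigma_1,\dots,\sigma_k)$; then (a) is the identity $\langle u_1\wedge\cdots\wedge u_k,\,w_1\wedge\cdots\wedge w_k\rangle=\det(\langle u_i,w_j\rangle)=\prod_i\sigma_i$, (b) comes from the explicit computation $\pi_{F^\perp}u_i=u_i-\sigma_i w_i$, which exhibits the singular values of $\pi_{E,F^\perp}$ (and symmetrically of $\pi_{F,E^\perp}$) as $\sqrt{1-\sigma_i^2}$, and (c) is the inequality $\prod_i\sigma_i^2\le\min_i\sigma_i^2$ for numbers in $[0,1]$. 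The paper avoids constructing principal vectors: for (a) it notes that $\wedge_k\pi_{F,E}$ maps the line $\wedge_k F$ to the line $\wedge_k E$, so $\langle e,f\rangle=\norm{\wedge_k\pi_{F,E}}=\abs{\det(\pi_{F,E})}$; for the symmetry in (b) it conjugates $\pi_{F,E^\perp}$ by an orthogonal map relating $E$ and $F$; and for (c) it uses $\norm{\pi_{E,F}\,u}\ge\minexp(\pi_{E,F})\ge\abs{\det(\pi_{E,F})}$ for unit $u\in E$, which is the same ``minimum versus product of singular values'' inequality you use, just without naming the $\sigma_i$. What your approach buys is extra explicitness: the formulas $\delta(E,F)=\sqrt{1-\prod_i\sigma_i^2}$ and $\deltah(E,F)=\sqrt{1-\min_i\sigma_i^2}$ in terms of the principal angles make the equality $\norm{\pi_{E,F^\perp}}=\norm{\pi_{F,E^\perp}}$ completely transparent (the paper's conjugation argument implicitly requires an element of $\SO(V)$ suitably exchanging the roles of $E$ and $F$, which your adapted bases in fact produce), at the cost of invoking the SVD/principal-angle construction up front, which the paper's shorter argument does not need.
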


\begin{proof}
Consider the unit $k$-vectors $e=\Psi(E)$ and $f=\Psi(F)$.

For (a) notice first that
$\delta(E,F)=\delta(e,f)=\sqrt{1-\langle e,f\rangle^2}$.
Since the exterior power $\wedge_k \pi_{F,E}:\wedge_k F\to \wedge_k E$ is also an orthogonal projection we have
$\langle e,f\rangle = 
\langle e,\wedge_k \pi_{F,E} (f)\rangle =  \norm{\wedge_k \pi_{F,E}} = \abs{\det(\pi_{F,E}) } $.

Given an orthogonal map $g\in\SO(V)$ such that
$g(F)=E$, we have $g^{-1}(E^\perp)=F^\perp$ 
and $\pi_{E,F^\perp}= g^{-1}\circ \pi_{F,E^\perp}\circ g$. Therefore $\norm{\pi_{E,F^\perp}} = \norm{\pi_{F,E^\perp}} $. 

Item (b) follows because for any unit vector $u\in\hatu$, with $\hatu\in\Pp(E)$,
$$ \norm{\pi_{E,F^\perp} (u)} = \min_{v\in F\setminus\{0\}} \delta(\hatu,\hatv)\;. $$

Since $\pi_{E,F}$ is an orthogonal projection all its singular values are in the range $[0,1]$.
Hence, for any unit vector $u\in E$,
$\norm{\pi_{E,F}(u)}\geq \minexp(\pi_{E,F})\geq \det(\pi_{E,F})$. Thus
\begin{align*}
\norm{\pi_{E,F^\perp} (u)}^2
= 1- \norm{\pi_{E,F} (u)}^2
\leq 1- \det(\pi_{E,F})^2\;,
\end{align*}
and  (c) follows taking the maximum over all unit vectors
$u\in E$.

\end{proof}

Given $k,k'\geq 0$ such that $k+k'\geq n=\dim V$, the
intersection of subspaces is an operation
$\cap:\Gr_{k,k'}(\cap)\subset \Gr_k(V)\times \Gr_{k'}(V)\to \Gr_{k+k'-n}(V)$
where 
\begin{definition} 
\label{def intersection domain}
the domain  is defined by
$$ \Gr_{k,k'}(\cap):=\{\,(E,E')\in \Gr_k(V)\times \Gr_{k'}(V)\,\colon\,
E+E'=V\,  \}\;. $$
\end{definition}

Similarly, given $k,k'\geq 0$ such that $k+k'\leq n=\dim V$, the
algebraic sum of subspaces is operation
$+:\Gr_{k,k'}(+)\subset \Gr_k(V)\times \Gr_{k'}(V)\to \Gr_{k+k'-n}(V)$
where 
\begin{definition} 
\label{def sum domain}
the domain  is defined by
$$ \Gr_{k,k'}(+):=\{\,(E,E')\in \Gr_k(V)\times \Gr_{k'}(V)\,\colon\,
E\cap E' =\{0\}\,  \}\;. $$
\end{definition}
The considerations in subsection~ \ref{exterior algebra}  show that
the Pl\"ucker  embedding satisfies the following relations:
\begin{proposition}
\label{prop }
Given $E\in \Gr_k(V)$, $E'\in\Gr_{k'}(V)$,
consider unit vectors $v\in \Psi(E)$ and $v'\in\Psi(E')$.
\begin{enumerate}
\item[(a)]\, If $(E,E')\in \Gr_{k,k'}(\cap)$ \, then  \,
$ \psi(E\cap E')=\widehat{v\vee v'}$.
\item[(b)]\, If $ (E,E')\in \Gr_{k,k'}(+)$ \, then  \,
$\psi(E + E')=\widehat{v\wedge v'} $.
\end{enumerate}
\end{proposition}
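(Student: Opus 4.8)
The plan is to deduce both identities straight from the dictionary between exterior-algebra operations and subspace operations recalled in Subsection~\ref{exterior algebra}; the one substantive point will be to verify that the simple multivectors $v\wedge v'$ and $v\vee v'$ are non-zero under the respective domain hypotheses, so that the projective points $\widehat{v\wedge v'}$ and $\widehat{v\vee v'}$ make sense. Throughout I will use that a unit vector $v$ spanning the line $\psi(E)\subset\wedge_k V$ is, up to sign, the normalized exterior product of a basis of $E$, and that conversely $\psi(E)=\linspan{v}$ does not depend on the chosen basis; for part (a) I also fix once and for all an orientation on $V$, so that $\ast$ and $\vee$ are defined (the projective conclusions below do not depend on this choice).

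For part (b): since $(E,E')\in\Gr_{k,k'}(+)$ we have $E\cap E'=\{0\}$, hence $\dim(E+E')=k+k'\le n$. Picking bases $\{v_1,\ldots,v_k\}$ of $E$ and $\{v'_1,\ldots,v'_{k'}\}$ of $E'$, their concatenation is linearly independent, so $v_1\wedge\ldots\wedge v_k\wedge v'_1\wedge\ldots\wedge v'_{k'}\neq0$; this simple vector is a non-zero scalar multiple of $v\wedge v'$, so $\widehat{v\wedge v'}$ is well defined. The interpretation of $\wedge$ as algebraic sum then yields $\linspan{v\wedge v'}=E+E'$, and since $\psi(E+E')$ is by definition the projective point of any simple $(k+k')$-vector spanning $E+E'$, we conclude $\psi(E+E')=\widehat{v\wedge v'}$.

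For part (a) I would reduce to (b) via the Hodge star. The hypothesis $(E,E')\in\Gr_{k,k'}(\cap)$ means $E+E'=V$, so $E^\perp\cap(E')^\perp=(E+E')^\perp=\{0\}$, and $\dim E^\perp+\dim(E')^\perp=2n-k-k'\le n$ because $k+k'\ge n$; moreover $\ast v$ and $\ast v'$ are unit simple multivectors with $\linspan{\ast v}=E^\perp$ and $\linspan{\ast v'}=(E')^\perp$. Applying part (b) to $(E^\perp,(E')^\perp)$ shows $(\ast v)\wedge(\ast v')\neq0$ with span $E^\perp+(E')^\perp=(E\cap E')^\perp$. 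Since $\ast$ is an isometric isomorphism, $v\vee v'=\ast\bigl((\ast v)\wedge(\ast v')\bigr)$ is non-zero and $\widehat{v\vee v'}$ is well defined; using that $\ast$ realizes orthogonal complement on spans, $\linspan{v\vee v'}=\bigl((E\cap E')^\perp\bigr)^\perp=E\cap E'$, whence $\psi(E\cap E')=\widehat{v\vee v'}$.

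The only genuine obstacle — and the reason the domain conditions are imposed — is the non-vanishing of the relevant simple multivector: in (b) this is exactly the transversality $E\cap E'=\{0\}$, and in (a) its dual $E+E'=V$. Everything else is bookkeeping: keeping track of the dimension inequalities ($k+k'\le n$ in (b), $k+k'\ge n$ in (a)) and quoting the identifications of $\wedge$, $\ast$, $\vee$ with sum, orthogonal complement, and intersection from Subsection~\ref{exterior algebra}. One could instead prove (a) directly from the stated $\vee$-interpretation of intersection, but routing it through (b) by Hodge duality is the shortest path.
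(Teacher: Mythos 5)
Your proposal is correct and follows essentially the route the paper intends: the paper gives no separate proof, stating the proposition as an immediate consequence of the exterior-algebra dictionary of Subsection~\ref{exterior algebra}, where the $\wedge$--sum interpretation gives (b) and the $\vee$--intersection interpretation is itself reduced "by duality\ldots to the previous ones on sums ($\wedge$) and complements ($\ast$)", which is exactly your Hodge-star reduction of (a) to (b). Your added checks (non-vanishing of $v\wedge v'$ and $v\vee v'$ under the transversality hypotheses, and the dimension bookkeeping) are the right points to make explicit.
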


A duality between sums and intersections stems from these facts.  

\begin{proposition}\label{+cap duality}
The orthogonal complement operation $E\mapsto E^\perp$  is a $d$-isometric involution on $\Gr(V)$  which maps $ \Gr_{k,k'}(+)$ to $ \Gr_{n-k,n-k'}(\cap)$
and satisfies for all $(E,E')\in  \Gr_{k,k'}(+)$,
\begin{equation*}
(E+E')^\perp = (E^\perp)\cap (E')^\perp \;. 
\end{equation*}
\end{proposition}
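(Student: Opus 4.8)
The plan is to treat the three assertions separately; only the isometry claim uses the structure developed above. The involution statement is immediate: since $\dim E^\perp=n-\dim E$, the operation $E\mapsto E^\perp$ sends $\Gr_k(V)$ onto $\Gr_{n-k}(V)$, and $(E^\perp)^\perp=E$ for every subspace $E$, so it is an involution of $\Gr(V)$ exchanging $\Gr_k(V)$ with $\Gr_{n-k}(V)$.

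For the $d$-isometry I would use the Hodge star $\ast$ of subsection~\ref{exterior algebra}: it is an isometry of $\wedge_\ast V$ carrying $\wedge_kV$ onto $\wedge_{n-k}V$ and sending the span of a simple $k$-vector to its orthogonal complement. Fix $E\in\Gr_k(V)$ and let $e$ be a unit $k$-vector with $\psi(E)=\hat e$; then $\ast e$ is a unit $(n-k)$-vector and $\psi(E^\perp)=\widehat{\ast e}$. The key observation is that the three projective metrics $\rho,d,\delta$ are built solely from the Euclidean inner product of the ambient exterior power, so any linear isometry between two Euclidean spaces induces, on projective spaces, an isometry for each of these metrics. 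Applying this to $\ast\colon\wedge_kV\to\wedge_{n-k}V$, and writing $e,f$ for unit $k$-vectors representing $\psi(E),\psi(F)$ with $E,F\in\Gr_k(V)$,
\[
d(E^\perp,F^\perp)=d(\widehat{\ast e},\widehat{\ast f})=\min\{\|\ast e-\ast f\|,\|\ast e+\ast f\|\}=\min\{\|e-f\|,\|e+f\|\}=d(E,F).
\]
The same computation with $\rho$ or $\delta$ in place of $d$ shows $E\mapsto E^\perp$ is simultaneously an isometry for all three metrics.

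For the duality between sums and intersections I would argue by elementary linear algebra. For arbitrary subspaces $E,E'\subset V$, a vector is orthogonal to $E+E'$ if and only if it is orthogonal to both $E$ and $E'$, so $(E+E')^\perp=E^\perp\cap(E')^\perp$. Now suppose $(E,E')\in\Gr_{k,k'}(+)$, i.e. $k+k'\le n$ and $E\cap E'=\{0\}$; then $E^\perp\in\Gr_{n-k}(V)$, $(E')^\perp\in\Gr_{n-k'}(V)$ with $(n-k)+(n-k')\ge n$, and applying the identity just proved to the pair $E^\perp,(E')^\perp$ and taking orthogonal complements gives
\[
E^\perp+(E')^\perp=\bigl((E^\perp)^\perp\cap((E')^\perp)^\perp\bigr)^\perp=(E\cap E')^\perp=\{0\}^\perp=V.
\]
Hence $(E^\perp,(E')^\perp)\in\Gr_{n-k,n-k'}(\cap)$, which is exactly the assertion that $E\mapsto E^\perp$ maps $\Gr_{k,k'}(+)$ into $\Gr_{n-k,n-k'}(\cap)$; and the displayed identity $(E+E')^\perp=(E^\perp)\cap(E')^\perp$ is the de Morgan law already established.

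I do not expect a genuine obstacle here — the argument is essentially bookkeeping. The two points needing care are: invoking the correct direction of the de Morgan law to deduce $E^\perp+(E')^\perp=V$, and noting that because the projective metrics depend only on the ambient Euclidean structure, the Hodge star (an isometry of exterior powers) descends to an isometry of Grassmannians. A variant more in keeping with the Pl\"ucker formalism replaces the third paragraph by an appeal to Proposition~\ref{prop }: with $w=\ast e$ and $w'=\ast e'$ one has $w\vee w'=\ast((\ast w)\wedge(\ast w'))=\pm\,\ast(e\wedge e')$, using $\ast\ast=\pm{\rm id}$ on $\wedge_kV$, whence $\psi\bigl(E^\perp\cap(E')^\perp\bigr)=\widehat{w\vee w'}=\widehat{\ast(e\wedge e')}=\psi\bigl((E+E')^\perp\bigr)$, and injectivity of the Pl\"ucker embedding finishes the argument.
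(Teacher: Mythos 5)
Your argument is correct and follows essentially the route the paper intends: the paper states this proposition without proof, as a direct consequence of the exterior-algebra facts (Hodge star as an isometry realizing orthogonal complements through the Pl\"ucker embedding, and the $\vee$-operation/de Morgan duality between sums and intersections), which is exactly what you spell out. Your elementary de Morgan computation and the Pl\"ucker-formalism variant are both sound and add nothing beyond making the paper's implicit reasoning explicit.
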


The composition semigroup $\mathcal{L}(V)$ has two partial actions
on  Grassmannians, called the {\em push-forward  action}
and the {\em pull-back action}. Before introducing them a couple facts is needed.

\begin{definition}
\label{def kernel, range}
Given $g\in \mathcal{L}(V)$, we denote by 
$\Ker g:=\{\, v\in V\,\colon\, g\,v=0\,\}$
the the {\em kernel of $g$}, and by 
  $\Range g:=\{\, g\,v\,\colon\, v\in V\,\}$
 the {\em range  of $g$}.
\end{definition}

\begin{lemma}\label{push-forwards,pull-backs}
Given $g\in\mathcal{L}(V)$ and  $E\in\Gr(V)$, 
\begin{enumerate}
\item if $E\cap (\Ker g)=\{0\}$ then the linear map $g\vert_E:E\to g(E)$ is an isomorphism,
and in particular\, $\dim g(E)=\dim E$.
\item if $E + (\Range g)= V$ then the linear map  $g^ \ast\vert_{E^ \perp}:E^ \perp\to g^{-1}(E)^ \perp$ is an isomorphism,
and  in particular\, $\dim g^{-1}(E)=\dim E$.
\end{enumerate}
\end{lemma}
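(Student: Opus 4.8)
The plan is to prove the two assertions as direct linear-algebra statements, using the rank-nullity theorem together with the interplay between a map and its adjoint.

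For part (1): suppose $E\cap(\Ker g)=\{0\}$. The restriction $g\vert_E:E\to g(E)$ is surjective onto $g(E)$ by definition of $g(E)$, so I only need injectivity. If $v\in E$ and $g\,v=0$, then $v\in E\cap(\Ker g)=\{0\}$, hence $v=0$; so $g\vert_E$ is injective, therefore an isomorphism onto its image $g(E)$. In particular $\dim g(E)=\dim E$. This part is essentially immediate.

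For part (2): suppose $E+(\Range g)=V$. First I would identify $g^{-1}(E)$ — more precisely the preimage $\{v\in V:g\,v\in E\}$, which contains $\Ker g$. The key identity is $\bigl(g^{-1}(E)\bigr)^\perp = g^\ast\bigl(E^\perp\bigr)$; this follows from the standard duality $\Ker(\pi_E\circ g)^\perp = \Range\bigl((\pi_E\circ g)^\ast\bigr) = \Range(g^\ast\circ\pi_E) = g^\ast(E^\perp)$, noting that $g^{-1}(E)=\Ker(\pi_E\circ g)$ and $\pi_E^\ast=\pi_E$ with $\Range\pi_E=E$. So $g^\ast\vert_{E^\perp}$ maps $E^\perp$ onto $g^{-1}(E)^\perp$. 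It remains to check injectivity of $g^\ast$ restricted to $E^\perp$: if $w\in E^\perp$ and $g^\ast w=0$, then $w\in(\Range g)^\perp$; but $w\in E^\perp$ as well, so $w\in (E+\Range g)^\perp = V^\perp=\{0\}$. Hence $g^\ast\vert_{E^\perp}:E^\perp\to g^{-1}(E)^\perp$ is an isomorphism, giving $\dim g^{-1}(E)^\perp=\dim E^\perp$, equivalently $\dim g^{-1}(E)=\dim E$.

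The only mildly delicate point — and the one I would write out most carefully — is the identity $g^{-1}(E)^\perp=g^\ast(E^\perp)$, since the superscript $g^{-1}(E)$ denotes a preimage (a subspace of possibly different behavior when $g$ is non-invertible) rather than the image under an inverse map; I would make explicit that $g^{-1}(E)=\Ker(\pi_E\circ g)$ and then invoke the elementary fact that for any linear map $h$ one has $(\Ker h)^\perp=\Range(h^\ast)$. Everything else is a two-line application of rank–nullity and the hypothesis $E+\Range g=V$. I expect no real obstacle; the proof is short and elementary, and the alternative route — applying part (1) to $g^\ast$ with $E^\perp$ in place of $E$, after checking $E^\perp\cap\Ker(g^\ast)=E^\perp\cap(\Range g)^\perp=(E+\Range g)^\perp=\{0\}$ — is perhaps even cleaner and is the one I would ultimately present.
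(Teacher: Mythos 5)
Your part (1) and your injectivity step in part (2) --- namely $E^\perp\cap(\Ker g^\ast)=E^\perp\cap(\Range g)^\perp=(E+\Range g)^\perp=\{0\}$, i.e.\ applying part (1) to $g^\ast$ and $E^\perp$ --- coincide exactly with the paper's argument. Where you genuinely differ is in identifying the image: you obtain $g^{-1}(E)^\perp=g^\ast(E^\perp)$ as an exact identity from the duality $(\Ker h)^\perp=\Range(h^\ast)$, whereas the paper only notes the easy inclusion $g^\ast(E^\perp)\subset g^{-1}(E)^\perp$ and then forces equality by a dimension count, computing $\dim g^{-1}(E)=\dim E$ directly from the transversality hypothesis (via $\dim g^{-1}(E)=\dim(E\cap\Range g)+\dim\Ker g$). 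Your route is arguably cleaner and shows that the identity $g^{-1}(E)^\perp=g^\ast(E^\perp)$ needs no transversality at all (transversality enters only for injectivity); the paper's count has the side benefit of producing the ``in particular'' clause $\dim g^{-1}(E)=\dim E$ on the way, which in your version is deduced afterwards from $\dim g^{-1}(E)^\perp=\dim E^\perp$.

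One concrete slip needs fixing: you used the wrong projection in the duality chain. With $\pi_E$ the orthogonal projection onto $E$, one has $\Ker(\pi_E\circ g)=\{\,v\colon g\,v\perp E\,\}=g^{-1}(E^\perp)$, not $g^{-1}(E)$, and $\Range(g^\ast\circ\pi_E)=g^\ast(E)$, not $g^\ast(E^\perp)$; so the displayed chain, combined with your remark $\Range\pi_E=E$, is internally inconsistent as written. The statement you want is $g^{-1}(E)=\Ker(\pi_{E^\perp}\circ g)$, whence $g^{-1}(E)^\perp=\Range\bigl((\pi_{E^\perp}\circ g)^\ast\bigr)=\Range(g^\ast\circ\pi_{E^\perp})=g^\ast(E^\perp)$, using $\pi_{E^\perp}^\ast=\pi_{E^\perp}$. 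With that correction (replace $\pi_E$ by $\pi_{E^\perp}$ throughout, or equivalently by ${\rm id}-\pi_E$), your proof is complete and correct.
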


\begin{proof}
The first statement is obvious because
if $E\cap (\Ker g)=\{0\}$ then $\Ker (g\vert_E)=\{0\}$.
If  $E + (\Range g)=V$ then, since $\Ker g^\ast=(\Range g)^\perp$,
we have $E^ \perp\cap (\Ker {g^ \ast})=E^ \perp\cap (\Range {g})^ \perp=(E+\Range g)^ \perp = \{0\}$.
Hence by 1,  the linear map $g^ \ast\vert_{E^ \perp}:{E^ \perp}\to g^ \ast(E^ \perp)$ is an isomorphism.
It is now enough to remark that $ g^ \ast(E^ \perp) = g^{-1}(E)^ \perp$.
In fact, the inclusion  $ g^ \ast(E^ \perp) \subset g^{-1}(E)^ \perp$ is clear.
Since $g^\ast\vert{E^\perp}$ is injective,
$\dim g^ \ast(E^ \perp) = \dim (E^\perp)$. On the other hand, by the transversality condition,
 $g^{-1}(E)$ has dimension
\begin{align*}
\dim g^{-1}(E) &= \dim \left( (g\vert_{(\Ker g)^ \perp})^{-1}(E\cap \Range g)\right) +\dim (\Ker g)\\
&= \dim  (E\cap \Range g) +\dim (\Ker g)\\
&= \dim  (E)+\dim(\Range g)-n +\dim (\Ker g) = \dim(E)\;.
\end{align*}
Hence both  $ g^ \ast(E^ \perp) $ and $ g^{-1}(E)^ \perp$ have  dimension equal to $\dim( E^ \perp)$, and the equality follows.  
\end{proof}

Given $g\in \mathcal{L}(V)$ and $k\geq 0$ such that $k+\dim (\Ker g)\leq n=\dim V$,
the {\em push-forward by $g$} is the map   
$ \varphi_g: \Gr_{k}(g)\subset \Gr_k(V) \to \Gr_{k}(V)$, $E\mapsto g E$,
where 
\begin{definition} 
\label{def push forward domain}
the domain  is defined by
$$ \Gr_{k}(g):=\{\, E \in \Gr_k(V) \,\colon\,
E\cap (\Ker g)=\{0\}\,  \}\;. $$
\end{definition}
Similarly, given $k\geq 0$ such that $k+\dim(\Range g)\geq n=\dim V$, the
{\em pull-back by $g$} is the map   
$  \varphi_{g^{-1}}: \Gr_{k}(g^{-1})\subset \Gr_k(V) \to \Gr_{k}(V)$, $E\mapsto g^{-1} E$,
where 
\begin{definition} 
the domain  is defined by
\label{def pull back domain}
$$ \Gr_{k}(g^{-1}):=\{\, E \in \Gr_k(V) \,\colon\,
E + (\Range g) =V \,  \}\;. $$
\end{definition}

From the proof of proposition~\ref{push-forwards,pull-backs} we obtain a duality between push-forwards and pull-backs which  can be expressed as follows. 
\begin{proposition} \label{push-pull duality}
Given $g\in \mathcal{L}(V)$ and $k\geq 0$ such that $k+\dim (\Range g)\geq n=\dim V$, we have 
$\Gr_k(g^{-1})=\Gr_{n-k}(g^\ast)^\perp$ and for all $E\in \Gr_k(g^{-1})$,
\begin{equation*}
(g^{-1} E)^\perp = g^\ast(E^\perp) \;.
\end{equation*}
\end{proposition}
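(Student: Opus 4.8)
The plan is to reduce both assertions to facts already contained in the proof of Proposition~\ref{push-forwards,pull-backs}, since the present statement is essentially a repackaging of the computation carried out there. First I would pin down the two domains in question. By definition $\Gr_k(g^{-1})=\{\,E\in\Gr_k(V)\,\colon\,E+\Range g=V\,\}$ and $\Gr_{n-k}(g^\ast)=\{\,F\in\Gr_{n-k}(V)\,\colon\,F\cap\Ker g^\ast=\{0\}\,\}$, and the orthogonal complement map is a bijection from $\Gr_{n-k}(V)$ onto $\Gr_k(V)$ sending $F$ to $F^\perp$. Hence it suffices to show, for $E\in\Gr_k(V)$, that $E+\Range g=V$ holds if and only if $E^\perp\cap\Ker g^\ast=\{0\}$. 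This is immediate: from $\Ker g^\ast=(\Range g)^\perp$ and the elementary identity $E^\perp\cap(\Range g)^\perp=(E+\Range g)^\perp$, together with the fact that a subspace is trivial precisely when its orthogonal complement is all of $V$. Note that the hypothesis $k+\dim\Range g\geq n$ is exactly what allows $E+\Range g=V$ for some $E\in\Gr_k(V)$, so the two sides are genuinely nonempty under that assumption.

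For the identity $(g^{-1}E)^\perp=g^\ast(E^\perp)$, fix $E\in\Gr_k(g^{-1})$. The inclusion $g^\ast(E^\perp)\subseteq(g^{-1}E)^\perp$ is a one-line adjointness check: if $w\in E^\perp$ and $x\in g^{-1}E$, then $\langle g^\ast w,x\rangle=\langle w,g\,x\rangle=0$ because $g\,x\in E$. To promote this inclusion to an equality I would compare dimensions. Lemma~\ref{push-forwards,pull-backs}(2), applicable precisely because $E+\Range g=V$, tells us both that $g^\ast\vert_{E^\perp}$ is injective, so $\dim g^\ast(E^\perp)=\dim E^\perp=n-k$, and that $\dim g^{-1}E=\dim E=k$, so $\dim(g^{-1}E)^\perp=n-k$ as well. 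An inclusion between subspaces of equal dimension is an equality, and the proof is complete.

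I do not expect any genuine obstacle here: every nontrivial ingredient (injectivity of $g^\ast\vert_{E^\perp}$, the computation $\dim g^{-1}E=\dim E$, and the identification $g^\ast(E^\perp)=g^{-1}(E)^\perp$) already appears verbatim inside the proof of Lemma~\ref{push-forwards,pull-backs}. The only point demanding mild care is to keep track of which transversality hypothesis is being used where, so that one cites Lemma~\ref{push-forwards,pull-backs}(2) under the condition $E+\Range g=V$ that defines membership in $\Gr_k(g^{-1})$.
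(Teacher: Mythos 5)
Your proof is correct and follows the same route the paper intends: the paper states that this proposition is obtained directly from the proof of Lemma~\ref{push-forwards,pull-backs}, and your argument simply makes that reduction explicit (domain equality via $\Ker g^\ast=(\Range g)^\perp$ and $E^\perp\cap(\Range g)^\perp=(E+\Range g)^\perp$, and the identity $(g^{-1}E)^\perp=g^\ast(E^\perp)$ via the adjointness inclusion plus the dimension count already carried out in that lemma). No gaps.
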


In  section~\ref{le}  we will  derive modulus of Lipschitz continuity, w.r.t. the metric $\delta$, for  the  sum, intersection, push-forward and pull-back operations.

\subsection{Flag manifolds}

Let $V$ be a finite $n$-dimensional Euclidean space.
Any strictly increasing sequence of linear subspaces
$F_1\subset F_2\subset \ldots \subset F_{k}\subset V$ is called  a {\em flag} in the Euclidean space $V$. 
Formally, flags are denoted as lists  $F=(F_1,\ldots, F_{k})$.
The sequence $\tau=(\tau_1,\ldots, \tau_k)$ of dimensions $\tau_j=\dim F_j$ is called  the {\em signature} of the flag $F$. 
The integer $k$ is called the {\em length} of the flag $F$, and the {\em length} of the signature $\tau$.
Let $\FF(V)$ be the set of all flags in $V$, and define $ \FF_\tau(V)$ to be the space of flags with a given signature $\tau$.  
 Two special cases of flag spaces are the  projective
space $\Pp(V)=\FF_\tau(V)$,  when $\tau=(1)$,
and the Grassmannian  $\Gr_k(V)=\FF_\tau(V)$,  when $\tau=(k)$.

The general linear group $\GL(V)$ acts naturally on $\FF(V)$.
Given  $g\in\GL(V)$ the action of $g$ on $\FF_\tau(V)$ is given by the
map  $\varphi_g:\FF_\tau(V) \to \FF_\tau(V)$, $\varphi_g F=(g F_1,\ldots, g F_{k})$. The special orthogonal subgroup $\SO(V)\subset \GL(V)$
acts transitively on $\FF_\tau(V)$. Hence,
all flag manifolds $\FF_\tau(V)$ are compact homogeneous spaces.
Each of them is a compact connected Riemannian manifold where the
group $\SO(V)$ acts by isometries. 
Since $\FF_\tau(V)\subset \Gr_{\tau_1}(V)\times \Gr_{\tau_2}(V)\times \ldots
\times \Gr_{\tau_k}(V)$,   the product distances
\begin{align}
 \label{rho:tau}
 \rho_\tau(\filt,\filt') &=\max_{1\leq j\leq k} \rho(F_j,F_j')  \\
 \label{d:tau}
 d_\tau(\filt,\filt') &=\max_{1\leq j\leq k} d(F_j,F_j')  \\
 \label{delta:tau}
 \delta_\tau(\filt,\filt') &=\max_{1\leq j\leq k} \delta(F_j,F_j')  
\end{align}
are equivalent to the Riemannian distance on $\FF_\tau(V)$.
With these metrics, the flag manifold  $\FF_\tau(V)$ has diameter $\frac{\pi}{2}$, $\sqrt{2}$ and $1$, respectively.
The group $\SO(V)$ acts isometrically on $\FF_\tau(V)$ with respect to these distances.

Given a signature $\tau=(\tau_1,\ldots, \tau_k)$,
if $n=\dim V$,  we define 
$$\tau^\perp :=(n-\tau_k, \ldots, n-\tau_1)\;. $$
When $\tau=(\tau_1,\ldots, \tau_k)$ we will write
$\tau^\perp=(\tau_1^\perp,\ldots, \tau_k^\perp)$,
where $\tau_j^\perp= n-\tau_{k+1-i}$.
\begin{definition}
\label{def flag orth complement}
Given a  flag $F=(F_1,\ldots, F_k)\in\FF_\tau(V)$, its 
{\em orthogonal complement } is  the ${\tau^\perp}$-flag\, 
$F^\perp : =(F_k^\perp,\ldots, F_1^\perp)$.
\end{definition}

The map $\cdot^\perp:\FF(V)\to \FF(V)$ is an isometric  involution on $\FF(V)$,
 mapping $\FF_{\tau}(V)$ onto $\FF_{{\tau^\perp}}(V)$.
The involution character, $(F^\perp)^\perp = F$
for all $F\in\FF(V)$, is clear.
As explained in section~\ref{exterior algebra}, the Hodge star operator  $\ast:\wedge_k V \to \wedge_{n-k} V$ is an isometry between these Euclidean spaces.
By choice of  metrics on the Grassmannians, see~\eqref{Grassmann:distance}, the Pl\"ucker embeddings are  isometries. Finally, the  Pl\"ucker embedding conjugates the 
orthogonal complement map $\cdot^\perp:\Gr_k(V)\to\Gr_{n-k}(V)$ with the Hodge star operator. Hence for each $0\leq k\leq n$, the map $\cdot^\perp:\Gr_k(V)\to\Gr_{n-k}(V)$ is
an isometry. The analogous conclusion for flags follows from the defintion of  distance $d_\tau$.

%
%
%
%

Given $g\in \mathcal{L}(V)$ and a signature $\tau$ such that $\tau_i+\dim (\Ker g)\leq n$ for all $i$,
the {\em push-forward by $g$} on flags  is the map   
$ \varphi_g: \FF_{\tau}(g)\subset \FF_\tau(V) \to \FF_{\tau}(V)$, $\varphi_g F := (g \,F_1,\ldots, g\,F_k)$,
where 
\begin{definition} 
\label{def push forward flag domain}
the domain  is defined by
\begin{align*}
\FF_{\tau}(g) &:=\{\, F \in \FF_\tau(V) \,\colon\,
F_k\cap (\Ker g)=\{0\}\,  \}\;. 
\end{align*}
\end{definition}

Similarly, given a signature $\tau$  such that $\tau_i+\dim(\Range g)\geq n$ for all $i$, the
{\em pull-back  by $g$} on flags  is the map   
$  \varphi_{g^{-1}}: \FF_{\tau}(g^{-1})\subset \FF_\tau(V) \to \FF_{\tau}(V)$, $\varphi_{g^{-1}} F := (g^{-1} F_1,\ldots, g^{-1} F_k)$, where 
\begin{definition} 
\label{def pull-back flag domain}
the domain  is defined by
\begin{align*}
 \FF_{\tau}(g^{-1}) &:=\{\, F \in \FF_\tau(V) \,\colon\,
F_1 + (\Range g) =V \,  \}\;. 
\end{align*}
\end{definition} 
 
The duality between duality between push-forwards and pull-backs is expressed as follows.

\begin{proposition} \label{push-pull duality: flags}
Given $g\in \mathcal{L}(V)$,\, 
$\FF_\tau(g^{-1})=\FF_{\tau^\perp}(g^\ast)^\perp$ and for all $F\in \FF_\tau(g^{-1})$,
\begin{equation*}
(\varphi_{g^{-1}} F)^\perp = \varphi_{g^\ast}(F^\perp) \;.
\end{equation*}
\end{proposition}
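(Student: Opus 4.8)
The plan is to reduce the statement for flags to the already established version for Grassmannians (Proposition~\ref{push-pull duality}) by working component by component. Recall that for a $\tau$-flag $F=(F_1,\ldots,F_k)$ the pull-back is defined componentwise by $\varphi_{g^{-1}}F=(g^{-1}F_1,\ldots,g^{-1}F_k)$, and the orthogonal complement is $F^\perp=(F_k^\perp,\ldots,F_1^\perp)$, a $\tau^\perp$-flag with $\tau^\perp=(n-\tau_k,\ldots,n-\tau_1)$. Likewise $\varphi_{g^\ast}(F^\perp)=(g^\ast(F_k^\perp),\ldots,g^\ast(F_1^\perp))$.

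First I would verify the equality of domains $\FF_\tau(g^{-1})=\FF_{\tau^\perp}(g^\ast)^\perp$. By Definition~\ref{def pull-back flag domain}, $F\in\FF_\tau(g^{-1})$ iff $F_1+\Range g=V$; on the other hand $F\in\FF_{\tau^\perp}(g^\ast)^\perp$ means $F^\perp\in\FF_{\tau^\perp}(g^\ast)$, which by Definition~\ref{def push forward flag domain} means the \emph{largest} subspace of the flag $F^\perp$, namely $F_1^\perp$ (since $F^\perp=(F_k^\perp,\ldots,F_1^\perp)$ and $F_1\subset\cdots\subset F_k$ forces $F_k^\perp\subset\cdots\subset F_1^\perp$), satisfies $F_1^\perp\cap\Ker g^\ast=\{0\}$. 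Since $\Ker g^\ast=(\Range g)^\perp$, the condition $F_1^\perp\cap(\Range g)^\perp=\{0\}$ is equivalent, by taking orthogonal complements, to $F_1+\Range g=V$. This matches the condition defining $\FF_\tau(g^{-1})$, so the two domains coincide. One must also check the signature bookkeeping is consistent, i.e. that for $F\in\FF_\tau(g^{-1})$ each $g^{-1}F_j$ has dimension $\tau_j$ (so $\varphi_{g^{-1}}F$ really is a $\tau$-flag); this is exactly Lemma~\ref{push-forwards,pull-backs}(2) applied to each $F_j$, valid because $F_j\supset F_1$ already satisfies $F_j+\Range g=V$.

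Next, for $F$ in this common domain, I would apply the Grassmannian duality Proposition~\ref{push-pull duality} to each component subspace $F_j$: it gives $(g^{-1}F_j)^\perp=g^\ast(F_j^\perp)$ for every $j=1,\ldots,k$. Reading off the flag $\varphi_{g^{-1}}F=(g^{-1}F_1,\ldots,g^{-1}F_k)$, its orthogonal complement is by Definition~\ref{def flag orth complement} the list $\big((g^{-1}F_k)^\perp,\ldots,(g^{-1}F_1)^\perp\big)=\big(g^\ast(F_k^\perp),\ldots,g^\ast(F_1^\perp)\big)$, which is precisely $\varphi_{g^\ast}(F^\perp)$ since $F^\perp=(F_k^\perp,\ldots,F_1^\perp)$ and push-forward acts componentwise. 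This establishes $(\varphi_{g^{-1}}F)^\perp=\varphi_{g^\ast}(F^\perp)$.

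There is no serious obstacle here; the only thing requiring care is the order-reversal induced by $\cdot^\perp$ and making sure the transversality hypothesis needed for each component (condition at $F_1$, the smallest space, vs.\ condition at $F_k^\perp$, the largest in the complemented flag) is tracked correctly. That bookkeeping is handled by the observation that $F_1$ is the smallest and $F_1^\perp$ the largest, together with Lemma~\ref{push-forwards,pull-backs} and the identity $\Ker g^\ast=(\Range g)^\perp$ used already in the proof of Proposition~\ref{push-pull duality}.
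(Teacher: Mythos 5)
Your proposal is correct and follows essentially the route the paper intends: the paper states this proposition without proof, but its proof of the parallel result, Proposition~\ref{pfwd-pbck:conjugation}, uses exactly your two steps — the domain identification via the equivalences $F_1+\Range g=V \Leftrightarrow F_1^\perp\cap\Ker g^\ast=\{0\}$ (using $\Ker g^\ast=(\Range g)^\perp$), and reduction of the commutation identity to the Grassmannian level through Proposition~\ref{push-pull duality}. Your additional remarks on the order reversal under $\cdot^\perp$ and the dimension bookkeeping via Lemma~\ref{push-forwards,pull-backs}(2) are accurate and only make explicit what the paper leaves implicit.
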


\bigskip

\section{Singular Value Geometry}
\label{svg}

Singular value geometry refers here to the geometry of the {\em singular value decomposition} (SVD)
of a linear endomorphism $g:V\to V$ on some Euclidean space $V$. It also refers to some geometric properties of the action of $g$ on Grassmannians and flag manifolds related to the singular value decomposition of $g$.

\subsection{Singular value decomposition}
Let $V$ be an Euclidean space of dimension $n$.
\begin{definition}
\label{singular values}
Given $g\in\mathcal{L}(V)$, the {\em singular values} of $g$ are the square roots of the eigenvalues of the 
quadratic form $Q_g:V\to\R$, $Q_g(v)=\norm{g\,v}^2=\langle g v, gv\rangle$, i.e., the eigenvalues of the
positive semi-definite self-adjoint operator $\sqrt{g^\ast g}$.
\end{definition}
Given  $g\in \mathcal{L}(V)$, let 
$$ s_1(g) \geq s_2(g)\geq \ldots \geq s_n(g)\geq 0\;,$$
denote the sorted singular values of $g$. The adjoint   $g^ \ast$ has the same singular values as $g$
because the operators $\sqrt{g^\ast g}$ and $\sqrt{g\, g^\ast}$ are conjugate.

The largest singular value, $s_1(g)$, is the square root of the maximum value of $Q_g$ over the unit sphere,
i.e., $s_1(g)=\max_{\norm{v}=1} \norm{g\,v}=\norm{g}$ is the operator norm of $g$.
Likewise, the least singular value, $s_n(g)$, is the square root of the minimum value of $Q_g$ over the unit sphere,
i.e., $s_n(g)=\min_{\norm{v}=1} \norm{g\,v}$. This number also denoted by  $\minexp(g)$ is called the {\em least expansion} of $g$.
If $g$ is invertible  $\minexp(g)=\norm{g^{-1}}^{-1}$,
while otherwise  $\minexp(g)=0$. 

\begin{definition}
\label{singular vectors}
The eigenvectors of the quadratic form $Q_g$, i.e., of the positive semi-definite self-adjoint operator $\sqrt{g^\ast g}$,
are called  the {\em singular vectors} of $g$.
\end{definition}
By the spectral theory of self-adjoint operators, for any $g\in\mathcal{L}(V)$
there exists an orthonormal basis consisting of singular vector of $g$.

\begin{proposition}
Given  $g\in\mathcal{L}(V)$ and  $v\in V$ be a unit singular vector of $g$ such that  $g^\ast g\,v=\lambda^2\,v$, there exists a unit vector
$w\in V$  such that
\begin{enumerate}
\item[(a)] $g\,v=\lambda\, w$,
\item[(b)] $g\,g^\ast\,w=\lambda^2\,w$, i.e., $w$ is a singular vector of $g^\ast$.
\end{enumerate}
\end{proposition}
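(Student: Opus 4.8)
The plan is to produce the companion singular vector $w$ of $g^\ast$ directly from $v$ by setting $w := g\,v/\lambda$, and then verify the two claimed properties. This requires first disposing of a degenerate case: if $\lambda = 0$ then $g^\ast g\,v = 0$, hence $\langle g v, g v\rangle = \langle g^\ast g v, v\rangle = 0$, so $g\,v = 0$ and there is nothing meaningful to normalize; in that case any unit vector $w \in \Ker g^\ast = (\Range g)^\perp$ (which is nonempty since $v \in \Ker g$ forces $g$ non-invertible, so $\Range g \neq V$) works, as $g\,g^\ast w = 0 = \lambda^2 w$ and the identity $g\,v = 0 = \lambda w$ holds trivially. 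So the substance is the case $\lambda > 0$.

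For $\lambda > 0$, set $w := \lambda^{-1}\,g\,v$. First I would check $w$ is a unit vector: $\norm{w}^2 = \lambda^{-2}\,\langle g v, g v\rangle = \lambda^{-2}\,\langle g^\ast g\,v, v\rangle = \lambda^{-2}\,\langle \lambda^2 v, v\rangle = \norm{v}^2 = 1$. Property (a), $g\,v = \lambda\,w$, is then immediate from the definition of $w$. For property (b), compute
\begin{equation*}
g\,g^\ast\,w = \lambda^{-1}\,g\,g^\ast\,g\,v = \lambda^{-1}\,g\,(g^\ast g\,v) = \lambda^{-1}\,g\,(\lambda^2\,v) = \lambda^2\,(\lambda^{-1}\,g\,v) = \lambda^2\,w \;,
\end{equation*}
where the crucial middle step uses that $v$ is a singular vector of $g$ with $g^\ast g\,v = \lambda^2 v$. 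This shows $w$ is an eigenvector of the self-adjoint operator $g\,g^\ast$ with eigenvalue $\lambda^2$, hence $\lambda$ is a singular value of $g^\ast$ and $w$ the corresponding singular vector, as recorded earlier in the observation that $\sqrt{g^\ast g}$ and $\sqrt{g\,g^\ast}$ are conjugate and share spectra.

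There is no real obstacle here; the only point requiring a moment's care is the bookkeeping in the $\lambda = 0$ case, namely confirming that $(\Range g)^\perp \neq \{0\}$ so that a unit vector $w$ actually exists — this follows because the hypothesis implicitly places us in a situation where $v \in \Ker g$ (equivalently $g^\ast g\,v = 0$ with $v \neq 0$), forcing $g$ to be singular and hence $\dim \Range g < n$. If one prefers to sidestep the case split, one can instead argue abstractly: the operators $\sqrt{g^\ast g}$ and $\sqrt{g\,g^\ast}$ have the same nonzero eigenvalues with the partial isometry from the polar decomposition of $g$ intertwining the corresponding eigenspaces, and extend by an arbitrary choice on the kernels; but the explicit construction $w = \lambda^{-1} g v$ is cleaner and makes property (a) manifest.
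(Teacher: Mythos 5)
Your proof is correct and follows essentially the same route as the paper: set $w=\lambda^{-1}g\,v$ when $\lambda>0$ (using $\lambda=\norm{g\,v}$, which your norm computation establishes) and take any unit vector in $\Ker g^\ast$ when $\lambda=0$, justified in the paper by $\dim(\Ker g)=\dim(\Ker g^\ast)$ and in your write-up by the equivalent observation $\Ker g^\ast=(\Range g)^\perp\neq\{0\}$. No gaps; the extra care you take in the $\lambda=0$ case matches the paper's intent.
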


\begin{proof}
Let $v\in V$ be a unit singular vector of $g$.
Then $g^\ast g\,v=\lambda^2\,v$ and
$\lambda^2=\langle\lambda^2\,v,v\rangle=\langle g^\ast g\,v,v\rangle=\norm{g\,v}^ 2$,
which implies that $\lambda=\norm{g\,v}$.
Since  $(g\,g^\ast)\,(g\,v)=g\,(g^\ast g)\,v=\lambda^2\,g\,v$, if $\lambda\neq 0$ then 
setting $w= g\,v/\norm{g\,v}=\lambda^{-1}\,g\,v$, we have   $(g\,g^\ast)\,w= \lambda^2\,w$, which proves that
$w$ is a singular vector of $g^\ast$. By definition $g\,v=\lambda\,w$.
When $\lambda=0$,
take $w$ to be any unit vector in $\Ker g^\ast$. Notice that $\dim(\Ker g)=\dim(\Ker g^ \ast)$.
In this case $v$ and $w$ are singular vectors of $g$ and $g^\ast$, respectively, such that $g\,v=0= \lambda\,w$. 
\end{proof}

 By the previous proposition, given $g\in\mathcal{L}(V)$ there exist two orthonormal singular vector  basis  of $V$,
 $\{v_1(g),\ldots, v_n(g)\}$  and   $\{v_1(g^\ast),\ldots, v_n(g^\ast)\}$ 
for $g$ and  $g^\ast$, respectively, such that
\begin{equation*} 
 g\, v_j(g)= s_j(g)\, v_j(g^\ast) \quad \text{ for all }\; 1\leq j \leq n\;.
\end{equation*} 
Denote by $D_g$ the diagonal matrix with diagonal entries $s_j(g)$, $1\leq j \leq n$, seen as an operator $D_g\in\mathcal{L}(\R^ n)$. Define the linear maps $U_g, U_{g^\ast}:\R^n\to V$ by $U_g(e_j)=v_ j(g)$ and $U_{g^\ast}(e_j)=v_ j(g^\ast)$, for all  $1\leq j \leq n$, where the $e_j$ are the vectors of the canonical basis in $\R^n$. Then the following decomposition holds 
$$ g= U_{g^\ast}\,D_g\, (U_g)^\ast\;,$$
known as the {\em singular value decomposition} (SVD) of $g$.

We say that  $g$ has a {\em simple singular spectrum}
if its $n$ singular values are all distinct. When $g$ has simple singular spectrum, the singular vectors
$v_j(g)$ and $v_j(g^\ast)$  above are uniquely determined up to a sign, and in particular they determine well-defined projective points 
$\mostexp_j(g), \mostexp_j(g^\ast)\in\Pp(V)$.

\begin{definition}\label{def singular basis}
Given $g\in\mathcal{L}(V)$, we call {\em singular basis} of $g$ to any orthonormal basis
$\{v_1,\ldots, v_m\}$ of $V$ ordered such that
$\norm{g\,v_i } = s_i(g)$, 
for all  $i=1,\ldots, m$.
\end{definition}

\subsection{Gaps and most expanding directions}

Consider a linear map $g\in\mathcal{L}(V)$ and a number $0\leq k \leq \dim V$.

\begin{definition}
\label{gap ratio}
The {\em $k$-th gap ratio } of $g$ is defined to be
\begin{equation*}
\rgap_k(g):=\frac{s_k(g)}{s_{k+1}(g)}\geq 1 \;. 
\end{equation*}
\end{definition}
We will write $\rgap(g)$ instead of 
$\rgap_1(g)$.
\begin{definition}
\label{def has a gap ratio}
We say that $g$ {\em has a first singular gap} when
$\rgap(g)>1$. More generally, we say that $g$ {\em has a $k$  singular gap} when $\rgap_k(g)>1$. 
\end{definition}
In some occasions  it is convenient to work with the inverse quantity, denoted by 
\begin{equation}
\label{inverse gap ratio}
\sgap_k(g):=\rgap_k(g)^{-1}\leq 1 \;. 
\end{equation}

\begin{proposition}\label{sing vals and ext pws}
For any $1\leq k \leq \dim V$, 
$\norm{\wedge_k g} = s_1(g)\,\ldots\, s_k(g)$.
\end{proposition}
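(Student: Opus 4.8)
The plan is to use the singular value decomposition of $g$ to reduce the claim to a computation with a diagonal matrix, and then to observe that the ordering of singular values makes the top Plücker coordinate the dominant one. First I would write the SVD $g = U_{g^\ast}\, D_g\, (U_g)^\ast$ established above, where $U_g, U_{g^\ast}$ are orthogonal and $D_g$ is the diagonal matrix with entries $s_1(g)\geq\ldots\geq s_n(g)$. Applying the functoriality of exterior powers, $\wedge_k g = (\wedge_k U_{g^\ast})\,(\wedge_k D_g)\,(\wedge_k U_g)^\ast$. Since $U_g$ and $U_{g^\ast}$ are orthogonal automorphisms of $V$, their exterior powers $\wedge_k U_g$ and $\wedge_k U_{g^\ast}$ are orthogonal automorphisms of $\wedge_k V$ (as noted in subsection~\ref{exterior algebra}), hence have operator norm $1$ and, more importantly, preserve the norm. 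Therefore $\norm{\wedge_k g} = \norm{\wedge_k D_g}$, and the problem is reduced to computing the operator norm of the $k$-th exterior power of a diagonal matrix.

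Next I would compute $\norm{\wedge_k D_g}$ directly. With respect to the orthonormal basis $\{e_I : I\in\Lambda^n_k\}$ of $\wedge_k V$, the matrix $\wedge_k D_g$ has entries $\det((D_g)_{I\times J})$; because $D_g$ is diagonal, the sub-matrix $(D_g)_{I\times J}$ has zero determinant unless $I=J$, and $(D_g)_{I\times I}$ is diagonal with determinant $\prod_{i\in I} s_i(g)$. So $\wedge_k D_g$ is itself diagonal, with eigenvalues $\prod_{i\in I} s_i(g)$ for $I\in\Lambda^n_k$. The operator norm of a diagonal matrix is the largest absolute value of its diagonal entries, and since all $s_i(g)\geq 0$ are sorted in decreasing order, the product $\prod_{i\in I} s_i(g)$ is maximized by the choice $I=\{1,2,\ldots,k\}$, giving $\norm{\wedge_k D_g} = s_1(g)\,s_2(g)\cdots s_k(g)$. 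Combining this with the previous paragraph yields the claim.

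There is no serious obstacle here; the argument is essentially a bookkeeping exercise once the SVD and the functoriality of $\wedge_k$ are in hand. The one point deserving a sentence of care is the reduction via orthogonal conjugation: one should note that $\wedge_k$ of an orthogonal map is orthogonal (so it is an isometry of $\wedge_k V$, not merely norm-nonincreasing), which is exactly what licenses the identity $\norm{\wedge_k g} = \norm{\wedge_k D_g}$ rather than just an inequality. The other mild point is observing that the sorted order of the $s_i(g)$ forces $I=\{1,\ldots,k\}$ to be the maximizing $k$-subset; this is immediate since replacing any index $i\in I$ by a smaller index not in $I$ can only increase (or leave unchanged) the product.
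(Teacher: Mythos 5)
Your proof is correct and follows essentially the same route as the paper's: both arguments use the singular value decomposition to identify the singular values of $\wedge_k g$ as the products $s_{i_1}(g)\cdots s_{i_k}(g)$ over $k$-subsets $I=\{i_1,\ldots,i_k\}$, and then observe that the sorted order forces the maximum at $I=\{1,\ldots,k\}$. The only cosmetic difference is that you phrase the reduction via the factored SVD $g=U_{g^\ast}D_g(U_g)^\ast$ and orthogonal invariance of the norm, while the paper works directly with the singular vector bases $\{v_j\}$, $\{v_j^\ast\}$ and shows that the $k$-vectors $v_I$, $v_I^\ast$ are singular bases for $\wedge_k g$.
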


\begin{proof}
Let $n=\dim V$. Consider orthonormal singular vector basis
$\{v_1,\ldots, v_n\}$  and   $\{v_1^\ast,\ldots, v_n^\ast\}$ 
for $g$ and  $g^\ast$, respectively, such that
$$  g\, v_j = s_j \, v_j^\ast,\; \text{ where }\; s_j=s_j(g) \quad \text{ for all }\; 1\leq j \leq n\;.$$
Given $I=\{i_1,\ldots, i_k\}\in \Lambda^n_k$, with
$i_1<\ldots < i_k$, we have
$$(\wedge_k g)(v_{i_1}\wedge \ldots \wedge v_{i_k})
= (s_{i_1} \ldots s_{i_k} )\, (v_{i_1}^\ast\wedge \ldots \wedge v_{i_k}^\ast)\;. $$
Therefore, the $k$-vectors  $v_I=v_{i_1}\wedge \ldots \wedge v_{i_k}$ and  $v_I^\ast=v_{i_1}^\ast\wedge \ldots \wedge v_{i_k}^\ast$ form two orthonormal singular vector basis for $\wedge_k g$ and $\wedge_k g^\ast$, respectively,
while the products $s_I= s_{i_1} \ldots s_{i_k}$ are the singular values of both $\wedge_k g$ and $\wedge_k g^\ast$.
Since the largest singular value is attained with $I=\{1,\ldots, k\}$, $\norm{\wedge_k g} = s_1 \,\ldots\, s_k$.

\end{proof}

\begin{corollary}
For any $1\leq k <\dim V$, \; 
$$\rgap_k(g) = \frac{\norm{\wedge_k g}^2}{\norm{\wedge_{k-1} g}\, \norm{\wedge_{k+1} g}}\;.$$
\end{corollary}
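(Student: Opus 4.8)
The plan is to reduce everything to Proposition~\ref{sing vals and ext pws}, which already expresses operator norms of exterior powers in terms of products of singular values. First I would write out the right-hand side using that proposition: for $1 \leq k < \dim V$ we have $\norm{\wedge_k g} = s_1(g) \cdots s_k(g)$, $\norm{\wedge_{k-1} g} = s_1(g) \cdots s_{k-1}(g)$, and $\norm{\wedge_{k+1} g} = s_1(g) \cdots s_{k+1}(g)$, with the convention $\norm{\wedge_0 g} = 1$ handling the case $k=1$.

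Then the quotient telescopes: the numerator is $(s_1(g) \cdots s_k(g))^2$ and the denominator is $(s_1(g) \cdots s_{k-1}(g)) \cdot (s_1(g) \cdots s_{k+1}(g))$. Cancelling the common factor $(s_1(g) \cdots s_{k-1}(g))^2$ from top and bottom leaves $\frac{s_k(g)^2}{s_k(g)\, s_{k+1}(g)} = \frac{s_k(g)}{s_{k+1}(g)}$, which is exactly $\rgap_k(g)$ by Definition~\ref{gap ratio}. One small caveat to flag: this manipulation presupposes the relevant singular values are nonzero, i.e.\ that $s_1(g), \ldots, s_{k-1}(g)$ (and ideally $s_k(g)$) do not vanish; if $s_k(g) = 0$ then $\rgap_k(g)$ is defined only as a formal ratio $s_k(g)/s_{k+1}(g) = 0$ (since $s_{k+1}(g) \le s_k(g) = 0$ forces both to be zero, making the ratio $0/0$), and correspondingly both sides should be interpreted with the same convention — but in the cases of interest for the Avalanche Principle one works under the hypothesis $\rgap_k(g) > 1$, so $s_k(g) > 0$ and no division-by-zero issue arises.

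There is essentially no obstacle here; the only "hard part" is bookkeeping the edge case $k=1$ (where $\norm{\wedge_0 g}$ must be read as $1$, not as something coming from a $0$-th exterior power of a matrix) and stating the nonvanishing convention cleanly. I would present the proof as a two-line computation immediately following an invocation of Proposition~\ref{sing vals and ext pws}, perhaps adding the parenthetical remark that the identity is to be read with the natural conventions when singular values vanish.
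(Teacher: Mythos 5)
Your proposal is correct and matches the paper's intent: the corollary is stated without proof as an immediate consequence of Proposition~\ref{sing vals and ext pws}, and your telescoping computation $\frac{(s_1\cdots s_k)^2}{(s_1\cdots s_{k-1})(s_1\cdots s_{k+1})}=\frac{s_k}{s_{k+1}}=\rgap_k(g)$ is exactly the intended argument. Your remarks on the $k=1$ convention $\norm{\wedge_0 g}=1$ and on vanishing singular values are sensible bookkeeping, not a departure from the paper's approach.
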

 
Given $g\in\mathcal{L}(V)$, if $\rgap(g)>1$ then
 the singular value  $s_1(g)=\norm{g}$ is simple.
\begin{definition}
\label{def medir}
We denote by $\mostexp(g)\in\Pp(V)$ the associated singular direction, and refer to it as the  {\em $g$-most expanding direction}.
\end{definition}

By definition we have 
\begin{equation}\label{varphi g:mostexp proj}
\varphi_g \mostexp(g)= \mostexp(g^\ast)\;.
\end{equation}   
More generally, given $1\leq k\leq  \dim V$, 
if $\rgap_k(g)>1$
\begin{definition}
\label{def medir k}
 we define the
 {\em $g$-most expanding $k$-subspace}  to be
 $$  \mostexp_k(g):=\Psi^{-1}\left( \mostexp(\wedge_k g)  \right)\; ,$$
 where $\Psi$ stands for the Pl\"ucker embedding 
 defined in subsection~\ref{grassmannians}.
\end{definition}
The subspace $\mostexp_k(g)$ is the direct sum of all singular directions associated with the singular values 
$s_1(g),\ldots, s_k(g)$.
We have
\begin{equation}\label{varphi g:mostexp grassmann}
 \varphi_g \mostexp_k(g)= \mostexp_k(g^\ast)\;. 
\end{equation}

Analogously, let $n=\dim V$ and assume $\rgap_{n-k}(g)>1$.
\begin{definition}
\label{def ledir}

We define the {\em $g$-least expanding $k$-subspace} as
 $$ \leastexp_k(g) := \mostexp_{n-k}(g)^\perp \;.$$  
\end{definition}
The subspace $\leastexp_k(g)$ is the direct sum of all singular directions associated with the singular values 
$s_{n-k+1}(g),\ldots, s_n(g)$.
 Again we have
\begin{equation}\label{varphi g:leastexp grassmann}
 \varphi_g \leastexp_k(g)= \leastexp_k(g^\ast)\;. 
\end{equation}

\bigskip

Let $\tau=(\tau_1,\ldots,\tau_k)$ be a signature with $1\leq \tau_1<\ldots <\tau_k\leq \dim V$.
\begin{definition}
\label{def tau gap ratio}
We define the {\em $\tau$-gap ratio} of $g$ to be
$$ \rgap_\tau(g):= \min_{1\leq j\leq k} \rgap_{\tau_j}(g)\;. $$
When $\rgap_\tau(g)>1$ we say that $g$ {\em has a $\tau$-gap pattern}.
\end{definition}
Note that $\rgap_\tau(g)>1$ means that $g$ has a 
$\tau_j$ singular gap for $1\leq j\leq k$.
Recall that $\FF_\tau(V)$ denotes the space of all $\tau$-flags, i.e.,
flags $F=(F_1,\ldots, F_k)$ such that $\dim (F_j)=\tau_j$ for $j=1,\ldots, k$.

\begin{definition}
\label{def most expanding flag}
If  $\rgap_\tau(g)>1$   the {\em most expanding $\tau$-flag}  is   defined to be
$$  \mostexp_\tau(g):=( \mostexp_{\tau_1}(g),\ldots,  \mostexp_{\tau_k}(g))\in  \FF_\tau(V) \;. $$
\end{definition}

Given $g\in\mathcal{L}(V)$ the domain of its push-forward action on $\FF_\tau(V)$ is  
\begin{definition}
\label{def flag push-forward domain}\quad  
$\displaystyle  \FF_\tau(g):=\{\, F\in\FF_\tau(V)\,\colon\,
F_k\cap \Ker_g=0\,\}\;.$ 
\end{definition}
The {\em push-forward} of a flag $F\in \FF_\tau(g)$ by  $g$ is defined to be
$$ \varphi_g F = g\, F :=
(g\,F_1,\ldots, g\,F_k)\;. $$

\begin{proposition} Given $g\in\mathcal{L}(V)$ such that $\rgap_\tau(g)>1$, the push-forward induces a map
$\varphi_g: \FF_\tau(g)\to\FF_\tau(g^\ast)$ such that
 $ \varphi_g \mostexp_\tau(g)= \mostexp_\tau(g^\ast)$. 
\end{proposition}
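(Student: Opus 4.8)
The plan is to reduce the flag statement to the already-established Grassmannian statements component by component. Recall from Definition~\ref{def most expanding flag} that $\mostexp_\tau(g)=(\mostexp_{\tau_1}(g),\ldots,\mostexp_{\tau_k}(g))$, and from~\eqref{varphi g:mostexp grassmann} that for each $j$ we have $\varphi_g\mostexp_{\tau_j}(g)=\mostexp_{\tau_j}(g^\ast)$, valid because $\rgap_\tau(g)>1$ forces $\rgap_{\tau_j}(g)>1$ for every $j$ (Definition~\ref{def tau gap ratio}). So the identity $\varphi_g\mostexp_\tau(g)=\mostexp_\tau(g^\ast)$ is nothing more than applying the projective/Grassmannian identity in each slot, \emph{once we know the flag is actually in the domain $\FF_\tau(g)$ and that its image is a genuine $\tau$-flag lying in $\FF_\tau(g^\ast)$}.

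The first substantive point is therefore to check $\mostexp_\tau(g)\in\FF_\tau(g)$, i.e.\ that $\mostexp_{\tau_k}(g)\cap\Ker g=\{0\}$ (Definition~\ref{def flag push-forward domain}). Here I would use the description following Definition~\ref{def medir k}: $\mostexp_{\tau_k}(g)$ is the span of the singular directions attached to $s_1(g),\ldots,s_{\tau_k}(g)$. Two cases arise. If $\tau_k=n$ then $g$ having a $\tau$-gap pattern with $\tau_k=n$ is vacuous for the top index, but more to the point, since the remaining indices still give gaps and $\Ker g$ is spanned by singular directions of the smallest singular values, one argues directly; alternatively one simply notes $s_{\tau_k}(g)>s_{\tau_k+1}(g)\ge 0$ is impossible when $\tau_k=n$, so in fact $\tau_k<n$ here, or the statement is interpreted so that only $\tau_j<n$ occur. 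If $\tau_k<n$, then $\rgap_{\tau_k}(g)>1$ gives $s_{\tau_k}(g)>s_{\tau_k+1}(g)\ge 0$, hence $s_{\tau_k}(g)>0$, so $\mostexp_{\tau_k}(g)$ is spanned by singular directions with strictly positive singular values; since $\Ker g$ is the span of the singular directions with singular value zero, these are orthogonal, and in particular $\mostexp_{\tau_k}(g)\cap\Ker g=\{0\}$. This puts $\mostexp_\tau(g)$ in the domain of $\varphi_g$.

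Next I would verify that the image $\varphi_g\mostexp_\tau(g)$ is again a $\tau$-flag, which amounts to two things: each component $\varphi_g\mostexp_{\tau_j}(g)$ has the right dimension, and the nesting is preserved. The dimension count is Lemma~\ref{push-forwards,pull-backs}(1), since $\mostexp_{\tau_j}(g)\subset\mostexp_{\tau_k}(g)$ also meets $\Ker g$ trivially, so $\dim g(\mostexp_{\tau_j}(g))=\tau_j$. The nesting $\mostexp_{\tau_1}(g)\subset\cdots\subset\mostexp_{\tau_k}(g)$ is clear from the spanning description (each is spanned by the top $\tau_j$ singular directions, using the same orthonormal singular basis of $g$), and applying the linear injection $g|_{\mostexp_{\tau_k}(g)}$ preserves inclusions, so $g\mostexp_{\tau_1}(g)\subset\cdots\subset g\mostexp_{\tau_k}(g)$. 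Thus $\varphi_g\mostexp_\tau(g)\in\FF_\tau(V)$. Finally, by~\eqref{varphi g:mostexp grassmann} applied slotwise, $g\mostexp_{\tau_j}(g)=\mostexp_{\tau_j}(g^\ast)$ for each $j$, so $\varphi_g\mostexp_\tau(g)=(\mostexp_{\tau_1}(g^\ast),\ldots,\mostexp_{\tau_k}(g^\ast))=\mostexp_\tau(g^\ast)$, and in particular this lies in $\FF_\tau(g^\ast)$ (apply the domain check of the first paragraph to $g^\ast$, using that $g^\ast$ has the same singular values as $g$ and hence the same $\tau$-gap pattern). This also gives the claimed codomain statement $\varphi_g:\FF_\tau(g)\to\FF_\tau(g^\ast)$ on the relevant point; for the full domain $\FF_\tau(g)$, one simply notes $\varphi_g F\in\FF_\tau(g^\ast)$ whenever $F\in\FF_\tau(g)$ by combining Lemma~\ref{push-forwards,pull-backs} with the observation that $g(F_1)+\Range g^\ast$ — or rather the appropriate transversality — can fail, so more carefully one restricts the assertion to what is needed, namely that $\mostexp_\tau(g)$ and its image behave as stated.

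The only genuine obstacle is bookkeeping around degenerate indices — specifically making sure $\tau_k<n$ (or interpreting the hypothesis so) so that the top most-expanding subspace avoids $\Ker g$, and confirming that ``$g^\ast$ has a $\tau$-gap pattern'' follows from ``$g$ has a $\tau$-gap pattern'' via the equality of singular values noted right after Definition~\ref{singular values}. Everything else is a slotwise application of the projective results~\eqref{varphi g:mostexp proj}–\eqref{varphi g:mostexp grassmann} together with the dimension-preservation Lemma~\ref{push-forwards,pull-backs}(1) and the trivial fact that an injective linear map preserves inclusions of subspaces.
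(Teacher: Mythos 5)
Your handling of the second claim, $\varphi_g\mostexp_\tau(g)=\mostexp_\tau(g^\ast)$, is essentially the paper's argument: apply the Grassmannian identity~\eqref{varphi g:mostexp grassmann} in each slot, with the (correct, and slightly more explicit than the paper's) check that $\mostexp_{\tau_k}(g)\cap\Ker g=\{0\}$ because $\rgap_{\tau_k}(g)>1$ forces $s_{\tau_k}(g)>0$ while $\Ker g$ is spanned by the singular directions with singular value zero. That part is fine.

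The genuine gap is in the first claim of the proposition, namely that the push-forward is a well-defined map $\varphi_g:\FF_\tau(g)\to\FF_\tau(g^\ast)$ on the \emph{whole} domain, i.e.\ that $\varphi_g F\in\FF_\tau(g^\ast)$ for \emph{every} $F\in\FF_\tau(g)$. You do not prove this: in your last paragraph you appear to read the codomain condition as a transversality statement involving $\Range g^\ast$ (``$g(F_1)+\Range g^\ast$ \dots can fail''), conclude that it may indeed fail, and retreat to verifying only the most expanding flag. But by Definition~\ref{def flag push-forward domain} membership in $\FF_\tau(g^\ast)$ means $(gF)_k\cap\Ker g^\ast=\{0\}$, and this never fails: since $gF_k\subseteq\Range g$ and $\Ker g^\ast=(\Range g)^\perp$, one has $gF_k\cap\Ker g^\ast\subseteq\Range g\cap(\Range g)^\perp=\{0\}$. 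This is exactly the content of the paper's proof, which phrases it by observing that if $g^\ast g v=0$ then $(g g^\ast)(gv)=0$, and $g g^\ast$ restricts to an automorphism of $\Range g$, so $gv=0$. With that one line restored (together with the dimension count $\dim gF_j=\tau_j$ from Lemma~\ref{push-forwards,pull-backs}(1), which you do have), your argument matches the intended proof; as written, it omits, and even casts doubt on, a part of the statement that must be proved.
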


\begin{proof}
Given $F\in\FF_\tau(g)$, we have 
$F_j\cap \Ker_g=0$ for all $j=1,\ldots, k$.
Hence $\dim g F_j= \dim F_j=\tau_j$ for all $j$,
which proves that $\varphi_g F \in \FF_\tau(V)$. To check that
$\varphi_g F \in \FF_\tau(g^\ast)$ we need to show that $g F_k\cap \Ker_{g^\ast}=0$.
Assume $g\,v\in \Ker_{g^\ast}$, with $v\in F_k$,
and let us see that $g\,v=0$. By assumption $g^\ast g\, v=0$, which implies $(g\,g^\ast)\,g\,v=0$.
Since the self-adjoint map $g\,g^\ast$ induces an automorphism on $\Range_g$, we conclude that $g\,v=0$.

The second statement follows from~\eqref{varphi g:mostexp grassmann}. 

\end{proof}

Given $g\in\mathcal{L}(V)$, the domain of its pull-back action on $\FF_\tau(V)$ is  
\begin{definition}
\label{def flag pull-back domain}
\quad  
$\displaystyle  \FF_\tau^{-1}(g):=\{\, F\in\FF_\tau(V)\,\colon\,
F_1 + \Range_g=V\,\}\;.$ 
\end{definition}
The {\em pull-back} of a flag $F\in \FF_\tau(g)$ by  $g$ is defined to be
$$ \varphi_g F = g^{-1} F :=
(g^{-1} F_1,\ldots, g^{-1} F_k)\;. $$


\begin{definition}
\label{def least expanding flag}
If  $\rgap_{\tau^\perp}(g)>1$   the {\em least expanding $\tau$-flag}  is  defined as
$$  \leastexp_\tau(g):=( \leastexp_{\tau_1}(g),\ldots,  \leastexp_{\tau_k}(g))\in  \FF_\tau(V) \;. $$
\end{definition}

\begin{proposition}
\label{prop relation least most expanding subspaces}
If  $\rgap_{\tau}(g)>1$ then \,
$\leastexp_{\tau^\perp} (g) =  
\mostexp_{\tau}(g)^\perp$.
\end{proposition}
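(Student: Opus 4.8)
The plan is to reduce the proposition to the definitions of the least- and most-expanding flags together with Definition~\ref{def ledir}, which already encodes the orthogonal-complement relation at the level of individual subspaces. Recall that $\tau=(\tau_1,\ldots,\tau_k)$ with $1\leq\tau_1<\ldots<\tau_k\leq n$, and that $\tau^\perp=(\tau_1^\perp,\ldots,\tau_k^\perp)$ with $\tau_j^\perp=n-\tau_{k+1-j}$. By Definition~\ref{def least expanding flag} applied to the signature $\tau^\perp$, we have
$$ \leastexp_{\tau^\perp}(g)=\bigl(\leastexp_{\tau_1^\perp}(g),\ldots,\leastexp_{\tau_k^\perp}(g)\bigr)
=\bigl(\leastexp_{n-\tau_k}(g),\ldots,\leastexp_{n-\tau_1}(g)\bigr)\;. $$

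First I would check that the hypothesis $\rgap_\tau(g)>1$ guarantees that every term above is well-defined. By Definition~\ref{def ledir}, $\leastexp_j(g)$ requires $\rgap_{n-j}(g)>1$; for the $i$-th entry $\leastexp_{n-\tau_{k+1-i}}(g)$ this means $\rgap_{\tau_{k+1-i}}(g)>1$, which holds because $\rgap_\tau(g)=\min_j\rgap_{\tau_j}(g)>1$. So $\leastexp_{\tau^\perp}(g)$ is a genuine element of $\FF_{\tau}(V)$ — note its signature really is $\tau$, since $(n-\tau_k,\ldots,n-\tau_1)^\perp$ in the length-$k$ indexing reproduces $(\tau_1,\ldots,\tau_k)$.

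The computational core is then a single substitution. By Definition~\ref{def ledir}, $\leastexp_{n-\tau_{k+1-i}}(g)=\mostexp_{n-(n-\tau_{k+1-i})}(g)^\perp=\mostexp_{\tau_{k+1-i}}(g)^\perp$. Hence
$$ \leastexp_{\tau^\perp}(g)=\bigl(\mostexp_{\tau_k}(g)^\perp,\ldots,\mostexp_{\tau_1}(g)^\perp\bigr)\;. $$
On the other hand, by Definition~\ref{def most expanding flag}, $\mostexp_\tau(g)=(\mostexp_{\tau_1}(g),\ldots,\mostexp_{\tau_k}(g))$, which is well-defined precisely because $\rgap_\tau(g)>1$; and by Definition~\ref{def flag orth complement}, its orthogonal complement is $\mostexp_\tau(g)^\perp=(\mostexp_{\tau_k}(g)^\perp,\ldots,\mostexp_{\tau_1}(g)^\perp)$. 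Comparing the two displayed expressions entry by entry gives $\leastexp_{\tau^\perp}(g)=\mostexp_\tau(g)^\perp$, as claimed.

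The only point requiring a little care — and the place where an index error would most easily creep in — is the bookkeeping of the reversal: $\tau^\perp$ reverses the order of the dimensions, Definition~\ref{def ledir} turns a $\mostexp$ of complementary index into a $\leastexp$, and Definition~\ref{def flag orth complement} again reverses order when forming $F^\perp$. I would therefore track the index $k+1-i$ explicitly through each of the three steps to confirm the two reversals cancel against the order reversal in $\tau^\perp$, so that the final lists line up term by term with matching dimensions. Everything else is a direct appeal to the definitions and to Proposition~\ref{prop relation least most expanding subspaces}'s single-subspace analogue built into Definition~\ref{def ledir}.
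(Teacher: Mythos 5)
Your argument is correct and is essentially the paper's proof: the paper establishes the same subspace-level identity $\leastexp_{n-k}(g)=\mostexp_k(g)^\perp$ (via a singular basis, which just realizes Definition~\ref{def ledir}) and then assembles the flag entries exactly as you do, tracking the same index reversal. One small correction to a side remark: $\leastexp_{\tau^\perp}(g)$ is a flag of signature $\tau^\perp$ (its entries have dimensions $n-\tau_k<\ldots<n-\tau_1$), so it lies in $\FF_{\tau^\perp}(V)$ rather than $\FF_{\tau}(V)$ --- as does $\mostexp_\tau(g)^\perp$ --- and your entry-by-entry comparison is unaffected by this.
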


\begin{proof}
Let $\{v_1,\ldots, v_n\}$ be a singular basis of $g$.
Since this basis is orthonormal,
\begin{align*}
\leastexp_{n-k}(g) =\langle v_{k+1},\ldots, v_n \rangle = \langle v_1,\ldots, v_k\rangle^\perp = \mostexp_k(g)^\perp\;.
\end{align*}
Hence
$$ \leastexp_{\tau^\perp}(g) =
( \leastexp_{n-\tau_k}(g),\ldots, \leastexp_{n-\tau_1}(g) ) = ( \mostexp_{\tau_1}(g),\ldots,
\mostexp_{\tau_k}(g) )^\perp = \mostexp_\tau(g)^\perp\;.$$

\end{proof}

\begin{proposition} Given $g\in\mathcal{L}(V)$ such that $\rgap_{\tau^\perp}(g)>1$, the pull-back induces a map
$\varphi_g^{-1}:\FF_\tau^{-1}(g)\to \FF_\tau^{-1}(g^\ast)$ such that  $ \varphi_g^{-1} \leastexp_\tau(g)= \leastexp_\tau(g^\ast)$. 
\end{proposition}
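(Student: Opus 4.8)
The plan is to derive this proposition from the push-forward proposition proved just above by transporting it through the orthogonal-complement involution $\cdot^\perp$, using the push-forward/pull-back dualities of Propositions~\ref{push-pull duality} and~\ref{push-pull duality: flags}. Since $g$ and $g^\ast$ have the same singular spectrum, $\rgap_{\tau^\perp}(g^\ast)=\rgap_{\tau^\perp}(g)>1$, so the push-forward proposition applies to $g^\ast$ with the signature $\tau^\perp$ and yields a well-defined map $\varphi_{g^\ast}\colon \FF_{\tau^\perp}(g^\ast)\to \FF_{\tau^\perp}((g^\ast)^\ast)=\FF_{\tau^\perp}(g)$ carrying $\mostexp_{\tau^\perp}(g^\ast)$ to $\mostexp_{\tau^\perp}(g)$. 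From Proposition~\ref{push-pull duality: flags}, applied to $g$ and then to $g^\ast$, I would also record the identifications $\FF_\tau^{-1}(g)=\FF_\tau(g^{-1})=\FF_{\tau^\perp}(g^\ast)^\perp$ and $\FF_\tau^{-1}(g^\ast)=\FF_{\tau^\perp}(g)^\perp$, together with the intertwining relation $(\varphi_g^{-1}F)^\perp=\varphi_{g^\ast}(F^\perp)$, valid for every $F\in\FF_\tau^{-1}(g)$.

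For the domain and codomain, take $F\in\FF_\tau^{-1}(g)$, so that $F^\perp\in\FF_{\tau^\perp}(g^\ast)$. The push-forward proposition gives $\varphi_{g^\ast}(F^\perp)\in\FF_{\tau^\perp}(g)$, and since $\varphi_{g^\ast}(F^\perp)=(\varphi_g^{-1}F)^\perp$, taking orthogonal complements yields $\varphi_g^{-1}F\in\FF_{\tau^\perp}(g)^\perp=\FF_\tau^{-1}(g^\ast)$. In particular this already exhibits $\varphi_g^{-1}F$ as a genuine $\tau$-flag, so the dimension count for the pull-back — which otherwise follows from Lemma~\ref{push-forwards,pull-backs} together with the nesting of the $F_j$ — is automatic.

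For the identity, I would rewrite the least expanding flags as orthogonal complements of most expanding flags using Proposition~\ref{prop relation least most expanding subspaces} applied with the signature $\tau^\perp$ in place of $\tau$ (recall $(\tau^\perp)^\perp=\tau$): this gives $\leastexp_\tau(h)=\mostexp_{\tau^\perp}(h)^\perp$, hence $\leastexp_\tau(h)^\perp=\mostexp_{\tau^\perp}(h)$, for $h=g$ and $h=g^\ast$. Applying $\cdot^\perp$ to $\varphi_g^{-1}\leastexp_\tau(g)$ and invoking the intertwining relation reduces the desired equality $\varphi_g^{-1}\leastexp_\tau(g)=\leastexp_\tau(g^\ast)$ to the most-expanding-flag identity of the push-forward proposition for $g^\ast$ and $\tau^\perp$, read through the two displayed equalities; since $\cdot^\perp$ is an involution, this closes the argument.

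The step I expect to be the main obstacle is this last reduction: keeping track, through the two successive applications of $\cdot^\perp$, of which of $g$ and $g^\ast$ carries the signature $\tau$ and which carries $\tau^\perp$, and verifying that the single hypothesis $\rgap_{\tau^\perp}(g)>1$ is simultaneously what makes $\leastexp_\tau(g)$ and $\leastexp_\tau(g^\ast)$ well defined, what puts the relevant most expanding $\tau^\perp$-flag in the domain of $\varphi_{g^\ast}$, and what is needed for the push-forward proposition to apply to $g^\ast$. The domain matching in the second paragraph — identifying $\FF_\tau^{-1}(\cdot)$ with $\FF_{\tau^\perp}(\cdot)^\perp$ correctly — is the other bookkeeping point, but it is purely formal once Proposition~\ref{push-pull duality: flags} is in hand.
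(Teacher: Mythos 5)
Your duality set-up (Proposition~\ref{push-pull duality: flags} for the domains, Proposition~\ref{prop relation least most expanding subspaces} with $\tau^\perp$ in place of $\tau$ for rewriting least expanding flags) is sound, and your treatment of the domain and codomain is a legitimate alternative to the paper's direct check, which verifies $\dim g^{-1}F_j=\tau_j$ via Lemma~\ref{push-forwards,pull-backs} and $g^{-1}F_1+\Range g^\ast\supseteq \Ker g+(\Ker g)^\perp=V$. The genuine gap is exactly at the step you flagged as the main obstacle and then declared closed. Applying $\cdot^\perp$ to $\varphi_g^{-1}\leastexp_\tau(g)$ and using $(\varphi_g^{-1}F)^\perp=\varphi_{g^\ast}(F^\perp)$ together with $\leastexp_\tau(g)^\perp=\mostexp_{\tau^\perp}(g)$, what you must prove is $\varphi_{g^\ast}\,\mostexp_{\tau^\perp}(g)=\mostexp_{\tau^\perp}(g^\ast)$. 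But the push-forward proposition applied to $g^\ast$ with signature $\tau^\perp$ — which is what you recorded in your first paragraph — gives the opposite pairing, $\varphi_{g^\ast}\,\mostexp_{\tau^\perp}(g^\ast)=\mostexp_{\tau^\perp}(g)$, and the pairing you need is false in general: for $g=RD$ in $\R^2$ with $R$ the rotation by $45^\circ$ and $D=\mathrm{diag}(2,1)$, one has $\mostexp(g)=\linspan{e_1}$, $g^\ast\,e_1\parallel (2,-1)$, while $\mostexp(g^\ast)=\linspan{R e_1}\parallel(1,1)$. So the final reduction silently swaps $g$ and $g^\ast$, and no bookkeeping fixes it.

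Carried out correctly, your transport argument proves the swapped identity $\varphi_g^{-1}\leastexp_\tau(g^\ast)=\leastexp_\tau(g)$: take $F=\leastexp_\tau(g^\ast)$, so $F^\perp=\mostexp_{\tau^\perp}(g^\ast)$, then $\varphi_{g^\ast}F^\perp=\mostexp_{\tau^\perp}(g)$ by the push-forward proposition, hence $(\varphi_g^{-1}F)^\perp=\mostexp_{\tau^\perp}(g)$ and $\varphi_g^{-1}F=\leastexp_\tau(g)$. This is also what the paper's own one-line argument actually delivers: its proof cites~\eqref{varphi g:leastexp grassmann}, i.e.\ $\varphi_g\leastexp_k(g)=\leastexp_k(g^\ast)$, and inverting that relation (plus Proposition~\ref{prop relation least most expanding subspaces}) produces the identity with $g$ and $g^\ast$ in the positions above, not as displayed in the statement; indeed in the $2\times2$ example one computes $g^{-1}\leastexp_1(g)=\linspan{(1,2)}$ while $\leastexp_1(g^\ast)=\linspan{(-1,1)}$, so the displayed form fails and appears to be a $g\leftrightarrow g^\ast$ slip in this draft. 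The conclusion for your write-up: you cannot cite the push-forward proposition for $g^\ast$ to finish as stated; either prove the swapped identity (which your method does cleanly) or, equivalently, state the result for the pull-back by $g^\ast$.
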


\begin{proof}
Given $F\in\FF_\tau^{-1}(g)$, we have 
$F_j + \Range_g=V$ for all $j=1,\ldots, k$.
Hence $\dim g^{-1}F_j= \dim F_j=\tau_j$ for all $j$,
which proves that $\varphi_g^{-1} F \in \FF_\tau(V)$. To check that
$\varphi_g^{-1} F \in \FF_\tau^{-1}(g^\ast)$ just notice that $ g^{-1} F_1+ \Range_{g^\ast} \supseteq
\Ker_g + \Ker_g^\perp = V$.

The second statement follows from~\eqref{varphi g:leastexp grassmann}
and proposition~\ref{prop relation least most expanding subspaces}. 

\end{proof}

We end this subsection proving that the orthogonal complement involution conjugates the push-forward action by
$g\in\mathcal{L}(V)$ with the pull-back action by the adjoint map $g^\ast$.

\begin{proposition} \label{pfwd-pbck:conjugation}
Given $g\in\mathcal{L}(V)$ such that $\rgap_{\tau^\perp}(g)>1$, 
the action of $\varphi_{g}^{-1}$ on $\FF_\tau(V)$ is
conjugate to the action of $\varphi_{g^\ast}$ on
$\FF_{\tau^\perp}(V)$ by the orthogonal complement involution. 
More precisely, we have
$\FF_{\tau}^{-1}(g)=\FF_{\tau^\perp}(g^\ast)^\perp$
and $\FF_{\tau}^{-1}(g^\ast)=\FF_{\tau^\perp}(g)^\perp$, and the following diagram commutes
$$ \begin{CD}
\FF_{\tau^\perp}(g^\ast)  @>\varphi_{g^\ast} >> \FF_{\tau^\perp}(g) \\
@V \cdot^\perp VV @VV \cdot^\perp V  \\
\FF_{\tau}^{-1}(g)   @>>\varphi_{g}^{-1}> \FF_{\tau}^{-1}(g^\ast)
\end{CD} \;. $$
\end{proposition}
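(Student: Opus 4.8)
The plan is to reduce everything to the already-established Grassmannian-level duality of Proposition~\ref{push-pull duality} and then assemble the flag statements coordinatewise. First I would record the identifications of the domains. Recall $\FF_{\tau^\perp}(g^\ast)=\{F\in\FF_{\tau^\perp}(V)\colon F_1\cap\Ker_{g^\ast}\text{ is as small as possible}\}$; more precisely, by Definition~\ref{def push forward flag domain} the defining condition is that the \emph{largest} subspace in the flag meets $\Ker_{g^\ast}$ trivially. Since a flag $F=(F_1,\ldots,F_k)\in\FF_{\tau^\perp}(V)$ has $F^\perp=(F_k^\perp,\ldots,F_1^\perp)\in\FF_\tau(V)$ with $F_k^\perp$ the \emph{smallest} subspace of $F^\perp$, the condition ``$F_k\cap\Ker_{g^\ast}=0$'' is equivalent, via Proposition~\ref{push-pull duality} applied with $g^\ast$ in place of $g$ (so $(g^\ast)^\ast=g$), to ``$F_k^\perp+\Range_g=V$'', which is precisely the defining condition for $F^\perp\in\FF_\tau^{-1}(g)$. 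This gives $\FF_\tau^{-1}(g)=\FF_{\tau^\perp}(g^\ast)^\perp$; swapping the roles of $g$ and $g^\ast$ yields $\FF_\tau^{-1}(g^\ast)=\FF_{\tau^\perp}(g)^\perp$.

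Next I would verify commutativity of the diagram. Both routes from the top-left corner $\FF_{\tau^\perp}(g^\ast)$ to the bottom-right corner $\FF_\tau^{-1}(g^\ast)$ are maps on flags, so it suffices to check equality coordinate by coordinate. Fix $F=(F_1,\ldots,F_k)\in\FF_{\tau^\perp}(g^\ast)$. Going right then down sends $F\mapsto(g^\ast F_1,\ldots,g^\ast F_k)\mapsto\bigl((g^\ast F_k)^\perp,\ldots,(g^\ast F_1)^\perp\bigr)$. Going down then along the bottom sends $F\mapsto(F_k^\perp,\ldots,F_1^\perp)\mapsto\bigl(g^{-1}(F_k^\perp),\ldots,g^{-1}(F_1^\perp)\bigr)$. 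These two lists agree entry by entry precisely because of the pointwise identity $(g^{-1}E)^\perp=g^\ast(E^\perp)$ from Proposition~\ref{push-pull duality}, applied to each $E=F_j^\perp$ (equivalently, $g^\ast(F_j^\perp)=(g^{-1}(F_j^\perp))^\perp$, so $(g^\ast F_j)^\perp$ is obtained by taking complements on both sides). One should also note that $\varphi_{g^\ast}$ indeed maps $\FF_{\tau^\perp}(g^\ast)$ into $\FF_{\tau^\perp}(g)$ — this is the content of the earlier push-forward proposition on flags — so that the target of the top arrow matches, after $\cdot^\perp$, the source $\FF_\tau^{-1}(g^\ast)$ of the bottom arrow via the second domain identity.

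Finally I would remark that the ``$g$-most/least expanding'' flags are carried along correctly: when $\rgap_{\tau^\perp}(g)>1$, Proposition~\ref{prop relation least most expanding subspaces} gives $\leastexp_\tau(g)=\mostexp_{\tau^\perp}(g)^\perp$, and the already-proven equivariance $\varphi_{g^\ast}\mostexp_{\tau^\perp}(g^\ast)=\mostexp_{\tau^\perp}(g)$ translates under the diagram into $\varphi_g^{-1}\leastexp_\tau(g)=\leastexp_\tau(g^\ast)$, consistent with the preceding proposition; this is a sanity check rather than a required step.

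The main obstacle is bookkeeping with the order-reversal in $\cdot^\perp$ and in the definition of $\tau^\perp$: one must be careful that the condition on a flag's domain is a condition on its \emph{extreme} member (the largest subspace for push-forwards, the smallest for pull-backs), and that passing to $F^\perp$ swaps which member is extreme, so that Definition~\ref{def push forward flag domain} for $g^\ast$ on $\tau^\perp$-flags matches Definition~\ref{def pull-back flag domain} for $g$ on $\tau$-flags. Once the indices are lined up, every step is a direct appeal to Proposition~\ref{push-pull duality}.
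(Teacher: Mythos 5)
Your argument is correct and essentially the paper's own: you obtain the domain identities from the Grassmannian-level duality of Proposition~\ref{push-pull duality} applied to the extreme member of the flag, and you reduce the diagram's commutativity, coordinate by coordinate, to the identity $(g^{-1}E)^\perp=g^\ast(E^\perp)$ with $E=F_j^\perp$ — which is exactly what the paper means when it says it suffices to check commutativity at the Grassmannian level. Only a trivial relabeling: the equivalence $F_k\cap\Ker_{g^\ast}=0\Leftrightarrow F_k^\perp+\Range_g=V$ is Proposition~\ref{push-pull duality} for $g$ itself applied to $E=F_k^\perp$ (equivalently, $\Ker_{g^\ast}=(\Range_g)^\perp$), whereas the version with $g^\ast$ substituted yields the second identity $\FF_\tau^{-1}(g^\ast)=\FF_{\tau^\perp}(g)^\perp$, which you in any case recover by swapping the roles of $g$ and $g^\ast$.
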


\begin{proof}
To see that $\FF_{\tau}^{-1}(g)=\FF_{\tau^\perp}(g^\ast)^\perp$, notice that the following equivalences hold:
\begin{align*}
F\in \FF_{\tau}^{-1}(g)\quad & \Leftrightarrow \quad  F_1+\Range_g=V  \\
 & \Leftrightarrow \quad  F_1^\perp\cap \Ker_{g^\ast} = 0  \quad   \Leftrightarrow \quad 
 F^\perp\in \FF_{\tau^\perp}(g^\ast)\;.
\end{align*}
Exchanging the roles of $g$ and $g^\ast$ we obtain the relation
$\FF_{\tau}^{-1}(g^\ast)=\FF_{\tau^\perp}(g)^\perp$.

Finally, notice it is enough to prove the diagram's commutativity at the Grassmannian level. For that we use proposition~\ref{push-pull duality}.

\end{proof}

\subsection{Angles and expansion }
\label{subsection angles}

Throughout this subsection let $\hatp,\hatq\in \Pp(V)$, and $p\in\hatp$, $q\in \hatq$ denote   representative vectors.
The projective  distance $\delta(\hatp,\hatq)$ was defined by
$$ \delta(\hatp,\hatq):=\sqrt{1-\frac{\langle p,q\rangle^2}{\norm{p}^2\norm{q}^2}} = \frac{\norm{p\wedge q}}{\norm{p}\,\norm{q}}
=\sin \rho(\hatp,\hatq)  \;. $$

The complementary quantity plays a special role in the sequel.
\begin{definition}
\label{def aangle(p,q)}
The $\aangle$-angle between $\hat p$ and $\hat q$ is defined to be
$$  \aangle(\hatp,\hatq ):= \frac{\vert \langle p,q\rangle\vert }{\norm{p}\,\norm{q} } =\cos \rho(\hatp,\hatq)\;. $$
\end{definition}

In order to give a geometric meaning to this angle we define the {\em projective orthogonal complement} of  $\hatp\in\Pp(V)$ as
$$ \orthC{\hatp} :=\{\, \hat{x}\in\Pp(V) \,\colon\, \langle x, p\rangle=0\quad \text{ for }\; x\in\hat x \,\}\;. $$
The number $\aangle(\hatp,\hatq )$ is the sine of the minimum angle between
$\hatp$ and $\orthC{\hatq}$.
\begin{proposition}
For any $\hat p, \hat q\in \Pp(V)$,
\begin{align}\label{aangle:sine rho} 
&\aangle(\hatp,\hatq) =\sin \rho_{{\rm min}}(\hatp,\orthC{\hatq}) =\deltamin(\hatp,\orthC{\hatq})\\
\label{aangle:delta:orth}
& \aangle(\hatp,\hatq)=0\;  \Leftrightarrow \;
\delta(\hatp,\hatq)=1 \; \Leftrightarrow \; p\perp q\;.
\end{align}
\end{proposition}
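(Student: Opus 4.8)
The plan is to reduce everything to the elementary identity $\delta(\hatp,\hatq)^2 = 1 - \aangle(\hatp,\hatq)^2$, which is immediate from the displayed formulas for $\delta$ and $\aangle$ at the start of this subsection, together with an orthogonal decomposition of $p$ along $q$ and $q^\perp$. Throughout I normalize so that $\norm{p}=\norm{q}=1$; this is harmless since all quantities involved depend only on $\hatp$ and $\hatq$.

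First I would dispose of~\eqref{aangle:delta:orth}. From $\delta(\hatp,\hatq)=\sqrt{1-\langle p,q\rangle^2}$ we get $\delta(\hatp,\hatq)^2+\aangle(\hatp,\hatq)^2=1$, so $\aangle(\hatp,\hatq)=0$ is equivalent to $\delta(\hatp,\hatq)=1$, and both are equivalent to $\langle p,q\rangle=0$, i.e.\ to $p\perp q$. That settles the second line.

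For the first line, the last equality $\sin\rho_{{\rm min}}(\hatp,\orthC{\hatq})=\deltamin(\hatp,\orthC{\hatq})$ is purely formal: by~\eqref{metric relations} one has $\delta(\hatp,\hat x)=\sin\rho(\hatp,\hat x)$ for every $\hat x$, and since $\rho$ takes values in $[0,\tfrac{\pi}{2}]$, where $\sin$ is continuous and strictly increasing, taking the infimum over $\hat x\in\orthC{\hatq}$ commutes with applying $\sin$. So it remains to prove $\aangle(\hatp,\hatq)=\deltamin(\hatp,\orthC{\hatq})$, which I would do by two inequalities. For the lower bound, let $x\in q^\perp$ be a unit vector, so $\hat x\in\orthC{\hatq}$; since $\{q,x\}$ is orthonormal, Bessel's inequality gives $\langle p,q\rangle^2+\langle p,x\rangle^2\leq\norm{p}^2=1$, hence $\delta(\hatp,\hat x)^2=1-\langle p,x\rangle^2\geq\langle p,q\rangle^2=\aangle(\hatp,\hatq)^2$, and minimizing over $x$ yields $\deltamin(\hatp,\orthC{\hatq})\geq\aangle(\hatp,\hatq)$. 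For the matching upper bound I exhibit a witness: write $p=\langle p,q\rangle\,q+p'$ with $p'\perp q$, so $\norm{p'}^2=1-\langle p,q\rangle^2$. If $p'\neq 0$ then $\hat{p'}\in\orthC{\hatq}$, and using $p\wedge p'=\langle p,q\rangle\,(q\wedge p')$ together with $\norm{q\wedge p'}=\norm{p'}$ (as $q\perp p'$) one computes $\delta(\hatp,\hat{p'})=\norm{p\wedge p'}/\norm{p'}=\abs{\langle p,q\rangle}=\aangle(\hatp,\hatq)$, so $\deltamin(\hatp,\orthC{\hatq})\leq\aangle(\hatp,\hatq)$. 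If instead $p'=0$ then $p=\pm q$, so $\aangle(\hatp,\hatq)=1$ and every $\hat x\in\orthC{\hatq}$ also satisfies $\delta(\hatp,\hat x)=1$, giving equality again (when $\dim V=1$ the set $\orthC{\hatq}$ is empty and there is nothing to prove).

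The only genuine subtlety is this degenerate collinear case $p'=0$, where the minimum defining $\deltamin$ is attained on all of $\orthC{\hatq}$ rather than at a single point; apart from that, each step is a one-line computation once the orthogonal decomposition $p=\langle p,q\rangle q+p'$ is in place, and I expect no serious obstacle.
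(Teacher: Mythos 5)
Your argument is correct and complete. Note that the paper states this proposition without any proof at all (it is treated as an immediate consequence of the definitions), so there is no official argument to compare against; your write-up simply supplies the missing verification. The structure you chose is the natural one: the identity $\delta(\hatp,\hatq)^2+\aangle(\hatp,\hatq)^2=1$ disposes of~\eqref{aangle:delta:orth} at once, the monotonicity of $\sin$ on $[0,\frac{\pi}{2}]$ lets it pass through the minimum to give $\sin\rho_{{\rm min}}=\deltamin$, and the two-inequality proof of $\aangle(\hatp,\hatq)=\deltamin(\hatp,\orthC{\hatq})$ is sound: Bessel's inequality for the orthonormal pair $\{q,x\}$ gives the lower bound, and your witness $p'=p-\langle p,q\rangle\,q$, which is exactly the orthogonal projection of $p$ onto $q^{\perp}$, attains it, since $p\wedge p'=\langle p,q\rangle\,(q\wedge p')$ and $\norm{q\wedge p'}=\norm{p'}$. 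Your handling of the collinear case $p'=0$ (where $\aangle=1$ and every point of $\orthC{\hatq}$ realizes $\delta=1$) is also right; the only remark worth adding is that $\orthC{\hatq}$ is the projectivization of the hyperplane $q^{\perp}$, hence compact, so the minimum in $\deltamin$ is genuinely attained, although your argument does not actually need this since you exhibit an explicit minimizer.
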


These concepts extend naturally to Grassmannians and flag manifolds. 

\begin{definition}
\label{grassmann rho delta alpha}
Given $E,F\in\Gr_k(V)$, we define the $\aangle$-angle  between them
\begin{align*}
\aangle(E,F)= \aangle_k(E,F):=\aangle(\Psi(E),\Psi(F))\;,
\end{align*}
where $\Psi:\Gr_k(V)\to\Pp(\wedge_k V)$ denotes the Pl\"ucker embedding (see subsection ~\ref{grassmannians}). 
\end{definition}

\begin{definition}
\label{def Grassmann orth complement}
We say that two $k$-subspaces $E,F\in\Gr_k(V)$  {\em are orthogonal}, and we write $E\perp F$,\,
iff \, $\aangle(E,F)=0$.
\end{definition}

The {\em Grassmannian orthogonal complement} of $F$ is defined as
$$ \orthC{F}:=\{\, E\in \Gr_k(V)\,\colon\, \aangle(E,F)=0\,\}\;. $$

As before, the number $\aangle(E,F)$ measures the sine of the minimum angle between
$E$ and $\orthC{F}$.
\begin{proposition}
\label{aangle:sine rho:grassmann} 
For any $E, F\in \Gr_k(V)$,  
\begin{equation*}
\aangle(E,F)=\sin \rho_{{\rm min}}(E,\orthC{F}) =\deltamin(E,\orthC{F})\;.
\end{equation*}
\end{proposition}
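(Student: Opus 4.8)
The plan is to reduce the Grassmannian statement to the already-proved projective statement via the Plücker embedding, using the fact that this embedding is an isometry for the metric $\delta$ (hence also for $\aangle$, which is the complementary quantity $\sqrt{1-\delta^2}$). Recall that by Definition~\ref{grassmann rho delta alpha} we have $\aangle(E,F)=\aangle(\Psi(E),\Psi(F))$, and by the definitions~\eqref{Grassmann:arc distance}--\eqref{Grassmann:sine distance} the Plücker embedding $\Psi:\Gr_k(V)\to\Pp(\wedge_k V)$ preserves all three metrics $\rho,d,\delta$. The key point to establish is that $\Psi$ also carries the Grassmannian orthogonal complement $\orthC{F}$ onto (a subset that is $\rho_{\rm min}$-equivalent to) the projective orthogonal complement $\orthC{\Psi(F)}$ inside $\Pp(\wedge_k V)$.

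First I would unwind the definitions: $\aangle(E,F)=0$ means $\delta(E,F)=1$, i.e. $\langle\Psi(E),\Psi(F)\rangle=0$, so $\Psi(\orthC{F})\subseteq \orthC{\Psi(F)}$. This inclusion is immediate. Then for any $E\in\Gr_k(V)$, since $\Psi$ is a $\rho$-isometry onto its image, $\rho_{\rm min}(E,\orthC{F}) = \rho_{\rm min}(\Psi(E),\Psi(\orthC{F}))$. The remaining issue is that $\Psi(\orthC{F})$ might be a proper subset of $\orthC{\Psi(F)}$, so a priori $\rho_{\rm min}(\Psi(E),\Psi(\orthC{F})) \ge \rho_{\rm min}(\Psi(E),\orthC{\Psi(F)})$, and I need equality. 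For that I would argue that the minimizing projective point in $\orthC{\Psi(F)}$ closest to $\Psi(E)$ can be taken to lie in $\Psi(\Gr_k(V))$: indeed, the formula from Proposition~\ref{aangle:sine rho:grassmann}'s projective analogue~\eqref{aangle:sine rho} tells us $\sin\rho_{\rm min}(\Psi(E),\orthC{\Psi(F)}) = \aangle(\Psi(E),\Psi(F))=\aangle(E,F)$, which is exactly the quantity we want. So in fact I do not need to chase the minimizer at all: apply~\eqref{aangle:sine rho} with $\hatp=\Psi(E)$, $\hatq=\Psi(F)$ to get $\aangle(\Psi(E),\Psi(F)) = \sin\rho_{\rm min}(\Psi(E),\orthC{\Psi(F)}) = \deltamin(\Psi(E),\orthC{\Psi(F)})$, and then observe that by the $\delta$-isometry property of $\Psi$ this equals $\sin\rho_{\rm min}(E,\orthC{F}) = \deltamin(E,\orthC{F})$, since $\deltamin$ and $\rho_{\rm min}$ on $\Gr_k(V)$ relative to $\orthC{F}$ correspond under $\Psi$ to the same quantities relative to $\orthC{\Psi(F)}$ — once one checks $\Psi(\orthC{F})$ is $\delta$-dense enough in $\orthC{\Psi(F)}$, or more cleanly, that $\deltamin(E,\orthC{F})$ is by definition $\min_{E'\in\orthC{F}}\delta(E,E')$ and equals $\min$ over $\Psi(E')$ of $\delta(\Psi(E),\Psi(E'))$.

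The main obstacle I anticipate is precisely the gap between $\Psi(\orthC{F})$ and $\orthC{\Psi(F)}$: the projective orthogonal complement of $\Psi(F)$ in $\Pp(\wedge_k V)$ is a whole projective hyperplane, most of whose points are not decomposable $k$-vectors and hence not in the image of $\Psi$. The cleanest fix is to not take infima over $\orthC{\Psi(F)}$ directly but to use the closed-form identity $\aangle=\cos\rho$ on both sides: we have $\aangle(E,F)=\aangle(\Psi(E),\Psi(F))=\cos\rho(\Psi(E),\Psi(F))=\cos\rho(E,F)$, and then invoke the projective identity~\eqref{aangle:sine rho} once to rewrite $\aangle(\Psi(E),\Psi(F))$ as $\sin\rho_{\rm min}(\Psi(E),\orthC{\Psi(F)})=\deltamin(\Psi(E),\orthC{\Psi(F)})$; finally I translate the right-hand side back to $\Gr_k(V)$ by noting that every $\hat x\in\orthC{\Psi(F)}$ at minimal distance from $\Psi(E)$ may be chosen of the form $\Psi(E')$ with $E'\in\orthC{F}$ — this last claim is verified by an explicit construction: take a singular basis realizing the principal angles between $E$ and $F$, rotate the component of $E$ along the unique direction making angle $\rho(E,F)$ with $F$ into $F^\perp\cap$(that plane), producing a genuine $k$-subspace $E'\perp F$ (in the $\aangle$ sense) with $\rho(E,E')=\rho_{\rm min}(E,\orthC{F})=\frac{\pi}{2}-\rho(E,F)$. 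Hence both $\deltamin(E,\orthC{F})$ and $\deltamin(\Psi(E),\orthC{\Psi(F)})$ equal $\cos\rho(E,F)=\aangle(E,F)$, and the proposition follows.

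\begin{proof}
Write $p=\Psi(E)$ and $q=\Psi(F)$ in $\Pp(\wedge_k V)$. By Definition~\ref{grassmann rho delta alpha} and the fact that the Plücker embedding $\Psi$ is a $\rho$-isometry onto its image (see~\eqref{Grassmann:arc distance}), we have $\aangle(E,F)=\aangle(\hatp,\hatq)=\cos\rho(\hatp,\hatq)=\cos\rho(E,F)$. Applying the projective identity~\eqref{aangle:sine rho} to $\hatp,\hatq$ gives
\begin{equation*}
\aangle(E,F)=\aangle(\hatp,\hatq)=\sin\rho_{{\rm min}}(\hatp,\orthC{\hatq})=\deltamin(\hatp,\orthC{\hatq})\;.
\end{equation*}
It remains to identify $\deltamin(\hatp,\orthC{\hatq})$ with $\deltamin(E,\orthC{F})$. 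The inclusion $\Psi(\orthC{F})\subseteq\orthC{\hatq}$ is immediate from the definitions, since $E'\in\orthC{F}$ means $\aangle(\Psi(E'),q)=0$, i.e. $\langle\Psi(E'),q\rangle=0$. Hence $\deltamin(\hatp,\orthC{\hatq})\leq\deltamin(\hatp,\Psi(\orthC{F}))=\deltamin(E,\orthC{F})$, the last equality because $\Psi$ is a $\delta$-isometry. For the reverse inequality, choose a singular basis adapted to the pair $E,F$: there is an orthonormal basis $\{u_1,\ldots,u_k\}$ of $E$ and principal angles $0\leq\theta_1\leq\ldots\leq\theta_k\leq\frac{\pi}{2}$ with $\theta_k=\rho(E,F)$, such that $u_k$ makes angle $\theta_k$ with $F$. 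Rotating $u_k$ within the plane it spans together with its orthogonal projection onto $F$ produces a unit vector $u_k'\perp F$, and $E':=\langle u_1,\ldots,u_{k-1},u_k'\rangle$ is a $k$-subspace. A direct computation of $\langle\Psi(E),\Psi(E')\rangle$ shows $\rho(E,E')=\frac{\pi}{2}-\rho(E,F)$, so $\delta(E,E')=\cos\rho(E,F)=\aangle(E,F)$. Since $E'\in\orthC{F}$ (as $u_k'\perp F$ and $u_1,\ldots,u_{k-1}$ contribute no overlap beyond what is already accounted for, the wedge $\Psi(E')$ is orthogonal to $\Psi(F)$), we get $\deltamin(E,\orthC{F})\leq\delta(E,E')=\aangle(E,F)$. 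Combining, $\deltamin(E,\orthC{F})=\aangle(E,F)=\deltamin(\hatp,\orthC{\hatq})$, and likewise $\sin\rho_{{\rm min}}(E,\orthC{F})=\aangle(E,F)$, which is the assertion.
\end{proof}
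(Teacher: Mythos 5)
The paper states this proposition without proof, so there is nothing to compare against; judged on its own terms, your argument has a genuine gap at the decisive step. The first half is fine: with $\hatp=\Psi(E)$, $\hatq=\Psi(F)$, identity~\eqref{aangle:sine rho} and the inclusion $\Psi(\orthC{F})\subseteq\orthC{\hatq}$ give the one--sided bound $\aangle(E,F)=\deltamin(\hatp,\orthC{\hatq})\leq \deltamin(E,\orthC{F})$. The problem is the reverse inequality. Your subspace $E'=\langle u_1,\ldots,u_{k-1},u_k'\rangle$ does lie in $\orthC{F}$, but the ``direct computation'' you invoke is wrong: since $u_k'$ is orthogonal to $u_i$ and to the principal vectors $v_i$ of $F$ for $i<k$, one gets $\langle\Psi(E),\Psi(E')\rangle=\langle u_k,u_k'\rangle=\sin\theta_k$, hence $\delta(E,E')=\cos\theta_k$, the cosine of the \emph{largest principal angle}; whereas $\aangle(E,F)=\prod_{i=1}^k\cos\theta_i$ by Proposition~\ref{prop: alpha = det Pi(E F)}(b). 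These coincide only when at most one principal angle is nonzero. (Relatedly, you identify $\theta_k$ with $\rho(E,F)$, but $\rho(E,F)=\arccos\prod_i\cos\theta_i$, which is not a principal angle for $k\geq 2$.)

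Moreover this is not a repairable slip within your strategy: the obstacle you yourself flagged (the minimizer in the hyperplane $\orthC{\hatq}$ is the normalized component of $\Psi(E)$ orthogonal to $\Psi(F)$, which is in general not decomposable) really does bite. Concretely, in $V=\R^4$, $k=2$, take $F=\langle e_1,e_2\rangle$ and $E=\langle \tfrac{1}{\sqrt2}(e_1+e_3),\tfrac{1}{\sqrt2}(e_2+e_4)\rangle$, so both principal angles equal $\pi/4$ and $\aangle(E,F)=\tfrac12$. Every $E'\in\orthC{F}$ contains a unit vector $x=\cos\psi\,e_3+\sin\psi\,e_4\in F^\perp$, and writing $E'=\langle x,y\rangle$ with $y$ a unit vector orthogonal to $x$ one finds $\abs{\langle\Psi(E),x\wedge y\rangle}=\tfrac{1}{\sqrt2}\abs{\langle \cos\psi\,a_2-\sin\psi\,a_1,\,y\rangle}\leq\tfrac{1}{\sqrt2}$ (with $a_1,a_2$ the above basis of $E$), so $\min_{E'\in\orthC{F}}\delta(E,E')=\tfrac{1}{\sqrt2}>\tfrac12=\aangle(E,F)$. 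Thus, with $\orthC{F}$ read as the subset of $\Gr_k(V)$ defined in the paper, the asserted equality fails for $k\geq2$; only the inequality $\deltamin(E,\orthC{F})\geq\aangle(E,F)$ survives (indeed $\deltamin(E,\orthC{F})=\cos\theta_k$). The identity is correct only if one reads $\orthC{F}$ as the projective orthogonal complement $\orthC{\Psi(F)}$ inside $\Pp(\wedge_kV)$ — but under that reading your first paragraph (apply~\eqref{aangle:sine rho} to the Pl\"ucker images) already finishes the proof and the construction of $E'$ is superfluous. So either the statement must be reinterpreted, or a proof of it in the Grassmannian reading cannot exist; your attempt papers over exactly this point.
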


Next we characterize  the angle $\aangle(E,F)$.
Consider the notation of definition~\ref{def orthog proj}.

\begin{proposition}\label{prop: alpha = det Pi(E F)}
Given $E,F\in\Gr_k(V)$,   
\begin{enumerate}
\item[(a)] $\aangle(E,F)= \aangle(E^\perp, F^\perp)$,
\item[(b)] $\aangle(E,F)= \abs{\det(\pi_{E,F})} = \abs{\det(\pi_{F,E})}$,
\item[(c)] $E\perp F$\, iff \,
 there exists a pair $(e,f)$ of unit vectors
such that $e\in E\cap F^\perp$ and $f\in F\cap E^\perp$,
\item[(d)] $\deltamin(E,F^\perp)\geq \aangle(E,F)$.
\end{enumerate}
\end{proposition}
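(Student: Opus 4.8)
The plan is to reduce everything to the Plücker picture, where $\aangle(E,F) = \aangle(\Psi(E),\Psi(F)) = \abs{\langle e, f\rangle}$ for unit $k$-vectors $e = \Psi(E)$, $f = \Psi(F)$, and then exploit the exterior-power description of the orthogonal projection. For (b), I would argue exactly as in the proof of Proposition~\ref{delta, deltamin, deltaH}(a): the $k$-th exterior power $\wedge_k \pi_{F,E}\colon \wedge_k F \to \wedge_k E$ is again an orthogonal projection, and since $\wedge_k F$ and $\wedge_k E$ are both $1$-dimensional spanned by the unit vectors $f$ and $e$, we get $\abs{\langle e,f\rangle} = \norm{\wedge_k \pi_{F,E}} = \abs{\det(\pi_{F,E})}$, and symmetrically $= \abs{\det(\pi_{E,F})}$ since $\pi_{E,F}$ and $\pi_{F,E}$ are adjoint to one another. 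This establishes (b), and (a) follows immediately by combining (b) with the identity $\abs{\det(\pi_{E,F})} = \abs{\det(\pi_{E^\perp,F^\perp})}$, which one checks by choosing an orthogonal $g \in \SO(V)$ with $g(F) = E$ (as in Proposition~\ref{delta, deltamin, deltaH}) so that $\pi_{E^\perp,F^\perp}$ is conjugate to $\pi_{F^\perp,E^\perp}$ and then $\wedge_{n-k}$; alternatively one can pass through the Hodge star, which intertwines $\cdot^\perp$ with an isometry on exterior powers, so $\abs{\langle \ast e, \ast f\rangle} = \abs{\langle e,f\rangle}$.

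For (c), I would unwind the definition $E \perp F \iff \aangle(E,F) = 0 \iff \det(\pi_{E,F}) = 0$, i.e.\ $\pi_{E,F}\colon E \to F$ is not injective. So there is a nonzero $e \in E$ with $\pi_F(e) = 0$, meaning $e \in E \cap F^\perp$; normalize it. By the symmetric statement applied to $\pi_{F,E}$ (which also has zero determinant), there is a unit $f \in F \cap E^\perp$. Conversely, if such a pair $(e,f)$ exists, then $e \in E$ lies in $\ker \pi_{E,F}$, so $\det(\pi_{E,F}) = 0$ and $E \perp F$. The only mild care needed is the logical structure: the existence of $e \in E \cap F^\perp$ and the existence of $f \in F \cap E^\perp$ are a priori two separate consequences, but each follows from $\det(\pi_{E,F}) = \det(\pi_{F,E}) = 0$, and $\aangle(E,F) = \aangle(F,E)$ is visibly symmetric from (b).

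Finally (d): using (b) and (a), $\aangle(E,F) = \abs{\det(\pi_{E,F})}$, and I want $\deltamin(E,F^\perp) \geq \abs{\det(\pi_{E,F})}$. Unravelling, $\deltamin(E,F^\perp) = \min_{u \in E\setminus\{0\}} \deltamin(\hatu, F^\perp) = \min_{\norm u = 1,\, u\in E} \norm{\pi_F(u)} = \minexp(\pi_{E,F})$, where the middle equality is exactly the computation $\norm{\pi_{E,F^\perp}(u)} = \min_{v\in F}\delta(\hatu,\hatv)$ from Proposition~\ref{delta, deltamin, deltaH}(b) applied with $F$ replaced by $F^\perp$ (so $F^{\perp\perp} = F$). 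Then since $\pi_{E,F}$ has all singular values in $[0,1]$, its smallest singular value $\minexp(\pi_{E,F})$ is at least the product of all of them, which is $\abs{\det(\pi_{E,F})}$; this is the same elementary inequality $\minexp(g) \geq \det(g)$ used at the end of the proof of Proposition~\ref{delta, deltamin, deltaH}(c). The one step that requires a moment's thought — and the closest thing to an obstacle — is getting the index bookkeeping right in (d): making sure that $\deltamin(E,F^\perp)$ really equals the least singular value of $\pi_{E,F}$ (not $\pi_{E,F^\perp}$), which comes down to noting $\delta(\hatu, \widehat{v}) = \norm{\pi_{\langle v\rangle^\perp}(u)}$ summed/minimized appropriately and that minimizing $\delta(\hatu,\hatv)$ over $v \in F^\perp$ is the same as computing $\norm{\pi_F(u)}$.
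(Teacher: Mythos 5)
Your proposal is correct and follows essentially the same route as the paper: the Hodge-star isometry for (a), the Gram/exterior-power determinant identity for (b), the kernels of $\pi_{E,F}$ and $\pi_{F,E}$ for (c), and the singular-value bound $\abs{\det(\pi_{E,F})}\leq \minexp(\pi_{E,F})$ for (d); your auxiliary conjugation sketch for (a) is vague as stated, but it is dispensable since the Hodge-star alternative you offer is exactly the paper's argument. In fact your bookkeeping in (d), identifying $\delta_{{\rm min}}(E,F^\perp)$ with the least singular value $\minexp(\pi_{E,F})$, is more careful than the paper's own display, which writes $\norm{\pi_{E,F}}$ where the least expansion is meant (the inequality against $\abs{\det(\pi_{E,F})}$ survives either way, since all singular values lie in $[0,1]$).
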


\begin{proof} Given $E,F\in\Gr_k(V)$, take orthonormal
basis $\{u_1,\ldots, u_k\}$ and $\{v_1,\ldots, v_k\}$ of $E$ and $F$, respectively, and consider the associated unit $k$-vectors $u= u_1\wedge  \ldots \wedge u_k$ and
$v= v_1\wedge  \ldots \wedge v_k$, so that  $u\in \Psi(E)$ and $v\in\Psi(F)$.

Using the Hodge star operator we obtain unit vectors  $\ast u\in \Psi(E^\perp)$ and $\ast v\in \Psi(F^\perp)$.
Hence
$$\alpha(E^\perp,F^\perp)=\abs{\langle\ast  u, \ast v \rangle}
=\abs{\langle u,  v \rangle} =\alpha(E,F)\;, $$
which proves (a). Also
\begin{align*}
\aangle(E,F) & :=
  \abs{ \langle \, u_1\wedge  \ldots \wedge u_k, \,
v_1\wedge   \ldots \wedge v_k\, \rangle}\\
&=\abs{\det \left( \begin{array}{cccc}
\langle u_1,v_1\rangle & \langle u_1,v_2\rangle & \ldots & \langle u_1,v_k\rangle\\
\langle u_2,v_1\rangle & \langle u_2,v_2\rangle & \ldots & \langle u_2,v_k\rangle\\
\vdots & \vdots & \ddots & \vdots \\
\langle u_k,v_1\rangle & \langle u_k,v_2\rangle & \ldots & \langle u_k,v_k\rangle 
\end{array}\right)}\\
&= \abs{\det (\pi_{E,F})}
\;. 
\end{align*}

For the second equality write $u_i=w_i +\sum_{j=1}^k \langle u_i,v_j\rangle\,v_j$ with $w_i\in F^\perp$ and use the anti-symmetry of the exterior product. For the third equality remark that the matrix with entries $\langle u_i,v_j\rangle$ represents $\pi_{E,F}$ w.r.t. the given orthonormal basis.
By symmetry,  $\aangle(E,F)= \abs{\det(\pi_{F,E})}$. This proves (b).

From these relations,
$\aangle(E,F)=0$ \, $\Leftrightarrow$\, $\Ker (\pi_{E,F})\neq \{0\}$ \, $\Leftrightarrow$\, $\Ker (\pi_{F,E})\neq \{0\}$,
which explains (c).

By proposition~\ref{delta, deltamin, deltaH} (b), and because all singular values of $\pi_{E,F}$ are in  $[0,1]$,
$$ \deltamin(E,F^\perp) = \norm{\pi_{E,F}}
\geq \abs{\det( \pi_{E,F} ) } = 
\aangle_k(E,F) \;,$$
which proves (d).

\end{proof}

Finally, we extend $\aangle$-angle to flags.
Consider a signature $\tau$ of length $k$.

\begin{definition}
\label{def tau aangle}
Given flags $F,G\in 
 \FF_\tau(V)$, define
\begin{align*}
\aangle(F,G) = \aangle_\tau(F,G) &:=
\min_{1\leq j\leq k} \aangle(F_j,G_j)\;.
\end{align*}
\end{definition}

\begin{definition}
\label{def Flag orth complement}
We say that two $\tau$-flags $F,G\in\FF_\tau(V)$  {\em are orthogonal}, and we write $F\perp G$,\,
iff \, $F_j\perp G_j$ for some $j=1,\ldots, k$ .
\end{definition}

Comparing the two definitions, for all $F, G\in \FF_\tau(V)$
$$ \aangle_\tau(F,G)=0\quad \Leftrightarrow\quad  G\perp F  \;. $$
Hence, the {\em orthogonal
flag hyperplane} of $F$ is defined as
$$ \orthC{F}:=\{\, \Sigma(F):=\{\, G\in\FF_\tau(V)\,:\, 
 \aangle(G,F)=0\,\}\;. $$

As in the previous cases, the number $\aangle_\tau(F,G)$ measures the sine of the minimum angle between
$F$ and $\orthC{G}$.
\begin{proposition}
\label{aangle:sine rho:grassmann} 
For any $F, G\in \FF_\tau(V)$,  
\begin{equation*}
\aangle(E,F)=\sin \rho_{{\rm min}}(F,\orthC{G}) =\deltamin(F,\orthC{G})\;.
\end{equation*}
\end{proposition}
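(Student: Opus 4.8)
\emph{Approach.} The plan is to reduce the statement to the projective case~\eqref{aangle:sine rho} by transporting the $\tau$-flag data, coordinate by coordinate, through the Pl\"ucker embedding into a product of projective spaces, and then taking a minimum over the $k$ coordinates.

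\emph{Set-up.} Write $v_j:=\Psi(F_j)$ and $w_j:=\Psi(G_j)$ in $\Pp(\wedge_{\tau_j}V)$, for $1\le j\le k$, where $\Psi$ is the Pl\"ucker embedding. Each map $\Psi:\Gr_{\tau_j}(V)\to\Pp(\wedge_{\tau_j}V)$ is, by the very definitions~\eqref{Grassmann:arc distance}--\eqref{Grassmann:sine distance}, an isometry for $\rho$ and for $\delta$, and $\aangle(F_j,G_j)=\aangle(v_j,w_j)$ by Definition~\ref{grassmann rho delta alpha}; hence $F\mapsto(v_1,\dots,v_k)$ embeds $\FF_\tau(V)$ isometrically --- for the metrics $\rho_\tau,\delta_\tau$, which by~\eqref{rho:tau}--\eqref{delta:tau} are the coordinatewise maxima --- into $\prod_{j=1}^{k}\Pp(\wedge_{\tau_j}V)$. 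Under this embedding the orthogonal flag hyperplane $\orthC G$ becomes a union of coordinate hyperplane slices: by Definitions~\ref{def tau aangle} and~\ref{def Flag orth complement}, $G'\in\orthC G$ iff $G'_j\perp G_j$ --- equivalently $\Psi(G'_j)\in\orthC{w_j}$, the projective orthogonal complement of the point $w_j$ --- for at least one $j$; so $\orthC G$ corresponds to $\bigcup_{j=1}^{k}\{\,(x_1,\dots,x_k):x_j\in\orthC{w_j}\,\}$.

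\emph{Computation.} The $\rho_\tau$-distance (and likewise the $\delta_\tau$-distance) from the point $(v_1,\dots,v_k)$ to this finite union is the minimum over $j$ of the distances to the individual slices; and since the slice $\{x_j\in\orthC{w_j}\}$ imposes no condition on the coordinates $i\ne j$, it is attained by perturbing only the $j$-th coordinate, so the distance to it equals $\rho_{{\rm min}}(v_j,\orthC{w_j})$, the $\rho$-distance in $\Pp(\wedge_{\tau_j}V)$ from $v_j$ to the hyperplane $\orthC{w_j}$. By the projective identity~\eqref{aangle:sine rho}, $\sin\rho_{{\rm min}}(v_j,\orthC{w_j})=\deltamin(v_j,\orthC{w_j})=\aangle(v_j,w_j)=\aangle(F_j,G_j)$. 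Putting $a_j:=\rho_{{\rm min}}(v_j,\orthC{w_j})\in[0,\frac{\pi}{2}]$ we get $\rho_{{\rm min}}(F,\orthC G)=\min_{1\le j\le k}a_j$, and since $\sin$ is increasing on $[0,\frac{\pi}{2}]$,
$$ \sin\rho_{{\rm min}}(F,\orthC G)=\min_{1\le j\le k}\sin a_j=\min_{1\le j\le k}\aangle(F_j,G_j)=\aangle_\tau(F,G)=\deltamin(F,\orthC G), $$
the last equality by the same reasoning with $\deltamin$ in place of $\sin\rho_{{\rm min}}$; reading the ``$\aangle(E,F)$'' of the statement as $\aangle_\tau(F,G)$, this is the assertion. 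The step that really needs care is the reduction just described: one must work in the product-of-projective-spaces model, where a coordinate hyperplane slice genuinely is attained by moving a single coordinate, rather than insisting that the competitor $G'$ stay a flag while one perturbs a single $G'_j$; once this is pinned down there is no further obstacle, the remaining manipulations being formal.
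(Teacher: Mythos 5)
Your reduction is correct up to the step you yourself single out, but that step is a genuine gap, not a bookkeeping issue. The paper defines $\orthC{G}$ as a subset of $\FF_\tau(V)$, so $\rho_{{\rm min}}(F,\orthC{G})$ and $\deltamin(F,\orthC{G})$ are minima over genuine flags $G'$ with $\aangle_\tau(G',G)=0$. Your argument replaces this constraint set by the union of coordinate slices $\bigcup_{j}\{(x_1,\dots,x_k)\,:\,x_j\perp\Psi(G_j)\}$ inside $\prod_j\Pp(\wedge_{\tau_j}V)$, thereby dropping both the decomposability of each coordinate (membership in the Pl\"ucker image of $\Gr_{\tau_j}(V)$) and the nestedness of the components. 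Enlarging the competitor set can only lower the minimum, so what you actually establish is the inequality $\sin\rho_{{\rm min}}(F,\orthC{G})\geq\aangle_\tau(F,G)$; the stated equality would require a flag $G'\in\orthC{G}$ realizing the relaxed value, and in general none exists. Already for a signature of length one (so $\FF_\tau(V)=\Gr_k(V)$) the identity fails under the flag/Grassmannian reading: in $V=\R^4$ take $E=\langle e_1,e_2\rangle$ and $F=\langle\cos a\,e_1+\sin a\,e_3,\ \cos b\,e_2+\sin b\,e_4\rangle$, so $\aangle(E,F)=\cos a\cos b$. Any $E'\in\Gr_2(V)$ with $\aangle(E',F)=0$ contains a unit vector $w\in F^\perp$, and then $\abs{\det\pi_{E',E}}\leq\norm{\pi_E\,w}\leq\max\{\sin a,\sin b\}$, hence $\delta(E,E')=\sqrt{1-\det(\pi_{E,E'})^2}\geq\min\{\cos a,\cos b\}$, which is strictly larger than $\cos a\cos b$ for $0<a,b<\tfrac{\pi}{2}$; the minimizer in the ambient hyperplane $(\Psi F)^\perp$ is a non-decomposable $2$-vector. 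So insisting that the competitor stay a flag is not something that can be ``pinned down'' later --- with that reading the equality is simply false, and only your lower bound survives.

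What your computation does prove cleanly is the proposition under the reading in which the orthogonal object is taken coordinatewise to be the full projective hyperplane $\{x\in\Pp(\wedge_{\tau_j}V)\,:\,x\perp\Psi(G_j)\}$; there the claim follows at once from the projective identity~\eqref{aangle:sine rho} together with the max-metric structure~\eqref{rho:tau}--\eqref{delta:tau}, exactly as you argue. The paper states this proposition without proof, so there is no argument of the authors to compare against; but since the text defines $\orthC{G}$ inside $\FF_\tau(V)$, a complete write-up must either justify that the ambient (Pl\"ucker-hyperplane) interpretation of $\rho_{{\rm min}}$ and $\deltamin$ is the intended one, or else weaken the statement to the inequality $\sin\rho_{{\rm min}}(F,\orthC{G})\geq\aangle_\tau(F,G)$ --- it cannot simply assert that the restriction to flags is harmless.
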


\medskip

Consider  a sequence of linear maps $g_0, g_1, \ldots, g_{n-1} \in \mathcal{L}(V)$. The following quantities, called {\em expansion rifts}, measure
the break of expansion in the composition 
$g_{n-1}\,\ldots\, g_1\, g_0$ of the  maps $g_j$.
\begin{definition}
\label{def rift}
The first  expansion rift of the sequence above is the number
$$\rift (g_0, g_1, \ldots, g_{n-1}) := \frac{\norm{g_{n-1} \ldots g_1 g_0}}{ \norm{g_{n-1}} \ldots \norm{g_1} \norm{g_0}}\in [1,+\infty)\,. $$
Given $1\leq k\leq \dim V$, the $k$-th expansion rift is
$$\rift_k (g_0, g_1, \ldots, g_{n-1}) := \rift (\wedge_k g_0, \wedge_k g_1, \ldots, \wedge_k g_{n-1})\,. $$
Given a signature $\tau=(\tau_1,\ldots, \tau_k)$,
 the $\tau$-expansion rift is defined as
$$\rift_\tau (g_0, g_1, \ldots, g_{n-1}) := \min_{1\leq j\leq k}
\rift_{\tau_j} (g_0, g_1, \ldots, g_{n-1})\,. $$
\end{definition}

\medskip

The key concept of this section is that of angle between linear maps. The quantity  $\aangle(g,g')$, for instance,
 is the sine of the angle between
$\varphi_g(\mostexp(g))=\mostexp(g^\ast)$ and $\orthC{\mostexp(g')}$.
As we will see, this angle is a lower bound on the expansion rift of two linear maps $g$ and $g'$.

\begin {definition}\label{alpha:def}
Given  $g,g'\in\mathcal{L}(V)$, we  define
\begin{align*}
\aangle (g,g')&:=\aangle (\mostexp(g^\ast),\mostexp(g'))
\quad  &\text{ if }\; g \; \text{ and }\; g' \; \text{ have a first gap ratio} \\
\aangle_k(g,g') &:=\aangle (\mostexp_{k}(g^\ast),\mostexp_{k}(g'))\quad &\text{ if }\; g \; \text{ and }\; g' \; \text{ have a }\; k \; \text{ gap ratio } \\
\aangle_\tau(g,g') &:=\aangle (\mostexp_{\tau}(g^\ast),\mostexp_{\tau}(g')) \quad &\text{ if }\; g \; \text{ and }\; g' \; \text{ have a }\; \tau \; \text{ gap pattern}.
\end{align*}
\end {definition}

\medskip

The following exotic operation is introduced to obtain an upper bound
on the expansion rift $\rift(g,g')$.
Consider the algebraic operation $ a\oplus b := a+b -a\,b $ on the set $[0,1]$.
Clearly
$([0,1],\oplus)$ is a commutative semigroup isomorphic to $([0,1],\cdot)$. In fact, the transformation $\Phi:([0,1],\oplus)\to ([0,1],\cdot)$, $\Phi(x):= 1-x$,
is a semigroup isomorphism. We summarize some properties of this
operation.

\begin{proposition} \label{oplus:prop}
For any $a,b,c\in [0,1]$,
\begin{enumerate}
\item[(1)] $0\oplus a = a$,
\item[(2)] $1\oplus a = 1$,
\item[(3)] $a\oplus b = (1-b)\,a+b = (1-a)\,b+a $,
\item[(4)] $a\oplus b <1$\; $\Leftrightarrow$\; $a<1$ and $b<1$,
\item[(5)] $a\leq b$ \; $\Rightarrow$\; $a\oplus c\leq b\oplus c$,
\item[(6)] $b>0$\; $\Rightarrow$\; 
$({a} {b}^{-1}\oplus c)\,b\leq a\oplus c$,
\item[(7)] $a\,c + b\,\sqrt{1-a^2}\,\,\sqrt{1-c^ 2}    \leq \sqrt{a^2 \oplus b^2}$.
\end{enumerate}
\end{proposition}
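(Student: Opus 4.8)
The plan is to verify each of the seven identities/inequalities in turn, treating them in roughly the listed order since later items build on earlier ones. Items (1) and (2) are immediate from the definition $a\oplus b = a+b-ab$: substituting $a=0$ gives $0+b-0=b$, and substituting $a=1$ gives $1+b-b=1$. Item (3) is also a pure algebraic rearrangement: $a+b-ab = (1-b)a+b = (1-a)b+a$ by factoring. These are the warm-up cases and require no real work.

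For item (4) I would use the semigroup isomorphism $\Phi(x)=1-x$ already introduced in the excerpt, which satisfies $\Phi(a\oplus b)=\Phi(a)\Phi(b)$, i.e. $1-(a\oplus b)=(1-a)(1-b)$. Then $a\oplus b<1 \Leftrightarrow (1-a)(1-b)>0 \Leftrightarrow 1-a>0$ and $1-b>0$ (both factors lie in $[0,1]$, so a product being strictly positive forces each factor strictly positive), which is exactly $a<1$ and $b<1$. For item (5), monotonicity, I would write $a\oplus c = (1-c)a+c$ from (3), and since $1-c\ge 0$, the map $a\mapsto (1-c)a+c$ is nondecreasing, so $a\le b$ gives $a\oplus c\le b\oplus c$. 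Item (6): assuming $b>0$, using (3) twice, $(ab^{-1}\oplus c)\,b = \bigl((1-c)ab^{-1}+c\bigr)b = (1-c)a + cb \le (1-c)a + c = a\oplus c$, where the inequality uses $cb\le c$ since $b\le 1$ and $c\ge 0$.

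The main obstacle is item (7), the inequality $ac + b\sqrt{1-a^2}\sqrt{1-c^2} \le \sqrt{a^2\oplus b^2}$. Here I would first rewrite the right-hand side using $\Phi$: $a^2\oplus b^2 = 1-(1-a^2)(1-b^2)$. Since both sides are nonnegative (the left side is a sum of nonnegative terms as $a,b,c\in[0,1]$), it suffices to compare squares. Expanding the square of the left side, $a^2c^2 + 2abc\sqrt{1-a^2}\sqrt{1-c^2} + b^2(1-a^2)(1-c^2)$, and the right side, $1-(1-a^2)(1-b^2) = a^2+b^2-a^2b^2$, the claimed inequality reduces to showing
\begin{equation*}
a^2c^2 + 2abc\sqrt{1-a^2}\sqrt{1-c^2} + b^2(1-a^2)(1-c^2) \le a^2 + b^2 - a^2 b^2.
\end{equation*}
I expect the clean way to see this is trigonometric: set $a=\cos\theta$, $c=\cos\varphi$ with $\theta,\varphi\in[0,\pi/2]$, so $\sqrt{1-a^2}=\sin\theta$, $\sqrt{1-c^2}=\sin\varphi$, and the left-hand side of (7) becomes $\cos\theta\cos\varphi + b\sin\theta\sin\varphi$. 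Writing $b=\cos\psi$ for suitable $\psi$ (or just using $0\le b\le 1$), one has $\cos\theta\cos\varphi + b\sin\theta\sin\varphi \le \cos\theta\cos\varphi + \sin\theta\sin\varphi$ only when that is the wrong direction — rather, the correct bound is obtained by viewing $\cos\theta\cos\varphi+b\sin\theta\sin\varphi$ as an inner product: it equals $\langle (\cos\theta,\sin\theta\sqrt{b}\,\ast),\ldots\rangle$; more robustly, apply Cauchy–Schwarz in the form $\cos\theta\cos\varphi + b\sin\theta\sin\varphi \le \sqrt{\cos^2\theta + b\sin^2\theta}\cdot\sqrt{\cos^2\varphi + b\sin^2\varphi}$, and then check that each factor squared, $\cos^2\theta+b\sin^2\theta = 1-(1-b)\sin^2\theta \le 1-(1-b)\cdot 0$... — this needs care, so the honest approach is: by Cauchy–Schwarz on the vectors $(a, \sqrt{b}\sqrt{1-a^2})$ and $(c,\sqrt{b}\sqrt{1-c^2})$, the left side of (7) is at most $\sqrt{a^2+b(1-a^2)}\sqrt{c^2+b(1-c^2)} = \sqrt{a^2\oplus b}\cdot\sqrt{c^2\oplus b}$; but we want $\sqrt{a^2\oplus b^2}$, so I would instead note $a^2\oplus b^2 \ge (\text{left side})^2$ follows by treating it as the statement that the Gram-type determinant is nonnegative. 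Concretely, the difference $a^2+b^2-a^2b^2 - \bigl(ac+b\sqrt{1-a^2}\sqrt{1-c^2}\bigr)^2$, after substituting $a=\cos\theta, c=\cos\varphi$, simplifies — using $\cos^2\theta+\sin^2\theta=1$ and the addition formula — to $b^2\sin^2(\theta-\varphi) + (1-b^2)\sin^2\theta \cdot(\text{nonneg})$, hence is $\ge 0$; I would carry out this routine expansion to finish. The key insight to highlight is that (7) is the exact inequality needed so that $\sqrt{\cdot\oplus\cdot}$ dominates the "rotation by angle plus perpendicular slack" combination, which is why $\oplus$ is the right bookkeeping device for composing expansion rifts.
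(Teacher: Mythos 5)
Your items (1)--(6) are fine; the paper simply leaves these as exercises, so the only part where a comparison is meaningful is (7), and there your route is genuinely different from the paper's. The paper fixes $a,b$ and maximizes $f(c)=a\,c+b\sqrt{1-a^2}\sqrt{1-c^2}$ over $c\in[0,1]$ by elementary calculus: $f'$ vanishes exactly where $a\sqrt{1-c^2}=b\,c\sqrt{1-a^2}$, and the maximum value is $\sqrt{a^2\oplus b^2}$, giving (7) with no expansion at all. Your plan---square both sides and verify the resulting polynomial inequality---also works, and in fact closes more cleanly than your sketch suggests: the difference is a single perfect square,
$$ a^2\oplus b^2-\bigl(a\,c+b\sqrt{1-a^2}\sqrt{1-c^2}\bigr)^2=\bigl(a\sqrt{1-c^2}-b\,c\sqrt{1-a^2}\bigr)^2\;\geq\;0\,, $$
which is just the Cauchy--Schwarz defect (Lagrange identity) for the vectors $(a,\,b\sqrt{1-a^2})$ and $(c,\,\sqrt{1-c^2})$; note these are the right vectors, not the pair $(a,\sqrt{b}\sqrt{1-a^2})$, $(c,\sqrt{b}\sqrt{1-c^2})$ you tried first, which only yields $\sqrt{a^2\oplus b}\,\sqrt{c^2\oplus b}$ and, as you noticed, does not suffice. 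So the one-line finish you were groping for is: apply Cauchy--Schwarz to $(a,\,b\sqrt{1-a^2})$ and $(c,\,\sqrt{1-c^2})$ and use $c^2+(1-c^2)=1$. The calculus proof and the squaring/Cauchy--Schwarz proof are of comparable length; yours has the small advantage of exhibiting the exact defect term (which also identifies the equality case $a\sqrt{1-c^2}=b\,c\sqrt{1-a^2}$, the same point where the paper's $f'$ vanishes).

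One detail to fix before you write it up: your guessed simplified form of the difference, $b^2\sin^2(\theta-\varphi)+(1-b^2)\sin^2\theta\cdot(\text{nonneg})$, is not correct as a decomposition---with $a=\cos\theta$, $c=\cos\varphi$ the difference equals $(\cos\theta\sin\varphi-b\sin\theta\cos\varphi)^2$, which can be strictly smaller than $b^2\sin^2(\theta-\varphi)$. Since your stated plan is to carry out the expansion anyway, this is a harmless misprediction rather than a gap, but the final text should state the correct identity.
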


\begin{proof} Items (1)-(6) are left as exercises.
For the last item consider the function 
$f:[0,1]\to [0,1]$
defined by $f(c):= a\,c + b\,\sqrt{1-a^2}\,\,\sqrt{1-c^ 2}$.
A simple computation shows that
$$ f'(c)=a-\frac{b\,c\,\sqrt{1-a^2}}{\sqrt{1-c^2}}$$
The derivative $f'$  has a zero at $c= a/\sqrt{a\oplus b}$,
and one can check that this zero is a global maximum of $f$.
Since
$f( a/\sqrt{a\oplus b} )=\sqrt{a^2\oplus b^2}$,
item (7) follows. 
\end{proof}

\begin {definition}\label{beta:def}
Given $g,g'\in\mathcal{L}(V)$ with $\tau$-gap patterns,  
the  upper $\tau$-angle between $g$ and $g'$ is defined to be
\begin{equation*}  
\bangle_\tau(g,g'):=\sqrt{ \rgap_\tau(g)^{-2} \oplus \aangle_\tau(g,g')^ 2 \oplus \rgap_\tau(g')^{-2}}\;. 
\end{equation*}
We will  write  $\bangle_k(g,g')$  when $\tau=(k)$, and 
  $\bangle(g,g')$ when $\tau=(1)$.
\end{definition}

Next proposition relates norm expansion by $g$ and distance contraction by $\varphi_g$ with angles and gap ratios.

\begin{proposition}\label{prop expansion aangle}
Given $g\in\mathcal{L}(V)$ with
$\sgap(g)<1$,
a point $\hatw\in \Pp(V)$ and a unit vector $w\in\hatw$,
\begin{enumerate}
\item[(a)]\quad $\displaystyle \aangle(\hatw,\mostexp(g))\,\norm{g}  \leq \norm{g\,w} \leq \norm{g} \, \sqrt{  \aangle(\hatw,\mostexp(g))^2 \oplus \sigma(g)^{2}} $,
\smallskip

\item[(b)]\quad $\displaystyle \delta( \varphi_g(\hatw), \mostexp(g^\ast) )\leq  \frac{\sigma(g)}{\aangle(\hatw,\mostexp(g))}\,\delta(\hatw, \mostexp(g)) \,  $.

\end{enumerate}
\end{proposition}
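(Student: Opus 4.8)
The plan is to work in a singular basis of $g$ and reduce everything to an elementary estimate about how the norm of $g\,w$ depends on the position of $w$ relative to $\mostexp(g)$. Write $g = U_{g^\ast}\,D_g\,(U_g)^\ast$ in its singular value decomposition, let $\{v_1,\dots,v_n\}$ be the singular basis of $g$ (so $v_1$ spans $\mostexp(g)$), and decompose the unit vector $w = \sum_j c_j v_j$ with $\sum_j c_j^2 = 1$. By Definition~\ref{def aangle(p,q)} we have $\aangle(\hatw,\mostexp(g)) = |c_1|$, and $\delta(\hatw,\mostexp(g))^2 = 1 - c_1^2 = \sum_{j\geq 2} c_j^2$. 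Since $g\,v_j = s_j(g)\,v_j(g^\ast)$ and the $v_j(g^\ast)$ are orthonormal, $\norm{g\,w}^2 = \sum_j s_j(g)^2 c_j^2$.

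For part (a): the lower bound is immediate, $\norm{g\,w}^2 \geq s_1(g)^2 c_1^2 = \norm{g}^2\,\aangle(\hatw,\mostexp(g))^2$. For the upper bound, split $\norm{g\,w}^2 = s_1(g)^2 c_1^2 + \sum_{j\geq 2} s_j(g)^2 c_j^2 \leq s_1(g)^2 c_1^2 + s_2(g)^2 \sum_{j\geq 2} c_j^2 = s_1(g)^2 c_1^2 + s_2(g)^2 (1 - c_1^2)$. Factoring out $s_1(g)^2 = \norm{g}^2$ and recalling $\sgap(g) = s_2(g)/s_1(g)$, this equals $\norm{g}^2\bigl(c_1^2 + \sgap(g)^2(1-c_1^2)\bigr)$. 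By Proposition~\ref{oplus:prop}(3), $c_1^2 + \sgap(g)^2(1-c_1^2) = (1-c_1^2)\,\sgap(g)^2 + c_1^2 = c_1^2 \oplus \sgap(g)^2 = \aangle(\hatw,\mostexp(g))^2 \oplus \sgap(g)^2$, so $\norm{g\,w} \leq \norm{g}\sqrt{\aangle(\hatw,\mostexp(g))^2 \oplus \sgap(g)^2}$, as claimed. (The hypothesis $\sgap(g)<1$ guarantees $s_1(g)$ is simple so $\mostexp(g)$ is well-defined; it is not otherwise needed for (a).)

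For part (b): by~\eqref{varphi g:mostexp proj}, $\mostexp(g^\ast) = \varphi_g\mostexp(g)$, which is the line spanned by $v_1(g^\ast)$. Now $g\,w = s_1(g) c_1 v_1(g^\ast) + \sum_{j\geq 2} s_j(g) c_j v_j(g^\ast)$, so writing this decomposition of the unit vector $g\,w/\norm{g\,w}$ against the orthonormal basis $\{v_j(g^\ast)\}$, and using $\delta(\hatx,\hat y)^2 = \norm{x\wedge y}^2/(\norm{x}^2\norm{y}^2)$ with $y = v_1(g^\ast)$, we get
\begin{equation*}
\delta(\varphi_g(\hatw),\mostexp(g^\ast))^2 = \frac{\sum_{j\geq 2} s_j(g)^2 c_j^2}{\sum_j s_j(g)^2 c_j^2} \leq \frac{s_2(g)^2 \sum_{j\geq 2} c_j^2}{s_1(g)^2 c_1^2} = \frac{\sgap(g)^2}{\aangle(\hatw,\mostexp(g))^2}\,\delta(\hatw,\mostexp(g))^2,
\end{equation*}
where in the inequality I bounded the numerator above using $s_j(g)\leq s_2(g)$ for $j\geq 2$ and bounded the denominator below by its first term $s_1(g)^2 c_1^2$. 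Taking square roots gives (b).

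The only mild subtlety — and the step I would be most careful about — is the precise passage between the abstract definitions of $\delta$ and $\aangle$ on $\Pp(V)$ and their coordinate expressions in the (orthonormal) singular basis; once one observes that $\aangle(\hatw,\mostexp(g)) = |\langle w, v_1\rangle|$ and $\delta(\hatw,\mostexp(g))^2 = 1 - \langle w,v_1\rangle^2$ (and likewise for the images under $g$, using that $\{v_j(g^\ast)\}$ is orthonormal), everything reduces to the elementary convexity estimate $\sum_{j\geq2} s_j^2 c_j^2 \leq s_2^2\sum_{j\geq 2} c_j^2$ and the identity in Proposition~\ref{oplus:prop}(3). No genuine obstacle arises; the proof is short.
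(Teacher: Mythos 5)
Your proof is correct and follows essentially the same route as the paper: both arguments decompose $w$ orthogonally with respect to the most expanding direction, identify $\aangle(\hatw,\mostexp(g))$ with the $v_1$-component, and control the complementary part by $s_2(g)=\sgap(g)\,\norm{g}$, with the $\oplus$-identity of Proposition~\ref{oplus:prop}(3) giving the upper bound in (a) and the ratio $\norm{g u}/\norm{g w}$ giving (b). The only cosmetic difference is that you expand $w$ in the full singular basis, whereas the paper splits $w=\alpha\,v+u$ with $u\perp v$ and bounds $\norm{g u}\leq s_2(g)\,\norm{u}$ directly, which is the same estimate.
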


\begin{proof}
Let us write $\alpha= \aangle(\hatw,\mostexp(g))$ and $\sigma= \sgap(g)$.
Take a unit vector $v\in \mostexp(g)$ 
such that $\angle(v,w)$ is non obtuse. Then $w = \alpha \, v + u$ with $u\perp v$ and $\norm{u}=\sqrt{1-\alpha^2}$. Choosing a unit vector
 $v^\ast\in\mostexp(g^\ast)$, we have $g w = \alpha\,\norm{g}\,v^\ast + g u$ with $g u\perp v^\ast$ and $\norm{g u}\leq \sqrt{1-\alpha^2} \,s_2(g)= \sqrt{1-\alpha^2} \,\sigma\,\norm{g}$.
We define the number $0\leq \kappa\leq \sigma$  so that
$\norm{g u}  = \sqrt{1-\alpha^2} \,\kappa\,\norm{g}$.
Hence
\begin{align*}
\alpha^2\,\norm{g}^2 &\leq \alpha^2\,\norm{g}^2 +
\norm{g u}^2 = \norm{g w}^2\;,
\end{align*}
and also
\begin{align*}
 \norm{g w}^2 & = \alpha^2\,\norm{g}^2 +
\norm{g u}^2 =
\norm{g}^2\,\left(\alpha^2 + (1-\alpha^2) \kappa^{2} \right)\\
&= \norm{g}^2 \, \left(  \alpha^2 \oplus \kappa^{2}\right)
\leq 
 \norm{g}^2 \, \left(  \alpha^2 \oplus \sigma^{2}\right) \;,
\end{align*}
which proves (a).

Item (b) follows from
\begin{align*}
\delta\left(  \varphi_g(\hatw), \mostexp(g^\ast)  \right) 
&= \frac{\norm{ g\,v \wedge g w}}{\norm{g v}\,\norm{g w}}
= \frac{\norm{ g\,v \wedge g u}}{\norm{g}\,\norm{g w}} = \frac{\norm{ v^\ast \wedge g u}}{ \norm{g w}} \\
& = \frac{\norm{ g u}}{ \norm{g w}}
\leq \frac{\sigma\,\sqrt{1-\alpha^2}\,\norm{g}}{\alpha\,\norm{g} } = \frac{\sigma\,\delta(\hatw, \mostexp(g))}{\alpha }\;.
\end{align*}

\end{proof}

Next proposition relates the expansion rift
$\rift(g,g')$ with the angle $\aangle(g,g')$
and the upper angle $\bangle(g,g')$.

\begin{proposition}\label{prod:2:lemma}
Given $g, g' \in\mathcal{L}(V)$ with a $(1)$-gap pattern,
$$ \aangle(g,g') \leq \frac{\norm{g'\,g}}{\norm{g'}\,\norm{g}}
\leq \bangle(g,g')   $$
\end{proposition}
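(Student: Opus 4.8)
The plan is to prove the two inequalities separately, exploiting the singular value decomposition of $g$ and $g'$ together with Proposition~\ref{prop expansion aangle}. Throughout write $\alpha=\aangle(g,g')=\aangle(\mostexp(g^\ast),\mostexp(g'))$ and let $v^\ast\in\mostexp(g^\ast)$, $v'\in\mostexp(g')$ be unit vectors.

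\textbf{Lower bound.} The idea is to feed a single well-chosen unit vector through the composition. Let $v\in\mostexp(g)$ be a unit singular vector of $g$, so that $\norm{g\,v}=\norm{g}$ and $g\,v=\norm{g}\,v^\ast$ with $v^\ast\in\mostexp(g^\ast)$. Then
\begin{align*}
\norm{g'\,g} \;\geq\; \norm{g'\,g\,v} \;=\; \norm{g}\,\norm{g'\,v^\ast}\;.
\end{align*}
Now apply the lower bound in Proposition~\ref{prop expansion aangle}(a) to the map $g'$ and the point $\hatw=\mostexp(g^\ast)$: since $g'$ has a first gap ratio, $\norm{g'\,v^\ast}\geq \aangle(\mostexp(g^\ast),\mostexp(g'))\,\norm{g'} = \alpha\,\norm{g'}$. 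Combining, $\norm{g'\,g}\geq \alpha\,\norm{g}\,\norm{g'}$, which is the claimed inequality after dividing by $\norm{g}\,\norm{g'}$.

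\textbf{Upper bound.} Here one must bound the operator norm of the composition, so it is natural to test an arbitrary unit vector $w$ and estimate $\norm{g'\,g\,w}$. Write $u=g\,w/\norm{g\,w}$ (a unit vector, assuming $g\,w\neq 0$, which holds since $\sgap(g)<1$ forces $g$ to have trivial kernel on the relevant subspace — more precisely $s_1(g)>0$ and we only lose the zero vector). Then $\norm{g'\,g\,w}=\norm{g\,w}\,\norm{g'\,u}$. Bound the first factor by Proposition~\ref{prop expansion aangle}(a) applied to $g$: $\norm{g\,w}\leq \norm{g}\sqrt{\aangle(\hatw,\mostexp(g))^2\oplus\sgap(g)^2}$. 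Bound the second factor by the same proposition applied to $g'$ and the point $\hat u=\varphi_g(\hatw)$: $\norm{g'\,u}\leq \norm{g'}\sqrt{\aangle(\varphi_g(\hatw),\mostexp(g'))^2\oplus\sgap(g')^2}$. The remaining task is to show the product of the two square roots is at most $\bangle(g,g')=\sqrt{\rgap(g)^{-2}\oplus\alpha^2\oplus\rgap(g')^{-2}}$, uniformly in $\hatw$.

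\textbf{The main obstacle}, and the technical heart of the argument, is this last reduction. Set $x=\delta(\hatw,\mostexp(g))$ and note $\aangle(\hatw,\mostexp(g))^2=1-x^2$. Proposition~\ref{prop expansion aangle}(b) gives $\delta(\varphi_g(\hatw),\mostexp(g^\ast))\leq \frac{\sgap(g)}{\sqrt{1-x^2}}\,x$, and then the triangle inequality for $\delta$ together with $\delta(\mostexp(g^\ast),\mostexp(g'))=\sqrt{1-\alpha^2}$ controls $\delta(\varphi_g(\hatw),\mostexp(g'))$, hence $\aangle(\varphi_g(\hatw),\mostexp(g'))$ from below via $\aangle^2=1-\delta^2$. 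Substituting these bounds, the product of square roots becomes an explicit function of the single variable $x\in[0,1]$ (with parameters $\sigma:=\sgap(g)$, $\sigma':=\sgap(g')$, $\alpha$), and one must verify it is bounded by $\sqrt{\sigma^2\oplus\alpha^2\oplus\sigma'^2}$. The key algebraic tool is Proposition~\ref{oplus:prop}(7), $a\,c+b\sqrt{1-a^2}\sqrt{1-c^2}\leq\sqrt{a^2\oplus b^2}$, used (likely twice, nested) to collapse the nested $\oplus$'s: first to absorb the contribution of $\varphi_g(\hatw)$'s deviation from $\mostexp(g')$ in terms of $\alpha^2\oplus\sigma'^2$, then to absorb the contribution of $\hatw$'s deviation from $\mostexp(g)$ in terms of $\sigma^2\oplus(\alpha^2\oplus\sigma'^2)$; monotonicity properties (4)--(6) of $\oplus$ handle the book-keeping, in particular replacing the variable factor $\kappa\leq\sigma$ from the proof of Proposition~\ref{prop expansion aangle}(a) by $\sigma$ itself. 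Care is needed at the endpoints $x=0$ (where $\hatw=\mostexp(g)$ and the bound should reduce to the exact expansion) and wherever $1-x^2$ appears in a denominator; a separate check that the extremal $\hatw$ gives equality up to the $\oplus$-estimates confirms the bound is of the right order.
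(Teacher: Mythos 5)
Your lower bound is correct and is exactly the paper's argument: push the unit vector $v\in\mostexp(g)$ through, use $g\,v=\norm{g}\,v^\ast$ and the lower estimate of Proposition~\ref{prop expansion aangle}(a) for $g'$ at $\mostexp(g^\ast)$. The skeleton of your upper bound (test an arbitrary unit $w$, factor $\norm{g'gw}=\norm{gw}\,\norm{g'u}$ with $u=gw/\norm{gw}$, bound each factor by Proposition~\ref{prop expansion aangle}(a)) also matches the paper. The gap is in the step you yourself flag as the heart of the matter: controlling $b:=\aangle(\varphi_g(\hatw),\mostexp(g'))$ by Proposition~\ref{prop expansion aangle}(b) plus the triangle inequality for $\delta$ and $\aangle^2=1-\delta^2$ (incidentally, you need $b$ bounded from \emph{above}, i.e.\ $\delta(\varphi_g(\hatw),\mostexp(g'))$ from below, not ``$\aangle$ from below'' as written). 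That route is too lossy, and the single-variable inequality you then propose to verify is false. Concretely, take $\alpha=0$, $\sgap(g')=0$, $\sigma:=\sgap(g)=0.1$ and $x=\delta(\hatw,\mostexp(g))=0.3$, so $a=\sqrt{1-x^2}\approx 0.954$: your chain gives $\delta(\varphi_g(\hatw),\mostexp(g^\ast))\leq \sigma x/a\approx 0.031$, hence $b\leq\sqrt{1-(1-0.031)^2}\approx 0.25$, and the product of your two factors is about $0.95\times 0.25\approx 0.24$, while $\bangle(g,g')=\sigma=0.1$. The loss is structural: when $\sqrt{1-\alpha^2}$ is close to $1$, converting a small $\delta$-perturbation of $\varphi_g(\hatw)$ away from $\mostexp(g^\ast)$ into a bound on $b$ through $1-\delta^2$ costs a square root, so no amount of $\oplus$-bookkeeping afterwards can recover the estimate.

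The paper avoids this by never passing through the metric triangle inequality. It expands the inner product exactly: writing $w=a\,v+\sqrt{1-a^2}\,u$ with $u\perp v$ and $\norm{gu}=\kappa\norm{g}$, $\kappa\leq\sgap(g)$, one gets $\norm{gw}=\norm{g}\sqrt{a^2\oplus\kappa^2}$ and
\begin{equation*}
b=\abs{\bigl\langle \tfrac{gw}{\norm{gw}},\,v'\bigr\rangle}
\leq \frac{\alpha\,a+\kappa\,\sqrt{1-a^2}\,\sqrt{1-\alpha^2}}{\sqrt{a^2\oplus\kappa^2}}
\leq \frac{\sqrt{\alpha^2\oplus\kappa^2}}{\sqrt{a^2\oplus\kappa^2}}\;,
\end{equation*}
where the last step is precisely Lemma~\ref{oplus:prop}(7). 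The crucial features are that the same $\kappa$ and the same denominator $\sqrt{a^2\oplus\kappa^2}$ appear in both factors, so they cancel when multiplied by $\norm{gw}$, and only at the very end is $\kappa$ replaced by $\sgap(g)$ via the monotonicity items (5)--(6). Your sketch applies $\oplus$(7) to quantities that do not have this product structure, so the cancellation is unavailable; to close the proof you would need to redo the upper bound with the explicit orthogonal decomposition of $w$ and $v'$ as above.
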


\begin{proof}
Let $\alpha:=\aangle(g,g')=\aangle(\mostexp(g^\ast),\mostexp(g'))$ and take unit vectors $v\in\mostexp(g)$,  $v^\ast\in\mostexp(g^\ast)$
and  $v'\in\mostexp(g')$  such that
$\langle v^\ast, v'\rangle= \alpha >0$ and 
$g\,v=\norm{g}\,v^\ast$.

Since $\varphi_g(\mostexp(g))=\mostexp(g^\ast)$,
$w=\frac{g\,v}{\norm{g\,v}}$ is a unit vector in $\hatw = \mostexp(g^\ast)$. Hence, applying proposition~\ref{prop expansion aangle} (a) to $g'$ and $\hatw$, we  get
$$ \aangle(g,g')\,\norm{g'}
= \aangle(\hatw, \mostexp(g')) \,\norm{g'}
\leq \norm{ \frac{g'\,g\,v}{\norm{g\,v}} } \leq \frac{\norm{g'\,g}}{\norm{g}}\;, $$
which proves the first inequality.

For the second, consider $\hatw\in\Pp(g)$ and a unit vector $w\in \hatw$ such that
$a:=\langle w, v\rangle = \aangle(\hatw, \mostexp(g)) \geq 0$.
Then $w = a\, v+ \sqrt{1-a^2}\, u$,
where $u$ is a unit vector orthogonal to $v$. It follows that 
$g\, w = a\, \norm{g}\,v^\ast + \sqrt{1-a^2}\, g\,u$ with $g\,u\perp v^\ast$, and $\norm{g\, u} = \kappa\,\norm{g}$ for some  $0\leq \kappa\leq \sgap(g)$. Therefore
$$\frac{\norm{g\, w}^2}{\norm{g}^2} 
 = a^2 + (1-a^2)\,\kappa^2 =  a^2\oplus \kappa^2 \;. $$
and
$$ \frac{g\, w}{\norm{g\,w}}
= \frac{a}{\sqrt{a^2\oplus \kappa^2}}\, v^\ast
+ \frac{\sqrt{1-a^2}}{\sqrt{a^2\oplus \kappa^2}}\,\frac{g\,u}{\norm{g}}\;. $$
The vector  $v'$ can be written as
 $v'=	\alpha\, v^\ast + w'$ with $w'\perp v^\ast$ and $\norm{w'}=\sqrt{1-\alpha^2}$.
Set now $b:= \aangle( \varphi_g(\hatw), \mostexp(g'))$.  Then
\begin{align*}
 b = \abs{\langle 
 \frac{ g\,w}{\norm{g\, w}}, v' \rangle}   &\leq   \frac{\alpha\,a}{\sqrt{a^2\oplus\kappa^2}} 
  + \frac{\sqrt{1-a^2} }{\sqrt{a^2\oplus\kappa^2}}\,\frac{\abs{\langle g\,u,  v' \rangle}}{\norm{g}}
 \\
 &\leq \frac{\alpha\,a}{\sqrt{a^2\oplus\kappa^2}} 
 + \frac{\kappa \, \sqrt{1-a^2}}{\sqrt{a^2\oplus\kappa^2}}\,
 \abs{ \langle \frac{g\,u}{\norm{g\,u}},  w'  \rangle }\\
  &\leq \frac{\alpha\,a}{\sqrt{a^2\oplus\kappa^2}} 
 + \frac{\kappa \, \sqrt{1-a^2}}{\sqrt{a^2\oplus\kappa^2}}\,
 \norm{w'}\\
 &\leq \frac{\alpha\,a}{\sqrt{a^2\oplus\kappa^2}} 
 + \frac{\kappa \, \sqrt{1-a^2}\, \sqrt{1-\alpha^2} }{\sqrt{a^2\oplus\kappa^2}} \leq \frac{\sqrt{\alpha^2\oplus\kappa^2}}{\sqrt{a^2\oplus\kappa^2}}
  \;.
 \end{align*}
We use Lemma~\ref{oplus:prop} (7) on the last inequality.
Finally, by proposition~\ref{prop expansion aangle} (a)
\begin{align*}
\norm{ g'\,g\,w} &\leq \norm{g'}\,\sqrt{b^2\oplus \sgap(g')^2}\,\norm{g\,w}\\
&\leq \norm{g'}\,\norm{g}\,\sqrt{b^2\oplus \sgap(g')^2}\,\sqrt{a^2\oplus \kappa^2}\\
&\leq \norm{g'}\,\norm{g}\,\sqrt{\kappa^2\oplus \alpha^2\oplus \sgap(g')^2 }
\leq \bangle(g,g')\,\norm{g'}\,\norm{g} \;,
\end{align*} 
where on the two last inequalities use items (6) and (5)  of lemma~\ref{oplus:prop}.

\end{proof}

\begin{corollary} 
Given $g, g' \in\mathcal{L}(V)$ with a $(k)$-gap pattern,
$$ \aangle_k(g,g') \leq \frac{\norm{ \wedge_k (g'\, g) }}{\norm{\wedge_k g'}\,\norm{\wedge_k  g}}
\leq \bangle_k(g,g')   $$
\end{corollary}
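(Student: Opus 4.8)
The plan is to reduce the corollary directly to Proposition~\ref{prod:2:lemma} by passing to the $k$-th exterior powers. The key observation is that the three quantities appearing in the statement are, by their very definitions, the corresponding quantities for the maps $\wedge_k g$ and $\wedge_k g'$ in place of $g$ and $g'$. Indeed, $\norm{\wedge_k(g'\,g)} = \norm{(\wedge_k g')(\wedge_k g)}$ by the functoriality of exterior powers (recorded in subsection~\ref{exterior algebra}), so the middle term equals $\norm{(\wedge_k g')(\wedge_k g)}/(\norm{\wedge_k g'}\,\norm{\wedge_k g})$, which is precisely the first expansion rift $\rift(\wedge_k g, \wedge_k g')$.

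First I would check the gap hypotheses translate correctly. The assumption that $g$ and $g'$ have a $(k)$-gap pattern means $\rgap_k(g) > 1$ and $\rgap_k(g') > 1$. By Proposition~\ref{sing vals and ext pws}, the singular values of $\wedge_k g$ are the products $s_I(g) = \prod_{i\in I} s_i(g)$ over $I\in\Lambda^n_k$, and the largest of these is $s_1(g)\cdots s_k(g)$, attained at $I = \{1,\ldots,k\}$, while the second largest is $s_1(g)\cdots s_{k-1}(g)\,s_{k+1}(g)$, attained at $I = \{1,\ldots,k-1,k+1\}$; hence $\rgap_1(\wedge_k g) = s_k(g)/s_{k+1}(g) = \rgap_k(g) > 1$. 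So $\wedge_k g$ and $\wedge_k g'$ have a first gap ratio, i.e.\ a $(1)$-gap pattern, and moreover $\sgap(\wedge_k g) = \sgap_k(g) < 1$ as required by the setup of Proposition~\ref{prod:2:lemma}.

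Next I would identify the remaining two terms. By Definition~\ref{def medir k} and Definition~\ref{alpha:def}, $\aangle_k(g,g') = \aangle(\mostexp_k(g^\ast),\mostexp_k(g')) = \aangle(\Psi(\mostexp_k(g^\ast)),\Psi(\mostexp_k(g'))) = \aangle(\mostexp(\wedge_k g^\ast),\mostexp(\wedge_k g))$; using $\wedge_k g^\ast = (\wedge_k g)^\ast$ this is exactly $\aangle(\wedge_k g, \wedge_k g')$ in the sense of Definition~\ref{alpha:def} applied to the maps $\wedge_k g, \wedge_k g'$. Similarly, comparing Definition~\ref{beta:def} with the relation $\rgap_1(\wedge_k g) = \rgap_k(g)$ established above, one gets $\bangle_k(g,g') = \bangle(\wedge_k g, \wedge_k g')$. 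Therefore the inequality to be proved is literally the conclusion of Proposition~\ref{prod:2:lemma} for the pair $(\wedge_k g, \wedge_k g')$ acting on the Euclidean space $\wedge_k V$, and the proof concludes by invoking that proposition.

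The only mild subtlety — and the one step I would write out carefully rather than assert — is the identification $\rgap_1(\wedge_k g) = \rgap_k(g)$ together with the matching of the $\mostexp$-subspaces under the Plücker embedding, since these are what make the $k$-indexed and $(1)$-indexed notations line up; everything else is a purely formal substitution. There is no genuine analytic obstacle here: the work of Proposition~\ref{prod:2:lemma} has already been done, and the corollary is the standard exterior-power bootstrap of a first-gap estimate to a $k$-th-gap estimate.
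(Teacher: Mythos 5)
Your proposal is correct and follows the same route as the paper: reduce to Proposition~\ref{prod:2:lemma} for the pair $(\wedge_k g, \wedge_k g')$, using functoriality of exterior powers, the identification $\rgap(\wedge_k g)=\rgap_k(g)$ from the singular values of $\wedge_k g$, and the Pl\"ucker identification $\Psi(\mostexp_k(g))=\mostexp(\wedge_k g)$ to match $\aangle_k$ and $\bangle_k$ with the first-gap quantities. (Your version is in fact more explicit than the paper's, which only records the angle identification; the one slip, writing $\mostexp(\wedge_k g)$ where $\mostexp(\wedge_k g')$ is meant, is an evident typo.)
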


\begin{proof} 
Apply proposition~\ref{prod:2:lemma} to the composition 
$(\wedge_k g')\,(\wedge_k g)$. Notice that by definition~\ref{def medir k}, the Pl\"ucker embedding satisfies $\Psi(\mostexp_{k}(g) )= \mostexp(\wedge_k g)$. Hence
$$ \aangle_k(g,g') =\aangle(\mostexp_{k}(g^\ast),\mostexp_{k}(g'))
= \abs{\langle \mostexp(\wedge_k g), \mostexp(\wedge_k g') \rangle} =\aangle (\wedge_k g, \wedge_kg')\;.
$$

\end{proof}

Next lemmas show how close the   bounds
$\aangle(g,g')$ and $\bangle(g,g')$ are, to each other, and to the rift $\rift(g,g')$.

\begin{lemma} \label{alpha:beta:bound}
Given   $g,g'\in\mathcal{L}(V)$ with $(1)$-gap patterns,
$$ 1\leq \frac{\beta (g,g')}{\aangle (g,g')}\leq \sqrt{ 1+
\frac{\rgap (g)^{-2}\oplus \rgap (g')^{-2}}{ \aangle (g,g')^2} }\;. $$
\end{lemma}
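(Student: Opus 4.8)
The plan is to unwind the definition of $\bangle(g,g')$ and compare it term‑by‑term with $\aangle(g,g')$ using only elementary properties of the $\oplus$‑operation collected in Proposition~\ref{oplus:prop}. Recall that
\[
\bangle(g,g')^2 = \rgap(g)^{-2}\oplus \aangle(g,g')^2 \oplus \rgap(g')^{-2},
\]
so writing $a := \aangle(g,g')$, $s := \rgap(g)^{-2}\oplus\rgap(g')^{-2}\in[0,1]$, we have $\bangle(g,g')^2 = s\oplus a^2$. Since a $(1)$‑gap pattern means $\rgap(g),\rgap(g')>1$, both $\rgap(g)^{-2},\rgap(g')^{-2}<1$, hence $s<1$ by Proposition~\ref{oplus:prop}(4), and also $a = \aangle(g,g')>0$ (because $\mostexp(g^\ast)$ and $\mostexp(g')$ are genuine projective points and $\aangle>0$ unless they are orthogonal; in any case the statement is about the ratio, so implicitly $a>0$).

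The lower bound $1\leq \beta/\alpha$ is immediate: by Proposition~\ref{oplus:prop}(1) and (5), $a^2 = 0\oplus a^2 \leq s\oplus a^2 = \bangle(g,g')^2$, so $\bangle(g,g')\geq a = \aangle(g,g')$. For the upper bound, I would use the explicit formula Proposition~\ref{oplus:prop}(3): $s\oplus a^2 = (1-s)\,a^2 + s \leq a^2 + s$, since $1-s\leq 1$ and $a^2\leq 1$… wait, more carefully, $(1-s)a^2 + s = a^2 - s\,a^2 + s = a^2 + s(1-a^2) \leq a^2 + s$. Therefore
\[
\frac{\bangle(g,g')^2}{\aangle(g,g')^2} = \frac{s\oplus a^2}{a^2} \leq \frac{a^2 + s}{a^2} = 1 + \frac{s}{a^2} = 1 + \frac{\rgap(g)^{-2}\oplus\rgap(g')^{-2}}{\aangle(g,g')^2},
\]
and taking square roots gives exactly the claimed bound.

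So the proof is essentially a two‑line computation once the $\oplus$‑arithmetic is in place. There is no real obstacle here; the only thing to be careful about is invoking the right item of Proposition~\ref{oplus:prop} — specifically identity (3) to linearize $s\oplus a^2$, and monotonicity (5) together with $0\oplus a = a$ for the trivial lower bound — and noting that $a>0$ (guaranteed by the $(1)$‑gap pattern hypothesis together with the convention that the angle $\aangle(g,g')$ appearing in a denominator is nonzero) so that division by $\aangle(g,g')^2$ is legitimate. I would write it as a short displayed chain of inequalities exactly as above, citing Proposition~\ref{oplus:prop} for each step.
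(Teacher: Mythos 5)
Your proof is correct and follows essentially the same route as the paper: the paper's argument is exactly the one-line inequality $\kappa^2\oplus\alpha^2\oplus(\kappa')^2\leq \alpha^2+(\kappa^2\oplus(\kappa')^2)$, which is your rearrangement $s\oplus a^2\leq a^2+s$ via the $\oplus$-arithmetic of Proposition~\ref{oplus:prop}. Your explicit treatment of the lower bound and of the positivity of $\aangle(g,g')$ is a harmless elaboration of what the paper leaves implicit.
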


\begin{proof} 
Just notice that
$$ \frac{\sqrt{\kappa^2\oplus \alpha^2\oplus (\kappa')^2}}{\alpha} \leq 
\sqrt{\frac{\alpha^2 +  (\kappa^2\oplus  (\kappa')^2) }{\alpha^2}}
= \sqrt{1 + \frac{ \kappa^2\oplus  (\kappa')^2  }{\alpha^2}} \;.$$

\end{proof}

\begin{proposition}
\label{prop angle rift}
Given $g, g' \in \mathcal{L}(V)$ with a $(1)$-gap pattern
$$ \aangle(g,g')\geq \rift(g,g')\,\sqrt{1-\frac{\rgap(g)^{-2} + \rgap(g')^{-2} }{\rift(g,g')^2} } \;.$$
\end{proposition}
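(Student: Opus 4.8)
The plan is to invert the estimate already established in Proposition~\ref{prod:2:lemma}, namely the upper bound $\rift(g,g')\le\bangle(g,g')$, and to combine it with the explicit form of $\bangle(g,g')$. Writing $\alpha:=\aangle(g,g')$, $\sigma:=\rgap(g)^{-1}$ and $\sigma':=\rgap(g')^{-1}$, Proposition~\ref{prod:2:lemma} gives $\rift(g,g')\le\bangle(g,g')=\sqrt{\sigma^2\oplus\alpha^2\oplus(\sigma')^2}$. The first step is therefore to expand $\sigma^2\oplus\alpha^2\oplus(\sigma')^2$ using Proposition~\ref{oplus:prop}(3): since $a\oplus b=(1-b)a+b\le a+b$ for $a,b\in[0,1]$, one gets $\sigma^2\oplus\alpha^2\oplus(\sigma')^2\le\alpha^2+\bigl(\sigma^2\oplus(\sigma')^2\bigr)\le\alpha^2+\sigma^2+(\sigma')^2$, but it is cleaner to keep $\sigma^2\oplus(\sigma')^2$ grouped, writing $\bangle(g,g')^2=\alpha^2\oplus\bigl(\sigma^2\oplus(\sigma')^2\bigr)=(1-c)\alpha^2+c$ with $c:=\sigma^2\oplus(\sigma')^2=\rgap(g)^{-2}\oplus\rgap(g')^{-2}$.

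Next I would isolate $\alpha$ from the inequality $\rift(g,g')^2\le(1-c)\alpha^2+c=\alpha^2+c(1-\alpha^2)$. Rearranging gives $\rift(g,g')^2-c(1-\alpha^2)\le\alpha^2$, but the quickest route is simply $\alpha^2\ge\rift(g,g')^2-c$, since $c(1-\alpha^2)\le c$; hence $\alpha^2\ge\rift(g,g')^2-c=\rift(g,g')^2\bigl(1-c/\rift(g,g')^2\bigr)$. To match the statement exactly I then replace the grouped quantity $c=\rgap(g)^{-2}\oplus\rgap(g')^{-2}$ by the larger sum $\rgap(g)^{-2}+\rgap(g')^{-2}$, using once more that $a\oplus b\le a+b$; this only weakens the lower bound on $\alpha^2$, which is fine. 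Taking square roots (legitimate because when the right-hand side is negative the inequality is vacuous, as $\aangle(g,g')\ge0$, and $\rift(g,g')\ge1$ so the outer factor is real) yields
\begin{equation*}
\aangle(g,g')\ge\rift(g,g')\sqrt{1-\frac{\rgap(g)^{-2}+\rgap(g')^{-2}}{\rift(g,g')^2}}\,,
\end{equation*}
which is precisely the claim.

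I do not expect any genuine obstacle here: the proposition is essentially an algebraic reformulation of the upper bound in Proposition~\ref{prod:2:lemma}, and every inequality used is one of the elementary $\oplus$-identities in Proposition~\ref{oplus:prop}. The only point requiring a word of care is the legitimacy of taking square roots and the sign of $1-\bigl(\rgap(g)^{-2}+\rgap(g')^{-2}\bigr)/\rift(g,g')^2$; since $\rift(g,g')\ge1$ always, if this quantity is nonpositive the asserted inequality holds trivially because its right-hand side is $\le 0\le\aangle(g,g')$, so one may assume it positive and proceed with the monotone square-root step. It is worth remarking that, alternatively, one could read off the bound directly from Lemma~\ref{alpha:beta:bound} together with $\rift(g,g')\le\bangle(g,g')$: from $\bangle(g,g')^2\le\aangle(g,g')^2+\rgap(g)^{-2}\oplus\rgap(g')^{-2}$ one obtains $\rift(g,g')^2\le\aangle(g,g')^2+\rgap(g)^{-2}+\rgap(g')^{-2}$ and rearranges as above; this makes the proof a two-line consequence of results already in hand.
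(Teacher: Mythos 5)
Your argument is correct and is essentially the paper's own proof: both invert the upper bound $\rift(g,g')\leq\bangle(g,g')$ from Proposition~\ref{prod:2:lemma}, bound $\bangle(g,g')^2$ by $\aangle(g,g')^2+\rgap(g)^{-2}+\rgap(g')^{-2}$ using the elementary $\oplus$-inequalities, and rearrange before taking square roots. Your extra remarks (handling the vacuous case where the radicand is nonpositive, and the alternative route via Lemma~\ref{alpha:beta:bound}) are harmless refinements of the same computation.
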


\begin{proof}
By proposition~\ref{prod:2:lemma}

$$\rift(g,g')^2\leq \bangle(g,g')^2\leq 
\aangle(g,g')^2 + \sgap(g)^2 + \sgap(g')^2 \;,$$
which implies the claimed inequality.

\end{proof}

These inequalities then imply the following more general  fact.

\begin{proposition} \label{svp:lemma:norm}
Given $g_0, g_1,\ldots, g_{n-1}\in\mathcal{L}(V)$, if for all $1\leq i\leq n-1$ the linear maps
$g_i$ and $g^{(i)}= g_{i-1}\ldots g_0$
have  $(1)$-gap patterns, then
$$ \prod_{i=1}^{n-1}
 \aangle (g^{(i)},g_i)  
\leq \frac{\norm{ g_{n-1}\ldots g_1 g_0} }{
\norm{g_{n-1}}\ldots  \norm{g_{1}}
\norm{g_{0}}} \leq \prod_{i=1}^{n-1}
\bangle (g^{(i)},g_i)  $$
\end{proposition}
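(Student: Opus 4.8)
The statement is essentially a telescoping consequence of Proposition~\ref{prod:2:lemma} applied along the sequence of partial products. First I would set $g^{(1)}:=g_0$ and, for $2\leq i\leq n$, $g^{(i)}:=g_{i-1}\cdots g_0$, so that $g^{(n)}=g_{n-1}\cdots g_0$ and $g^{(i+1)}=g_i\,g^{(i)}$. The hypothesis is precisely that each pair $(g^{(i)},g_i)$ for $1\leq i\leq n-1$ has a $(1)$-gap pattern, which is exactly what is needed to invoke Proposition~\ref{prod:2:lemma} with $g\mapsto g^{(i)}$ and $g'\mapsto g_i$. That proposition gives, for each such $i$,
\begin{equation*}
\aangle(g^{(i)},g_i)\;\leq\;\frac{\norm{g_i\,g^{(i)}}}{\norm{g_i}\,\norm{g^{(i)}}}\;=\;\frac{\norm{g^{(i+1)}}}{\norm{g_i}\,\norm{g^{(i)}}}\;\leq\;\bangle(g^{(i)},g_i)\;.
\end{equation*}

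Next I would multiply these $n-1$ chained inequalities together, for $i=1,\dots,n-1$. On the middle term the product telescopes: the factor $\norm{g^{(i+1)}}$ in the numerator of step $i$ cancels against the factor $\norm{g^{(i)}}$ in the denominator of step $i+1$, leaving only $\norm{g^{(n)}}=\norm{g_{n-1}\cdots g_1 g_0}$ in the numerator and $\norm{g^{(1)}}=\norm{g_0}$ together with $\norm{g_1}\cdots\norm{g_{n-1}}$ in the denominator. Thus the middle product equals
\begin{equation*}
\frac{\norm{g_{n-1}\cdots g_1 g_0}}{\norm{g_{n-1}}\cdots\norm{g_1}\,\norm{g_0}}\;,
\end{equation*}
and the claimed two-sided bound follows immediately by taking products of the left-hand and right-hand sides respectively. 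One should note that all quantities involved are nonnegative (the norms are positive since $g^{(i)}$ has a first singular gap, hence $\norm{g^{(i)}}=s_1(g^{(i)})>0$, and likewise $\norm{g_i}>0$), so multiplying inequalities termwise is legitimate.

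The only point requiring a little care—and the main (mild) obstacle—is confirming that the intermediate partial products $g^{(i)}$ are nonzero and that the telescoping is valid even when some $g^{(i)}$ fails to be invertible; but this is handled by the gap hypothesis, which forces $\rgap(g^{(i)})>1$ and in particular $\norm{g^{(i)}}=s_1(g^{(i)})>0$, so every factor in every denominator is strictly positive and no division by zero occurs. With that observed, the proof is a one-line induction on $n$ (or, equivalently, a direct product of the $n-1$ instances of Proposition~\ref{prod:2:lemma}), and nothing deeper is needed.
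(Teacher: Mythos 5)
Your proposal is correct and follows essentially the same route as the paper: the paper also writes the normalized norm of the full product as the telescoping product $\prod_i \norm{g_i\,g^{(i)}}/(\norm{g_i}\,\norm{g^{(i)}})$ (using the convention $g^{(0)}={\rm id}_V$, whose factor is trivially $1$) and then applies Proposition~\ref{prod:2:lemma} to each factor. Your remark that the gap hypotheses make every norm in the denominators strictly positive is a fine, if implicit, point of care.
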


\begin{proof}
By definition
$g^{(n-1)}=g_{n-1}\ldots g_1 g_0$,  and by convention   $g^{(0)}={\rm id}_V$.
Hence  $\norm{ g_{n-1}\ldots g_1 g_0} = \prod_{i=0}^{n-1}\frac{\norm{g^{(i+1)}}}{\norm{g^{(i)}}}$. This implies that
\begin{align*}
\frac{\norm{ g_{n-1}\ldots g_1 g_0}}{\norm{g_{n-1}}\ldots  \norm{g_{1}}}
&=\left(\prod_{i=0}^{n-1}\frac{1}{\norm{g_i}}\right)\,
\left(\prod_{i=0}^{n-1}\frac{\norm{g^{(i+1)}}}{\norm{g^{(i)}}}\right)\\
&= \prod_{i=0}^{n-1}\frac{\norm{g_i\,g^{(i)}}}{\norm{g_i}\,\norm{g^{(i)}}}\;.
\end{align*}
It is now enough to apply proposition~\ref{prod:2:lemma} to each factor.
\end{proof}

\bigskip

\section{Lipschitz Estimates}
\label{le}

\newcommand{\hide}[1]{}
\newcommand{\drel}{d_{{\rm rel}} }

In this section we will derive some
inequalities describing quantities such as the contracting behaviour of a linear endomorphism on the projective space, 
the Lipschitz dependence of a projective action on the acting linear endomorphism, the continuity of most expanding directions as functions of a linear map, and the Lipschitz modulus of continuity for sum and intersection operations on flag manifolds.

\subsection{  Projective action }

\begin{proposition}\label{proj:lip}
Given $p,q\in V\setminus\{0\}$,
$$ \norm{\frac{p}{\norm{p}} - \frac{q}{\norm{q}} } \leq
\max\{ \frac{1}{\norm{p}}, \frac{1}{\norm{q}} \}\,
 \norm{p-q} \;.$$
\end{proposition}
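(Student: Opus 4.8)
The plan is to reduce the statement to a scaling identity and then bound the resulting terms. Without loss of generality assume $\norm{p} \le \norm{q}$, so that $\max\{1/\norm{p}, 1/\norm{q}\} = 1/\norm{p}$, and it suffices to prove
\[
\norm{\frac{p}{\norm{p}} - \frac{q}{\norm{q}}} \le \frac{\norm{p-q}}{\norm{p}}\;.
\]
The natural first step is to write the difference of unit vectors as
\[
\frac{p}{\norm{p}} - \frac{q}{\norm{q}}
= \frac{1}{\norm{p}}\Bigl( p - q + q - \frac{\norm{p}}{\norm{q}}\, q \Bigr)
= \frac{1}{\norm{p}}\Bigl( (p-q) + \frac{\norm{q} - \norm{p}}{\norm{q}}\, q \Bigr)\;.
\]

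The key observation is that the two terms inside the parentheses are controlled by $\norm{p-q}$: the first is exactly $\norm{p-q}$, and for the second we use the reverse triangle inequality $\abs{\norm{q} - \norm{p}} \le \norm{p-q}$ together with $\norm{q}/\norm{q} = 1$, so that its norm is at most $\norm{p-q}$ as well. A crude triangle inequality would then give a factor of $2$, which is too lossy; the real point is that these two vectors can be arranged to (almost) cancel rather than add. So I would instead estimate the squared norm directly: writing $t = \norm{p}/\norm{q} \le 1$, one has $\frac{p}{\norm{p}} - \frac{q}{\norm{q}} = \frac{1}{\norm{p}}\bigl( (p - q) + (1-t)\, q \bigr)$, and expanding,
\[
\norm{(p-q) + (1-t)q}^2 = \norm{p-q}^2 + 2(1-t)\langle p-q, q\rangle + (1-t)^2 \norm{q}^2\;.
\]
Now $\langle p-q, q\rangle = \langle p, q\rangle - \norm{q}^2$ and $(1-t)\norm{q}^2 = \norm{q}^2 - \norm{p}\norm{q} \ge \norm{q}^2 - \langle p,q \rangle$ (by Cauchy--Schwarz, since $\norm{p}\norm{q} \ge \langle p,q\rangle$), hence $2(1-t)\langle p-q,q\rangle + (1-t)^2\norm{q}^2 = (1-t)\bigl( 2\langle p-q,q\rangle + (1-t)\norm{q}^2 \bigr) \le (1-t)\bigl( 2\langle p-q, q\rangle + \norm{q}^2 - \langle p,q\rangle \bigr) = (1-t)\bigl(\langle p,q\rangle - \norm{q}^2\bigr) = -(1-t)\langle p-q,q\rangle \le \cdots$; one checks this correction term is nonpositive, giving $\norm{(p-q)+(1-t)q}^2 \le \norm{p-q}^2$, and the claim follows after dividing by $\norm{p}^2$.

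The main obstacle is bookkeeping the sign of the cross term: a naive triangle-inequality split loses the constant, so the argument must exploit the geometric fact that scaling $p$ down toward the sphere moves it along the chord in a way that cannot increase the distance to $q/\norm{q}$ beyond $\norm{p-q}/\norm{p}$. An alternative, cleaner route avoiding the sign chase is to invoke the projective metric machinery already set up in Section~\ref{grassmann}: the left-hand side is $2\sin(\angle(p,q)/2) \cdot$ (a harmless factor) bounded via $d \le \rho$ from~\eqref{metric equivalence}, compared against $\norm{p-q} \ge \norm{q}\sin\angle(p,q) \ge \norm{p}\sin\angle(p,q)$; but since the statement is elementary and self-contained, I would keep the direct computation above, just organized as a single squared-norm estimate rather than a term-by-term triangle inequality.
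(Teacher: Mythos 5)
Your argument is correct and, at its core, follows the same route as the paper: reduce by scaling (dividing by the smaller of the two norms) to the statement that pushing the longer vector radially onto the unit sphere does not increase its distance to a unit vector. The difference is in execution: the paper simply asserts that auxiliary inequality (for $\norm{u}\geq\norm{v}=1$, $\norm{u/\norm{u}-v}\leq\norm{u-v}$) and then rescales, whereas you prove the needed inequality $\norm{(p-q)+(1-t)q}\leq\norm{p-q}$ by expanding the square and checking the correction term is nonpositive — so your write-up is actually more self-contained. Two sign slips should be fixed, though neither affects validity: the intermediate claim should read $(1-t)\norm{q}^2=\norm{q}^2-\norm{p}\norm{q}\leq\norm{q}^2-\langle p,q\rangle$ (not $\geq$; the "$\leq$" is precisely what your subsequent chain uses), and the simplified bracket equals $+(1-t)\langle p-q,q\rangle=(1-t)\bigl(\langle p,q\rangle-\norm{q}^2\bigr)$, not its negative; this is indeed nonpositive because $\langle p,q\rangle\leq\norm{p}\norm{q}\leq\norm{q}^2$ under your normalization $\norm{p}\leq\norm{q}$, which is the one-line check you left as "one checks." With those corrections the proof is complete; the detour through the projective metrics in your last paragraph is unnecessary and, as you note, would be circular in spirit since this proposition is meant to be elementary.
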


\begin{proof}
Given to vectors $u,v\in V$ with $\norm{u}\geq \norm{v}=1$
we have
$$\norm{ \frac{u}{\norm{u}} - \frac{v}{\norm{v}} }\leq 
\norm{u-v} \;. $$
Assume for instance that $\norm{p}\geq \norm{q}$,
so that
$$\max\{ \norm{p}^{-1}, \norm{q}^{-1} \} = \norm{q}^{-1}\;.$$
Applying the previous inequality with
$u= \frac{p}{\norm{q}}$ and
$v = \frac{q}{\norm{q}}$, we get

\begin{align*} 
\norm{ \frac{p}{\norm{p}} -
\frac{q}{\norm{q}} } & = \norm{ \frac{u}{\norm{u}} - \frac{v}{\norm{v}} }
 \leq \norm{u-v} = \norm{\frac{p}{\norm{q}}-
\frac{q}{\norm{q}}} \\
 & =
 \norm{q}^{-1} \,
 \norm{p-q}  =
\max\{ \norm{p}^{-1}, \norm{q}^{-1} \}\,
 \norm{p-q}\;.
\end{align*} 
\end{proof}

Given a linear map $g\in\mathcal{L}(V)$, the projective action of $g$ is given by the map
$\varphi_g:\Pp(g)\to\Pp(g^\ast)$,
$\varphi_g(\hatp):=\widehat{g\,p}$.

For any non collinear vectors $p,q\in V$ with $\norm{p}=\norm{q}=1$,
define
$$v_{p}(q):= \frac{ q-  \langle p, q\rangle \,p}{\norm{q-  \langle p, q\rangle \,p}} $$ 
to be the versor of the orthogonal projection
of $q$ onto  $p^\perp$.

\begin{proposition} \label{Lip:proj:action}
Given $g\in\mathcal{L}(V)$, and points $\hatp\neq \hatq$ in $\Pp(V)$,
$$ \frac{\delta(\varphi_g(\hatp), \varphi_g(\hatq))}{\delta(\hatp,\hatq)} = \frac{\norm{g p \wedge g v_p(q)}}{\norm{g\,p}\,\norm{g\,q}}\;. $$
\end{proposition}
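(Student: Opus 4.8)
The plan is to reduce both sides to a computation in the plane $\linspan{p,q}$, using the fact that the projective distance $\delta$ only depends on the exterior product of representative unit vectors. First I would normalize: since $\hatp\neq\hatq$, the vectors $p$ and $q$ (chosen of unit norm) are non-collinear, so $v_p(q)$ is well-defined, and $\{p, v_p(q)\}$ is an orthonormal basis of the two-dimensional subspace $\linspan{p,q}$. Writing $\langle p,q\rangle = \cos\theta$ and $\norm{q - \langle p,q\rangle p} = \sin\theta$ where $\theta = \rho(\hatp,\hatq)\in(0,\tfrac\pi2]$ (up to replacing $q$ by $-q$, which changes nothing in $\delta$), we have the decomposition $q = \cos\theta\, p + \sin\theta\, v_p(q)$.

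Next I would compute the numerator of the left-hand side. By definition $\delta(\varphi_g(\hatp),\varphi_g(\hatq)) = \dfrac{\norm{(g p)\wedge (g q)}}{\norm{g p}\,\norm{g q}}$, valid provided $g p, g q \neq 0$ (which holds since $\hatp,\hatq\in\Pp(g)$; if either fails the corresponding side is interpreted accordingly — but I would restrict to $\hatp,\hatq\in\Pp(g)$ as the statement implicitly does). Using bilinearity and skew-symmetry of $\wedge$ together with the decomposition of $q$,
\begin{align*}
(g p)\wedge (g q) &= (g p)\wedge\bigl(\cos\theta\, g p + \sin\theta\, g v_p(q)\bigr)\\
&= \sin\theta\,\bigl((g p)\wedge (g v_p(q))\bigr),
\end{align*}
so $\norm{(g p)\wedge (g q)} = \sin\theta\,\norm{(g p)\wedge(g v_p(q))}$. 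Meanwhile $\delta(\hatp,\hatq) = \sin\theta$ by~\eqref{projective sine metric} and~\eqref{metric relations}. Therefore
$$
\frac{\delta(\varphi_g(\hatp),\varphi_g(\hatq))}{\delta(\hatp,\hatq)}
= \frac{1}{\sin\theta}\cdot\frac{\sin\theta\,\norm{(g p)\wedge(g v_p(q))}}{\norm{g p}\,\norm{g q}}
= \frac{\norm{g p\wedge g v_p(q)}}{\norm{g p}\,\norm{g q}},
$$
which is exactly the claimed identity.

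There is essentially no hard step here; the only point requiring a little care is the sign/orientation bookkeeping — the metrics $\delta$ and $\rho$ are defined via a minimum over $\pm$, so I would note at the outset that the whole computation is insensitive to replacing $p$ by $-p$ or $q$ by $-q$, and fix representatives so that $\langle p,q\rangle = \cos\theta$ with $\theta\in(0,\tfrac\pi2]$. The other implicit hypothesis worth flagging is $gp\neq 0$ and $gq\neq 0$, i.e. $\hatp,\hatq\in\Pp(g)$, without which $\varphi_g$ is not defined at those points; under that hypothesis $\norm{gp}\,\norm{gq}\neq 0$ and all the fractions above make sense.
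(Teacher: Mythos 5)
Your proposal is correct and follows essentially the same route as the paper: choose unit representatives with $\langle p,q\rangle=\cos\theta\geq 0$, write $q=(\cos\theta)\,p+(\sin\theta)\,v_p(q)$, and use skew-symmetry of $\wedge$ to cancel the factor $\sin\theta=\delta(\hatp,\hatq)$. Your extra remarks about $\hatp,\hatq\in\Pp(g)$ and sign bookkeeping are reasonable clarifications but do not change the argument.
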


\begin{proof}
Let $p\in \hatp$ and $q\in\hatq$ be unit vectors such that
$\theta=\angle(p,q)\in [0,\frac{\pi}{2}]$.
We can write $q= (\cos\theta)\,p +  (\sin\theta)\,v_p(q)$.
Hence
$$ \delta(\hatp,\hatq)=\norm{p\wedge q}= (\sin\theta)\,\norm{p\wedge v_p(q)}= \sin\theta\;,$$
and 
$$ \delta(\varphi_g(\hatp), \varphi_g(\hatq))=
\frac{\norm{ g\,p \wedge g\,q} }{\norm{g\,p}\,\norm{g\,q} } =
(\sin\theta)\,\frac{\norm{g p\wedge g v_p(q)}}{\norm{g\,p}\,\norm{g\,q} } \;.$$
 
\end{proof}

Given a  unit vector $v\in V$, $\norm{v}=1$,
denote by $\pi_v,\pi_v^\perp:V\to V$
the orthogonal projections $\pi_v(x):=\langle v,x\rangle\, v$,
respectively  $\pi_v^ \perp(x):=x -\langle v,x\rangle\, v$.

\begin{lemma}\label{diff:proj}
Given  $u,v\in V$  non-collinear  with $\norm{u}=\norm{v}=1$,
denote by $P$ the plane spanned by $u$ and $v$. Then
\begin{enumerate}
\item[(a)] is $\pi_v-\pi_u$ a self-adjoint endomorphism,
\item[(b)]  $\Ker (\pi_v-\pi_u)= P^ \perp$,
\item[(c)] the restriction $\pi_v-\pi_u:P\to P$ is anti-conformal
with similarity factor $\abs{\sin\angle (u,v)}$,
\item[(d)] $\norm{\pi^\perp_v-\pi^\perp_u}=\norm{\pi_v-\pi_u}\leq \norm{v-u}$.
\end{enumerate}
\end{lemma}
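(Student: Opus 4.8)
The plan is to work in the $2$-dimensional plane $P$ and use the explicit structure of the rank-one orthogonal projections $\pi_u,\pi_v$. First I would prove (a): since $\pi_u(x)=\langle u,x\rangle u$ and $\pi_v(x)=\langle v,x\rangle v$ are each self-adjoint (being orthogonal projections, or by direct computation $\langle \pi_u x, y\rangle=\langle u,x\rangle\langle u,y\rangle=\langle x,\pi_u y\rangle$), their difference is self-adjoint. Next, for (b), observe that if $x\perp P$ then $\langle u,x\rangle=\langle v,x\rangle=0$, so $(\pi_v-\pi_u)(x)=0$; hence $P^\perp\subseteq\Ker(\pi_v-\pi_u)$. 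Conversely $\pi_v-\pi_u$ maps $V$ into $P$ (both $\pi_u x$ and $\pi_v x$ lie in $P$), and since $\pi_v-\pi_u$ is self-adjoint its kernel is the orthogonal complement of its range, so $\Ker(\pi_v-\pi_u)\supseteq P^\perp$ forces equality once we know the restriction to $P$ is injective, which will come out of (c).

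For (c), the concrete step is to choose an orthonormal basis of $P$ adapted to the bisector of $u$ and $v$: writing $\theta=\angle(u,v)$ and letting $e_1,e_2$ be orthonormal with $u=\cos(\theta/2)e_1+\sin(\theta/2)e_2$ and $v=\cos(\theta/2)e_1-\sin(\theta/2)e_2$, I would compute the $2\times 2$ matrix of $\pi_v-\pi_u$ in this basis. A short computation gives $\pi_u=\begin{pmatrix}\cos^2(\theta/2)&\cos(\theta/2)\sin(\theta/2)\\ \cos(\theta/2)\sin(\theta/2)&\sin^2(\theta/2)\end{pmatrix}$ and the analogous matrix for $\pi_v$ with the off-diagonal sign flipped, so $\pi_v-\pi_u=\begin{pmatrix}0&-\sin\theta\\-\sin\theta&0\end{pmatrix}$ (using $2\cos(\theta/2)\sin(\theta/2)=\sin\theta$). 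This matrix is $\abs{\sin\theta}$ times the reflection $\begin{pmatrix}0&-1\\-1&0\end{pmatrix}$, hence anti-conformal (orientation-reversing, i.e.\ a similarity composed with a reflection) on $P$ with similarity factor $\abs{\sin\angle(u,v)}$, which is nonzero since $u,v$ are non-collinear — this also closes the injectivity gap in (b).

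Finally, for (d): the identity $\pi_v^\perp-\pi_u^\perp=(\mathrm{id}-\pi_v)-(\mathrm{id}-\pi_u)=\pi_u-\pi_v=-(\pi_v-\pi_u)$ gives $\norm{\pi_v^\perp-\pi_u^\perp}=\norm{\pi_v-\pi_u}$, and from (b)--(c) the operator $\pi_v-\pi_u$ vanishes on $P^\perp$ and acts on $P$ with all singular values equal to $\abs{\sin\theta}$, so $\norm{\pi_v-\pi_u}=\abs{\sin\theta}=\delta(\hatu,\hatv)$. It remains to check $\abs{\sin\theta}\leq\norm{v-u}$: since $\norm{v-u}=2\abs{\sin(\theta/2)}=\mathrm{chord}\,\theta$ and $\sin\theta\leq\mathrm{chord}\,\theta$ for $\theta\in[0,\pi/2]$ (the inequality already recorded in~\eqref{metric equivalence}), we are done; here I use that $\theta=\angle(u,v)$ can be taken in $[0,\pi/2]$ by possibly replacing $v$ with $-v$, which changes neither $\pi_v$ nor $\pi_v^\perp$. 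The only mild subtlety — the ``main obstacle'' if any — is bookkeeping the sign/branch choices so that $\theta$ genuinely lies in $[0,\pi/2]$ and the chord inequality applies; the linear-algebra core is the one-line $2\times 2$ matrix computation in (c).
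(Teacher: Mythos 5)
Your proof is correct and follows essentially the same route as the paper: both reduce (c) to an explicit $2\times 2$ computation exhibiting $\pi_v-\pi_u$ as $\abs{\sin\angle(u,v)}$ times a reflection (you use a bisector-adapted basis, the paper an arbitrary one), and both then get (d) from $\norm{\pi_v-\pi_u}=\abs{\sin\angle(u,v)}$. The only cosmetic difference is the final inequality in (d) — you invoke $\sin\theta\leq{\rm chord}\,\theta$ with a harmless sign reduction (which indeed works, since for an obtuse angle $\norm{v-u}\geq\sqrt{2}>\abs{\sin\theta}$ anyway), whereas the paper bounds $\norm{\pi_v^\perp(u)}\leq\norm{u-v}$ by a direct Pythagorean decomposition.
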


\begin{proof}
Item  (a) follows because orthogonal projections are self-adjoint operators.

Given $w\in P^\perp$, we have $\pi_u(w)=\pi_v(w)=0$, which implies
$w\in \Ker (\pi_u-\pi_v)$. Hence $P^\perp \subset  \Ker (\pi_u-\pi_v)$. Since $u$ and $v$ are non-collinear, 
$\pi_u-\pi_v$ has rank $2$. Thus  $\Ker (\pi_u-\pi_v) = P^\perp$, which proves (b).

For (c) we may assume that
$V=\R^2$  and consider $u=(u_1,u_2)$, $v=(v_1,v_2)$, with $u_1^ 2+u_2^ 2=v_1^ 2+v_2^ 2=1$.
The projections $\pi_u$ and $\pi_v$ are represented by the matrices
$$ U= \left(\begin{array}{cc}
u_1^2 & u_1 u_2 \\
u_1 u_2 & u_2^ 2
\end{array} \right)\quad \text{ and }\quad
V = \left(\begin{array}{cc}
v_1^2 & v_1 v_2 \\
v_1 v_2 & v_2^ 2
\end{array} \right)$$
w.r.t. the canonical basis. Hence  $\pi_v-\pi_u$ is given by
$$ V-U= \left(\begin{array}{cc}
v_1^2-u_1^2 & v_1 v_2 - u_1 u_2 \\
v_1 v_2 - u_1 u_2 & v_2^ 2 - u_2^ 2
\end{array} \right) = \left(\begin{array}{cc}
\beta & \alpha \\
\alpha & -\beta 
\end{array} \right)$$
where $\alpha=v_1 v_2 - u_1 u_2$ and $\beta= v_1^2-u_1^2 = -(v_2^ 2 - u_2^ 2)$.
This proves that the restriction of $\pi_v-\pi_u$ to the plane $P$ is
anti-conformal. The similarity factor of this map is
$$ \norm{\pi_v-\pi_u} = \norm{\pi_v(u)-u}=\norm{\pi_v^\perp(u) }
=\abs{\sin \angle(u,v)}$$
Finally, since $u - \langle v,u\rangle\,v\perp v$,
\begin{align*}
  \norm{\pi^\perp_v-\pi^\perp_u}^2 &=\norm{\pi_v-\pi_u}^2 = \norm{\pi_v^\perp(u) }^2\\
&= \norm{ u - \langle v,u\rangle\,v}^2\\
&= \norm{u-v}^2- \norm{ v - \langle v,u\rangle\,v}^2\\
&\leq  \norm{u-v}^2\;. 
\end{align*}

\end{proof}

Given a point $\hatp\in\Pp(V)$, we identify the tangent to the projective space at $\hatp$ as  $T_{\hatp}\Pp(V)=p^\perp$,
for any representative $p\in\hatp$.

\begin{proposition}\label{derivative varphig}
Given $g\in \mathcal{L}(V)$, $\hat{x} \in\Pp(g)$,
and a representative $x\in\hat{x}$, 
the derivative of the map
$\varphi_g:\Pp(g)\to \Pp(g^\ast)$ at $\hat{x}$  is given by 
$$ (D\varphi_g)_{\hat{x}}\, v = 
\frac{g\,v-\langle \frac{g\,x}{\norm{g\,x}}, \, g\,v\rangle\, \frac{g\,x}{\norm{g\,x}} }{\norm{g\,x}} =
 \frac{1}{\norm{g\,x}}\, \pi_{ g x/\norm{g x} }^ \perp (g\,v) $$
\end{proposition}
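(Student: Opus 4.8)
The plan is to compute the derivative directly from the definition $\varphi_g(\hat{y}) = \widehat{g\,y}$ by differentiating the composition of the linear map $g$ with the normalization map $y \mapsto y/\norm{y}$. First I would recall the identification $T_{\hat{x}}\Pp(V) = x^\perp$ and $T_{\hat{w}}\Pp(V) = w^\perp$ for representatives, so that a tangent vector $v \in x^\perp$ at $\hat{x}$ must be pushed forward and then projected onto the tangent space at the image point $\widehat{g\,x}$, i.e.\ onto $(g\,x)^\perp$. Concretely, pick a smooth curve $t \mapsto x(t)$ on the sphere $\Su(V)$ with $x(0) = x/\norm{x}$ and $\dot{x}(0) = v$ (taking $x$ itself to be a unit representative without loss of generality, since both sides are $0$-homogeneous in $x$ and linear in $v$). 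Then $\varphi_g(\widehat{x(t)}) = \widehat{g\,x(t)}$, and a representative curve on the sphere downstairs is $t \mapsto g\,x(t)/\norm{g\,x(t)}$.

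The main computation is then to differentiate $t \mapsto g\,x(t)/\norm{g\,x(t)}$ at $t = 0$ using the quotient rule. Writing $N(t) = \norm{g\,x(t)}$, one gets
$$\frac{d}{dt}\Big|_{t=0} \frac{g\,x(t)}{N(t)} = \frac{g\,v}{N(0)} - \frac{N'(0)}{N(0)^2}\, g\,x,$$
and since $N'(0) = \langle g\,x, g\,v\rangle / N(0)$, this rearranges to
$$\frac{1}{\norm{g\,x}}\left( g\,v - \Big\langle \frac{g\,x}{\norm{g\,x}},\, g\,v \Big\rangle \frac{g\,x}{\norm{g\,x}} \right) = \frac{1}{\norm{g\,x}}\, \pi^\perp_{g\,x/\norm{g\,x}}(g\,v),$$
which is exactly the claimed formula. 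The only point requiring a word of justification is that this vector indeed lies in the tangent space $(g\,x)^\perp$ at the image point, which is immediate since $\pi^\perp_{g\,x/\norm{g\,x}}$ projects onto $(g\,x)^\perp$ by definition of $\pi^\perp_v$ from the paragraph preceding Lemma~\ref{diff:proj}.

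I expect there is no serious obstacle here: the statement is essentially the chain rule applied to the normalization map, and the only care needed is bookkeeping about which tangent spaces one works in and the homogeneity argument that lets one assume $\norm{x} = 1$ in the intermediate curve computation (the final formula is stated for an arbitrary representative $x$, so one should either carry the general $x$ through the quotient rule, which changes nothing essential, or note both sides scale correctly under $x \mapsto \lambda x$). A clean way to present it is to first establish the derivative of the normalization map $\Pi: V\setminus\{0\} \to \Su(V)$, $\Pi(y) = y/\norm{y}$, namely $(D\Pi)_y\, h = \frac{1}{\norm{y}}\pi^\perp_{y/\norm{y}}(h)$, and then apply the chain rule to $\varphi_g = \Pi \circ g$ restricted appropriately, evaluating at $y = g\,x$ and $h = g\,v$. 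That isolates the one genuine computation into a single elementary lemma and makes the rest a one-line composition.
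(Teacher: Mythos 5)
Your proposal is correct and follows essentially the same route as the paper: the paper also lifts $\varphi_g$ through the covering $\proj:\Su(V)\to\Pp(V)$ to $\widetilde{\varphi}_g(x)=g\,x/\norm{g\,x}$, identifies $(D\varphi_g)_{\hat x}$ with $(D\widetilde{\varphi}_g)_x$, and leaves the quotient-rule differentiation as ``a simple calculation,'' which is exactly the computation you carry out explicitly. Your only slip is cosmetic: the right-hand side is $(-1)$-homogeneous (not $0$-homogeneous) in the representative $x$, consistent with the rescaling of the curve parametrization, as your closing remark about carrying a general $x$ through the quotient rule already acknowledges.
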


\begin{proof}
The sphere $\Su(V):=\{\, v\in V\,\colon\,
\norm{v}=1\,\}$ is a double covering space of  $\Pp(V)$, whose covering map is the canonical projection $\proj:\Su(V)\to\Pp(V)$.
With the identification $T_{\hatp}\Pp(V)=p^\perp$, the derivative of $\proj$, $D{\proj}_x:T_x\Su(V)\to T_{\hat{x}}\Pp(V)$, is the identity linear map.
The map $\varphi_g$  lifts
to the map defined on the sphere  by
$\widetilde{\varphi}_g(x):=\frac{g\,x}{\norm{g\,x}}$.
Hence we can identify the derivatives $(D\varphi_g)_{\hat{x}}$ and
$(D\widetilde{\varphi}_g)_x$. The explicit expression for
$(D\widetilde{\varphi}_g)_x v$ follows by a simple calculation.

\end{proof}

We will use the following closed ball notation
 $$B^{(d)}(\hatp,r):=\{\, \hat{x}\in\Pp(V)\,\colon\,
d(\hat{x},\hatp)\leq r\,\} \;, $$
where the superscript emphasizes the distance in matter.
Given a projective map $f:X\subset \Pp(V)\to\Pp(V)$,
we denote by ${\rm Lip}_d(f)$ the least Lipschitz 
constant of $f$ with respect to the distance $d$.
Next proposition refers to the projective metrics
$\delta$ and $\rho$ defined in subsection~\ref{subsection projective spaces}.

\begin{proposition} \label{proj:contr} Given  $0<\kappa<1$ and $g\in\mathcal{L}(V)$ such that $\rgap(g)\geq \kappa^{-1}$,

\begin{enumerate}
\item[(1)] $\varphi_g\left( B^{(\delta)}(\mostexp(g),r) \right)\subset B^{(\delta)}(\mostexp(g^\ast), \kappa\,r/\sqrt{1-r^ 2})$, \, for any $0<r <1$,

\item[(2)] $\varphi_g\left( B^{(\rho)}(\mostexp(g),a) \right)\subset B^{(\rho)}(\mostexp(g^\ast), \kappa\,\tan a)$,  \, for any $0<a <\frac{\pi}{2}$,

\item[(3)] ${\rm Lip}_\rho( \varphi_g\vert_{ B^{(\delta)}(\mostexp(g),r)} ) \leq 
\kappa\, \frac{ r+\sqrt{1-r^2} }{1-r^2}$,  \, for any $0<r <1$.
\end{enumerate}
\end{proposition}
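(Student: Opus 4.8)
The plan is to reduce everything to Proposition~\ref{prop expansion aangle}, which already controls both the expansion of $g$ and the $\delta$-contraction of $\varphi_g$ in terms of the $\aangle$-angle to $\mostexp(g)$ and the inverse gap ratio $\sgap(g) = \rgap(g)^{-1} \le \kappa$. The key geometric input is that for a point $\hatw$ with $\delta(\hatw, \mostexp(g)) = s$ we have $\aangle(\hatw, \mostexp(g)) = \sqrt{1-s^2}$, since $\delta = \sin\rho$ and $\aangle = \cos\rho$ on the projective space. So on the ball $B^{(\delta)}(\mostexp(g), r)$ the angle $\aangle(\hatw, \mostexp(g))$ is bounded below by $\sqrt{1-r^2}$, and this is what converts the contraction estimate of Proposition~\ref{prop expansion aangle}(b) into the stated bounds.

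For (1): take $\hatw \in B^{(\delta)}(\mostexp(g), r)$ and set $s := \delta(\hatw, \mostexp(g)) \le r$, so $\aangle(\hatw, \mostexp(g)) = \sqrt{1-s^2}$. Proposition~\ref{prop expansion aangle}(b) gives
$$\delta(\varphi_g(\hatw), \mostexp(g^\ast)) \le \frac{\sgap(g)}{\sqrt{1-s^2}}\, s \le \kappa \cdot \frac{s}{\sqrt{1-s^2}}\,.$$
Since $t \mapsto t/\sqrt{1-t^2}$ is increasing on $(0,1)$, this is $\le \kappa\, r/\sqrt{1-r^2}$, which is (1). For (2): translate via the relations $\delta = \sin\rho$, so that $\delta(\hatw,\mostexp(g)) = \sin a$ means $s/\sqrt{1-s^2} = \tan a$; applying (1) with $r = \sin a$ gives $\delta(\varphi_g(\hatw), \mostexp(g^\ast)) \le \kappa \tan a$, hence $\sin \rho(\varphi_g(\hatw), \mostexp(g^\ast)) \le \kappa \tan a$, and since $\sin \rho \ge \tfrac{2}{\pi}\rho$ is the wrong direction, I instead use $\rho \le \frac{\pi}{2}\sin\rho$ only if needed — actually the cleaner route is $\sin\theta \le \theta$ fails too; the correct observation is that $\rho \mapsto \sin\rho$ has inverse $\arcsin$, and $\arcsin(\kappa\tan a) \le \kappa \tan a \cdot (\text{something} \ge 1)$, so one must be slightly careful. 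The honest statement (2) as written with $\kappa \tan a$ on the $\rho$-side should follow because $\arcsin x \ge x$, so $\rho = \arcsin(\delta) $ and we need an \emph{upper} bound on $\rho$; here one uses that on the relevant range $\delta(\varphi_g(\hatw),\mostexp(g^\ast)) \le \kappa\tan a$ together with $\arcsin$ being the inverse — I would double-check the constant, but morally (2) is just (1) rephrased through $\delta = \sin\rho$ together with $\rho(\hatw,\mostexp(g)) = a \Leftrightarrow \delta = \sin a$ and $\sin a/\cos a = \tan a$.

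For (3): I would differentiate. On $B^{(\delta)}(\mostexp(g),r)$, Proposition~\ref{derivative varphig} gives $(D\varphi_g)_{\hat x} v = \frac{1}{\norm{g x}} \pi^\perp_{gx/\norm{gx}}(g v)$, whose $\rho$-operator-norm is $\le \norm{g v}/\norm{g x} \le \norm{g}/\norm{g x}$; by Proposition~\ref{prop expansion aangle}(a), $\norm{g x} \ge \aangle(\hat x, \mostexp(g))\,\norm{g} \ge \sqrt{1-r^2}\,\norm{g}$ on the ball. That alone gives Lipschitz constant $1/\sqrt{1-r^2}$, not good enough — the factor $\kappa$ must come from the transverse contraction, i.e. from the fact that $g$ expands the $\mostexp(g)$ direction by $\norm{g}$ but the orthogonal part by at most $\sgap(g)\norm{g}$. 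So the sharper estimate of $\norm{(D\varphi_g)_{\hat x} v}_\rho$ should split $gv$ into its $v^\ast$-component and its orthogonal complement, use that $\pi^\perp_{gx/\norm{gx}}$ kills most of the $v^\ast$ part when $\hat x$ is near $\mostexp(g^\ast)$, and then combine with the lower bound on $\norm{gx}$. The main obstacle is precisely this step: getting the extra $\kappa$ and the precise numerator $r + \sqrt{1-r^2}$ requires carefully estimating how $(D\varphi_g)_{\hat x}$ acts relative to the splitting $v^\ast \oplus (v^\ast)^\perp$, as a function of how far $\hat x$ is from $\mostexp(g)$ (at most $r$) and hence how far $\varphi_g(\hat x)$ is from $\mostexp(g^\ast)$ (at most $\kappa r/\sqrt{1-r^2}$ by part (1)). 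I expect one writes $v = a v_0 + \sqrt{1-a^2} u_0$ at $\hat x$ (with $v_0$ the $\mostexp(g)$-versor), tracks $gv = a\norm{g} v^\ast + \sqrt{1-a^2}\, gu_0$ with $\norm{gu_0} \le \sgap(g)\norm{g}$, projects orthogonally to $gx/\norm{gx}$, and bounds the resulting norm using $\norm{gx} \ge \sqrt{1-r^2}\norm{g}$ and the angle between $gx/\norm{gx}$ and $v^\ast$ being $\le \kappa r/\sqrt{1-r^2}$ in the $\delta$-metric; summing the worst-case contributions of the two components yields the numerator $r + \sqrt{1-r^2}$ after optimizing over the position of $\hat x$ and the direction $v$.
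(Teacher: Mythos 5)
Your treatment of (1) and (2) is exactly the paper's: (1) is Proposition~\ref{prop expansion aangle}(b) combined with $\aangle(\hatw,\mostexp(g))=\sqrt{1-\delta(\hatw,\mostexp(g))^2}\geq\sqrt{1-r^2}$ on the ball, and (2) is the reduction via $B^{(\rho)}(\cdot,a)=B^{(\delta)}(\cdot,\sin a)$, which is all the paper itself says. The $\arcsin$ worry you raise in (2) is legitimate (a $\delta$-radius $\kappa\tan a$ only gives a $\rho$-radius $\arcsin(\kappa\tan a)\geq\kappa\tan a$), but it closes easily: the computation in the proof of Proposition~\ref{prop expansion aangle}(b) shows that tangents are contracted, namely $\tan\rho(\varphi_g(\hatw),\mostexp(g^\ast))=\norm{g\,u}/(\alpha\,\norm{g})\leq\kappa\,\tan\rho(\hatw,\mostexp(g))$, so $\rho(\varphi_g(\hatw),\mostexp(g^\ast))\leq\arctan(\kappa\tan a)\leq\kappa\tan a$, which is (2) with the stated radius.

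For (3) your plan is also the paper's strategy: bound $\norm{(D\varphi_g)_{\hat x}}$ via Proposition~\ref{derivative varphig}, using $\norm{g\,x}\geq\sqrt{1-r^2}\,\norm{g}$ from Proposition~\ref{prop expansion aangle}(a) and the proximity $\delta(\varphi_g(\hat x),\mostexp(g^\ast))\leq\kappa r/\sqrt{1-r^2}$ from part (1). The only organizational difference is where the splitting happens: the paper splits the projection operator, $(D\varphi_g)_x v=\frac{1}{\norm{g x}}\pi^\perp_{v^\ast}(g v)+\frac{1}{\norm{g x}}\bigl(\pi^\perp_{\widetilde{\varphi}_g(x)}-\pi^\perp_{v^\ast}\bigr)(g v)$, bounding the first term by $\norm{\pi^\perp_{v^\ast}\circ g}\leq s_2(g)\leq\kappa\norm{g}$ and the second by Lemma~\ref{diff:proj}(d) together with $\norm{\widetilde{\varphi}_g(x)-v^\ast}\leq\kappa r/\sqrt{1-r^2}$; you instead split the vector, writing a unit tangent vector $v=a\,v_0+\sqrt{1-a^2}\,u_0$ (with $v_0\in\mostexp(g)$, $u_0\perp v_0$), so $g v=a\norm{g}v^\ast+\sqrt{1-a^2}\,g u_0$ with $\norm{g u_0}\leq\kappa\norm{g}$ and $\norm{\pi^\perp_{g x/\norm{g x}}(v^\ast)}=\delta(\varphi_g(\hat x),\mostexp(g^\ast))\leq\kappa r/\sqrt{1-r^2}$. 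Your version does close, and with no optimization: the two contributions are at most $\frac{\kappa r}{1-r^2}$ and $\frac{\kappa}{\sqrt{1-r^2}}$, whose sum is exactly $\kappa\frac{r+\sqrt{1-r^2}}{1-r^2}$, so the step you call the main obstacle is not one. Two points you leave implicit should be stated: the bound $\norm{g u_0}\leq s_2(g)$ (equivalently $\norm{\pi^\perp_{v^\ast}\circ g}=\norm{g\circ\pi^\perp_{v_0}}\leq s_2(g)$), which is where the factor $\kappa$ enters, and the final passage from the pointwise derivative bound to ${\rm Lip}_\rho$, which the paper justifies by noting that $B^{(\delta)}(\mostexp(g),r)$ is a convex Riemannian disk so the mean value theorem applies.
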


\begin{proof}
Item (1) of this proposition follows from 
proposition~\ref{prop expansion aangle} (b), because   
$$ \delta(\hatw,\mostexp(g))<r \quad \text{ implies} \quad
\aangle(\hatw,\mostexp(g))=\sqrt{1-\delta(\hatw,\mostexp(g))^2}
\geq \sqrt{1-r^2}\;.$$

Item (2) reduces to (1), because we have 
$\delta(\hatu,\hatv)  =\sin \rho(\hatu,\hatv)$, which implies that  $B^{(\rho)}(\hatv, a)= B^{(\delta)}(\hatv, \sin a)$.

To prove (3), take unit vectors $v\in \mostexp(g)$ and $v^\ast\in \mostexp(g^\ast)$ 
such that $g\,v=\norm{g}\,v^\ast$.
Because $v$ is a $g$-most expanding vector,  
$\norm{\pi^\perp_{v^\ast}\circ g} =\norm{g\circ \pi_v^\perp}\leq s_2(g)\leq \kappa\,\norm{g}$.
Given $\hat x$ such that $\delta(\hat x, \mostexp(g))<r$,
and a unit vector $x\in\hat x$, by proposition~\ref{prop expansion aangle} (a)  
$$ \frac{\norm{g}}{\norm{g x}}\leq \frac{1}{\aangle(\hat x,\mostexp(g))} \leq \frac{1}{\sqrt{1-r^2}}\;. $$
Using item (b) of the same proposition we get
$$ \norm{ \widetilde{\varphi}_g(x) -v^\ast} \leq
\delta(\varphi_g(\hat x), \mostexp(g^\ast))\leq\frac{\sgap(g)}{\aangle( \hat x, \mostexp(g))}\,\delta( \hat x, \mostexp(g))
\leq \frac{\kappa\, r}{\sqrt{1-r^2}}
$$
By proposition~\ref{derivative varphig} we have
$$ (D\varphi_g)_x\, v=\frac{1}{\norm{g x}}\,
\pi_{v^\ast}^\perp(g\,v) + \frac{1}{\norm{g x}}\left( \pi_{\widetilde{\varphi}_g(x)}^\perp - \pi_{v^\ast}^\perp\right)(g\,v)\;.
$$
Thus, by lemma~\ref{diff:proj} (d), 
\begin{align*}
\norm{ (D\varphi_g)_x } &\leq
\frac{\kappa\,\norm{g}}{\norm{g x}}
 + \frac{\norm{ \widetilde{\varphi}_g(x) - v^\ast }\,\norm{g}}{\norm{g x}} \\
 &\leq
\frac{\kappa}{\sqrt{1-r^2}}
 + \frac{\kappa\,r }{1-r^2} 
 = \frac{\kappa\,(r+\sqrt{1-r^2}) }{1-r^2}   \;.
\end{align*}
Since $B^{(\delta)}(\mostexp(g),r)$ is a convex Riemannian  disk,
by the mean value theorem $\varphi_g\vert_ {B^{(\delta)}(\mostexp(g),r)}$ has Lipschitz constant
$\leq \frac{\kappa\,(r+\sqrt{1-r^2}) }{1-r^2}$ with respect to  distance $\rho$.  
\end{proof}

\bigskip

\subsection{ Operations on flag manifolds }

As before let $V$ be a $n$-dimensional Euclidean space.
Recall that the Grassmann manifold $\Gr_k(V)$ identifies through the Pl\"ucker embedding with a submanifold of $\mathbb{P}(\wedge_k V)$. Up to a sign,  $E\in \Gr_k(V)$
is identified with the unit $k$-vector $e = e_1\wedge \ldots \wedge e_k$
associated to any orthonormal basis $\{e_1,\ldots, e_k\}$ of $E$.
Recall that the Grassmann distance~\eqref{Grassmann:distance}  on $\Gr_k(V)$  can be characterized by
$$ d(E_1, E_2):= \min\{ \norm{e_1-e_2}, \norm{e_1+e_2}\}\; , $$
where $e_j$  is a unit $k$-vector  of $E_j$, for $j=1,2$.

\begin{definition}
Given  $E,F\in\Gr(V)$, we say that $E$ and $F$ are
$(\cap)$ transversal \, iff\,  $E+F=V$.
Analogously, we say  that $E$ and $F$ are
$(+)$ transversal \, iff\,  $E\cap F =\{0\}$.
\end{definition}

The following numbers quantify the transversality of two linear subspaces.

\begin{definition} Given $E\in \Gr_r(V)$ and $F\in \Gr_s(V)$,
consider a unit $r$-vector $e$ of $E$,
 a unit $s$-vector $f$ of $F$, a unit $(n-r)$-vector $e^\perp$ of $E^\perp$,
and  a unit $(n-s)$-vector $f^\perp$ of $F^\perp$.
We define
\begin{align*}
 \theta_+(E,F) &:= \norm{e \wedge f}\;,  \\
 \theta_\cap(E,F) &:= \norm{e^\perp \wedge f^\perp } \;.
\end{align*}
Since the chosen unit vectors are unique up to a sign, these quantities are well-defined.
\end{definition}

\begin{remark}
\label{rmk dim constraint}
If $r + s>n$ then $\theta_+(E,F)=0$.
Similarly, if $r + s< n$ then  $\theta_\cap(E,F)=0$.
\end{remark}

\begin{remark}
\label{rmk rel between transversalities}
Given $E,F\in\Gr(V)$,\, 
$ \theta_\cap(E,F) =  \theta_+(E^\perp,F^\perp)$.
\end{remark}
 
Next proposition establishes a Lispchitz modulus of continuity
for the sum and intersection operations on Grassmannians in terms of
the previous quantities.

\begin{proposition} 
\label{sum:inters:modulus cont}
Given $r,s\in\N$ and  $E,E'\in\Gr_r(V)$, $F,F'\in\Gr_s(V)$,
\begin{enumerate}
\item[(1)]\; $\displaystyle  d(E + F, E' + F')  \leq \max\left\{  \frac{1}{\theta_+ (E,F)},
 \frac{1}{\theta_+ (E',F')} \right\} \,( d(E,E') + d(F,F') ) \;,$ 
 
\smallskip

\item[(2)]\; $\displaystyle d(E \cap F, E' \cap  F')  \leq \max\left\{  \frac{1}{\theta_\cap (E,F)},
 \frac{1}{\theta_\cap (E',F')} \right\} \,( d(E,E') + d(F,F') ) \;.$ 
\end{enumerate}
\end{proposition}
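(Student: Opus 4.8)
The plan is to reduce item (1) to Proposition~\ref{proj:lip} via the Pl\"ucker embedding, and then obtain item (2) from item (1) by the orthogonal-complement duality. For item (1), recall that $E+F$ is represented in $\Pp(\wedge_{r+s} V)$ by the unit $(r+s)$-vector obtained by normalizing $e\wedge f$, where $e,f$ are unit $r$- and $s$-vectors of $E,F$; explicitly $\psi(E+F)=\widehat{e\wedge f}$ with $\norm{e\wedge f}=\theta_+(E,F)$ by Proposition~\ref{prop } and the definition of $\theta_+$. Thus $d(E+F,E'+F')$ is, by the very definition~\eqref{Grassmann:distance}, the distance in $\Pp(\wedge_{r+s}V)$ between $\widehat{e\wedge f}$ and $\widehat{e'\wedge f'}$, which by~\eqref{projective Euclidean metric} equals (up to the sign ambiguity) the norm of the difference of the normalized vectors $\frac{e\wedge f}{\norm{e\wedge f}}-\frac{e'\wedge f'}{\norm{e'\wedge f'}}$. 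Now apply Proposition~\ref{proj:lip} with $p=e\wedge f$ and $q=e'\wedge f'$ to get
\begin{equation*}
d(E+F,E'+F')\leq \max\Bigl\{\tfrac{1}{\theta_+(E,F)},\tfrac{1}{\theta_+(E',F')}\Bigr\}\,\norm{e\wedge f-e'\wedge f'}\;,
\end{equation*}
choosing the signs of $e',f'$ so that $\norm{e-e'}=d(E,E')$ and $\norm{f-f'}=d(F,F')$.

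The remaining step for (1) is the elementary bound $\norm{e\wedge f-e'\wedge f'}\leq\norm{e-e'}+\norm{f-f'}$. This follows by the telescoping identity $e\wedge f-e'\wedge f'=(e-e')\wedge f+e'\wedge(f-f')$ together with the fact that wedging with a unit simple vector is norm-nonincreasing: since $e,f,e',f'$ are unit vectors in the respective exterior powers and $\norm{a\wedge b}\leq\norm{a}\,\norm{b}$, we get $\norm{(e-e')\wedge f}\leq\norm{e-e'}$ and $\norm{e'\wedge(f-f')}\leq\norm{f-f'}$. Combining with the previous display yields (1). One subtlety to handle carefully is the sign ambiguity in the Pl\"ucker representatives: the bound $d(E,E')=\min\{\norm{e-e'},\norm{e+e'}\}$ lets us pick the sign of $e'$ achieving the minimum, and likewise for $f'$, and then the triangle inequality above is applied to that particular choice; this is legitimate because $d(E+F,E'+F')$ is itself a minimum over signs, so any single choice of representatives gives an upper bound.

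For item (2), I would invoke Proposition~\ref{+cap duality}: the map $G\mapsto G^\perp$ is a $d$-isometry on $\Gr(V)$, it sends $\Gr_{r,s}(+)$ to $\Gr_{n-r,n-s}(\cap)$, and $(E\cap F)^\perp=E^\perp+F^\perp$. Hence $d(E\cap F,E'\cap F')=d((E\cap F)^\perp,(E'\cap F')^\perp)=d(E^\perp+F^\perp,(E')^\perp+(F')^\perp)$. Apply part (1) to the subspaces $E^\perp,(E')^\perp\in\Gr_{n-r}(V)$ and $F^\perp,(F')^\perp\in\Gr_{n-s}(V)$, and use that $d$ is $\perp$-isometric so $d(E^\perp,(E')^\perp)=d(E,E')$ and $d(F^\perp,(F')^\perp)=d(F,F')$. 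Finally, by Remark~\ref{rmk rel between transversalities}, $\theta_+(E^\perp,F^\perp)=\theta_\cap(E,F)$ and similarly for the primed pair, so the factor $\max\{1/\theta_+(E^\perp,F^\perp),1/\theta_+((E')^\perp,(F')^\perp)\}$ becomes $\max\{1/\theta_\cap(E,F),1/\theta_\cap(E',F')\}$, giving (2).

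I expect the main obstacle to be bookkeeping rather than a conceptual difficulty: making sure the identification of $d(E+F,E'+F')$ with the projective distance in $\wedge_{r+s}V$ is clean (via~\eqref{Grassmann:distance} and the characterization of $d$ on Grassmannians recalled just before the proposition), and tracking the sign choices so that the triangle-inequality step is applied to representatives realizing the $\min$'s. If $r+s>n$ (so $E+F$ is not defined), the statement is vacuous since the relevant Grassmannian is empty; the analogous degenerate case for (2) is when $r+s<n$. No estimate beyond Proposition~\ref{proj:lip} and submultiplicativity of the wedge norm is needed.
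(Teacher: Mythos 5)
Your proposal is correct and follows essentially the same route as the paper: item (1) is proved by applying Proposition~\ref{proj:lip} to the Pl\"ucker representatives $e\wedge f$ and $e'\wedge f'$, bounding $\norm{e\wedge f - e'\wedge f'}$ by a telescoping decomposition plus submultiplicativity of the wedge norm, and item (2) is reduced to (1) by the orthogonal-complement duality of Proposition~\ref{+cap duality}. Your extra care about sign choices and the degenerate dimension cases is sound bookkeeping that the paper leaves implicit.
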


\begin{proof} (1)\; Consider unit $r$-vectors $e$ and $e'$ representing the 
subspaces $E$ and $E'$  respectively. Consider also 
 unit $s$-vectors $f$ and $f'$ representing the 
subspaces $F$ and $F'$  respectively.
By Proposition~\ref{proj:lip}
\begin{align*}
d(E+F,E'+F') &= \norm{ \frac{e\wedge f}{\norm{e\wedge f}} 
- \frac{e'\wedge f'}{\norm{e'\wedge f'}}  } \\
&\leq K \, \norm{ e\wedge f - e'\wedge f' }\\
&\leq K \, ( \norm{ e\wedge (f-f') } +
\norm{ (e - e')\wedge f' } )\\
&\leq K \, ( \norm{ e- e' } +
\norm{f -  f' } )\\
\end{align*}
where 
$K = \max\{\norm{e\wedge f}^{-1},
\norm{e'\wedge f'}^{-1} \} =
\max\{\theta_{+}(E,F)^{-1},
\max\{\theta_{+}(E',F')^{-1} \}$.

\noindent
 (2)  reduces to (1) by duality (see Proposition~\ref{+cap duality}).

\end{proof}

Next proposition gives an  alternative  characterization  of  the transversality measurements $\theta_+(E,F)$ and $\theta_\cap(E,F)$.
 Let, as before, $\pi_E:V\to E$ denote the orthogonal projection  onto a subspace $E\subset V$,
and define   the restriction   $\pi_{E,F}:= \pi_F\vert_E:E\to F$.

\begin{proposition} 
\label{prop theta continuity modulus}
Given $E\in\Gr_r(V)$ and $F\in\Gr_s(V)$,
\begin{enumerate}
\item[(1)]   \; $\theta_+(E,F) =  \abs{\det (\pi_{E,F^\perp})} =  \abs{\det (\pi_{F,E^\perp})}$.
 
\item[(2)]   \; $\theta_\cap(E,F) =  \abs{\det (\pi_{E^\perp,F})} =  \abs{\det (\pi_{F^\perp,E})}$.
\end{enumerate}
\end{proposition}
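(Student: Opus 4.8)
The plan is to prove both identities by reducing them to facts already established in the excerpt, using the exterior-algebra description of the transversality quantities $\theta_+$ and $\theta_\cap$ together with Proposition~\ref{prop: alpha = det Pi(E F)} (which identifies the $\aangle$-angle of two equidimensional subspaces with the determinant of the associated orthogonal projection) and Proposition~\ref{delta, deltamin, deltaH}(a)/(b). The key observation is that $\theta_+(E,F) = \norm{e\wedge f}$, where $e$ is a unit $r$-vector for $E$ and $f$ a unit $s$-vector for $F$, is, up to the Hodge star, exactly a quantity of the type $\aangle$ evaluated on subspaces of complementary dimension; so the whole proof is a bookkeeping exercise matching the combinatorial/exterior-algebra data to the geometric quantities.

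First I would treat item (1). I would pick an orthonormal basis $\{u_1,\dots,u_r\}$ of $E$ and extend it to an orthonormal basis of $V$, and similarly pick an orthonormal basis $\{v_1,\dots,v_s\}$ of $F$. Writing each $v_j = \pi_{F,E}(v_j) + \pi_{F,E^\perp}(v_j)$, i.e. decomposing along $E\oplus E^\perp$, and expanding $e\wedge f = (u_1\wedge\cdots\wedge u_r)\wedge(v_1\wedge\cdots\wedge v_s)$ using the anti-symmetry of $\wedge$, every term in which some $v_j$ contributes its $E$-component $\pi_{F,E}(v_j)$ dies, because it becomes an exterior product of $r+1$ vectors already lying (after the $u$'s) in the $r$-dimensional space $E$ together with the $u$'s — wait, more precisely the surviving part is $(u_1\wedge\cdots\wedge u_r)\wedge(\pi_{F,E^\perp}(v_1)\wedge\cdots\wedge\pi_{F,E^\perp}(v_s))$. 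The norm of this is $\norm{u_1\wedge\cdots\wedge u_r}$ times the volume of the parallelepiped spanned by the $\pi_{F,E^\perp}(v_j)$ inside $E^\perp$, i.e. $\abs{\det(\pi_{F,E^\perp})}$ where $\pi_{F,E^\perp}$ is viewed via the chosen orthonormal bases of $F$ and $E^\perp$. This gives $\theta_+(E,F) = \abs{\det(\pi_{F,E^\perp})}$. The symmetric identity $\theta_+(E,F) = \abs{\det(\pi_{E,F^\perp})}$ follows by exchanging the roles of $E$ and $F$ (the quantity $\norm{e\wedge f}$ is manifestly symmetric), or alternatively by invoking Proposition~\ref{prop: alpha = det Pi(E F)}(b), since $\theta_+(E,F)$ with $r+s\le n$ is literally $\aangle$ of the $r$-subspace $E$ and the $r$-subspace $(F^\perp)$ restricted appropriately — one must be slightly careful because $E$ and $F^\perp$ need not have the same dimension unless $r+s=n$, so the cleaner route is the direct computation plus symmetry.

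For item (2), I would appeal to Remark~\ref{rmk rel between transversalities}, which states $\theta_\cap(E,F) = \theta_+(E^\perp,F^\perp)$. Applying item (1) to the pair $(E^\perp,F^\perp)$, which lie in $\Gr_{n-r}(V)$ and $\Gr_{n-s}(V)$, gives $\theta_\cap(E,F) = \theta_+(E^\perp,F^\perp) = \abs{\det(\pi_{E^\perp,(F^\perp)^\perp})} = \abs{\det(\pi_{E^\perp,F})}$ and similarly $= \abs{\det(\pi_{F^\perp,E})}$, using $(F^\perp)^\perp = F$. This is purely formal once (1) is in hand.

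The main obstacle, such as it is, is the dimension-counting in the expansion of $e\wedge f$: one must verify carefully that, after decomposing $F$'s basis along $E\oplus E^\perp$ and distributing $\wedge$, exactly the ``all-$E^\perp$ components'' term survives and that its norm is the absolute value of the determinant of $\pi_{F,E^\perp}$ computed in orthonormal coordinates — this is where Remark~\ref{rmk dim constraint} (that $\theta_+$ vanishes when $r+s>n$, consistent with $\pi_{F,E^\perp}$ then being non-injective hence having zero determinant as a map into a space of dimension $n-r<s$) needs to be seen to be compatible. I expect this to be a short clean computation of the same flavour as the proof of Proposition~\ref{prop: alpha = det Pi(E F)}(b) already in the paper, so I would present it briefly and refer back to that proof for the determinant-expansion mechanics rather than repeating them.
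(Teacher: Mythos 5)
Your proposal is correct and follows essentially the same route as the paper's proof: expand the unit multivector $e\wedge f$ by decomposing one subspace's orthonormal basis along the other's orthogonal splitting, identify the norm of the surviving term with $\abs{\det(\pi_{F,E^\perp})}$ (the paper does the symmetric computation with $\pi_{E,F^\perp}$ and then swaps roles), and reduce item (2) to item (1) via the duality $\theta_\cap(E,F)=\theta_+(E^\perp,F^\perp)$. The only cosmetic difference is that the paper also disposes of the degenerate case $E\cap F\neq\{0\}$ explicitly at the outset, which your computation handles implicitly since all quantities then vanish.
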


\begin{proof}
Notice that $E\cap F = \Ker (\pi_{E,F^\perp})=  \Ker(\pi_{F,E^\perp})$.
If $E\cap F\neq \emptyset$ then the three terms in (1) vanish. Otherwise $\pi_{E,F^\perp}$ and $\pi_{F,E^\perp}$ are isomorphisms.
Take an orthonormal basis $\{f_1,\ldots, f_s,  f_{s+1},\ldots, f_{s+r},\ldots, f_n\}$
such that $\{ f_1,\ldots, f_s\}$ spans $F$ and the family of vectors
$\{f_1,\ldots, f_r, f_{s+1},\ldots, f_{s+r}\}$ spans $E+F$.
Consider the unit $s$-vector
$f=f_1\wedge\ldots \wedge f_s$ of $F$,
and  a unit $r$-vector $e=e_1\wedge \ldots \wedge e_r$ of $E$. Then
\begin{align*}
\theta_+(E,F) &= \norm{(e_1\wedge \ldots \wedge e_r)\wedge (f_1\wedge \ldots \wedge f_s)}\\
 &= \norm{\pi_{E, F^\perp}(e_1)\wedge \ldots \wedge  \pi_{E, F^\perp}(e_r)\wedge  f_1\wedge \ldots \wedge f_s }\\
 &= \abs{\det (\pi_{E, F^\perp})}\, \norm{f_{s+1}\wedge \ldots \wedge f_{s+r}\wedge  f_1\wedge \ldots \wedge f_s }
 = \abs{\det (\pi_{E, F^\perp})}\;.
\end{align*}
Reversing the roles of $E$ and $F$, and because $\norm{e\wedge f}$ is symmetric in $e$ and $f$,
we obtain $\theta_+(E,F) = \abs{\det \pi_{F, E^\perp}}$, which proves (1).

By  duality and remark~\ref{rmk rel between transversalities}, item (2) reduces to (1). 
\end{proof}

The measurement on the $(\cap)$ transversality admits the following lower bound in terms of the  angle  in definition~\ref{grassmann rho delta alpha}. 

\begin{proposition}
Given $E\in \Gr_r(V)$  and $F\in \Gr_s(V)$,
if $E+F=V$ then
$$ \theta_{\cap}(E,F)\geq \aangle_r(E, E\cap F + F^\perp)\;.$$
\end{proposition}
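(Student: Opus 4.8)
The plan is to show that the stated bound is in fact an equality, $\theta_\cap(E,F)=\aangle_r(E,E\cap F+F^\perp)$, by expressing both quantities as the same determinant. Throughout I would write $W:=E\cap F$ and $w:=\dim W$; since $E+F=V$ we have $w=r+s-n$. Let $W''$ denote the orthogonal complement of $W$ inside $E$, so $\dim W''=r-w=n-s$ and $\pi_E=\pi_W+\pi_{W''}$ (this uses $W\perp W''$). Because $W\subseteq F$, the subspace $F^\perp$ is orthogonal to $W$, hence $\pi_W$ vanishes on $F^\perp$ and $\pi_E$ agrees with $\pi_{W''}$ on $F^\perp$. Note also that $G:=E\cap F+F^\perp=W+F^\perp$ is an orthogonal sum (as $W\subseteq F\perp F^\perp$), of dimension $w+(n-s)=r$, so $G\in\Gr_r(V)$ and $\aangle_r(E,G)$ is well defined.

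First I would rewrite the left-hand side. By Proposition~\ref{prop theta continuity modulus}(2), $\theta_\cap(E,F)=\abs{\det(\pi_{F^\perp,E})}$. Choosing an orthonormal basis $\{x_1,\dots,x_{n-s}\}$ of $F^\perp$ and reading this determinant as the volume $\norm{\pi_E(x_1)\wedge\cdots\wedge\pi_E(x_{n-s})}$ (consistently with the proof of Proposition~\ref{prop theta continuity modulus}), and using $\pi_E|_{F^\perp}=\pi_{W''}|_{F^\perp}$, I obtain $\theta_\cap(E,F)=\norm{\pi_{W''}(x_1)\wedge\cdots\wedge\pi_{W''}(x_{n-s})}=\abs{\det(\pi_{F^\perp,W''})}$, the last determinant being genuinely square since $\dim F^\perp=n-s=\dim W''$.

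Next I would rewrite the right-hand side. By Proposition~\ref{prop: alpha = det Pi(E F)}(b), $\aangle_r(E,G)=\abs{\det(\pi_{G,E})}$. I would compute this using the orthonormal basis of $G$ obtained by joining an orthonormal basis $\{u_1,\dots,u_w\}$ of $W$ with the basis $\{x_1,\dots,x_{n-s}\}$ of $F^\perp$ above (legitimate because $W\perp F^\perp$): since $\pi_E(u_i)=u_i$ and $\pi_E(x_j)=\pi_{W''}(x_j)$, this determinant equals $\norm{u_1\wedge\cdots\wedge u_w\wedge\pi_{W''}(x_1)\wedge\cdots\wedge\pi_{W''}(x_{n-s})}$. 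As the $u_i$ span $W$, the $\pi_{W''}(x_j)$ lie in $W''$, and $W\perp W''$, the Gram matrix of this combined family is block diagonal, so the wedge-norm factors as $\norm{u_1\wedge\cdots\wedge u_w}\cdot\norm{\pi_{W''}(x_1)\wedge\cdots\wedge\pi_{W''}(x_{n-s})}=1\cdot\abs{\det(\pi_{F^\perp,W''})}$. Comparing with the previous paragraph gives $\theta_\cap(E,F)=\aangle_r(E,G)$, which is even stronger than the asserted inequality.

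The main obstacle is purely organizational: one has to juggle the three subspaces $W=E\cap F$, its orthogonal complement $W''$ inside $E$, and $F^\perp$; to recognize that $F^\perp\perp W$, so that projecting $F^\perp$ onto $E$ coincides with projecting it onto $W''$; and to use the factorization of the norm of a wedge whose factors split into two mutually orthogonal groups. One also needs to be comfortable with the convention, already in force in the proof of Proposition~\ref{prop theta continuity modulus}, that $\abs{\det\pi_{X,Y}}$ with $\dim X\le\dim Y$ denotes the volume $\norm{\pi_{X,Y}(x_1)\wedge\cdots\wedge\pi_{X,Y}(x_{\dim X})}$ of the image of an orthonormal basis of $X$.
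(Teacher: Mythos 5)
Your argument is correct, and it actually proves more than the proposition asks: by splitting $E=(E\cap F)\oplus W''$ orthogonally, noting $F^\perp\perp E\cap F$ so that $\pi_E$ and $\pi_{W''}$ agree on $F^\perp$, and factoring the wedge norm over the two mutually orthogonal groups, you obtain the equality $\theta_\cap(E,F)=\aangle_r\bigl(E,(E\cap F)+F^\perp\bigr)$. This is a genuinely different route from the paper's: there the statement follows in two lines by combining Lemma~\ref{theta:monot} (monotonicity of $\theta_\cap$ when one argument is enlarged) with Lemma~\ref{theta aangle} (the identity $\theta_\cap(E',E^\perp)=\aangle_r(E',E)$ in complementary dimensions), applied to the shrunken subspace $F\cap(E\cap F)^\perp$, whose orthogonal complement is exactly $(E\cap F)+F^\perp$; the monotonicity step is what turns the identity into a mere inequality. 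What the paper's proof buys is brevity, since both lemmas are already in place; what your computation buys is a sharper conclusion (equality rather than $\geq$) and independence from the monotonicity lemma, at the cost of carrying out the Gram/volume bookkeeping by hand and of relying on the volume reading of $\abs{\det\pi_{X,Y}}$ for rectangular restrictions --- a convention which, as you correctly observe, is already the one used implicitly in the proof of Proposition~\ref{prop theta continuity modulus}, and which you apply consistently (in particular $\dim F^\perp=\dim W''=n-s$ makes the final determinant genuinely square). The degenerate cases $E\cap F=\{0\}$ and $F=V$ are covered by your argument as well, the former reducing precisely to Lemma~\ref{theta aangle}.
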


\begin{proof}
Combining lemmas~\ref{theta:monot} and ~\ref{theta aangle} we have
\begin{align*}
\theta_{\cap}(E,F) & \geq \theta_{\cap}(E,F\cap(E\cap F)^\perp) = \aangle_r(E, (F\cap(E\cap F)^\perp)^\perp)\\
&= \aangle_r(E,  (E\cap F) + F^\perp )\;. 
\end{align*}

\end{proof}

\begin{lemma}\label{theta:monot}
Given $E\in \Gr_r(V)$, $E'\in \Gr_{r'}(V)$ and $F\in \Gr_s(V)$
such that $r+s\geq n$ and $E\subseteq E'$  then
$\displaystyle \theta_{\cap}(E',F)\geq  \theta_{\cap}(E,F)$.
\end{lemma}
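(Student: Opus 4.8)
The plan is to use the characterization of $\theta_\cap$ in terms of determinants of orthogonal projections (Proposition~\ref{prop theta continuity modulus}(2)) together with an adapted orthonormal basis, and then reduce the monotonicity in the first slot to a submatrix/Cauchy–Binet style inequality for products of singular values.

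First I would invoke Proposition~\ref{prop theta continuity modulus}(2) to write $\theta_\cap(E,F)=\abs{\det(\pi_{E^\perp,F})}$ and $\theta_\cap(E',F)=\abs{\det(\pi_{(E')^\perp,F})}$, noting that both determinants make sense: since $r+s\geq n$ we have $r'+s\geq n$ as well, and by Proposition~\ref{sing vals and ext pws} the quantity $\abs{\det(\pi_{E^\perp,F})}$ equals $\norm{\wedge_{n-r}\pi_{E^\perp,F}}$, i.e. the product of the singular values of the projection $\pi_F\vert_{E^\perp}:E^\perp\to F$ (which has rank $\min(n-r,s)=n-r$ since the domain has dimension $n-r\le s$). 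From $E\subseteq E'$ we get $(E')^\perp\subseteq E^\perp$, so $\pi_{(E')^\perp,F}$ is the restriction of $\pi_{E^\perp,F}$ to the smaller subspace $(E')^\perp$.

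Next I would compare the product of the $n-r'$ singular values of the restricted map $\pi_F\vert_{(E')^\perp}$ with the product of the $n-r$ singular values of $\pi_F\vert_{E^\perp}$. Picking an orthonormal basis $\{w_1,\dots,w_{n-r}\}$ of $E^\perp$ whose first $n-r'$ vectors span $(E')^\perp$, the matrix of $\pi_F\vert_{(E')^\perp}$ (in a suitable orthonormal basis of $F$) is obtained from that of $\pi_F\vert_{E^\perp}$ by deleting rows, i.e. it is a submatrix. Since all singular values of an orthogonal projection lie in $[0,1]$, each singular value of the smaller map is at most $1$ and the product over fewer factors dominates — more precisely, by the interlacing of singular values under passing to a submatrix (or by the variational characterization of $\norm{\wedge_k\cdot}$), $\norm{\wedge_{n-r'}\pi_F\vert_{(E')^\perp}}\geq \norm{\wedge_{n-r}\pi_F\vert_{E^\perp}}$ because the $(n-r')$-fold product of the top singular values is at least the $(n-r)$-fold product, each extra factor being $\le 1$. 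Translating back via Proposition~\ref{prop theta continuity modulus}(2) gives $\theta_\cap(E',F)\geq\theta_\cap(E,F)$.

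The main obstacle is getting the singular-value comparison exactly right: one must be careful that the relevant map $\pi_F\vert_{E^\perp}$ has full rank $n-r$ (using $n-r\le s$, equivalently $r+s\ge n$) so that $\abs{\det(\pi_{E^\perp,F})}=\prod_{i=1}^{n-r}s_i(\pi_F\vert_{E^\perp})$ with all factors genuinely the singular values, and that restricting to $(E')^\perp$ drops to exactly $n-r'$ of them without introducing spurious zeros. Once the dimension bookkeeping is pinned down, the inequality $\prod_{i=1}^{n-r'}s_i\ge\prod_{i=1}^{n-r}s_i$ is immediate from $0\le s_i\le 1$ together with the monotonicity $s_i(\pi_F\vert_{(E')^\perp})\ge s_i(\pi_F\vert_{E^\perp})$ for each $i\le n-r'$, which is the standard singular-value interlacing for restriction to a subspace. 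Alternatively one can argue directly with unit exterior vectors: writing $e'^\perp = w_1\wedge\cdots\wedge w_{n-r'}$ as a factor of $e^\perp = w_1\wedge\cdots\wedge w_{n-r}$ and using that $\norm{x\wedge y}\le\norm{x}\,\norm{y}$ together with the fact that the omitted block $w_{n-r'+1}\wedge\cdots\wedge w_{n-r}$ has a unit $f^\perp$-component of norm $\le 1$, which is perhaps the cleanest route and avoids invoking interlacing explicitly.
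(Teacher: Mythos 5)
The step your main route leans on is stated backwards. Restricting a linear map to a subspace of its domain can only \emph{decrease} each singular value: by the max--min characterization, $s_i(\pi_F\vert_{(E')^\perp})\le s_i(\pi_F\vert_{E^\perp})$ for every $i$, not $\ge$ as you assert (e.g.\ if $\pi_F\vert_{E^\perp}$ has singular values $1$ and $1/2$ and you restrict to the second singular direction, the restriction has top singular value $1/2<1$). So the ``standard singular-value interlacing for restriction to a subspace'' does not give the monotonicity you invoke. The product inequality you need is nevertheless true, but it requires the \emph{lower} interlacing bound with a shift: deleting $k=r'-r$ columns from the matrix of $\pi_F\vert_{E^\perp}$ gives $s_i(\pi_F\vert_{(E')^\perp})\ge s_{i+k}(\pi_F\vert_{E^\perp})$, whence
\[
\prod_{i=1}^{n-r'}s_i\bigl(\pi_F\vert_{(E')^\perp}\bigr)\;\ge\;\prod_{j=k+1}^{n-r}s_j\bigl(\pi_F\vert_{E^\perp}\bigr)\;\ge\;\prod_{j=1}^{n-r}s_j\bigl(\pi_F\vert_{E^\perp}\bigr)\;,
\]
the last step using that the $k$ omitted factors lie in $[0,1]$. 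With this correction (and the identification of $\abs{\det}$ with the product of singular values, as you set up via Proposition~\ref{prop theta continuity modulus}), your main argument closes.

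Your ``alternative'' sketch in the last sentence is, in contrast, already a complete and correct proof: choosing an orthonormal basis of $E^\perp$ whose first $n-r'$ vectors span $(E')^\perp$, one has $e^\perp=\pm\,e'^\perp\wedge w$ with $w$ a unit $(r'-r)$-vector, and submultiplicativity of the norm of exterior products of simple vectors gives $\theta_{\cap}(E,F)=\norm{e'^\perp\wedge f^\perp\wedge w}\le\norm{e'^\perp\wedge f^\perp}\,\norm{w}=\theta_{\cap}(E',F)$ directly from the definition. This route is genuinely different from the paper's proof of Lemma~\ref{theta:monot}, which uses $E\subseteq E'$ to factor $\pi_{F^\perp,E}=\pi_{E',E}\circ\pi_{F^\perp,E'}$ and then applies multiplicativity of determinants together with the bound $\abs{\det(\pi_{\pi_{E'}(F^\perp),E})}\le 1$; the paper's factorization avoids any singular-value interlacing, while your wedge argument avoids both interlacing and the determinant factorization, at the cost of invoking the submultiplicativity of the exterior norm.
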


\begin{proof} Because $E\subset E'$, we have
$\pi_{F^ \perp, E} = \pi_{E', E}\circ \pi_{F^ \perp, E'}$. Hence
\begin{align*}
\theta_{\cap}(E,F) &= \abs{\det(\pi_{F^ \perp, E}) } = \abs{\det(\pi_{\pi_{E'}(F^ \perp), E})}\,\abs{\det( \pi_{F^ \perp, E'})} \\
&\leq \abs{\det( \pi_{F^ \perp, E'})}  = \theta_{\wedge}(E',F) \;,
\end{align*}
where $\abs{\det(\pi_{\pi_{E'}(F^ \perp), E})} \leq 1$ because   $\norm{ \pi_{E} } \leq 1$.  
\end{proof}

\begin{lemma}\label{theta aangle}
Given $E, E'\in \Gr_r(V)$, \;
$\displaystyle \theta_{\cap}(E',E^ \perp) =  \alpha_r(E',E)$.
\end{lemma}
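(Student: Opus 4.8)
The plan is to reduce the identity to the two characterizations that are already available: Proposition~\ref{prop theta continuity modulus}, which expresses the transversality measurements $\theta_\cap$ (and $\theta_+$) as absolute values of determinants of restricted orthogonal projections, and Proposition~\ref{prop: alpha = det Pi(E F)}(b), which does the same for the $\aangle$-angle. First I would apply Proposition~\ref{prop theta continuity modulus}(2) with the first subspace taken to be $E'\in\Gr_r(V)$ and the second taken to be $E^\perp\in\Gr_{n-r}(V)$. Using the second expression in that proposition, $\theta_\cap(E,F)=\abs{\det(\pi_{F^\perp,E})}$, this becomes
$$ \theta_\cap(E',E^\perp)=\abs{\det(\pi_{(E^\perp)^\perp,\,E'})}=\abs{\det(\pi_{E,E'})}\;, $$
where $\pi_{E,E'}:E\to E'$ is the restriction of the orthogonal projection onto $E'$, in the notation of Definition~\ref{def orthog proj}.

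Next I would invoke Proposition~\ref{prop: alpha = det Pi(E F)}(b), applied to the pair $E,E'\in\Gr_r(V)$, which gives $\aangle_r(E,E')=\abs{\det(\pi_{E,E'})}=\abs{\det(\pi_{E',E})}$. Combining this with the previous display yields $\theta_\cap(E',E^\perp)=\aangle_r(E,E')$, and since the $\aangle$-angle is symmetric (it equals $\abs{\langle\Psi(E),\Psi(E')\rangle}$ for unit Pl\"ucker representatives, or directly from the two equal expressions in Proposition~\ref{prop: alpha = det Pi(E F)}(b)), this is $\aangle_r(E',E)$, as claimed. The whole argument is purely formal; the only point that requires attention is the substitution into Proposition~\ref{prop theta continuity modulus}(2), keeping track of which subspace plays which role and of the placement of the orthogonal complement, so that $(E^\perp)^\perp$ collapses back to $E$.

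There is an equivalent route, slightly longer but using the same ingredients, which I would mention for robustness: first use Remark~\ref{rmk rel between transversalities} to write $\theta_\cap(E',E^\perp)=\theta_+((E')^\perp,(E^\perp)^\perp)=\theta_+((E')^\perp,E)$, then Proposition~\ref{prop theta continuity modulus}(1) to get $\theta_+((E')^\perp,E)=\abs{\det(\pi_{(E')^\perp,E^\perp})}$, and finally Proposition~\ref{prop: alpha = det Pi(E F)}(b) together with Proposition~\ref{prop: alpha = det Pi(E F)}(a) to identify this with $\aangle_{n-r}((E')^\perp,E^\perp)=\aangle_r(E',E)$. I do not anticipate any genuine obstacle: the relevant case here is the boundary dimension condition $r+(n-r)=n$, which is exactly when $\theta_\cap$ and $\theta_+$ are honestly scalar-valued and the cited propositions apply verbatim (cf. Remark~\ref{rmk dim constraint}).
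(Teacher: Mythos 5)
Your proof is correct and follows essentially the same route as the paper: both reduce $\theta_{\cap}(E',E^\perp)$ to $\abs{\det(\pi_{E,E'})}$ via the determinant characterization of the transversality measurement and then identify that determinant with $\aangle_r(E,E')$. The only difference is cosmetic: the paper re-derives the identity $\abs{\det(\pi_{E,E'})}=\abs{\langle \Psi(E),\Psi(E')\rangle}$ by an explicit wedge-product computation, whereas you cite Proposition~\ref{prop: alpha = det Pi(E F)}(b), which is legitimate since that proposition is established earlier and independently of this lemma.
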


\begin{proof}
Take orthonormal basis $\{v_1,\ldots, v_r\}$ of $E$, and 
$\{v_1',\ldots, v_r'\}$ of $E'$. Then
\begin{align*}
\theta_{\cap}(E',E^\perp) &= \abs{ \det (\pi_{E',E} ) } \\
&= \abs{\langle  \wedge_r \pi_{E,E'} (v_1 \wedge \ldots \wedge v_r), v_1'\wedge \ldots \wedge v_r'\rangle } \\
&= \abs{\langle \pi_{E'}(v_1)\wedge \ldots \wedge \pi_{E'}(v_r), v_1'\wedge \ldots \wedge v_r'\rangle } \\
&= \abs{\langle v_1\wedge \ldots \wedge v_r, v_1'\wedge \ldots \wedge v_r'\rangle }
=\alpha_r(E,E')\;.
\end{align*}
 
\end{proof}

Next proposition gives a modulus of lower semi-continuity for the   transversality measurement $\theta_{\cap}$.

\begin{proposition}\label{theta cap: E->E0}
Given $E, E_0\in \Gr_r(V)$ and $F, F_0\in \Gr_s(V)$,
$$\theta_{\cap}(E,F)\geq \theta_{\cap}(E_0,F_0)
- d(E,E_0)-d(F,F_0)\;.$$
\end{proposition}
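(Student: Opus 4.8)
The plan is to reduce the inequality to its counterpart for $\theta_+$, and then prove that counterpart by a direct triangle–inequality estimate on unit multivectors.

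First I would pass to complements. By Remark~\ref{rmk rel between transversalities} we have $\theta_\cap(E,F)=\theta_+(E^\perp,F^\perp)$ and $\theta_\cap(E_0,F_0)=\theta_+(E_0^\perp,F_0^\perp)$, while by Proposition~\ref{+cap duality} the orthogonal complement is a $d$-isometric involution, so $d(E,E_0)=d(E^\perp,E_0^\perp)$ and $d(F,F_0)=d(F^\perp,F_0^\perp)$. Hence it suffices to show that for all $E,E_0\in\Gr_r(V)$ and $F,F_0\in\Gr_s(V)$,
\[ \theta_+(E,F)\ \ge\ \theta_+(E_0,F_0)-d(E,E_0)-d(F,F_0). \]

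Next I would fix representatives. Choose a unit $r$-vector $e_0$ of $E_0$ and a unit $s$-vector $f_0$ of $F_0$; since the Grassmann distance $d$ is the minimum of $\norm{\cdot-\cdot}$ over the two sign choices of a unit $k$-vector, we may then pick a unit $r$-vector $e$ of $E$ and a unit $s$-vector $f$ of $F$ with $\norm{e-e_0}=d(E,E_0)$ and $\norm{f-f_0}=d(F,F_0)$. With these choices $\theta_+(E,F)=\norm{e\wedge f}$ and $\theta_+(E_0,F_0)=\norm{e_0\wedge f_0}$, so from $e\wedge f-e_0\wedge f_0 = e\wedge(f-f_0)+(e-e_0)\wedge f_0$ and the reverse triangle inequality,
\[ \theta_+(E,F)\ \ge\ \norm{e_0\wedge f_0}-\norm{e\wedge(f-f_0)}-\norm{(e-e_0)\wedge f_0}. \]
To finish I would invoke the elementary bound already used inside the proof of Proposition~\ref{sum:inters:modulus cont}: if $\omega$ is a unit simple $p$-vector then $\norm{\omega\wedge\beta}\le\norm{\beta}$ for every multivector $\beta$ (complete an orthonormal basis of $\langle\omega\rangle$ to one of $V$, expand $\beta$ in the induced orthonormal basis $\{e_J\}$ of the relevant exterior power, and observe that the non-zero terms $\omega\wedge e_J$ are pairwise orthogonal unit vectors, so Parseval gives the bound). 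Applying it with $(\omega,\beta)=(e,f-f_0)$, and — using the anticommutativity of $\wedge$ — with $(\omega,\beta)=(f_0,e-e_0)$, bounds the last two terms by $d(F,F_0)$ and $d(E,E_0)$ respectively, which gives the claimed inequality.

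There is no real obstacle here; the only points requiring care are that the reduction to $\theta_+$ legitimately uses that $\cdot^\perp$ is a $d$-isometry on each Grassmannian (Proposition~\ref{+cap duality}), and that the submultiplicativity $\norm{\omega\wedge\beta}\le\norm{\omega}\,\norm{\beta}$ is applied only when $\omega$ is a \emph{simple} multivector — which is exactly the situation above, since $e$ and $f_0$ are unit simple $k$-vectors.
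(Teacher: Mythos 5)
Your proposal is correct and follows essentially the same route as the paper: the paper also picks unit multivector representatives of the perpendicular subspaces realizing the Grassmann distances (using that $\cdot^\perp$ is a $d$-isometry), splits $e\wedge f-e_0\wedge f_0=e\wedge(f-f_0)+(e-e_0)\wedge f_0$, and bounds each term by the corresponding distance. Your explicit Parseval-type justification of $\norm{\omega\wedge\beta}\le\norm{\beta}$ for a unit simple $\omega$ is a welcome detail the paper leaves implicit, but it does not change the argument.
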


\begin{proof}
Consider unit vectors
$e\in\Psi(E^\perp)$, $f\in\Psi(F^\perp)$,
$e_0\in\Psi(E_0^\perp)$ and $f_0\in\Psi(F_0^\perp)$,
chosen so that 
\begin{align*}
d(E,E_0) &=d(E^\perp,E_0^\perp)=\norm{e-e_0}\;,\\ 
d(F,F_0) &=d(F^\perp,F_0^\perp)=\norm{f-f_0}\;.
\end{align*}
Hence
\begin{align*}
\theta_{\cap}(E,F) &= \norm{e\wedge f} \geq 
\norm{e_0\wedge f_0} -\norm{e\wedge f - e_0\wedge f_0}\\
&\geq \theta_{\cap}(E_0,F_0) - 
\norm{ e\wedge (f-f_0)} -\norm{(e-e_0)\wedge f_0}\\
&\geq \theta_{\cap}(E_0,F_0) - 
\norm{f-f_0} -\norm{e-e_0}\\
&\geq \theta_{\cap}(E_0,F_0) - 
d(F,F_0) -d(E,E_0)\;.
\end{align*}

\end{proof}

The exterior product is  a continuous operation.
A lower bound on its modulus of continuity can be expressed in terms of the angle between the arguments.

\begin{proposition}\label{prop norm{u wedge v} >= norm{u} norm{v}}
Given $E,F\in\Gr_k(V)$, and  families of vectors
$\{u_1,\ldots, u_k\}\subset E$ and $\{u_{k+1},\ldots, u_{k+i}\}\subset  F^\perp$  with $1\leq i\leq m-k$,
\begin{enumerate}
\item[(a)] $\displaystyle \norm{u_1\wedge \ldots \wedge u_k\wedge u_{k+1}\wedge \ldots \wedge u_{k+i}} \leq  \norm{u_1\wedge \ldots \wedge u_k}\,\norm{u_{k+1}\wedge \ldots \wedge u_{k+i}}$,
\item[(b)]  $\displaystyle  \norm{u_1\wedge \ldots \wedge u_k\wedge u_{k+1}\wedge \ldots \wedge u_{k+i}} \geq \aangle(E,F)\,\norm{u_1\wedge \ldots \wedge u_k}\,\norm{u_{k+1}\wedge \ldots \wedge u_{k+i}}$.
\end{enumerate}
\end{proposition}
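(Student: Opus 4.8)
The plan is to prove both inequalities at once by choosing a convenient orthonormal frame adapted to the two families, and to reduce the wedge norms to determinants of Gram matrices. First I would normalize: writing $e = u_1\wedge\ldots\wedge u_k$ and $f^\perp = u_{k+1}\wedge\ldots\wedge u_{k+i}$, both sides of (a) and (b) scale homogeneously in each $u_j$, so it costs nothing to assume that $\{u_1,\ldots,u_k\}$ and $\{u_{k+1},\ldots,u_{k+i}\}$ are orthonormal bases of $E$ and of an $i$-dimensional subspace $G \subseteq F^\perp$ (if either family is linearly dependent both sides vanish and there is nothing to prove). Then $\norm{u_1\wedge\ldots\wedge u_k} = \norm{u_{k+1}\wedge\ldots\wedge u_{k+i}} = 1$, and the task becomes showing $\aangle(E,F) \leq \norm{u_1\wedge\ldots\wedge u_{k+i}} \leq 1$.

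For the upper bound (a), the standard fact is that $\norm{u_1\wedge\ldots\wedge u_{k+i}}^2 = \det M$ where $M$ is the $(k+i)\times(k+i)$ Gram matrix $M_{ab} = \langle u_a, u_b\rangle$; since the first block and the last block of $M$ are identity matrices, Hadamard's inequality (or Fischer's inequality for the block decomposition) gives $\det M \leq \det(I_k)\det(I_i) = 1$, which is exactly (a) after taking square roots. For the lower bound (b), I would write each $u_a$ with $1\le a\le k$ as $u_a = \pi_{G^\perp}(u_a) + \pi_G(u_a)$; since $G \subseteq F^\perp$, the component $\pi_G(u_a)$ lies in the span of $u_{k+1},\ldots,u_{k+i}$, so by multilinearity and antisymmetry
\[
u_1\wedge\ldots\wedge u_k\wedge u_{k+1}\wedge\ldots\wedge u_{k+i}
= \pi_{G^\perp}(u_1)\wedge\ldots\wedge\pi_{G^\perp}(u_k)\wedge u_{k+1}\wedge\ldots\wedge u_{k+i}.
\]
Now the first $k$ factors lie in $G^\perp$ and the last $i$ lie in $G$, i.e. the two groups are mutually orthogonal, so the norm of this wedge factors as $\norm{\pi_{G^\perp}(u_1)\wedge\ldots\wedge\pi_{G^\perp}(u_k)}\cdot\norm{u_{k+1}\wedge\ldots\wedge u_{k+i}} = \norm{\wedge_k \pi_{G^\perp}(u_1\wedge\ldots\wedge u_k)}$. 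This last quantity equals $\abs{\det(\pi_{E,G^\perp})}$, the absolute value of the determinant of the compression of the orthogonal projection onto $G^\perp$ restricted to $E$.

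It remains to bound $\abs{\det(\pi_{E,G^\perp})}$ from below by $\aangle(E,F)$. Here I would invoke Proposition~\ref{prop: alpha = det Pi(E F)}(b), which identifies $\aangle(E,F) = \abs{\det(\pi_{E,F})}$, together with the monotonicity principle used in Lemma~\ref{theta:monot}: since $G \subseteq F^\perp$, we have $F \subseteq G^\perp$, hence $\pi_{E,F} = \pi_{G^\perp,F}\circ\pi_{E,G^\perp}$ and therefore $\abs{\det(\pi_{E,F})} = \abs{\det(\pi_{G^\perp,F}\vert_{\pi_{G^\perp}(E)})}\cdot\abs{\det(\pi_{E,G^\perp})} \leq \abs{\det(\pi_{E,G^\perp})}$, because every singular value of an orthogonal projection lies in $[0,1]$. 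Combining the displayed factorization with this inequality yields $\norm{u_1\wedge\ldots\wedge u_{k+i}} = \abs{\det(\pi_{E,G^\perp})} \geq \aangle(E,F)$, which is (b) in the normalized case; undoing the normalization gives the stated inequality. The only mildly delicate point is keeping track of orientations and the dimension count $k+i \le m$ so that all the wedges and projections are between spaces of the right dimensions; the factorization-of-determinants trick is routine once the frame is set up, so I do not expect a genuine obstacle here.
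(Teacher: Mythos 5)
Your argument is correct, and it differs from the paper's proof in genuinely interesting ways, though the geometric core is shared. The paper keeps the families arbitrary and projects the \emph{second} family: it replaces each $u_{k+j}\in F^\perp$ by $\pi_{F^\perp,E^\perp}(u_{k+j})$, factors the norm across $E\perp E^\perp$, and then bounds the result using $\norm{\wedge_i\,\pi_{F^\perp,E^\perp}}\leq 1$ for (a) and $\aangle(E,F)=\abs{\det(\pi_{F^\perp,E^\perp})}\leq \minexp(\wedge_i\,\pi_{F^\perp,E^\perp})$ for (b); because it works with the projection between the full complements $F^\perp$ and $E^\perp$, no monotonicity step is needed. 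You instead normalize to orthonormal frames, prove (a) by the Gram-matrix/Fischer (or Hadamard) inequality -- more elementary and self-contained than the paper's operator-norm bound -- and prove (b) by projecting the \emph{first} family onto $G^\perp$, where $G=\linspan{u_{k+1}\wedge\ldots\wedge u_{k+i}}\subseteq F^\perp$; since $G$ may be a proper subspace of $F^\perp$, you then need the extra factorization $\pi_{E,F}=\pi_{G^\perp,F}\circ\pi_{E,G^\perp}$ and the bound $\abs{\det(\pi_{E,F})}\leq\abs{\det(\pi_{E,G^\perp})}$, which is precisely the mechanism of Lemma~\ref{theta:monot}, combined with Proposition~\ref{prop: alpha = det Pi(E F)}(b). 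So your route trades the paper's $\minexp(\wedge_i\,\cdot)$ estimate for a determinant-monotonicity argument; both rest on the fact that all singular values of an orthogonal projection lie in $[0,1]$. Two small points deserve tightening. First, the reduction to orthonormal families is not a matter of scaling each $u_j$ individually: you should invoke the change-of-basis fact that replacing $\{u_1,\ldots,u_k\}$ by an orthonormal basis of its span multiplies $u_1\wedge\ldots\wedge u_k$ and the full wedge by the same determinant factor (and likewise for the second family), with the degenerate case giving $0=0$. Second, $\det(\pi_{E,G^\perp})$ is a map between spaces of different dimensions when $i<m-k$, so it must be read as the product of singular values (equivalently $\norm{\wedge_k\pi_{E,G^\perp}(e)}$ for a unit $k$-vector $e$ of $E$), and the multiplicativity you use, $\abs{\det(ST)}=\abs{\det(S\vert_{\Range T})}\,\abs{\det(T)}$, should be stated in that generalized sense; this is consistent with the paper's own conventions in Lemma~\ref{theta:monot}, but it is worth making explicit.
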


\begin{proof}
Since $\pi_{F^\perp,E^\perp}$ is an orthogonal projection, all its singular values  are in  $[0,1]$.
Thus, because $\abs{\det (\pi_{F^\perp,E^\perp})}$ is the product of all singular values, while $\minexp(\wedge_i \, \pi_{F^\perp,E^\perp})$ is the product of the 
$i$ smallest singular values, we have    
$$\abs{\det (\pi_{F^\perp,E^\perp})}
\leq  \minexp(\wedge_i \, \pi_{F^\perp,E^\perp})
\leq \norm{ \wedge_i \, \pi_{F^\perp,E^\perp} }\leq 1\;.$$
Hence
\begin{align*}
\norm{u_1\wedge \ldots \wedge u_k\wedge u_{k+1}\wedge \ldots \wedge u_{k+i}}
&= \norm{u_1\wedge \ldots \wedge u_k\wedge \pi_{F^\perp,E^\perp}(u_{k+1})\wedge \ldots \wedge \pi_{F^\perp,E^\perp}(u_{k+i})} \\
&= \norm{ u_1 \wedge \ldots u_k}\,
\norm{ \pi_{F^\perp,E^\perp}(u_{k+1})\wedge \ldots \pi_{F^\perp,E^\perp}(\wedge u_{k+i})} \\
&\leq  \norm{\wedge_i \, \pi_{F^\perp,E^\perp}}\, \norm{ u_1\wedge \ldots \wedge u_k}\,\norm{ u_{k+1}\wedge \ldots \wedge u_{k+i}} \\
&\leq    \norm{ u_1\wedge \ldots \wedge u_k}\,\norm{ u_{k+1}\wedge \ldots \wedge u_{k+i}} \;,
\end{align*}
which proves (a).
By proposition~\ref{prop: alpha = det Pi(E F)} we have
$$\aangle(E,F)=\aangle(F^\perp,E^\perp)=\abs{\det (\pi_{F^\perp,E^\perp})}
\leq  \minexp( \wedge_i (\pi_{F^\perp,E^\perp}))\;.
$$
Thus
\begin{align*}
\norm{u_1\wedge \ldots \wedge u_k\wedge u_{k+1}\wedge \ldots \wedge u_{k+i}}
&= \norm{u_1\wedge \ldots \wedge u_k\wedge \pi_{F^\perp,E^\perp}(u_{k+1})\wedge \ldots \wedge \pi_{F^\perp,E^\perp}(u_{k+i})} \\
&= \norm{ u_1 \wedge \ldots u_k}\,
\norm{ \pi_{F^\perp,E^\perp}(u_{k+1})\wedge \ldots \pi_{F^\perp,E^\perp}(\wedge u_{k+i})} \\
&\geq  \minexp(\wedge_i \, \pi_{F^\perp,E^\perp})\, \norm{ u_1\wedge \ldots \wedge u_k}\,\norm{ u_{k+1}\wedge \ldots \wedge u_{k+i}} \\
&\geq  \alpha(E,F)\, \norm{ u_1\wedge \ldots \wedge u_k}\,\norm{ u_{k+1}\wedge \ldots \wedge u_{k+i}} \;,
\end{align*}
which proves (b).

\end{proof}

Of course the angle function $\aangle$ is Lipschitz continuous.

\begin{proposition}
\label{prop aangle continuity}
Given $u,u',v,v'\in \Pp(V)$,
$$ \abs{\aangle(u,v) -\aangle(u',v')} \leq d(u,u') + d(v,v') \;. $$
\end{proposition}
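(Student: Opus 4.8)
The plan is to reduce the claim to the triangle inequality for the metric $d$ together with the Lipschitz continuity of the absolute value of the inner product as a bilinear form on unit vectors. First I would recall from Definition~\ref{def aangle(p,q)} that for unit vectors $p\in\hatp$, $q\in\hatq$ we have $\aangle(\hatp,\hatq) = |\langle p,q\rangle|$, the representatives being determined up to sign. Pick unit representatives $p,p',q,q'$ of $\hatu,\hatu',\hatv,\hatv'$ respectively, chosen so that $\norm{p-p'}=d(\hatu,\hatu')$ and $\norm{q-q'}=d(\hatv,\hatv')$ (this is possible precisely because the metric $d$ is the minimum of the two chord lengths, cf.~\eqref{projective Euclidean metric}).

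Next I would estimate
\begin{align*}
\bigl| \aangle(\hatu,\hatv) - \aangle(\hatu',\hatv') \bigr|
&= \bigl| \,|\langle p,q\rangle| - |\langle p',q'\rangle|\, \bigr|
\leq \bigl| \langle p,q\rangle - \langle p',q'\rangle \bigr| \\
&\leq \bigl| \langle p - p', q\rangle \bigr| + \bigl| \langle p', q - q'\rangle \bigr| \\
&\leq \norm{p-p'}\,\norm{q} + \norm{p'}\,\norm{q-q'}
= \norm{p-p'} + \norm{q-q'}
= d(\hatu,\hatu') + d(\hatv,\hatv')\,,
\end{align*}
using the reverse triangle inequality for $|\cdot|$ in the first step, bilinearity and the Cauchy--Schwarz inequality in the middle steps, and $\norm{q}=\norm{p'}=1$ at the end.

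There is essentially no obstacle here; the only point requiring a word of care is the sign choice in the definition of $\aangle$, which is why one must be free to select the representatives $p,p'$ (and $q,q'$) realizing the chord distance $d$ rather than arbitrary lifts — an arbitrary choice could inflate $\norm{p-p'}$ up to the full chord of the opposite arc. Once the representatives are aligned with $d$, the computation above is immediate. I would present the argument in roughly the four displayed lines shown.
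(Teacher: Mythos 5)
Your proof is correct: choosing unit representatives that realize the chord distance $d$ for each pair, and then combining the reverse triangle inequality, bilinearity, and Cauchy--Schwarz, gives exactly the claimed bound, and your remark about the sign alignment is the one point that needs care. The paper itself leaves this proposition as an exercise, and your argument is precisely the intended elementary one, so there is nothing to reconcile.
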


\begin{proof}
Exercise.
\end{proof}

\bigskip

The intersection of complementary flags satisfying the appropriate transversality conditions determines a decomposition of the Euclidean space $V$.
We end this subsection proving a modulus of continuity for this intersection operation.

Consider a signature $\tau=(\tau_1,\ldots, \tau_k)$ of length $k$ with $\tau_k<\dim V$.
We make the convention that
$\tau_0=0$ and $\tau_{k+1}=\dim V$.

\begin{definition}\label{def decomposition}
A $\tau$-decomposition is a family of linear subspaces $E_{\cdot}=\{E_i\}_{1\leq i\leq k+1}$ in $\Gr(V)$ such that $V=\oplus_{i=1}^{k+1} E_i$ and
$\dim E_i=\tau_i-\tau_{i-1}$ for all
$1\le i\leq k+1$. 
\end{definition}
Let $\Decompsp_\tau(V)$ denote
the space of all $\tau$-decompositions, which is a metric space with the distance
$$ d_\tau(E_{\cdot}, E_{\cdot}')=
\max_{1\leq i \leq k+1} d_{\tau_i-\tau_{i-1}}(E_i,E_i')\;, $$
and where $d_{\tau_i-\tau_{i-1}}$ stands for the distance~\eqref{Grassmann:distance} in $\Gr_{\tau_i-\tau_{i-1}}(V)$.

 Given  two flags $F \in\FF_\tau(V)$ and $F' \in\FF_{\tau^\perp}(V)$,  we will define a decomposition, denoted by $F\sqcap F'$, formed out of intersecting the components of these flags. 
 For that we introduce the following a measurement.

\begin{definition}
 \label{def sqcap tranvsersality}
 Given  two flags $F \in\FF_\tau(V)$ and $F' \in\FF_{\tau^\perp}(V)$,  let
$$ \theta_{\sqcap}(F,F'):=\min_{1\leq i\leq k}
\theta_{\cap}(F_i,F_{k-i+1}')\;.$$
\end{definition}

Notice that $\dim F_i=\tau_i$ and
$\dim F_{k-i+1}'=\tau_{k-i+1}^\perp=\dim V -\tau_i$, i.e., the subspaces $F_i$ and $F_{k-i+1}'$ have complementary dimensions. We will refer to this quantity as the measurement of the transversality between the flags $F$ and $F'$.

In the next proposition we complete $F$ and $F'$ to full flags of length $k+1$ setting $F_{k+1}=F_{k+1}'=V$. 
Assume also that $\tau_0=0$ and $\tau_{k+1}=\dim V$. 
\begin{proposition} 
\label{label theta sqcap >0 decomp well-def}
If $\theta_{\sqcap}(F,F')>0$ then the following is a direct sum decomposition in the space $\Decompsp_\tau(V)$, 
$$ V=\bigoplus_{i=1}^{k+1} F_i\cap F_{k-i+2}' \;, $$
with $\dim (F_i\cap F_{k-i+2}') = \tau_{i}-\tau_{i-1}$ for all $1\leq i\leq k+1$.
\end{proposition}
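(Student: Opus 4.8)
The plan is to build the decomposition component by component, via an induction that at stage $m$ identifies the partial sum $\sum_{i=1}^{m}\bigl(F_i\cap F_{k-i+2}'\bigr)$ with $F_m$; the dimension count then takes care of the rest.

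First I would record the dictionary between the measurement $\theta_{\cap}$ and transversality: for $E\in\Gr_r(V)$ and $F\in\Gr_s(V)$ with $r+s=n$ one has $\theta_{\cap}(E,F)>0$ if and only if $E+F=V$. This is immediate from Remark~\ref{rmk dim constraint} and Proposition~\ref{prop theta continuity modulus}(2), since in that case $\pi_{E^\perp,F}\colon E^\perp\to F$ is a map between spaces of equal dimension with kernel $E^\perp\cap F^\perp=(E+F)^\perp$. Applying this to each factor of $\theta_{\sqcap}(F,F')$ — and noting that $\dim F_j=\tau_j$ and $\dim F_{k-j+1}'=n-\tau_j$ are complementary — the hypothesis $\theta_{\sqcap}(F,F')>0$ says precisely that $F_j+F_{k-j+1}'=V$ for every $1\le j\le k$. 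Since the flag $F$ is increasing, for $2\le i\le k+1$ the inclusion $F_{i-1}\subseteq F_i$ combined with the $(i-1)$-st of these relations, namely $F_{i-1}+F_{k-i+2}'=V$, yields both $F_i+F_{k-i+2}'=V$ and, by a dimension count (here $\dim F_{i-1}+\dim F_{k-i+2}'=\tau_{i-1}+(n-\tau_{i-1})=n$), $F_{i-1}\cap F_{k-i+2}'=\{0\}$.

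Now set $W_i:=F_i\cap F_{k-i+2}'$ for $1\le i\le k+1$, using the conventions $F_{k+1}=F_{k+1}'=V$, $\tau_0=0$, $\tau_{k+1}=\dim V$. From $F_i+F_{k-i+2}'=V$ (trivial for $i=1$, established above for $i\ge 2$) one gets $\dim W_i=\dim F_i+\dim F_{k-i+2}'-n=\tau_i-\tau_{i-1}$ for all $i$. I would then prove by induction on $m$ that $W_1\oplus\cdots\oplus W_m=F_m$. The base case is $W_1=F_1\cap V=F_1$. For the inductive step, assume $W_1\oplus\cdots\oplus W_{m-1}=F_{m-1}$; each $W_i$ with $i\le m-1$ lies in $F_i\subseteq F_{m-1}\subseteq F_m$, and $W_m\subseteq F_m$, so the whole sum sits inside $F_m$. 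Moreover $F_{m-1}\cap W_m\subseteq F_{m-1}\cap F_{k-m+2}'=\{0\}$ by the previous paragraph, so $W_1\oplus\cdots\oplus W_{m-1}\oplus W_m$ is direct of dimension $\tau_{m-1}+(\tau_m-\tau_{m-1})=\tau_m=\dim F_m$; being a full-dimensional subspace of $F_m$, it equals $F_m$. Taking $m=k+1$ gives $V=\bigoplus_{i=1}^{k+1}F_i\cap F_{k-i+2}'$ with $\dim\bigl(F_i\cap F_{k-i+2}'\bigr)=\tau_i-\tau_{i-1}$, so this family is an element of $\Decompsp_\tau(V)$, which is the assertion.

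The one place to be careful — and where I would slow down — is the bookkeeping of the two index shifts: the transversality data in $\theta_{\sqcap}$ pairs $F_j$ with $F_{k-j+1}'$, whereas the decomposition pairs $F_i$ with $F_{k-i+2}'$. This discrepancy is absorbed exactly by using the $(i-1)$-st transversality relation together with the inclusion $F_{i-1}\subseteq F_i$; beyond that the argument is routine dimension counting with no genuine difficulty.
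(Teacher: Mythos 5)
Your proof is correct and takes essentially the same route as the paper's: translate $\theta_{\sqcap}(F,F')>0$ into the transversality $V=F_j\oplus F_{k-j+1}'$ of the complementary pairs, deduce $F_i+F_{k-i+2}'=V$ and the dimension formula, and then run the same induction showing the partial direct sums exhaust $F_m$. The only cosmetic difference is that you get $F_i+F_{k-i+2}'=V$ from the $(i-1)$-st transversality relation together with the inclusion $F_{i-1}\subseteq F_i$, whereas the paper enlarges on the $F'$-side via the monotonicity lemma (Lemma~\ref{theta:monot}).
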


\begin{proof}
Since the subspaces $F_i$ and $F_{k-i+1}'$ have complementary dimensions, the relation $\theta_{\cap}(F_i,F_{k-i+1}')>0$ implies that 
\begin{equation}\label{V=Fi+Fk-i+1'}
V=F_i\oplus F_{k-i+1}'\;.
\end{equation}
By lemma~\ref{theta:monot},
 $\theta_{\cap}(F_i,F_{k-i+2}')\geq \theta_{\cap}(F_i,F_{k-i+1}')>0$. Therefore
$F_i + F_{k-i+2}'=V$ and
\begin{align*}
\dim(F_i\cap F_{k-i+2}') &=
\tau_i + \tau_{k-i+2}^\perp-\dim V\\
&=
\tau_i + (\dim V - \tau_{i-1})-\dim V = \tau_i-\tau_{i-1}\;.
\end{align*}
We prove by finite induction in $i=1,\ldots, k+1$
that
\begin{equation}\label{Fi oplus}
F_i= \bigoplus_{j\leq i} F_j\cap F_{k-j+2}'\;.
\end{equation}
Since $F_{k+1}=V$ the proposition will follow from this relation at $i=k+1$.

For $i=1$,  ~\eqref{Fi oplus} reduces to $F_1=F_1\cap V$. The induction step follows
from
$$ F_{i+1}=F_{i}\oplus \left( F_{i+1}\cap F_{k-i+1}'\right) \;.$$
Since the following dimensions add up
\begin{align*} 
\dim F_{i+1}=\tau_{i+1} & = \tau_{i} + (\tau_{i+1}-\tau_{i}) \\
& = \dim F_{i} +\dim (F_{i+1}\cap F_{k-i+1}') \;,
\end{align*}
it is enough to see that
$$ F_{i}\cap \left( F_{i+1}\cap F_{k-i+1}'\right) =  F_{i}\cap F_{k-i+1}' =\{0\}\;, $$
which holds because of~\eqref{V=Fi+Fk-i+1'}.

\end{proof}

Hence, by the previous proposition we can define

\begin{definition} 
\label{def sqcap decomp operation} 
Given flags $F\in \FF_\tau(V)$ and $F'\in\FF_{\tau^\perp}(V)$ such that $\theta_{\sqcap}(F,F')>0$ we define
$F\sqcap F':= \{ F_i\cap F_{k-i+2}'\}_{1\leq i\leq k+1}$ and call it the intersection decomposition of the flags $F$ and $F'$.
\end{definition}

Next proposition gives a modulus of lower semi-continuity for the   transversality measurement $\theta_{\sqcap}$.

\begin{proposition} 
\label{theta cap: F,F'->F0,F0'}.
Given $F, F_0\in \FF_\tau(V)$ and $F', F_0'\in \FF_{\tau^\perp}(V)$,
$$\theta_{\sqcap}(F,F')\geq \theta_{\sqcap}(F_0,F_0')
- d_{\tau}(F,F_0)-d_{\tau^\perp}(F',F_0')\;.$$
\end{proposition}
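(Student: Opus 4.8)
The plan is to reduce this immediately to the Grassmannian-level estimate, Proposition~\ref{theta cap: E->E0}, applied componentwise. Recall that by Definition~\ref{def sqcap tranvsersality} we have $\theta_{\sqcap}(F,F')=\min_{1\leq i\leq k}\theta_{\cap}(F_i,F_{k-i+1}')$ and similarly for $F_0,F_0'$, and that the flag distances are the maxima of the component Grassmann distances, $d_\tau(F,F_0)=\max_{1\le j\le k} d(F_j,(F_0)_j)$ and $d_{\tau^\perp}(F',F_0')=\max_{1\le j\le k} d(F_j',(F_0')_j)$.

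First I would fix an index $i\in\{1,\dots,k\}$ and apply Proposition~\ref{theta cap: E->E0} with the substitutions $E=F_i$, $E_0=(F_0)_i$, $F=F_{k-i+1}'$, $F_0=(F_0')_{k-i+1}$ (these lie in the correct Grassmannians, since $\dim F_i=\tau_i=\dim (F_0)_i$ and $\dim F_{k-i+1}'=\tau_{k-i+1}^\perp=\dim (F_0')_{k-i+1}$, which is why the $\theta_\cap$ in Definition~\ref{def sqcap tranvsersality} makes sense). This gives
$$
\theta_{\cap}(F_i,F_{k-i+1}')\geq \theta_{\cap}((F_0)_i,(F_0')_{k-i+1}) - d(F_i,(F_0)_i) - d(F_{k-i+1}',(F_0')_{k-i+1}).
$$

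Next I would bound the two error terms from above: $d(F_i,(F_0)_i)\le d_\tau(F,F_0)$ and, since $k-i+1$ also ranges over $\{1,\dots,k\}$, $d(F_{k-i+1}',(F_0')_{k-i+1})\le d_{\tau^\perp}(F',F_0')$. I would also bound the leading term from below by $\theta_{\cap}((F_0)_i,(F_0')_{k-i+1})\ge \theta_{\sqcap}(F_0,F_0')$. Combining, for every $i$,
$$
\theta_{\cap}(F_i,F_{k-i+1}')\geq \theta_{\sqcap}(F_0,F_0') - d_\tau(F,F_0) - d_{\tau^\perp}(F',F_0').
$$

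Finally I would take the minimum over $i=1,\dots,k$ on the left-hand side, which by Definition~\ref{def sqcap tranvsersality} is exactly $\theta_{\sqcap}(F,F')$, and the right-hand side is independent of $i$; this yields the claimed inequality. There is no real obstacle here: the only points requiring care are checking the dimension bookkeeping so that Proposition~\ref{theta cap: E->E0} applies to each pair of complementary-dimensional subspaces, and observing that the reindexing $i\mapsto k-i+1$ is a bijection of $\{1,\dots,k\}$ so that the second error term is genuinely controlled by $d_{\tau^\perp}(F',F_0')$.
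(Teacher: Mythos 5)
Your proof is correct and is exactly the paper's argument: the paper's proof is simply to apply Proposition~\ref{theta cap: E->E0} componentwise, which is what you carry out in detail, including the harmless bookkeeping on indices and the passage to the minimum over $i$.
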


\begin{proof}
Apply proposition~\ref{theta cap: E->E0}.

\end{proof}

The modulus of continuity for the intersection map 
$\sqcap:\FF_\tau(V)\times \FF_{\tau^\perp}(V)\to \Decompsp_\tau(V)$ is established below.

\begin{proposition} 
\label{decomp:modulus cont}
Given flags $F_1, F_2\in\FF_\tau(V)$ and $F_1', F_2'\in\FF_{\tau^\perp}(V)$,
$$  d_\tau(F_1\sqcap F_1', F_2\sqcap F_2')  \leq \max\left\{  \frac{1}{\theta_\sqcap (F_1,F_1')},
 \frac{1}{\theta_\sqcap (F_2,F_2')} \right\} \,( d_{\tau}(F_1,F_2) + d_{\tau^\perp}(F_1',F_2') ) \;.$$ 
\end{proposition}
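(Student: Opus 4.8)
The plan is to reduce this flag-level estimate to the componentwise Grassmannian estimate of Proposition~\ref{sum:inters:modulus cont}(2), and then to control the transversality denominators that appear by $\theta_\sqcap$ via the monotonicity Lemma~\ref{theta:monot}. Throughout, write $F_{a,i}$ for the $i$-th component of the flag $F_a$ and $F_{a,i}'$ for the $i$-th component of $F_a'$, with the padding convention $F_{a,k+1}=F_{a,k+1}'=V$.

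First I would dispose of the degenerate case: if $\theta_\sqcap(F_1,F_1')=0$ or $\theta_\sqcap(F_2,F_2')=0$ the right-hand side is $+\infty$ and there is nothing to prove, so I may assume both are positive. By Proposition~\ref{label theta sqcap >0 decomp well-def} the decompositions $F_1\sqcap F_1'$ and $F_2\sqcap F_2'$ are then well defined, the $i$-th component of $F_a\sqcap F_a'$ being $F_{a,i}\cap F_{a,k-i+2}'$, of dimension $\tau_i-\tau_{i-1}$. By the definition of the metric on $\Decompsp_\tau(V)$,
\[ d_\tau(F_1\sqcap F_1',F_2\sqcap F_2')=\max_{1\le i\le k+1} d\bigl(F_{1,i}\cap F_{1,k-i+2}',\, F_{2,i}\cap F_{2,k-i+2}'\bigr)\;. \]

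Next I would fix $i\in\{1,\dots,k+1\}$ and apply Proposition~\ref{sum:inters:modulus cont}(2) with $(E,F)=(F_{1,i},F_{1,k-i+2}')$ and $(E',F')=(F_{2,i},F_{2,k-i+2}')$, bounding the $i$-th term above by $\max\{\theta_\cap(F_{1,i},F_{1,k-i+2}')^{-1},\theta_\cap(F_{2,i},F_{2,k-i+2}')^{-1}\}$ times $d(F_{1,i},F_{2,i})+d(F_{1,k-i+2}',F_{2,k-i+2}')$. The second factor is at most $d_\tau(F_1,F_2)+d_{\tau^\perp}(F_1',F_2')$ directly from the definitions~\eqref{d:tau} of the flag metrics, the padding components contributing $0$. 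For the first factor, since flags increase we have $F_{a,k-i+1}'\subseteq F_{a,k-i+2}'$ for $1\le i\le k$, so Lemma~\ref{theta:monot} gives $\theta_\cap(F_{a,i},F_{a,k-i+2}')\ge\theta_\cap(F_{a,i},F_{a,k-i+1}')\ge\theta_\sqcap(F_a,F_a')$ for $a=1,2$; while for $i=k+1$ one argument of $\theta_\cap$ equals $V$ and $\theta_\cap(V,\cdot)=1\ge\theta_\sqcap(F_a,F_a')$ (here $V^\perp=\{0\}$ and the relevant unit $0$-vector is the scalar $1$), so the same lower bound holds in every case. Combining these estimates and taking the maximum over $i$ then yields the asserted inequality.

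The only point requiring care is the index bookkeeping — the shift between the index $k-i+1$ in Definition~\ref{def sqcap tranvsersality} and the index $k-i+2$ in the components of $F\sqcap F'$ — together with the two boundary values $i=1$ and $i=k+1$, for which one intersected subspace is all of $V$; these are precisely what Lemma~\ref{theta:monot} and the triviality $\theta_\cap(V,\cdot)=1$ take care of. No genuine analytic difficulty is expected.
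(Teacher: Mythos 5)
Your proof is correct and follows essentially the same route as the paper, whose proof of Proposition~\ref{decomp:modulus cont} is simply the componentwise application of Proposition~\ref{sum:inters:modulus cont}(2); the index bookkeeping and the bound $\theta_\cap(F_{a,i},F'_{a,k-i+2})\geq\theta_\cap(F_{a,i},F'_{a,k-i+1})\geq\theta_\sqcap(F_a,F_a')$ via Lemma~\ref{theta:monot} are exactly the details the paper leaves implicit (and already uses in Proposition~\ref{label theta sqcap >0 decomp well-def}). Nothing further is needed.
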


\begin{proof}
The proof reduces to apply proposition~\ref{sum:inters:modulus cont}.

\end{proof}

Any two linear maps $g_0,g_1\in\mathcal{L}(V)$
having  $\tau$-gap ratios, and such that $\aangle_\tau(g_0,g_1)>0$, determine a  $\tau$-decomposition of $V$ as intersection of the image by $\varphi_g$  of the $g_0$ most expanding $\tau$-flag with the $g_1$ least expanding $\tau^\perp$-flag. Recall definitions~\ref{def most expanding flag} and~\ref{def least expanding flag}.
The corresponding intersection measurement is bounded below by the angle $\aangle_\tau(g_0,g_1)$.

\begin{proposition}
\label{aangle bound}
Given $g_0,g_1\in\mathcal{L}(V)$, if $\rgap_\tau(g_0)>1$ and $\rgap_\tau(g_1)>1$ then
$$\theta_{\sqcap}(\leastexp_{\tau^\perp}(g_1),
\mostexp_\tau(g_0^\ast) )\geq \aangle_\tau(g_0,g_1)\;.$$
In particular, if $\aangle_\tau(g_0,g_1)>0$ the flags $\mostexp_\tau(g_0^\ast)$ and $\leastexp_{\tau^\perp}(g_1)$ determine the decomposition $\mostexp_\tau(g_0^\ast)\sqcap  \leastexp_{\tau^\perp}(g_1) \in\Decompsp_\tau(V)$.
\end{proposition}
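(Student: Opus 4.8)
The plan is to reduce the statement about flags to the already-established statement about Grassmannians, namely the corollary to Proposition~\ref{prod:2:lemma}, which says that for linear maps with a $(k)$-gap pattern one has $\aangle_k(g_0,g_1)\le \norm{\wedge_k(g_1 g_0)}/(\norm{\wedge_k g_1}\norm{\wedge_k g_0})$, together with the angle identity $\aangle_k(g_0,g_1)=\aangle(\mostexp_k(g_0^\ast),\mostexp_k(g_1))$. Unwinding Definition~\ref{def sqcap tranvsersality}, we must bound below each quantity $\theta_{\cap}(G_i, H_{k-i+1})$ where $G_i$ is the $i$-th component of $\mostexp_\tau(g_0^\ast)$, i.e. $\mostexp_{\tau_i}(g_0^\ast)$, and $H_{k-i+1}$ is the $(k-i+1)$-th component of $\leastexp_{\tau^\perp}(g_1)$. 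Here the key observation is that $\leastexp_{\tau^\perp}(g_1)$ has signature $\tau^\perp$, so by Definition~\ref{def ledir} its $(k-i+1)$-th component is $\leastexp_{\tau_{k-i+1}^\perp}(g_1) = \leastexp_{n-\tau_i}(g_1) = \mostexp_{\tau_i}(g_1)^\perp$, using $\tau_{k-i+1}^\perp = n-\tau_i$ and Proposition~\ref{prop relation least most expanding subspaces} (or directly Definition~\ref{def ledir}).

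First I would fix $i$ and set $k_i:=\tau_i$, $E:=\mostexp_{k_i}(g_0^\ast)$ and $F:=\mostexp_{k_i}(g_1)^\perp$; these are complementary-dimensional subspaces ($\dim E = k_i$, $\dim F = n-k_i$), so $\theta_{\cap}(E,F)$ is the relevant non-degenerate transversality. By Proposition~\ref{prop theta continuity modulus}(2), $\theta_{\cap}(E,F)=\abs{\det(\pi_{E^\perp,F})}=\abs{\det(\pi_{F^\perp,E})}$; taking the second form, $F^\perp = \mostexp_{k_i}(g_1)$, so $\theta_{\cap}(E,F)=\abs{\det(\pi_{\mostexp_{k_i}(g_1),\,\mostexp_{k_i}(g_0^\ast)})}$. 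Now both subspaces have dimension $k_i$, and by Proposition~\ref{prop: alpha = det Pi(E F)}(b) this determinant equals exactly $\aangle_{k_i}(\mostexp_{k_i}(g_0^\ast),\mostexp_{k_i}(g_1)) = \aangle(\mostexp_{k_i}(g_0^\ast),\mostexp_{k_i}(g_1))$, which by Definition~\ref{alpha:def} is precisely $\aangle_{k_i}(g_0,g_1) = \aangle_{\tau_i}(g_0,g_1)$.

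Having this identity for every $i$, I would conclude
$$ \theta_{\sqcap}(\leastexp_{\tau^\perp}(g_1),\mostexp_\tau(g_0^\ast)) = \min_{1\le i\le k}\theta_{\cap}(\mostexp_{\tau_i}(g_0^\ast),\mostexp_{\tau_i}(g_1)^\perp) = \min_{1\le i\le k}\aangle_{\tau_i}(g_0,g_1) = \aangle_\tau(g_0,g_1), $$
where the last equality is Definition~\ref{alpha:def} (the $\tau$-angle case) combined with Definition~\ref{def tau aangle}. In fact this gives equality, which is stronger than the claimed inequality $\theta_{\sqcap}\ge \aangle_\tau$; I would state the bound as in the proposition but the proof naturally yields the identity. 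Finally, if $\aangle_\tau(g_0,g_1)>0$ then $\theta_{\sqcap}(\mostexp_\tau(g_0^\ast),\leastexp_{\tau^\perp}(g_1))>0$ (note $\theta_\sqcap$ is symmetric in its two flag arguments up to the index reversal built into Definition~\ref{def sqcap tranvsersality}, so the order of arguments is harmless), and Proposition~\ref{label theta sqcap >0 decomp well-def} together with Definition~\ref{def sqcap decomp operation} produces the decomposition $\mostexp_\tau(g_0^\ast)\sqcap\leastexp_{\tau^\perp}(g_1)\in\Decompsp_\tau(V)$.

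The one point that needs care — and the only real obstacle — is bookkeeping the index conventions: making sure that the $(k-i+1)$-th component of a $\tau^\perp$-flag is indexed by $\tau_{k-i+1}^\perp = n-\tau_i$, that Definition~\ref{def ledir}'s $\leastexp_{n-\tau_i}(g_1)$ really equals $\mostexp_{\tau_i}(g_1)^\perp$, and that $\leastexp_{\tau^\perp}(g_1)$ is well-defined here (which requires $\rgap_{(\tau^\perp)^\perp}(g_1)=\rgap_\tau(g_1)>1$, exactly the hypothesis). Once the indices are aligned, every step is an application of a previously proved identity with no inequality slack beyond the trivial $\min$.
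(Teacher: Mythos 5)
Your proposal is correct and follows essentially the same route as the paper's proof: identify the $i$-th component pairing as $\mostexp_{\tau_i}(g_0^\ast)$ versus $\leastexp_{n-\tau_i}(g_1)=\mostexp_{\tau_i}(g_1)^\perp$, show each $\theta_{\cap}$ equals $\aangle_{\tau_i}(g_0,g_1)$, and take the minimum. The only cosmetic difference is that the paper invokes Lemma~\ref{theta aangle} directly, whereas you re-derive its content by combining Proposition~\ref{prop theta continuity modulus}(2) with Proposition~\ref{prop: alpha = det Pi(E F)}(b), which is exactly how that lemma is proved.
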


\begin{proof} Let $n=\dim V$.
Consider the flags $F= \mostexp_{\tau}(g_0^\ast)$ and $F' = \leastexp_{{\tau}^\perp}(g_1)$.
We have $F_i= \mostexp_{\tau_i}(g_0^\ast)$ \, and \,
$F_{k-i+1} = \leastexp_{\tau_{k-i+1}^\perp}(g_1) = \leastexp_{n-\tau_{i}}(g_1) = \mostexp_{\tau_i}(g_1)^\perp$. Hence by lemma~\ref{theta aangle},
$$ \theta_{\cap}(F_i,F_{k-i+1}') = 
\theta_{\cap}(\mostexp_{\tau_i}(g_0^\ast),\mostexp_{\tau_i}(g_1)^\perp) = \aangle_{\tau_i }(\mostexp_{\tau_i}(g_0^\ast),\mostexp_{\tau_i}(g_1)) = \aangle_{\tau_i }(g_0,g_1)\;, $$
and taking the minimum,\,
$\theta_{\sqcap}(F,F')\geq \aangle_{\tau}(g_0,g_1)$.

\end{proof}

\subsection{ Dependence on the linear map}

We establish a modulus of Lipschitz continuity for the most expanding direction of a linear endomorphism  with a gap between its first and second singular values. For any $0<\kappa<1$, consider the set
$\mathcal{L}_\kappa :=\{\, g\in \mathcal{L}(V)\,:\,
\rgap(g)\geq \frac{1}{\kappa} \,\}$. We denote by $\mostexp:\mathcal{L}_\kappa\to\Pp(V)$ the map that assigns the $g$-most expanding direction to each $g\in \mathcal{L}_\kappa$.

The {\em relative distance} between
 linear maps $g,g'\in \mathcal{L}(V)\setminus\{0\}$ is defined as
$$ \drel(g,g'):=\frac{ \norm{g-g'} }{ \max\{ \norm{g}, \norm{g'} \}}\;. $$
Notice that this relative distance is not a metric.
It does not satisfy the triangle inequality. We  introduce it just to lighten the notation.

\begin{proposition}\label{lipschitz:eigendir} 
The map  $\mostexp:\mathcal{L}_\kappa\to\Pp(V)$ is locally Lipschitz. 

More precisely, given $0<\kappa<1$ there exists $\varepsilon_0 >0$ such that   for any $g_1,g_2\in \mathcal{L}_\kappa$ satisfying $\drel(g_1,g_2)\leq \varepsilon_0 $,
$$d(\mostexp(g_1), \mostexp(g_2))\leq \frac{16}{1-\kappa^2}\, \drel(g_1,g_2) \;. $$
\end{proposition}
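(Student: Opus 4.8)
The plan is to reduce the statement to a perturbation estimate for the top singular direction and then exploit the contraction properties of the projective action already established in Proposition~\ref{proj:contr}. Write $g=g_1$ and $\tilde g=g_2$, set $\mostexp(g)=\hat v$ and $\mostexp(\tilde g)=\hat{\tilde v}$, and pick unit representatives $v,\tilde v$ with $\angle(v,\tilde v)$ non‑obtuse. The key observation is that $\hat{\tilde v}$ is the attracting fixed point of $\varphi_{\tilde g}$ on a $\delta$‑ball around $\mostexp(\tilde g)$, while $\varphi_{\tilde g}$ maps $\hat v$ to a point close to $\varphi_g(\hat v)=\mostexp(g^\ast)$ — no wait, that is the wrong iterate. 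The cleaner route is to compare $\hat v$ directly with $\hat{\tilde v}$ using the following idea: $\hat{\tilde v}$ is the unique fixed point of $\varphi_{\tilde g^\ast \tilde g}$ in a neighbourhood of itself, and $\hat v$ is the fixed point of $\varphi_{g^\ast g}$; both $g^\ast g$ and $\tilde g^\ast\tilde g$ have a first singular gap (the square of the gap of $g,\tilde g$), and $\rgap(g^\ast g)=\rgap(g)^2\geq\kappa^{-2}$. So apply the contraction estimate to the symmetric operators.

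Here are the steps in order. First, record that for $g\in\mathcal{L}_\kappa$ the operator $h:=g^\ast g$ is positive semidefinite with $\rgap(h)=\rgap(g)^2\geq \kappa^{-2}$ and $\mostexp(h)=\mostexp(g)$; moreover $\varphi_h$ fixes $\mostexp(g)$. Second, estimate $\drel(h,\tilde h)$ in terms of $\drel(g,\tilde g)$: from $h-\tilde h = g^\ast(g-\tilde g) + (g^\ast-\tilde g^\ast)\tilde g$ one gets $\norm{h-\tilde h}\leq \norm{g-\tilde g}(\norm{g}+\norm{\tilde g})\leq 2\max\{\norm g,\norm{\tilde g}\}\,\norm{g-\tilde g}$, and since $\norm h=\norm g^2$, $\norm{\tilde h}=\norm{\tilde g}^2$, this gives $\drel(h,\tilde h)\leq 2\,\drel(g,\tilde g)$. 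Third — the heart of the argument — show that if $\drel(h,\tilde h)$ is small then $\varphi_h$ maps a small $\delta$‑ball $B$ around $\mostexp(\tilde g)$ into itself: since $\varphi_{\tilde h}$ fixes the center and contracts $B$ by a factor $\kappa^2/\sqrt{1-r^2}<1$ (Proposition~\ref{proj:contr}(1) applied to $\tilde h$, whose squared gap is $\geq\kappa^{-4}$, actually even stronger), and since $\varphi_h$ is $C^0$‑close to $\varphi_{\tilde h}$ on $B$ with an explicit bound coming from Proposition~\ref{Lip:proj:action} and the Lipschitz dependence of $gp\wedge g v_p(q)$ on $g$, the map $\varphi_h$ still maps $B$ into itself provided the perturbation is below a threshold $\varepsilon_0(\kappa)$. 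Then $\varphi_h$ has a fixed point in $B$, which must be $\mostexp(h)=\mostexp(g)$ by uniqueness (its own contraction estimate). Fourth, convert "$\mostexp(g)\in B$" into the quantitative bound: the radius $r$ one can take is essentially the $C^0$ distance $\sup_{B}\delta(\varphi_h,\varphi_{\tilde h})$ divided by $1-$(contraction factor); choosing $r$ a fixed small constant like $1/2$ (so $\sqrt{1-r^2}\geq 1/\sqrt2$ and all the $\kappa$‑dependent constants are harmless), one gets $\delta(\mostexp(g),\mostexp(\tilde g))\leq \frac{C}{1-\kappa^2}\drel(h,\tilde h)\leq \frac{2C}{1-\kappa^2}\drel(g,\tilde g)$, and tracking the constant $C$ through Proposition~\ref{Lip:proj:action} (the factor $\norm{gp\wedge gv_p(q)}/(\norm{gp}\norm{gq})$ differs from the analogous quantity for $\tilde g$ by $O(\drel)$ with an absolute constant, and one divides by $\minexp$‑type quantities controlled by $\kappa$) yields the explicit $16/(1-\kappa^2)$ after using $d\leq \frac{\pi}{2}\delta$ or the direct comparison $d=\mathrm{chord}\,\rho$, $\delta=\sin\rho$, so $d\leq\sqrt2\,\delta$; one then absorbs $\sqrt2$ and the other numerical losses into the constant $16$.

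The main obstacle is the third step: making the self‑map argument quantitative with a clean constant. One must bound $\sup_{\hat p\in B}\delta(\varphi_h(\hat p),\varphi_{\tilde h}(\hat p))$ by a constant times $\drel(h,\tilde h)$ uniformly over the ball, using that on $B$ the denominators $\norm{h\,p}$ are bounded below by $\minexp$ of $h$ restricted to directions near $\mostexp(h)$, which is $\geq \mathrm{const}\cdot\norm h$ there (not globally — $\minexp(h)$ could be $0$!). This is exactly why the ball must be centered at the top singular direction and why one works with $h$ rather than something symmetric but arbitrary: near $\mostexp(h)$ the operator $h$ expands by nearly $\norm h$. The rest — the algebra of $\oplus$, the chord/sine comparisons, and bookkeeping the constant down to $16$ — is routine given the lemmas already proved, in particular Propositions~\ref{proj:lip}, \ref{Lip:proj:action}, \ref{diff:proj}, and~\ref{proj:contr}.
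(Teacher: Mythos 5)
Your proposal is correct and follows essentially the same route as the paper's proof: pass to the positive semi-definite maps $g_i^\ast g_i$ (gap ratio $\geq\kappa^{-2}$), bound the $C^0$-distance between the projective actions by the relative distance of the operators (Proposition~\ref{proj:lip}, i.e.\ Lemma~\ref{varphi:gi}), and combine the contraction of $\varphi_h$ near $\mostexp(h)$ from Proposition~\ref{proj:contr} with a fixed-point stability argument. The only differences are cosmetic — you center the invariant ball at $\mostexp(g_2)$ where the paper compares the two fixed points via Lemma~\ref{Ti} on a ball centered at $\mostexp(h_1)$ — though note that the radius must be chosen small depending on $\kappa$ (as with the paper's $\delta_0$), not a fixed $1/2$, so that the contraction factor $\kappa^2/\sqrt{1-r^2}$ stays below $1$ and $1/(1-\kappa')\leq 2/(1-\kappa^2)$.
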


\begin{proof} 
Let $g\in \mathcal{L}_\kappa$ and $\lambda>0$.
The singular values (resp. singular vectors) of $g$ are the eigenvalues (resp. eigenvectors) of $\sqrt{g^\ast\, g}$.
Hence  $s_j(\lambda\,g)= \lambda\, s_j(g)$ , for all $j$. We also have 
$\mostexp(\lambda g)=\mostexp(g)$ and $\rgap(\lambda\, g)=\rgap(g)$.

Consider the subspace
$\mathcal{L}_\kappa(1):=\{\, g\in \mathcal{L}_\kappa\, \colon\, \norm{g}=1\,\}$.
The projection $g\mapsto g/\norm{g}$ takes
$\mathcal{L}_\kappa$ to $\mathcal{L}_\kappa(1)$.
It also satisfies $\mostexp(g/\norm{g})=\mostexp(g)$ 
and 
$$ \norm{ \frac{g_1}{\norm{g_1}} - \frac{g_2}{\norm{g_2}} }\leq 
2\, \drel(g_1,g_2) \;. $$
Hence we can focus our attention on the 
restricted map
$\mostexp:\mathcal{L}_\kappa(1)\to\Pp(V)$.
 
Let $\mathcal{L}^+_\kappa(1)$ denote the subspace of  $g\in \mathcal{L}_\kappa(1)$ such that
$g=g^ \ast\geq 0$, i.e., $g$ is positive semi-definite.

Given $g \in\mathcal{L}_\kappa(1)$,
we have $\norm{g^\ast\, g}=1=\norm{g}$, $\rgap(g^\ast g)=\rgap(g)^2$  and  $\mostexp(g^\ast g)=\mostexp(g)$.
Also, for all $g_1, g_2\in\mathcal{L}_\kappa(1)$,
\begin{align*}
\norm{g_1^\ast\,g_1 -  g_2^\ast\,g_2 } &\leq
\norm{g_1^\ast}\,\norm{g_1 - g_2 } +
\norm{g_1^\ast -  g_2^\ast } \,\norm{g_2}\\
&= ( \norm{g_1^\ast} + \norm{g_2})\,\norm{g_1 - g_2 } 
\leq 2\,\norm{g_1-g_2}\;.
\end{align*}
Hence,  the mapping $g\mapsto g^\ast\,g$  takes $\mathcal{L}_\kappa(1)$  to $\mathcal{L}^+_{\kappa^2}(1)$ and  has Lispschitz constant  $2$.
Therefore, it is enough to prove that the 
restricted map
$\mostexp:\mathcal{L}^+_{\kappa^ 2}(1)\to\Pp(V)$
has (locally) Lipschitz constant $4\,(1-\kappa^2)^{-1}$. 

Let $\delta_0$ be a small positive number and take
$0<\varepsilon_0\ll \frac{ \delta_0}{4}$.
The size of $\delta_0$  will be fixed throughout the rest of the proof
according to necessity.
Take $h_1,h_2\in\mathcal{L}_{\kappa^2}^+(1)$
such that $\norm{h_1-h_2}<\varepsilon_0$  and  set
 $\hatp_0:=\mostexp(h_1)$. By Proposition~\ref{proj:contr} we have
$$\varphi_{h_1}\left(B(\hatp_0,\delta_0) \right) \subset  B \left(\hatp_0, \frac{\kappa^2\delta_0}{\sqrt{1-\delta_0^2}} \right)
\subset B(\hatp_0, \delta_0  )  \;, $$
where all balls refer to the projective sine-metric $\delta$ 
defined in~\eqref{projective sine metric}.
The second inclusion holds if $\delta_0$ is chosen small enough.
Take any $\hatp\in B(\hatp_0, \delta_0)$ and choose unit vectors
$p\in\hatp$ and $p_0\in\hatp_0$ such that $\langle p,p_0\rangle>0$.
Then $p = \langle p,p_0\rangle\,p_0 + w$, with $w\in p_0^\perp$,
$h_1(p_0)= p_0$   and
$h_1(w)\in p_0^\perp$. Hence
\begin{align*}
\norm{h_1(p)} & = \norm{\langle p,p_0\rangle\,p_0 + 
h_1(w) } \geq  \langle p,p_0\rangle  \\
& = \sqrt{1-\norm{p\wedge p_0}^2}\geq \sqrt{1-\delta_0^2} \geq   1/2 \;,
\end{align*}
and again, assuming $\delta_0$ is small,
$$
\norm{h_2(p)}  \geq \norm{h_1(p)} -\norm{h_1-h_2} \geq\sqrt{1-\delta_0^2}    -\varepsilon_0 
\geq  1/2 \;.
$$
Thus, by Lemma~\ref{varphi:gi} below, for all $\hatp\in B(\hatp_0,\delta_0)$,
$$ d(\varphi_{h_1}(\hatp), \varphi_{h_2}(\hatp)) \leq 2\,  \norm{h_1-h_2}  \;.  $$
Choosing $\varepsilon_0$ small enough, $\frac{\kappa^2\,\delta_0}{\sqrt{1-\delta_0^2}} + 2\,\varepsilon_0  < \delta_0$. This implies that
$$\varphi_{h_2}\left(B(\hatp_0,\delta_0) \right) \subset B(\hatp_0,\delta_0) \;. $$
By Proposition~\ref{proj:contr} we  know that 
$T_1=\varphi_{h_1}\vert_{B(\hatp_0,\delta_0)}$ has Lispchitz
constant $\kappa'= \kappa^2\,\frac{\delta_0+\sqrt{1-\delta_0^2}}{1-\delta_0^2}\approx \kappa^2$, and assuming $\delta_0$ is small enough we have $\frac{1}{1-\kappa'}\leq \frac{2}{1-\kappa^2}$.
Notice that although the Lispchitz constant in this proposition refers to the Riemannian metric $\rho$, since the ratio  ${\rm Lip}_\delta(T_1)/{\rm Lip}_\rho(T_1)$ approaches $1$ as $\delta_0$ tends to $0$, we can assume that
${\rm Lip}_\delta(T_1)\leq \kappa'$.
Thus, by Lemma~\ref{Ti} below applied to $T_1$ and $T_2=\varphi_{h_2}\vert_{B(\hatp_0,\delta_0)}$,
we have $d(T_1,T_2)\leq 2\,\norm{h_1-h_2}$ and
$$
d(\mostexp(h_1),\mostexp(h_2)) \leq \frac{1}{1-\kappa'}\, d(T_1,T_2)\leq  \frac{4}{1-\kappa^2}\,
 \norm{h_1-h_2}  \;.
$$

\end{proof}

\begin{lemma} \label{Ti}
Let $(X,d)$ be a complete metric space,
$T_1:X\to X$ a Lipschitz contraction  
  with ${\rm Lip}(T_1)<\kappa<1$,  
$x_1^\ast=T_1 (x_1^\ast)$ a fixed point,  
and  $T_2:X\to X$  any other map with a fixed point $x_2^\ast=T_2 (x_2^\ast)$. Then
$$ d(x_1^\ast, x_2^\ast)\leq \frac{1}{1-\kappa}\, d(T_1,T_2) \;,$$
where   $d(T_1,T_2):=\sup_{x\in X} d(T_1(x), T_2 (x))$.
\end{lemma}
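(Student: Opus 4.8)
The plan is to use nothing more than the triangle inequality together with the contraction property of $T_1$, exactly as in the standard stability estimate for Banach fixed points. First I would insert the intermediate point $T_1(x_2^\ast)$ and invoke the two fixed-point equations $x_1^\ast = T_1(x_1^\ast)$ and $x_2^\ast = T_2(x_2^\ast)$ to write
\[
d(x_1^\ast, x_2^\ast) \;=\; d\bigl(T_1(x_1^\ast), T_2(x_2^\ast)\bigr) \;\leq\; d\bigl(T_1(x_1^\ast), T_1(x_2^\ast)\bigr) + d\bigl(T_1(x_2^\ast), T_2(x_2^\ast)\bigr).
\]
Then I would bound the first term on the right by $\kappa\, d(x_1^\ast, x_2^\ast)$, since ${\rm Lip}(T_1) < \kappa$, and the second term by $d(T_1, T_2)$, directly from the definition $d(T_1,T_2) = \sup_{x} d(T_1(x), T_2(x))$. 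This yields
\[
d(x_1^\ast, x_2^\ast) \;\leq\; \kappa\, d(x_1^\ast, x_2^\ast) + d(T_1, T_2).
\]

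Finally I would rearrange: if $d(T_1,T_2) = \infty$ the asserted inequality is trivial, so I may assume it is finite, whence $d(x_1^\ast, x_2^\ast)$ is a finite real number and subtracting $\kappa\, d(x_1^\ast, x_2^\ast)$ from both sides is legitimate. Dividing the resulting inequality $(1-\kappa)\, d(x_1^\ast, x_2^\ast) \leq d(T_1, T_2)$ by $1 - \kappa > 0$ gives the claim.

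There is essentially no obstacle here; the only point deserving a word of care is the rearrangement step, valid precisely because the distance $d(x_1^\ast, x_2^\ast)$ is finite. I would also remark that completeness of $X$ plays no role in the argument itself — it is needed only to guarantee, via the Banach fixed point theorem, that $T_1$ actually has a fixed point, but here the existence of both $x_1^\ast$ and $x_2^\ast$ is already part of the hypotheses.
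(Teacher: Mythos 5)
Your argument is correct and is essentially identical to the paper's proof: insert the intermediate point $T_1(x_2^\ast)$, apply the triangle inequality, bound the two terms by $\kappa\, d(x_1^\ast,x_2^\ast)$ and $d(T_1,T_2)$ respectively, and rearrange. Your extra remarks on finiteness and on the irrelevance of completeness are sound but not needed for the argument.
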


\begin{proof}
\begin{align*}
 d(x_1^\ast, x_2^\ast) & =
  d(T_1(x_1^\ast), T_2(x_2^\ast))\\
  &\leq
  d(T_1(x_1^\ast), T_1(x_2^\ast)) +  d(T_1(x_2^\ast), T_2(x_2^\ast))\\
   &\leq
  \kappa\, d(x_1^\ast, x_2^\ast) +  d(T_1, T_2)\;,
\end{align*}
which implies that
$$  d(x_1^\ast, x_2^\ast)  \leq \frac{1}{1-\kappa}\,  d(T_1, T_2)\;. $$ 
\end{proof}

\begin{lemma}
\label{lemma wedge i g1 minus wedge i g2}
Given $g_1,g_2\in\mathcal{L}(V)$, for any $1\leq i \leq \dim V$,
$$ \norm{ \wedge_i g_1 -  \wedge_i g_2 }
\leq i\, \max\{ 1, \norm{g_1}, \norm{g_2} \}^{i-1}\,\norm{g_1-g_2}\;. $$
\end{lemma}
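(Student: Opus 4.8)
The plan is to transfer the estimate to the $i$-fold tensor power $V^{\otimes i}$, where the exterior power of a linear map is the restriction of its tensor power to an invariant subspace, and where a genuine operator-level telescoping identity is available. First I would recall that $\wedge_i V$ embeds isometrically into $V^{\otimes i}$ as the subspace $W$ of alternating tensors, the embedding $\iota$ sending a decomposable vector $v_1\wedge\cdots\wedge v_i$ to $\frac{1}{\sqrt{i!}}\sum_{\sigma\in{\rm S}_i}{\rm sgn}(\sigma)\,v_{\sigma(1)}\otimes\cdots\otimes v_{\sigma(i)}$; this map is well defined because it is multilinear and alternating in the $v_j$, and it preserves inner products because on a pair of decomposable vectors both sides evaluate the Gram determinant $\det(\langle v_a,w_b\rangle)_{a,b}$, which is precisely the inner product on $\wedge_i V$ fixed in Subsection~\ref{exterior algebra}. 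For any $g\in\mathcal{L}(V)$ the operator $g^{\otimes i}$ commutes with the permutation action of ${\rm S}_i$ on the factors of $V^{\otimes i}$, hence leaves $W$ invariant, and it satisfies $g^{\otimes i}\circ\iota=\iota\circ\wedge_i g$. Applying this to $g_1$ and $g_2$, the difference $g_1^{\otimes i}-g_2^{\otimes i}$ also leaves $W$ invariant and is conjugate there, via $\iota$, to $\wedge_i g_1-\wedge_i g_2$; since restricting an operator to an invariant subspace cannot increase its operator norm, $\norm{\wedge_i g_1-\wedge_i g_2}\le\norm{g_1^{\otimes i}-g_2^{\otimes i}}$.

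Next, on $V^{\otimes i}$ I would use the telescoping identity
\[ g_1^{\otimes i}-g_2^{\otimes i}=\sum_{j=1}^{i} g_2^{\otimes(j-1)}\otimes(g_1-g_2)\otimes g_1^{\otimes(i-j)}\;, \]
which holds as an identity of operators because it holds on every pure tensor $v_1\otimes\cdots\otimes v_i$ by the usual telescoping of $g_1v_1\otimes\cdots\otimes g_1v_i-g_2v_1\otimes\cdots\otimes g_2v_i$, and pure tensors span $V^{\otimes i}$. Together with the multiplicativity of the operator norm under tensor products, $\norm{A_1\otimes\cdots\otimes A_i}=\norm{A_1}\cdots\norm{A_i}$ — which follows from $\norm{A_1\otimes\cdots\otimes A_i}^2=\norm{(A_1^\ast A_1)\otimes\cdots\otimes(A_i^\ast A_i)}$ and the fact that the eigenvalues of a tensor product of positive self-adjoint operators are the products of the eigenvalues of the factors — this gives, with $M:=\max\{1,\norm{g_1},\norm{g_2}\}$,
\[ \norm{\wedge_i g_1-\wedge_i g_2}\le\sum_{j=1}^{i}\norm{g_2}^{j-1}\,\norm{g_1-g_2}\,\norm{g_1}^{i-j}\le i\,M^{i-1}\,\norm{g_1-g_2}\;, \]
because $\norm{g_1},\norm{g_2}\le M$ and there are $i$ summands, which is exactly the claimed bound.

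The tensor-power ingredients above are all standard; the only point that genuinely needs care — and the reason for going through $V^{\otimes i}$ — is that the telescoping must be carried out there rather than directly on $\wedge_i V$. On $\wedge_i V$ the individual would-be summands $g_2v_1\wedge\cdots\wedge(g_1-g_2)v_j\wedge\cdots\wedge g_1v_i$ are not well-defined operators, since they depend on the chosen ordered basis of the underlying subspace; and proving the bound only on decomposable $i$-vectors and then expanding a general $i$-vector in an orthonormal basis would introduce a spurious factor $\sqrt{\binom{n}{i}}$, precisely the loss that the tensor-power detour removes. Everything else is routine bookkeeping.
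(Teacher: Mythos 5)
Your proof is correct, but it takes a genuinely different route from the paper's. The paper argues directly on $\wedge_i V$: for a unit decomposable $i$-vector $v_1\wedge\ldots\wedge v_i$ coming from an orthonormal family, it telescopes $(g_1v_1)\wedge\ldots\wedge(g_1v_i)-(g_2v_1)\wedge\ldots\wedge(g_2v_i)$ into $i$ mixed wedge products and bounds the $j$-th one by $\norm{g_1}^{j-1}\norm{g_2}^{i-j}\norm{g_1v_j-g_2v_j}$ via the Hadamard-type inequality $\norm{u_1\wedge\ldots\wedge u_i}\leq\norm{u_1}\cdots\norm{u_i}$; since this telescoping is done at the level of vectors, for one fixed decomposable argument at a time, the well-definedness objection you raise against term-by-term telescoping on $\wedge_i V$ does not apply to it. What the paper's argument does leave implicit is precisely the passage you were worried about: it only bounds $\norm{(\wedge_i g_1-\wedge_i g_2)\,w}$ for \emph{decomposable} unit $w$, while the operator norm is a supremum over all unit $i$-vectors, and for a general operator on $\wedge_i V$ (unlike $\wedge_i g$ itself) that supremum need not be attained on decomposable vectors. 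Your detour through $V^{\otimes i}$ --- the isometric embedding of $\wedge_i V$ as alternating tensors intertwining $\wedge_i g$ with $g^{\otimes i}$, the operator-level identity $g_1^{\otimes i}-g_2^{\otimes i}=\sum_{j} g_2^{\otimes(j-1)}\otimes(g_1-g_2)\otimes g_1^{\otimes(i-j)}$, multiplicativity of the operator norm under tensor products, and the fact that restriction to an invariant subspace does not increase the norm --- closes that point cleanly and with the same constant $i\,\max\{1,\norm{g_1},\norm{g_2}\}^{i-1}$; the price is the extra tensor-power machinery, which the paper avoids at the cost of that implicit step.
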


\begin{proof}
Given any unit $i$-vector $v_1\wedge \ldots \wedge v_i\in \wedge_i V$, determined by an orthonormal family of vectors $\{v_1,\ldots, v_i\}$,
\begin{align*}
& \norm{(\wedge_i g_1)(v_1\wedge \ldots \wedge v_i)
- (\wedge_i g_2)(v_1\wedge \ldots \wedge v_i) }
 = \\
 & \qquad = \norm{ (g_1 v_1)\wedge \ldots \wedge (g_1 v_i) 
- (g_2 v_1)\wedge \ldots \wedge (g_2 v_i) }\\
& \qquad \leq \sum_{j=1}^i 
\norm{ (g_1 v_1)\wedge \ldots \wedge (g_1 v_{j-1}) \wedge (g_1 v_j  
-  g_2 v_j)\wedge (g_2 v_{j+1}) \wedge \ldots \wedge (g_2 v_i)  } 		\\
& \qquad \leq \sum_{j=1}^i 
\norm{g_1}^{j-1}\,\norm{g_2}^{i-j} \, 
\norm{ g_1 v_j -  g_2 v_j} 		\\
&\qquad \leq i\, \max\{1, \norm{g_1}, \norm{g_2} \}^{i-1}\,\norm{g_1-g_2}\;.
\end{align*}

\end{proof}

Given a dimension $1\leq l \leq \dim V$ and  $0<\kappa<1$, consider the set
$$ \mathcal{L}_{l, \kappa} :=\{\, g\in \mathcal{L}(V)\,:\,
\rgap_l(g)\geq  \kappa^{-1} \,\}\;,$$
and define
$$ C_l(g_1,g_2):=  \frac{l\,\max\{ 1, \norm {g_1},\norm{g_2}\}^{l-1}}{\max\{\norm{ 1, \wedge_l g_1}, \norm{\wedge_l g_2} \}}\;. $$

\begin{corollary}
\label{coro lipschitz:medir} 
The map  $\mostexp:\mathcal{L}_{l,\kappa}\to\Gr_l(V)$ is locally Lipschitz. 

More precisely, given $0<\kappa<1$ there exists $\varepsilon_0 >0$ such that   for any $g_1,g_2\in \mathcal{L}_{l,\kappa}$ such that $\norm{g_1-g_2}\leq \varepsilon_0 \,C_l(g_1,g_2)^{-1}$, we have
$$d(\mostexp_l(g_1), \mostexp_l(g_2))\leq \frac{16}{1-\kappa^2}\,C_l(g_1,g_2)\,
\norm{g_1-g_2}\;. $$
\end{corollary}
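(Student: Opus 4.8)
The plan is to transfer the already-proved projective statement, Proposition~\ref{lipschitz:eigendir}, up to the $l$-th exterior power. The key observation is that, by Definition~\ref{def medir k}, the most expanding $l$-subspace is $\mostexp_l(g)=\Psi^{-1}(\mostexp(\wedge_l g))$, where $\Psi:\Gr_l(V)\to\Pp(\wedge_l V)$ is the Pl\"ucker embedding, and that $\Psi$ is an isometry onto its image by the very definition of the distance $d$ on $\Gr_l(V)$ (see~\eqref{Grassmann:distance}). Consequently $d(\mostexp_l(g_1),\mostexp_l(g_2))=d(\mostexp(\wedge_l g_1),\mostexp(\wedge_l g_2))$, so it suffices to bound the right-hand side inside $\Pp(\wedge_l V)$.

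First I would check that $\wedge_l$ carries $\mathcal{L}_{l,\kappa}$ into the analogous set $\mathcal{L}_\kappa$ for the Euclidean space $\wedge_l V$. By Proposition~\ref{sing vals and ext pws} and the description of the singular values of $\wedge_l g$ in its proof, the two largest singular values of $\wedge_l g$ are $s_1(g)\cdots s_l(g)$ and $s_1(g)\cdots s_{l-1}(g)\,s_{l+1}(g)$, so
$$ \rgap(\wedge_l g)=\frac{s_l(g)}{s_{l+1}(g)}=\rgap_l(g)\;. $$
Hence $\rgap_l(g_i)\geq\kappa^{-1}>1$ is equivalent to $\wedge_l g_i$ lying in $\mathcal{L}_\kappa$ for $\wedge_l V$, for $i=1,2$; in particular $\mostexp(\wedge_l g_i)$ and $\mostexp_l(g_i)$ are well-defined, and Proposition~\ref{lipschitz:eigendir} applies on $\wedge_l V$ (of dimension $\binom{n}{l}$) with the same $\kappa$. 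I let $\varepsilon_0>0$ be the constant it provides in that setting.

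Next I would run the relative-distance estimate. By Lemma~\ref{lemma wedge i g1 minus wedge i g2}, $\norm{\wedge_l g_1-\wedge_l g_2}\leq l\,\max\{1,\norm{g_1},\norm{g_2}\}^{l-1}\,\norm{g_1-g_2}$, and dividing by $\max\{\norm{\wedge_l g_1},\norm{\wedge_l g_2}\}$ gives $\drel(\wedge_l g_1,\wedge_l g_2)\leq C_l(g_1,g_2)\,\norm{g_1-g_2}$. Therefore the hypothesis $\norm{g_1-g_2}\leq\varepsilon_0\,C_l(g_1,g_2)^{-1}$ forces $\drel(\wedge_l g_1,\wedge_l g_2)\leq\varepsilon_0$, so Proposition~\ref{lipschitz:eigendir} yields
$$ d(\mostexp(\wedge_l g_1),\mostexp(\wedge_l g_2))\leq\frac{16}{1-\kappa^2}\,\drel(\wedge_l g_1,\wedge_l g_2)\leq\frac{16}{1-\kappa^2}\,C_l(g_1,g_2)\,\norm{g_1-g_2}\;, $$
and transporting this back through the isometry $\Psi^{-1}$ gives exactly the claimed inequality.

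I do not expect a substantial obstacle here: the whole argument is a functorial transfer of an already-established fact. The only points requiring care are the gap-ratio identity $\rgap(\wedge_l g)=\rgap_l(g)$ — which needs the sorted list of singular values of $\wedge_l g$ from Proposition~\ref{sing vals and ext pws} — and making sure the constant $C_l$ and the threshold $\varepsilon_0\,C_l^{-1}$ are precisely what is needed to place $\wedge_l g_1,\wedge_l g_2$ in the regime where Proposition~\ref{lipschitz:eigendir} applies (so that $\varepsilon_0$ here is the constant for $\wedge_l V$, not for $V$). Everything else is routine bookkeeping.
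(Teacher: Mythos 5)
Your proposal is correct and is essentially the paper's own proof: the paper likewise bounds $\drel(\wedge_l g_1,\wedge_l g_2)\leq C_l(g_1,g_2)\,\norm{g_1-g_2}$ via Lemma~\ref{lemma wedge i g1 minus wedge i g2} and then applies Proposition~\ref{lipschitz:eigendir} to the maps $\wedge_l g_j$ on $\wedge_l V$. The details you add (the identity $\rgap(\wedge_l g)=\rgap_l(g)$ and the transfer through the Pl\"ucker isometry) are exactly the points the paper leaves implicit.
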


\begin{proof}
By lemma~\ref{lemma wedge i g1 minus wedge i g2},
$\drel(\wedge_l g_1, \wedge_l g_2) \leq C_l(g_1,g_2)\,\norm{g_1-g_2}$.
Apply proposition~\ref{lipschitz:eigendir} 
to the linear maps $\wedge_l g_j:\wedge_l V \to \wedge_l V$, $j=1,2$.

\end{proof}

Given $g\in\mathcal{L}(V)$ having  $k$ and $k+r$ gap ratios, if a subspace $E\in\Gr_k(V)$ close to the $g$ most expanding subspace $\mostexp_k(g)$ then the restriction $g\vert_{E^\perp}$ has a $r$-gap ratio and the most expanding $r$-dimensional subspace of $g\vert_{E^\perp}$ is close to the intersection of
$\mostexp_{k+r}(g)$ with $E^\perp$. Next proposition expresses this fact in a quantitative way.

\begin{proposition}\label{prop dist med g | E perp}
Given $\varkappa>0$ small enough, and integers $1\leq k < k+r \leq \dim V$, there exists $\delta_0>0$ such that for all $g\in\mathcal{L}(V)$
and   $E\in\Gr_k(V)$, if
\begin{enumerate}
\item[(a)]\; $\sgap_k(g)<\varkappa$ \, and \,  $\sgap_{k+r}(g)<\varkappa$, 
\item[(b)]\;  $\delta(E,\mostexp_k(g))<\delta_0$
\end{enumerate}
then
\begin{enumerate}
\item[(1)]\; $\sgap_r(g\vert_{E^\perp})\leq 2\,\varkappa$,
\item[(2)]\; $\displaystyle \delta\left(\, \mostexp_r(g\vert_{E^\perp}), \, 
\mostexp_{k+r}(g)\cap E^\perp\,
\right) \leq \frac{20}{1-4\,\varkappa^2}\,\delta(E,\mostexp_k(g))$.
\end{enumerate}
\end{proposition}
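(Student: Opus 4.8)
The plan is to transport everything to the fixed subspace $P^\perp$, where $P:=\mostexp_k(g)$, and to read off both conclusions from a structured perturbation of the singular value problem for $g|_{P^\perp}$. First, since $\sgap_j$, $\mostexp_j$ and $\delta(E,\mostexp_k(g))$ are invariant under $g\mapsto\lambda g$, I would normalise $\norm g=1$. Fix a singular basis $\{v_1,\dots,v_n\}$ of $g$ with $g v_j=s_j v_j^\ast$, so that $P=\langle v_1,\dots,v_k\rangle$, $W:=\mostexp_{k+r}(g)=\langle v_1,\dots,v_{k+r}\rangle$ and $U:=W\cap P^\perp=\langle v_{k+1},\dots,v_{k+r}\rangle=\mostexp_r(g|_{P^\perp})$; the case $E=P$ of the statement is then the trivial identity $\mostexp_r(g|_{P^\perp})=U=W\cap P^\perp$, and the general statement is to be obtained as its Lipschitz perturbation. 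When $\delta(E,P)<\delta_0<1$, Proposition~\ref{delta, deltamin, deltaH} shows $E^\perp$ is transversal to $P$, so $E^\perp$ is the graph of a linear map $\Lambda\colon P^\perp\to P$ with $\norm\Lambda\le\delta(E,P)/\sqrt{1-\delta(E,P)^2}$; write $J\colon P^\perp\to E^\perp$, $Jw=w+\Lambda w$, which is a near-isometry ($\minexp J\ge 1$, $\norm J\le(1+\norm\Lambda^2)^{1/2}$), and which differs from the orthogonal $Q\in\SO(V)$ with $QP^\perp=E^\perp$ by $O(\delta(E,P))$. Using $g|_{E^\perp}\circ J$, respectively $g|_{E^\perp}=(gQ)|_{P^\perp}\circ(Q^{-1}|_{E^\perp})$ with $Q^{-1}|_{E^\perp}$ an isometry, reduces the computation of $\sgap_r(g|_{E^\perp})$ and $\mostexp_r(g|_{E^\perp})$ to maps defined on the fixed space $P^\perp$.

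Next I would analyse $T:=g|_{E^\perp}\circ J\colon P^\perp\to V$ in the orthogonal splitting of the codomain $V=g(P)\oplus g(P)^\perp=\langle v_1^\ast,\dots,v_k^\ast\rangle\oplus\langle v_{k+1}^\ast,\dots,v_n^\ast\rangle$. Since $g w\in g(P)^\perp$ for $w\in P^\perp$ while $g\Lambda w\in g(P)$, the map $T$ is block-diagonal, $T=(g\Lambda)\oplus g|_{P^\perp}$, hence
\[
T^\ast T=(g|_{P^\perp})^\ast(g|_{P^\perp})+(g\Lambda)^\ast(g\Lambda)=:D+B
\]
as operators on $P^\perp$, where $D=\mathrm{diag}(s_{k+1}^2,\dots,s_n^2)$ in the basis $v_{k+1},\dots,v_n$ (a diagonal with a large relative gap at position $r$, namely $s_{k+r+1}^2<\varkappa^2s_{k+r}^2$), and $B=\Lambda^\ast\,\mathrm{diag}(s_1^2,\dots,s_k^2)\,\Lambda\succeq 0$ has $\mathrm{rank}\,B\le k$ and $\norm B\le\norm\Lambda^2$. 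Thus, up to the controlled distortion of $J$ (which is $1+O(\delta(E,P)^2)$ on norms) and up to transporting by $Q$, $\mostexp_r(g|_{E^\perp})$ is the span of the top $r$ eigenvectors of $D+B$, to be compared with $U$, the top-$r$ eigenspace of $D$.

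For part (1), I would combine the rank-$\le k$ interlacing inequality $\lambda_{r+1}(D+B)\le\lambda_{\max(1,\,r+1-k)}(D)$ with the lower bound $\lambda_r(D+B)\ge\lambda_r(D)=s_{k+r}^2$, the robust gap $s_{k+r+1}^2<\varkappa^2s_{k+r}^2$, and the near-isometry bounds on $J$, to get $\sgap_r(g|_{E^\perp})\le 2\varkappa$ once $\delta_0$ is small compared with $\varkappa$. For part (2), I would run a Davis--Kahan / resolvent perturbation argument for the top-$r$ eigenspace $\Omega$ of $D+B$ in the splitting $P^\perp=U\oplus W^\perp$: the $U$--$W^\perp$ coupling block of $D+B$ equals $(\Lambda|_U)^\ast\,\mathrm{diag}(s_1^2,\dots,s_k^2)\,(\Lambda|_{W^\perp})$, and the aim is to bound the tilt $\delta(\Omega,U)$ linearly by $\norm{\Lambda|_{W^\perp}}\le\delta(E,P)$; then transport back by $Q$ and combine with $d(W\cap E^\perp,W\cap P^\perp)\lesssim\delta(E,P)$, which follows from the Lipschitz continuity of the intersection operation (Propositions~\ref{sum:inters:modulus cont} and~\ref{decomp:modulus cont}, the transversality constant being $\theta_\cap(W,P^\perp)=1$), finally keeping track of the conversions between the metrics $\delta$ and $d$ and of the $(1-\varkappa^2)$-type constants to reach the stated $20/(1-4\varkappa^2)$. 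The main obstacle is precisely this last perturbation step: the crude estimate $\norm{T-g|_{P^\perp}}\le\norm\Lambda$ is worthless because $\norm{g|_{P^\perp}}=s_{k+1}$ can be arbitrarily small relative to $\norm g=1$, so Corollary~\ref{coro lipschitz:medir} is not directly applicable; one has to exploit the structured form $B=\Lambda^\ast\,\mathrm{diag}(s_1^2,\dots,s_k^2)\,\Lambda$ — so that whatever large spectral mass $B$ carries already sits over $U$ and by itself cannot rotate the top-$r$ eigenspace — and extract the linear-in-$\delta(E,P)$ dependence only from the $U$--$W^\perp$ coupling. Making that quantitative, with explicit constants, is the delicate heart of the argument.
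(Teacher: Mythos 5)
Your route is genuinely different from the paper's. The paper replaces $g\vert_{E^\perp}$ by $h_E=\frac{g}{\norm{g}}\circ\pi_{E^\perp}$ and compares it with $h=\frac{g}{\norm{g}}\circ\pi_{\mostexp_k(g)^\perp}$: conclusion (1) is extracted from a compactness/uniform-continuity claim for $\sgap_r$ on $\mathcal{K}_r=\{\norm{h}\le 1,\ \sgap_r(h)\le\varkappa\}$, and conclusion (2) from a three-term triangle inequality whose first term invokes corollary~\ref{coro lipschitz:medir} (with $C_r(h_E,h)=r$), whose middle term vanishes, and whose last term uses proposition~\ref{sum:inters:modulus cont}(2) with $\theta_\cap(\mostexp_{k+r}(g),\mostexp_k(g)^\perp)=1$. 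Your graph parametrization $Jw=w+\Lambda w$ and the block identity $T^\ast T=(g\vert_{P^\perp})^\ast(g\vert_{P^\perp})+\Lambda^\ast\,\mathrm{diag}(s_1^2,\dots,s_k^2)\,\Lambda=D+B$ are correct and a legitimate alternative starting point, but the attempt is not a proof. For (2) you explicitly stop before the decisive estimate (the ``delicate heart'' is left undone), so nothing is established. For (1), the inequality $\lambda_{r+1}(D+B)\le\lambda_{\max(1,r+1-k)}(D)$ is not what interlacing gives when $r<k$, and when $r\ge k$ it yields $s_{r+1}^2$, a quantity the hypotheses do not compare with $s_{k+r}^2$ (only the gaps at $k$ and $k+r$ are assumed); the usable bound is $\lambda_{r+1}(D+B)\le s_{k+r+1}^2+\norm{B}$ with $\norm{B}\le\norm{\Lambda}^2\norm{g}^2$, and this is not $\lesssim\varkappa^2 s_{k+r}^2$ unless $\delta(E,\mostexp_k(g))$ is small \emph{relative to} $s_{k+r}(g)/\norm{g}$.

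The obstacle you flag is real, and your proposed cure does not work: it is false that the spectral mass of $B$ ``already sits over $U$'', since $\Lambda$ may be supported on $W^\perp\cap P^\perp$, in which case the diagonal $W^\perp$-block of $B$, of size up to $\delta(E,P)^2\norm{g}^2$, can dwarf the gap $(1-\varkappa^2)s_{k+r}^2$ and rotate the top-$r$ eigenspace entirely. Concretely, in $\R^3$ take $g=\mathrm{diag}(1,\eta,\eta\varkappa/2)$, $k=r=1$, $E=\widehat{e_1+\eta e_3}$ with $\eta<\varkappa/2$ arbitrarily small: hypotheses (a),(b) hold with $\delta(E,\mostexp_1(g))\approx\eta$, yet $\sgap_1(g\vert_{E^\perp})=\sqrt{(1+\eta^2)/(1+\varkappa^2/4)}\approx 1$ and $\delta(\mostexp_1(g\vert_{E^\perp}),\mostexp_2(g)\cap E^\perp)=1$. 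So no argument can produce the stated conclusions with $\delta_0$ independent of $g$; your difficulty is not a defect of your method, and it is exactly the point passed over in the paper's own proof (uniform continuity of $\sgap_r$ fails on $\mathcal{K}_r$ near maps with small $s_r$, and $C_r(h_E,h)=r$ tacitly presumes $\norm{\wedge_r h}$ of order one). Under an added relative-smallness hypothesis of the form $\delta(E,\mostexp_k(g))\le\delta_0\, s_{k+r}(g)/s_1(g)$, your $D+B$ scheme (Weyl for (1), a Davis--Kahan bound driven by the $U$--$W^\perp$ coupling for (2), plus proposition~\ref{sum:inters:modulus cont} for the final intersection term) can be completed, but as written the proposal contains a genuine gap at precisely the step that matters.
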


\begin{proof}
Consider the compact space
$$ \mathcal{K}_r=\{\, h\in \mathcal{L}(V)\,\colon\,
\norm{h}\leq 1 \; \text{ and }\; \sgap_r(h)\leq \varkappa\,\}\;. $$
By uniform continuity of $\sgap_r$ on $\mathcal{K}_r$ there exists $\delta_0>0$ such that for all
$h\in \mathcal{L}(V)$ if there exists $h_0\in  \mathcal{K}_r$
with $\norm{h-h_0}<\delta_0$ then $\sgap_r(h)\leq 2\,\varkappa$.

Recall that $\pi_F$   denotes the orthogonal projection onto a linear subspace $F\subset V$.

Given $g \in \mathcal{L}(V)$ such that (a) holds,
consider the   map $h=\frac{g}{\norm{g}}\circ \pi_{\mostexp_k(g)^\perp}$. We have $h\in \mathcal{K}_r$ because 
$\sgap_r(h)= \sgap_r(g \circ \pi_{\mostexp_k(g)^\perp})
= \sgap_{k+r}(g)<\varkappa$.

Given $E\in \Gr_k(V)$ such that (b) holds,
we define $h_E=\frac{g}{\norm{g}}\circ \pi_{E^\perp}$.
Then
$$ \norm{h-h_E}\leq \norm{ \pi_{\mostexp_k(g)^\perp} - \pi_{E^\perp} } \lesssim \delta(\mostexp_k(g)^\perp, E^\perp) = \delta(E, \mostexp_k(g)) <\delta_0 \;, $$
which implies that
$\sgap_r(g\vert_{E^\perp})=\sgap_r(h_E) \leq 2\,\varkappa$,
and hence proves (1).

For  (2) we use the following triangle inequality
\begin{align*}
\delta( \mostexp_r(g\vert_{E^\perp}), \mostexp_{k+r}(g)\cap E^\perp ) &\leq  \; \delta( \mostexp_r( h_E), \mostexp_r(h) )  \\
&\quad   + \delta( \mostexp_r(h), 
\mostexp_{k+r}(g)\cap \mostexp_k(g)^\perp ) \\
&\quad   + \delta( \mostexp_{k+r}(g)\cap \mostexp_k(g)^\perp,\, 
\mostexp_{k+r}(g)\cap E^\perp  ) \\
&\leq  \left( \frac{16\,r}{1-4\,\varkappa^2} + 0 + 1\right)\,\delta(E,\mostexp_k(g))\\
&\leq \frac{20}{1-4\,\varkappa^2} \,\delta(E,\mostexp_k(g))\; .
\end{align*}
The bound on the first distance is obtained through
corollary~\ref{coro lipschitz:medir}, with $C_r(h_E,h)=r$.
The second distance is zero.
Finally the bound on the third distance comes from
proposition~\ref{sum:inters:modulus cont} (2), using that
$\theta_\cap( \mostexp_{k+r}(g), \mostexp_{k}(g)^\perp )=1$,
because $\mostexp_{k}(g)\subset \mostexp_{k+r}(g)$.

\end{proof}

\begin{lemma}\label{varphi:gi}
Given $g_1, g_2\in \mathcal{L}(V)$, $\hatp\in\Pp(g_1)\cap \Pp(g_2)$   and any unit vector $p\in\hat p$,
$$ d(\varphi_{g_1}(\hatp), \varphi_{g_2}(\hatp)) \leq
\max\{ \frac{1}{\norm{g_1\,p}}, \frac{1}{\norm{g_2\,p}} \}\,
 \norm{g_1-g_2} \;.$$
\end{lemma}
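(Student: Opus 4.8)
The plan is to unwind the definition of the projective distance $d$ and the projective action $\varphi_g$, and then reduce everything to Proposition~\ref{proj:lip}. Recall that $\varphi_{g_i}(\hatp)=\widehat{g_i\,p}$, and that for unit vectors the distance $d$ on $\Pp(V)$ is the smaller chord, so in particular for any two nonzero vectors $u,w$ one has $d(\hatu,\hatw)\leq \norm{u/\norm{u}-w/\norm{w}}$. Applying this with $u=g_1\,p$ and $w=g_2\,p$ (both nonzero since $\hatp\in\Pp(g_1)\cap\Pp(g_2)$) gives
$$ d(\varphi_{g_1}(\hatp),\varphi_{g_2}(\hatp)) \leq \norm{\frac{g_1\,p}{\norm{g_1\,p}}-\frac{g_2\,p}{\norm{g_2\,p}}}\;. $$

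Next I would invoke Proposition~\ref{proj:lip} with the pair of vectors $g_1\,p$ and $g_2\,p$, obtaining
$$ \norm{\frac{g_1\,p}{\norm{g_1\,p}}-\frac{g_2\,p}{\norm{g_2\,p}}} \leq \max\Bigl\{\frac{1}{\norm{g_1\,p}},\frac{1}{\norm{g_2\,p}}\Bigr\}\,\norm{g_1\,p-g_2\,p}\;. $$
Finally, since $p$ is a unit vector, $\norm{g_1\,p-g_2\,p}=\norm{(g_1-g_2)\,p}\leq\norm{g_1-g_2}$, and combining the three displays yields the claimed bound.

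There is essentially no obstacle here: the statement is a direct corollary of Proposition~\ref{proj:lip} together with the definition of $d$ and sub-multiplicativity of the operator norm. The only point requiring a word of care is the preliminary inequality $d(\hatu,\hatw)\leq\norm{u/\norm{u}-w/\norm{w}}$, which is immediate from~\eqref{projective Euclidean metric} since $d$ is the minimum of $\norm{u/\norm u - w/\norm w}$ and $\norm{u/\norm u + w/\norm w}$; and the verification that $\hatp\in\Pp(g_i)$ is exactly what guarantees $g_i\,p\neq 0$ so that all the normalizations make sense.
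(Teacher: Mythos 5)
Your proposal is correct and follows essentially the same route as the paper: bound $d(\varphi_{g_1}(\hatp),\varphi_{g_2}(\hatp))$ by the chord between the normalized vectors, apply Proposition~\ref{proj:lip} to $g_1\,p$ and $g_2\,p$, and finish with $\norm{(g_1-g_2)\,p}\leq\norm{g_1-g_2}$. No gaps.
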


\begin{proof} Assume $p\in V$ is a unit vector such that $\hatp\in \Pp(g_1)\cap \Pp(g_2)$.
Applying  proposition~\ref{proj:lip} to the non-zero vectors
$g_1 \,p$ and $g_2\,p$, we get

\begin{align*}
d(\varphi_{g_1}(\hatp), \varphi_{g_2}(\hatp)) &\leq
\norm{ \frac{g_1 \,p}{\norm{g_1 \,p}} -
\frac{g_2 \,p}{\norm{g_2 \,p}} } \\
 &\leq \max\{ \norm{g_1\,p}^{-1}, \norm{g_2\,p}^{-1} \}\,
 \norm{g_1\, p-g_2\, p} \\
  &\leq \max\{ \norm{g_1\,p}^{-1}, \norm{g_2\,p}^{-1} \}\,
 \norm{g_1 -g_2} \;.
\end{align*} 
\end{proof}

The final four lemmas of this subsection apply to invertible linear maps in $\GL(V)$. They express  the continuity of the map  $g\mapsto \varphi_g$ with values in the space of Lipschitz or H\"older continuous maps on the projective space. These facts will be useful in ~\cite{LEbook}.

\begin{lemma}\label{delta:1}
Given $g_1, g_2\in \GL(V)$, and $\hatp\neq \hatq$ in $\Pp(V)$,
$$ \abs{ \frac{\delta(\varphi_{g_1}(\hatp), \varphi_{g_1}(\hatq))}{\delta(\hatp,\hatq)} - \frac{\delta(\varphi_{g_2}(\hatp), \varphi_{g_2}(\hatq))}{\delta(\hatp,\hatq)}  }
\leq C(g_1,g_2)\,  \norm{g_1 - g_2}\;, $$
where \, $C(g_1,g_2):= (\norm{g_1^{-1}}^ 2 + \norm{g_2}^2\,\norm{g_1^{-1}}^2\,\norm{g_2^{-1}}^2)\,(\norm{g_1}+  \norm{g_2})$.
\end{lemma}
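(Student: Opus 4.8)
The plan is to use Proposition~\ref{Lip:proj:action}, which gives the exact formula
$$ \frac{\delta(\varphi_{g}(\hatp), \varphi_{g}(\hatq))}{\delta(\hatp,\hatq)} = \frac{\norm{g\,p \wedge g\,v_p(q)}}{\norm{g\,p}\,\norm{g\,q}} $$
for a unit vector $p\in\hatp$ and the versor $v_p(q)$ of the orthogonal projection of $q$ onto $p^\perp$ (here $q\in\hatq$ is the unit vector making $\angle(p,q)$ non-obtuse). Writing $w:=v_p(q)$, so that $\{p,w\}$ is orthonormal, the left-hand side of the claimed inequality is
$$ \abs{ \frac{\norm{g_1 p \wedge g_1 w}}{\norm{g_1 p}\,\norm{g_1 q}} - \frac{\norm{g_2 p \wedge g_2 w}}{\norm{g_2 p}\,\norm{g_2 q}} } . $$
Each factor $\norm{g_i p \wedge g_i w}$ is $\norm{\wedge_2 g_i\,(p\wedge w)}$, and $\norm{g_i p}=\norm{g_i p}$, $\norm{g_i q}=\norm{g_i q}$ are norms of images of unit vectors; all three numerators/denominators are comparable to operator norms of $g_i$ or $\wedge_2 g_i$, and the denominators are bounded below by $\minexp(g_i)=\norm{g_i^{-1}}^{-1}$ and $\minexp(\wedge_2 g_i)$. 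So the strategy is: (i) reduce to estimating the difference of the \emph{single} quantity $N(g):=\frac{\norm{g p\wedge g w}}{\norm{gp}\,\norm{gq}}$ as $g$ ranges over $\{g_1,g_2\}$; (ii) split this into three telescoping differences by varying one of the three ``building blocks'' $\norm{gp\wedge gw}$, $\norm{gp}^{-1}$, $\norm{gq}^{-1}$ at a time; (iii) bound each block and its variation using the Lipschitz continuity of $x\mapsto\norm{x}$, of $g\mapsto g v$ (trivially $\norm{g_1 v-g_2 v}\le\norm{g_1-g_2}$ for unit $v$), and of $g\mapsto\wedge_2 g$ (Lemma~\ref{lemma wedge i g1 minus wedge i g2} with $i=2$, giving $\norm{\wedge_2 g_1-\wedge_2 g_2}\le 2\max\{1,\norm{g_1},\norm{g_2}\}\norm{g_1-g_2}$, though for the two-vector $p\wedge w$ one only needs the cruder $\norm{g_1p\wedge g_1w-g_2p\wedge g_2w}\le(\norm{g_1}+\norm{g_2})\norm{g_1-g_2}$ obtained by adding and subtracting $g_1 p\wedge g_2 w$).

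Concretely I would proceed as follows. First fix unit vectors $p\in\hatp$, $q\in\hatq$ with $\angle(p,q)\in[0,\pi/2]$ and set $w=v_p(q)$, so $\{p,w\}$ is orthonormal and $q=(\cos\theta)p+(\sin\theta)w$. For $i=1,2$ put $a_i=\norm{g_i p\wedge g_i w}$, $b_i=\norm{g_i p}$, $c_i=\norm{g_i q}$, so that $N(g_i)=a_i/(b_i c_i)$. Then
$$ \abs{\frac{a_1}{b_1c_1}-\frac{a_2}{b_2c_2}} \le \frac{\abs{a_1-a_2}}{b_1c_1} + a_2\,\abs{\frac{1}{b_1c_1}-\frac{1}{b_2c_2}}, $$
and the second term is at most $a_2\big(\frac{\abs{c_1-c_2}}{b_1c_1c_2}+\frac{\abs{b_1-b_2}}{b_1b_2c_2}\big)$. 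Now use: $\abs{b_1-b_2}\le\norm{g_1-g_2}$, $\abs{c_1-c_2}\le\norm{g_1-g_2}$ (reverse triangle inequality), $\abs{a_1-a_2}\le\norm{g_1p\wedge g_1w-g_2p\wedge g_2w}\le(\norm{g_1}+\norm{g_2})\norm{g_1-g_2}$; and for the lower bounds $b_i\ge\minexp(g_i)=\norm{g_i^{-1}}^{-1}$, $c_i\ge\norm{g_i^{-1}}^{-1}$, together with the upper bounds $a_2\le\norm{g_2}^2$, $b_2,c_2\le\norm{g_2}$. Substituting and collecting, each of the three resulting terms is of the form (a product of operator norms of $g_1,g_2$ and inverse norms of $g_1^{-1},g_2^{-1}$) times $\norm{g_1-g_2}$; summing them and crudely majorising gives a constant of the shape stated, namely bounded by $(\norm{g_1^{-1}}^2+\norm{g_2}^2\norm{g_1^{-1}}^2\norm{g_2^{-1}}^2)(\norm{g_1}+\norm{g_2})$ — one keeps the first block's contribution $\le\norm{g_1^{-1}}^2(\norm{g_1}+\norm{g_2})\norm{g_1-g_2}$ separate, and absorbs the other two (which each carry the extra factor $a_2/b_2$ or $a_2/c_2\le\norm{g_2}$ and an extra inverse-norm) into the term $\norm{g_2}^2\norm{g_1^{-1}}^2\norm{g_2^{-1}}^2(\norm{g_1}+\norm{g_2})\norm{g_1-g_2}$ after bounding $a_2\le\norm{g_2}^2$ and using $b_1,b_2,c_2\ge\max\{\norm{g_1^{-1}}^{-1},\norm{g_2^{-1}}^{-1}\}$ generously. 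I would not optimise constants, just verify the displayed $C(g_1,g_2)$ is an admissible majorant.

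The only genuinely delicate point is bookkeeping with the denominators: one must be careful that the quantity being estimated is independent of the choice of representative $p$ (it is, by Proposition~\ref{Lip:proj:action}, since the left side of the lemma only involves $\delta$'s) and that $v_p(q)$ is well-defined because $\hatp\ne\hatq$ forces $p,q$ non-collinear. Everything else is a routine ``add and subtract'' estimate; there is no conceptual obstacle, and the main risk is an algebra slip in matching the final bound to the asserted form of $C(g_1,g_2)$. One subtlety worth flagging in the write-up: to keep $\norm{g_ip\wedge g_iw}$ under control one uses that $p\wedge w$ is a \emph{unit} 2-vector (since $p\perp w$), so $a_i\le\norm{\wedge_2 g_i}=s_1(g_i)s_2(g_i)\le\norm{g_i}^2$ and $\norm{g_1p\wedge g_1w-g_2p\wedge g_2w}\le\norm{g_1}\norm{g_1w-g_2w}+\norm{g_1p-g_2p}\norm{g_2}\le(\norm{g_1}+\norm{g_2})\norm{g_1-g_2}$; this is exactly the $i=2$ case of Lemma~\ref{lemma wedge i g1 minus wedge i g2} applied to the unit 2-vector $p\wedge w$.
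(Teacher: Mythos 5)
Your proposal is correct and follows essentially the same route as the paper: the exact formula of Proposition~\ref{Lip:proj:action}, the same add-and-subtract split into a wedge-difference term and a reciprocal-difference term, and the same bounds $\norm{g_ip},\norm{g_iq}\geq\norm{g_i^{-1}}^{-1}$, $\norm{g_2p\wedge g_2v_p(q)}\leq\norm{g_2}^2$, and the telescoped estimate $(\norm{g_1}+\norm{g_2})\norm{g_1-g_2}$ for the wedge difference. The only deviation is your grouping of the denominators in the reciprocal term, which lands on the stated $C(g_1,g_2)$ after the harmless extra observation $\norm{g}\,\norm{g^{-1}}\geq 1$.
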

 
\begin{proof}
Given $p\in\hat p$ and $q\in\hat q$, by proposition~\ref{Lip:proj:action}
\begin{align*}
& \abs{ \frac{\delta(\varphi_{g_1}(\hatp), \varphi_{g_1}(\hatq))}{\delta(\hatp,\hatq)} - \frac{\delta(\varphi_{g_2}(\hatp), \varphi_{g_2}(\hatq))}{\delta(\hatp,\hatq)} }  = 
\abs{  \frac{ \norm{g_1 p \wedge g_1 v_p(q) } }{ \norm{g_1 p} \norm{g_1 q} } - 
\frac{ \norm{g_2 p \wedge g_2 v_p(q) } }{ \norm{g_2 p} \norm{g_2 q} }   }\\
&\qquad \leq \frac{  \norm{g_1 p \wedge g_1 v_p(q)  -  g_2 p \wedge g_2 v_p(q)   } }{ \norm{g_1 p} \norm{g_1 q} }  \\
&\qquad  \quad + \, \abs{ \frac{1}{\norm{g_1 p} \norm{g_1 q}} - \frac{1}{\norm{g_2 p} \norm{g_2 q}}   } \, \norm{  g_2 p \wedge g_2 v_p(q) }  \\
&\qquad \leq  \norm{g_1^ {-1}}^2\, \norm{g_1 p \wedge (g_1 v_p(q)  -  g_2 v_p(q))   }  + 
 \norm{g_1^{-1}}^2\, \norm{(g_1 p -  g_2 p) \wedge g_2 v_p(q)   } \\
&\qquad \quad + \,  \norm{g_1^{-1}}^2\,\norm{g_2^{-1}}^2\,( \norm{g_1 p}\,\abs{\norm{g_1 q}- \norm{g_2 q}}  + 
\norm{g_2 q}\,\abs{\norm{g_1 p} - \norm{g_2 p}} )\,\norm{g_2}^2  \\
&\qquad \leq \norm{g_1^ {-1}}^2\, (\norm{g_1} + \norm{g_2})\, \norm{g_1-g_2}   \\
&\qquad \quad + \, \norm{g_2}^2\, \norm{g_1^{-1}}^2\,\norm{g_2^{-1}}^2\,(\norm{g_1}+  \norm{g_2})\, \norm{g_1-g_2} \\
&\qquad  = (\norm{g_1^{-1}}^ 2 + \norm{g_2}^2\, \norm{g_1^{-1}}^2\,\norm{g_2^{-1}}^2)\,(\norm{g_1}+  \norm{g_2})\,\norm{g_1-g_2}  \;.
\end{align*}

\end{proof}

\begin{lemma}\label{delta:3} Given $g\in\GL(V)$ and $\hatp\neq \hatq$ in $\Pp(V)$,
$$ \frac{1}{\norm{g}^{2}\,\norm{g^{-1}}^{2}} \leq \frac{\delta(\varphi_g(\hatp), \varphi_g(\hatq))}{\delta(\hatp,\hatq)} \leq \norm{g}^2\,\norm{g^{-1}}^2 \;.$$
\end{lemma}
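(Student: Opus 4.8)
The plan is to reduce the quotient to a purely exterior-algebraic expression via Proposition~\ref{Lip:proj:action} and then control that expression by singular values, using Proposition~\ref{sing vals and ext pws}. First I would pick unit representatives $p\in\hatp$ and $q\in\hatq$ with $\angle(p,q)\in[0,\frac{\pi}{2}]$, and recall that $v_p(q)$ is a \emph{unit} vector orthogonal to $p$, so that $p\wedge v_p(q)$ is a unit $2$-vector and $(\wedge_2 g)(p\wedge v_p(q))=(g\,p)\wedge(g\,v_p(q))$. Proposition~\ref{Lip:proj:action} then gives
$$ \frac{\delta(\varphi_g(\hatp),\varphi_g(\hatq))}{\delta(\hatp,\hatq)}=\frac{\norm{(\wedge_2 g)(p\wedge v_p(q))}}{\norm{g\,p}\,\norm{g\,q}}\,, $$
so it suffices to bound the numerator and the denominator separately. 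Note that $\dim V\geq 2$, since otherwise $\Pp(V)$ is a single point and $\hatp\neq\hatq$ is impossible.

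For the upper bound I would estimate the numerator by the operator norm $\norm{(\wedge_2 g)(p\wedge v_p(q))}\leq\norm{\wedge_2 g}=s_1(g)\,s_2(g)\leq\norm{g}^2$, invoking Proposition~\ref{sing vals and ext pws}, while each denominator factor satisfies $\norm{g\,p},\norm{g\,q}\geq\minexp(g)=s_n(g)=\norm{g^{-1}}^{-1}$ because $g$ is invertible. Dividing yields the bound $\norm{g}^2\,\norm{g^{-1}}^2$. For the lower bound I would use that for any unit $2$-vector $\omega$ one has $\norm{(\wedge_2 g)\omega}\geq\minexp(\wedge_2 g)$, and that by the description of the singular values of $\wedge_2 g$ in Proposition~\ref{sing vals and ext pws} the least one is $s_{n-1}(g)\,s_n(g)\geq s_n(g)^2=\norm{g^{-1}}^{-2}$; combined with $\norm{g\,p},\norm{g\,q}\leq\norm{g}$ this gives $\frac{\delta(\varphi_g(\hatp),\varphi_g(\hatq))}{\delta(\hatp,\hatq)}\geq\frac{1}{\norm{g}^2\,\norm{g^{-1}}^2}$.

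I do not expect a real obstacle here; the only point needing a moment's care is identifying $\minexp(\wedge_2 g)$ with $s_{n-1}(g)\,s_n(g)$, which is immediate from Proposition~\ref{sing vals and ext pws} once the singular values are sorted, and observing that the quotient is symmetric in $\hatp$ and $\hatq$ so that the asymmetric role of the base point in $v_p(q)$ is harmless.
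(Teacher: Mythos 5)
Your proposal is correct and follows essentially the same route as the paper: reduce the quotient to $\norm{(\wedge_2 g)(p\wedge v_p(q))}/(\norm{g\,p}\,\norm{g\,q})$ via Proposition~\ref{Lip:proj:action}, then bound the unit-$2$-vector image between $\minexp(\wedge_2 g)\geq\norm{g^{-1}}^{-2}$ and $\norm{\wedge_2 g}\leq\norm{g}^2$, and the denominator between $\norm{g^{-1}}^{-2}$ and $\norm{g}^2$. The only cosmetic difference is that the paper phrases the lower bound on the numerator as $\norm{(\wedge_2 g)^{-1}}^{-1}\geq\norm{g^{-1}}^{-2}$ rather than through the singular values $s_{n-1}(g)\,s_n(g)$, which amounts to the same estimate.
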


\begin{proof}
Given $\hatp\neq \hatq$ in $\Pp(V)$ consider unit vectors
$p\in\hatp$, $q\in \hatq$ and set $v=v_p(q)$.
We have $\norm{p}=\norm{q}=\norm{v}=1$ and $\langle p,v\rangle=0$. This last relation implies $\norm{p\wedge v}=1$. Hence
$$ \norm{g p \wedge g v}= \norm{(\wedge_2 g)(p\wedge v)}\geq \norm{(\wedge_2 g)^{-1}}^{-1}\geq \norm{g^{-1}}^ {-2} \;. $$
Analogously
$$ \norm{g p \wedge g v}= \norm{(\wedge_2 g)(p\wedge v)}\leq \norm{\wedge_2 g}\leq \norm{g}^ 2 \;. $$
We also have
$$    \norm{g^{-1}}^{-2} \leq \norm{g\,p}\,\norm{g\,q} \leq \norm{g}^2\;. $$
To finish the proof combine these inequalities with proposition~\ref{Lip:proj:action}.

\end{proof}

\bigskip

Given $g\in\GL(V)$, we define
\begin{equation} \label{ell}
 \ell(g):= \max\{ \log \norm{g},\log \norm{g^{-1}} \}\;. 
\end{equation}

\begin{lemma}\label{Lipschitz:Proj}
For every $g\in\GL(V)$ and $\hatp\neq \hatq$ in $\Pp(V)$,
$$ -4\,\ell(g) \leq \log \left[\frac{\delta(\varphi_g(\hatp), \varphi_g(\hatq))}{\delta(\hatp,\hatq)}\right]\leq 4\,\ell(g)\;.$$
\end{lemma}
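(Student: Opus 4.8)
The plan is to derive this directly from Lemma~\ref{delta:3}, which already provides two-sided multiplicative bounds on the distortion ratio $\delta(\varphi_g(\hatp),\varphi_g(\hatq))/\delta(\hatp,\hatq)$ in terms of $\norm{g}$ and $\norm{g^{-1}}$. The only work remaining is to convert those multiplicative bounds into additive (logarithmic) ones and to repackage the constant $\norm{g}^2\norm{g^{-1}}^2$ using the quantity $\ell(g)$ introduced in~\eqref{ell}.

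First I would invoke Lemma~\ref{delta:3}: for $g\in\GL(V)$ and $\hatp\neq\hatq$,
$$ \frac{1}{\norm{g}^{2}\,\norm{g^{-1}}^{2}} \leq \frac{\delta(\varphi_g(\hatp), \varphi_g(\hatq))}{\delta(\hatp,\hatq)} \leq \norm{g}^2\,\norm{g^{-1}}^2 \;. $$
Taking logarithms (which is legitimate since the middle term is strictly positive, because $\varphi_g$ is a diffeomorphism of $\Pp(V)$ when $g$ is invertible, so it does not collapse the two distinct points) yields
$$ -\bigl(2\log\norm{g} + 2\log\norm{g^{-1}}\bigr) \;\leq\; \log\!\left[\frac{\delta(\varphi_g(\hatp), \varphi_g(\hatq))}{\delta(\hatp,\hatq)}\right] \;\leq\; 2\log\norm{g} + 2\log\norm{g^{-1}} \;. $$

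Finally I would use the definition $\ell(g)=\max\{\log\norm{g},\log\norm{g^{-1}}\}$, which gives both $\log\norm{g}\leq \ell(g)$ and $\log\norm{g^{-1}}\leq \ell(g)$, hence $2\log\norm{g}+2\log\norm{g^{-1}}\leq 4\,\ell(g)$. Substituting this bound into both ends of the displayed inequality produces exactly $-4\,\ell(g)\leq \log[\cdots]\leq 4\,\ell(g)$, as claimed. There is no genuine obstacle here: the statement is a cosmetic reformulation of Lemma~\ref{delta:3}, and the only point requiring a word of justification is that the ratio inside the logarithm is positive and finite, which follows from the invertibility of $g$ and $\hatp\neq\hatq$.
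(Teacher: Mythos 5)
Your proposal is correct and is essentially the paper's own argument: the paper proves this lemma by simply citing Lemma~\ref{delta:3}, and your logarithm-plus-$\ell(g)$ bookkeeping is exactly the routine conversion that citation leaves implicit.
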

\begin{proof} 
Follows from ~lemma~\ref{delta:3}. \end{proof}

\begin{lemma}\label{delta:2}
Given $g_1, g_2\in\GL(V)$, $0<\alpha\leq 1$ and $\hatp\neq \hatq$ in $\Pp(V)$,
$$ \abs{ \left(\frac{\delta(\varphi_{g_1}(\hatp), \varphi_{g_1}(\hatq))}{\delta(\hatp,\hatq)}\right)^ \alpha - 
\left(\frac{\delta(\varphi_{g_2}(\hatp), \varphi_{g_2}(\hatq))}{\delta(\hatp,\hatq)} \right)^ \alpha }
\leq C_1(g_1,g_2)\,  \norm{g_1 - g_2}\;, $$
where $C_1(g_1,g_2)= \alpha\, \max\{\norm{g_1}\,\norm{g_1^{-1}}, \norm{g_2}\,\norm{g_2^{-1}}\}^{2(1-\alpha)}\, C(g_1,g_2)$,
and $C(g_1,g_2)$ stands for the constant in lemma~\ref{delta:1}.
\end{lemma}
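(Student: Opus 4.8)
The plan is to reduce the statement to the two preceding lemmas via the mean value theorem applied to the concave power $t\mapsto t^\alpha$. Write
$$ x_i := \frac{\delta(\varphi_{g_i}(\hatp), \varphi_{g_i}(\hatq))}{\delta(\hatp,\hatq)} \qquad (i=1,2), $$
so that $x_1,x_2>0$ and the quantity to estimate is $\abs{x_1^\alpha - x_2^\alpha}$. First I would invoke lemma~\ref{delta:3} to confine both $x_1$ and $x_2$ to the interval $[m,M]$ with $m:=\min_{i}\norm{g_i}^{-2}\norm{g_i^{-1}}^{-2}>0$ and $M:=\max_i \norm{g_i}^2\norm{g_i^{-1}}^2$; only the lower bound $m$ will actually be used.

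Next, assuming without loss of generality $x_1\neq x_2$ (otherwise there is nothing to prove), I would apply the mean value theorem to $f(t)=t^\alpha$ on the interval between $x_1$ and $x_2$: there is $\xi$ strictly between them with
$$ \abs{x_1^\alpha - x_2^\alpha} = \alpha\,\xi^{\alpha-1}\,\abs{x_1-x_2}. $$
Since $0<\alpha\le 1$, the exponent $\alpha-1$ is non-positive, so $\xi\mapsto \xi^{\alpha-1}$ is non-increasing; as $\xi\ge m$ we get $\xi^{\alpha-1}\le m^{\alpha-1}=m^{-(1-\alpha)}=\max_i\big(\norm{g_i}^2\norm{g_i^{-1}}^2\big)^{1-\alpha}=\max_i\big(\norm{g_i}\,\norm{g_i^{-1}}\big)^{2(1-\alpha)}$. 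This is precisely the factor appearing in $C_1(g_1,g_2)$.

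Finally I would estimate $\abs{x_1-x_2}$ directly by lemma~\ref{delta:1}, which gives $\abs{x_1-x_2}\le C(g_1,g_2)\,\norm{g_1-g_2}$. Combining the three bounds,
$$ \abs{x_1^\alpha - x_2^\alpha} \;\le\; \alpha\,\max_i\big(\norm{g_i}\,\norm{g_i^{-1}}\big)^{2(1-\alpha)}\,C(g_1,g_2)\,\norm{g_1-g_2} \;=\; C_1(g_1,g_2)\,\norm{g_1-g_2}, $$
as claimed. There is no real obstacle here; the only point requiring a little care is the direction of the monotonicity of $\xi^{\alpha-1}$, which forces us to use the \emph{lower} bound on $\xi$ (hence the $\max$, rather than a $\min$, over $i$ in the constant).
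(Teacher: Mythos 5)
Your proposal is correct and follows essentially the same route as the paper: a mean-value-type bound $\abs{\Delta_1^\alpha-\Delta_2^\alpha}\le \alpha\,\max\{\Delta_1^{\alpha-1},\Delta_2^{\alpha-1}\}\,\abs{\Delta_1-\Delta_2}$, the lower bound from lemma~\ref{delta:3} to control the $\alpha-1$ power, and lemma~\ref{delta:1} to bound $\abs{\Delta_1-\Delta_2}$. No issues.
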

 
\begin{proof}
Setting $\Delta_1:=\frac{\delta(\varphi_{g_1} \hatp, \varphi_{g_1} \hatq)}{\delta(\hatp,\hatq)}$ and $\Delta_2:=\frac{\delta(\varphi_{g_2} \hatp, \varphi_{g_2} \hatq)}{\delta(\hatp,\hatq)}$,
from lemmas~\ref{delta:1} and~\ref{delta:3} we get
\begin{align*}
\abs{ \Delta_1^\alpha - \Delta_2^\alpha  } &\leq \alpha\, \max\{\Delta_1^{\alpha-1}, \Delta_2^{\alpha-1}\} \,\abs{\Delta_1-\Delta_2}\\
&\leq  \alpha\, \max\{ \norm{g_1}\,\norm{g_1^{-1}}, \norm{g_2}\,\norm{g_2^{-1}} \}^{2(1-\alpha)} \,\abs{\Delta_1-\Delta_2}\\
&\leq  \alpha\, \max\{ \norm{g_1}\,\norm{g_1^{-1}}, \norm{g_2}\,\norm{g_2^{-1}} \}^{2(1-\alpha)} \, C(g_1,g_2) \,\norm{ g_1- g_2}\;.
\end{align*}

\end{proof}

\bigskip

\section{Avalanche Principle}
\label{ap}
\newcommand{\asp}[1]{\stackrel{{#1}}{\asymp}}
\newcommand{\epsh}{\varepsilon_{{\rm sh}}}
\newcommand{\desh}{\delta_{{\rm sh}}}
\newcommand{\kash}{\kappa_{{\rm sh}}}

Consider a long chain of $n$ linear maps
$g_0:V_0\to V_1$, $g_1:V_1\to V_2$, etc,  between Euclidean spaces $V_i$ of the same dimension $m$. The  AP  relates
the expansion $\norm{g_{n-1}\, \ldots \, g_1 \,g_0}$ of the composition $g_{n-1}\, \ldots \, g_1 \,g_0$  with the product  of the individual expansions $\norm{g_{n-1}}\,\ldots\, \norm{g_1}\,\norm{g_0}$. Given two
 quantities $M_n$ and $N_n$ depending on a large number $n\in\N$, we say in rough terms that they are $\varepsilon$-asymptotic, and write $M_n\asp{\varepsilon}  N_n$, when $e^{-n\,\epsilon}\leq M_n/N_n \leq e^{n\,\epsilon}$.
 In general it is not true  that $\norm{g_{n-1}\, \ldots \, g_1 \,g_0}\asp{\varepsilon} \norm{g_{n-1}}\,\ldots\, \norm{g_1}\,\norm{g_0}$ 
for some small $\varepsilon>0$,  unless some atypically sharp alignment of the singular directions of the linear maps $g_j$ occurs.
Given the chain of linear maps
$g_0, g_1,\ldots, g_{n-1}$,
its {\em rift} $\rift(g_0,\ldots, g_ {n-1}):= \frac{\norm{g_{n-1}\,\ldots \,g_0}}{\norm{g_{n-1}}\,\ldots\, \norm{g_0}}\in [0,1]$ measures the break of expansion in the 
composition $g_{n-1}\,\dots \, g_1\,g_0$.
The AP says that given any such chain   $g_0, g_1,\ldots, g_{n-1}$, where
the gap ratio\footnote{ratio between the first and second singular value} of each map $g_j$  is large, and
the rift of any pair of consecutive maps is never too small,
then the rift of the composition behaves  multiplicatively, in the sense that for some small number $\varepsilon>0$,
$$ \rho(g_0,g_1,\ldots, g_{n-1})\asp{\varepsilon} \rho(g_0,g_1)\,\rho(g_1,g_2)\,\ldots\, \rho(g_{n-2},g_{n-1}) \;, $$
or, equivalently,
$$ \frac{\norm{g_{n-1}\, \ldots \, g_1 \,g_0} \, \norm{g_1}\, \ldots\, \norm{g_{n-2}} }{  \norm{g_1\,g_0}\, \ldots\, \norm{g_{n-1}\,g_{n-2}} }\asp{\varepsilon} 1 \;. $$

The AP was introduced by M. Goldstein and W. Schlag~\cite[proposition 2.2]{GS-Holder} 
as a technique to obtain  H\"oder continuity of the integrated density of states for
quasi-periodic Schr\"odinger cocycles.  In the original version, the AP  applies to chains of unimodular matrices in $\SL(2,\C)$, and the length of the chain is assumed to be bounded by some  lower bound on the norms of the matrices. Notice that for unimodular matrices, the gap ratio and the norm are two equivalent measurements.
Still in this unimodular setting,  for matrices in $\SL(2,\R)$, J. Bourgain and S. Jitomirskaya~\cite[lemma 5]{B-J} have greatly relaxed the constraint on the  length of the chain of matrices, and later J. Bourgain~\cite[lemma 2.6]{B-d} has completely removed it, at the cost of slightly weakening the conclusion of the AP.

Later, W. Schlag ~\cite[lemma 1]{Schlag} has generalized the AP to  invertible matrices in $\GL(m,\C)$. Recently, C. Sadel has shared with the authors an earlier draft of ~\cite{AJS},
containing his version of the AP for $\GL(m,\C)$ matrices. Both these higher dimensional  APs  assume some bound on the length of the chains of matrices.
A higher dimensional  AP without this assumption was proven by the authors~\cite[theorem 3.1]{DK2} for invertible real matrices.

We present here the proof of a more general AP, that holds for  (possibly non-invertible) matrices in $\Mat(m,\R)$. As a by-product of the geometric approach used in the proof,
we also obtain a quantitative control on the most expanding directions of the product matrix, something essential to prove the continuity of the Oseledets decomposition.

\subsection{Contractive shadowing}

We prove here a {\em shadowing lemma} saying that under some conditions a loose pseudo-orbit of a chain of contracting maps is shadowed by a true orbit of the mapping sequence. In particular, a closed pseudo-orbit is shadowed by a periodic orbit of the mapping chain.

Given a metric space  $(X,d)$, denote the closed $r$-ball around $x\in X$ by 
$$B(x,\varepsilon):=\{\, z\in X\,\colon\, d(z,x)\leq \varepsilon \,\}\;. $$
Given an open set $X^0\subset X$, define
$$ X^0(\varepsilon):=\{\, x\in X^0\,\colon\, d(x,\partial X^0)\geq \varepsilon\,\}\;,$$
where $\partial X^0$ denotes the topological boundary of $X^0$ in $(X,d)$.

\begin{lemma}[shadowing lemma] \label{shadow:lemma}
Consider  $\varepsilon>0$  and
$0<\delta<\kappa<1$   such that $\delta/(1-\kappa)<\varepsilon<1/2$.

Given a family $\{ (X_j,d_j)\}_{0\leq j\leq n}$ of compact metric spaces with diameter $1$,   a  chain of continuous mappings  $\{g_j:X_j^0  \to X_{j+1}\}_{0\leq j\leq n-1}$ defined on open sets $X^0_j\subset X_j$,
and a sequence of points
$x_j\in X_j$,  assume that for every $0\leq j\leq n-1$:
\begin{enumerate}
\item[(a)]  $x_j\in X^0_j$ and $d(x_j,\partial X^0_j)=1$,
\item[(b)] $g_j$ has Lipschitz constant
$\leq \kappa$ on $X^0_j(\varepsilon)$, 
\item[(c)] $g_j(x_j)\in X^0_{j+1}(2\,\varepsilon)$,
\item[(d)] $g_j(X^0_j(\varepsilon))\subset B(g_j(x_j),\delta)$.
\end{enumerate}
Then, setting $g^{(n)} :=g_{n-1}\circ \ldots\circ g_1\circ g_0$, the following hold:
\begin{enumerate}
\item[(1)] the composition $g^{(n)}$ is defined on $B(x_0,\varepsilon)$ and ${\rm Lip}(g^{(n)}\vert_{B(x_0,\varepsilon)})\leq \kappa^n$,

\item[(2)]   $d (\, g_{n-1}(x_{n-1}), \, g^{(n)}(x_0)\, )\leq 
\frac{\delta}{1-\kappa}$,

\item[(3)] if $x_0=g_{n-1}(x_{n-1})$ then
$g^{(n)}(B(x_0,\varepsilon))\subset B(x_0,\varepsilon)$ and  there is 
a point $x^\ast\in B(x_0,\varepsilon)$
such that $g^{(n)}(x^\ast)=x^\ast$ and
$d\left(  x_0, x^\ast \right)\leq 
\frac{\delta}{(1-\kappa)(1-\kappa^n)}$.
\end{enumerate}
\end{lemma}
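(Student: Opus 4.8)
The plan is to prove the three conclusions in order, using hypotheses (a)--(d) to bootstrap the domain of definition and the contraction estimate, then a standard telescoping argument for the shadowing bound, and finally a Banach-type fixed point argument in the closed ball $B(x_0,\varepsilon)$.

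First I would establish (1) by an induction on $0\le j\le n$ showing that $g^{(j)}:=g_{j-1}\circ\cdots\circ g_0$ is defined on $B(x_0,\varepsilon)$, that $g^{(j)}(B(x_0,\varepsilon))\subset B(g_{j-1}(x_{j-1}),\delta+\kappa\varepsilon)\subset X_j^0(\varepsilon)$ (using $\delta+\kappa\varepsilon<\varepsilon$, which follows from $\delta/(1-\kappa)<\varepsilon$, and using (c) to place $g_{j-1}(x_{j-1})$ at distance $\ge 2\varepsilon$ from $\partial X_j^0$ so the $\delta+\kappa\varepsilon<\varepsilon$-ball stays inside $X_j^0(\varepsilon)$), and that $\mathrm{Lip}(g^{(j)}|_{B(x_0,\varepsilon)})\le\kappa^j$. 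The inductive step combines (b) — the next map $g_j$ is $\kappa$-Lipschitz on $X_j^0(\varepsilon)$, which contains the image of $g^{(j)}$ — with (d) to re-confine the image into the next shrunken ball. This bootstrapping is the heart of the matter: one must check at each stage that the image ball $B(g_j(g^{(j)}(x_0)),\,\kappa\cdot(\text{radius so far}))$ lies well inside $X_{j+1}^0(\varepsilon)$, which is exactly what the numerology $\delta/(1-\kappa)<\varepsilon$ and hypothesis (c) are designed to guarantee.

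Next, for (2) I would telescope: write $d(g_{n-1}(x_{n-1}),g^{(n)}(x_0))$ and insert the intermediate points $g_{n-1}\circ\cdots\circ g_j(x_j)$ for $j=0,\dots,n-1$. Consecutive terms differ by applying the $\kappa$-Lipschitz tail $g_{n-1}\circ\cdots\circ g_{j+1}$ (Lipschitz constant $\le\kappa^{n-1-j}$ on the relevant shrunken ball, by the same argument as in (1)) to the two points $g_j(x_j)$ and $g_j(g^{(j)}(x_j))$, whose distance is $\le\delta$ by (d) since $g^{(j)}(x_j)\in X_j^0(\varepsilon)$. Summing the geometric series $\sum_{j=0}^{n-1}\kappa^{n-1-j}\delta\le\delta/(1-\kappa)$ gives (2). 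For (3), assume $x_0=g_{n-1}(x_{n-1})$. Then (2) reads $d(x_0,g^{(n)}(x_0))\le\delta/(1-\kappa)$, and since $g^{(n)}$ is a $\kappa^n$-contraction on the complete space $B(x_0,\varepsilon)$ that maps this ball into itself (as $\delta/(1-\kappa)+\kappa^n\varepsilon\le\varepsilon$), the contraction mapping principle yields a unique fixed point $x^\ast$ with $d(x_0,x^\ast)\le\frac{1}{1-\kappa^n}d(x_0,g^{(n)}(x_0))\le\frac{\delta}{(1-\kappa)(1-\kappa^n)}$; the containment $g^{(n)}(B(x_0,\varepsilon))\subset B(x_0,\varepsilon)$ is recorded along the way.

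The main obstacle is purely bookkeeping: one must be scrupulous that at every intermediate stage the point $g^{(j)}(x_0)$ and the whole image ball remain inside the domain $X_j^0$, and in fact inside $X_j^0(\varepsilon)$ where $g_j$ is guaranteed $\kappa$-Lipschitz, since (b) only gives the Lipschitz bound on the shrunken set. The inequalities $\delta<\kappa<1$, $\delta/(1-\kappa)<\varepsilon<1/2$, together with the ``slack'' $2\varepsilon$ in (c) and the diameter-$1$ normalization in (a), are exactly calibrated so that each image ball has radius $<\varepsilon$ and is centered within distance $\delta<\varepsilon$ of a point that (c) places at distance $\ge2\varepsilon$ from the boundary; hence the ball sits at distance $\ge\varepsilon$ from $\partial X_{j+1}^0$. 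I would state this containment as an explicit sub-claim proved by induction before assembling (1)--(3), to avoid repeating the estimate three times.
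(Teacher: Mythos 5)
Your overall route is the same as the paper's: a bootstrap induction showing that the forward images of $B(x_0,\varepsilon)$ stay inside the sets $X^0_j(\varepsilon)$, within distance $<\varepsilon$ of the spine points $g_{j-1}(x_{j-1})$ (your $\delta+\kappa\varepsilon<\varepsilon$ numerology is exactly the paper's $\delta<(1-\kappa)\varepsilon$), then a telescoping sum of $\delta$-jumps contracted by Lipschitz tails for (2), and the contraction mapping principle for (3). Parts (1) and (2) are sound in substance; in (2), though, the pair of points you feed into the common tail is misstated: $g^{(j)}(x_j)$ is not even defined, since $g^{(j)}$ has domain in $X_0$. The correct pair is $g_j(x_j)$ and $g_j(g_{j-1}(x_{j-1}))$, whose distance is $\le\delta$ by (d) because $g_{j-1}(x_{j-1})\in X^0_{j}(2\varepsilon)\subset X^0_j(\varepsilon)$ by (c); this is precisely the paper's two-parameter family of orbits $z^i_j$, so the intent is right and the slip is notational.

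The genuine gap is in (3). To get $g^{(n)}(B(x_0,\varepsilon))\subset B(x_0,\varepsilon)$ you invoke the inequality $\delta/(1-\kappa)+\kappa^n\varepsilon\le\varepsilon$, i.e. $\delta/(1-\kappa)\le(1-\kappa^n)\varepsilon$, and this does not follow from the standing hypothesis $\delta/(1-\kappa)<\varepsilon$: take $\kappa=1/2$, $\varepsilon=2/5$, $\delta=19/100$, $n=2$; then $\delta/(1-\kappa)=0.38<\varepsilon$ while $\delta/(1-\kappa)+\kappa^n\varepsilon=0.48>\varepsilon$. The repair is the one the paper uses, and your own telescoping already supplies it: do not weaken the finite geometric sum to $\delta/(1-\kappa)$ before checking invariance. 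One has $d(x_0,g^{(n)}(x_0))\le\delta\,(1+\kappa+\cdots+\kappa^{n-2})\le(1-\kappa)\varepsilon\cdot\frac{1-\kappa^{n-1}}{1-\kappa}=(1-\kappa^{n-1})\,\varepsilon$, hence for $x\in B(x_0,\varepsilon)$, $d(g^{(n)}(x),x_0)\le\kappa^{n}\varepsilon+(1-\kappa^{n-1})\varepsilon\le\varepsilon$ since $\kappa^{n}\le\kappa^{n-1}$. With the ball invariance established this way, the fixed point argument and the a posteriori estimate $d(x_0,x^\ast)\le\frac{1}{1-\kappa^n}\,d(x_0,g^{(n)}(x_0))\le\frac{\delta}{(1-\kappa)(1-\kappa^n)}$ go through exactly as you state.
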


\begin{proof}

The proof's inductive scheme is better understood
with the help of figure~\ref{chain:orbits},
where we set  $z^i_j:= (g_{j-1}\circ \ldots \circ g_{i+1}\circ g_i)(x_i)$ for $i\leq j\leq n$. Of course we have to prove that all  points $z^i_j$ are well-defined.

\begin{figure}[h]
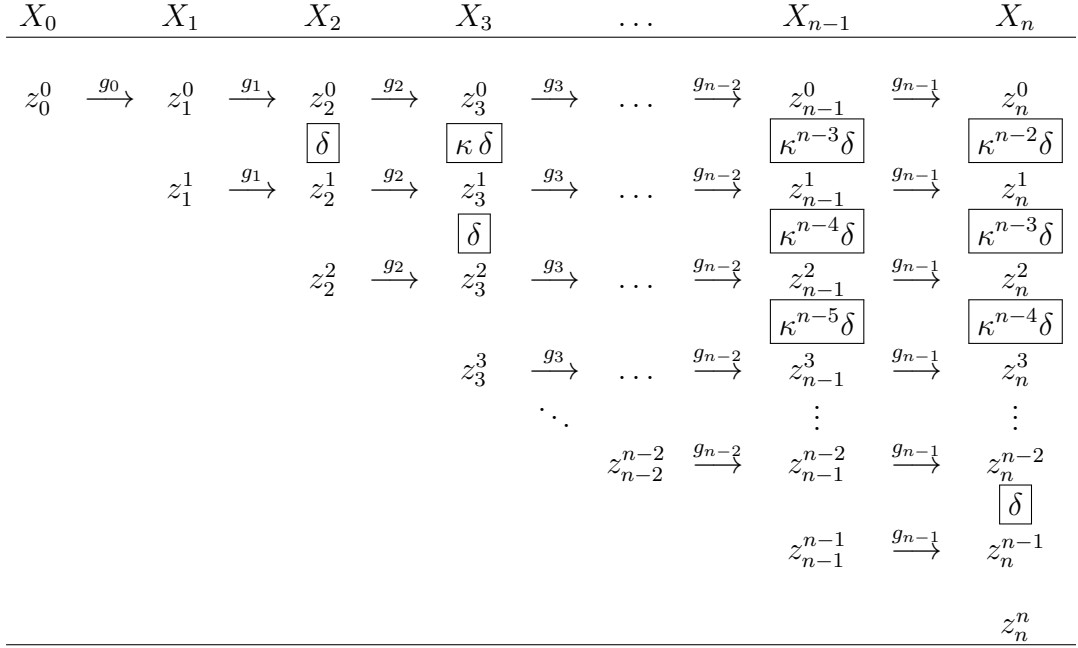

$$\begin{array}{cccccccccccccc}
X_0 &  & X_1  &  & X_2  &  & X_3  & & \ldots & & X_{n-1} & & X_n\\
\hline \\
z^0_0 & \stackrel{g_0}{\longrightarrow} & z^0_1
& \stackrel{g_1}{\longrightarrow} & z^0_2
& \stackrel{g_2}{\longrightarrow} & z^0_3
& \stackrel{g_3}{\longrightarrow} & \; \ldots \;  & \stackrel{g_{n-2}}{\longrightarrow} & z^0_{n-1} &  \stackrel{g_{n-1}}{\longrightarrow} & z^0_n \\
& &   & & \boxed{\delta} & & \boxed{\kappa\,\delta} & & & & \boxed{\kappa^{n-3}\delta} & & \boxed{\kappa^{n-2}\delta}\\
  &  & z^1_1
& \stackrel{g_1}{\longrightarrow} & z^1_2
& \stackrel{g_2}{\longrightarrow} & z^1_3
& \stackrel{g_3}{\longrightarrow}  & \; \ldots \;  & \stackrel{g_{n-2}}{\longrightarrow} & z^1_{n-1} &  \stackrel{g_{n-1}}{\longrightarrow} & z^1_n \\
& &  & & & & \boxed{\delta} &  & & & \boxed{\kappa^{n-4}\delta} & & \boxed{\kappa^{n-3}\delta}\\
  &  &  &  & z^2_2
& \stackrel{g_2}{\longrightarrow} & z^2_3
& \stackrel{g_3}{\longrightarrow}  & \; \ldots \;  & \stackrel{g_{n-2}}{\longrightarrow} & z^2_{n-1} &  \stackrel{g_{n-1}}{\longrightarrow} & z^2_n \\
& &  & & & & & & & &   \boxed{\kappa^{n-5}\delta} & & \boxed{\kappa^{n-4}\delta} \\
  &  &  &  &  &  & z^3_3
& \stackrel{g_3}{\longrightarrow}  & \; \ldots \;  & \stackrel{g_{n-2}}{\longrightarrow} & z^3_{n-1} &  \stackrel{g_{n-1}}{\longrightarrow} & z^3_n \\
& &  & & & & & \ddots & & & \vdots & & \vdots\\
 & & & & & & & & z^{n-2}_{n-2} & \stackrel{g_{n-2}}{\longrightarrow}  & z^{n-2}_{n-1} &  \stackrel{g_{n-1}}{\longrightarrow} & z^{n-2}_n \\
& &  & & & & & & & &  & & \boxed{\delta} \\
 & & & & & & & & & & z^{n-1}_{n-1} &  \stackrel{g_{n-1}}{\longrightarrow} & z^{n-1}_n \\
& &  & & & & & & & & & &  \\
  & & & & & & & & & & & & z^{n}_n \\
\hline
\end{array} $$
\caption{Family of orbits for the chain of mappings $\{g_j:X^0_j\to X_{j+1}\}_j$.
}
\label{chain:orbits}
\end{figure}

The boxed expressions represent upper bounds on the distance between the points respectively above and below the box.
The $i$-th row represents the orbit of $x_i\in X_i$ by the   chain of mappings $\{g_j\}_{j\geq i}$. All points in the $j$-th column belong to the space $X_j$.

To explain the last upper bound at the bottom of each column,
first notice that  $z^i_i=x_i$.
By (a),  $z^{i-1}_{i}=g_{i-1}(x_{i-1})$ is well-defined,
and by (c), $z^{i-1}_{i}\in X^0_{i}(2\,\varepsilon)\subset X^0_{i}(\varepsilon)$.
Likewise $z^{i-2}_{i-1}\in   X^0_{i-1}(\varepsilon)$,
and    $z_i^{i-2}=g_{i-1}(g_{i-2}(x_{i-2}))$ is well-defined.
Then by (d) we have
\begin{equation}\label{delta:dist}
  d(z^{i-1}_i, z_i^{i-2}) =  d(g_{i-1}(x_{i-1}),g_{i-1}(g_{i-2}(x_{i-2}))) \leq \delta\;. 
\end{equation}
All other bounds are obtained applying (b) inductively.
More precisely, we prove by induction in the column index $j$
that 
\begin{enumerate}
\item[(i)] all points $z^i_j$ in the $j$-th column are well-defined and  belong to $ X^0_j(\varepsilon)$,  
\item[(ii)]  distances between between consecutive points in the  column $j$ are bounded by the expressions in figure~\ref{chain:orbits}, i.e., 
for all $1\leq i\leq j-1$,
\begin{equation}\label{zij:claim}
d(z_j^{i-1},z_j^{i})\leq \kappa^{j-i-1}\,\delta \;.   
\end{equation} 
\end{enumerate}

The initial  inductive steps, $j=0,1,2$, follow  from (a), (c) and ~\eqref{delta:dist}.
Assume now that the points $z_j^i$ in $j$-th column satisfy (i) and (ii). Then their images $z^i_{j+1}=g_{j}(z_j^i)$ are well-defined.
By (b) we have for all  $1\leq i\leq j-1$,
$$d(z_{j+1}^{i-1},z_{j+1}^{i}) =
d(g_j(z_j^{i-1}), g_j(z_j^{i}))  \leq
\kappa\,d(z_j^{i-1},z_j^{i})\leq \kappa^{j-i}\,\delta \;. $$
Together with ~\eqref{delta:dist} this proves  (ii) for the column $j+1$. To prove (i) consider any  $1\leq i\leq j$.
By (c) and the triangle inequality,
\begin{align*}
d( z^i_{j+1}, \partial X^0_{j+1}(\varepsilon))  &\geq
d( z^j_{j+1}, \partial X^0_{j+1}(\varepsilon)) - d( z^i_{j+1}, z^j_{j+1})\\
&\geq
d( g_j(x_j), \partial X^0_{j+1}(\varepsilon)) - \sum_{l=i+1}^{j} d( z^{l-1}_{j+1}, z^{l}_{j+1})\\
&\geq
2\,\varepsilon - \sum_{l=i+1}^{j} \kappa^{j-l}\,\delta\geq
2\,\varepsilon-\frac{\delta}{1-\kappa} \geq \varepsilon\;.
\end{align*}
This proves (i) for the column $j+1$, and concludes the induction.


\smallskip

Conclusion (1) follows from (b) and the following claim, to be proved  by induction in $i$.

For every $i=0,1,\ldots, n-1$, \,
$g^{(i)}( B(x_0,\varepsilon))\subset X^0_{i}(\varepsilon)$, where $g^{(i)}=g_{i-1}\circ \ldots \circ g_0$.

Consider first the case $i=0$. 
Given $x\in  B(x_0,\varepsilon)$, \,
$$d(x,\partial X^0_0)\geq d(x_0,\partial X^0_0)-d(x,x_0)\geq 1-\varepsilon>\varepsilon\;.$$
This implies that $d(g_0(x),g_0(x_0))\leq\kappa\,d(x,x_0)$.
Thus
\begin{align*}
 d( g_0(x), \partial X^0_1) &\geq
 d(  g_0(x_0), \partial X^0_1) - d(g_0(x_0), g_0(x)) \geq 2\,\varepsilon -  d(g_0(x_0), g_0(x))\\
 &\geq 2\,\varepsilon -  \kappa\, d(x_0,x)\geq 
 2\,\varepsilon -  \kappa\, \varepsilon >\varepsilon\;
\end{align*}
which proves that $g_0( B(x_0,\varepsilon) )\subset X^0_1(\varepsilon)$.

Assume now that for every $l\leq i-1$,
$$(g_l\circ \ldots\circ g_0)( B(x_0,\varepsilon))\subset X^0_{l+1}(\varepsilon)\;.  $$  
By (b), $g^{(i)}$ acts
as a $\kappa^i$ contraction on $B(x_0,\varepsilon)$
and $g^{(i)}( B(x_0,\varepsilon) )\subset X^0_{i}(\varepsilon)$.
Thus for every $x\in B(x_0,\varepsilon)$,
\begin{align*}
 d( g^{(i+1)}(x), \partial X^0_{i+1}) &\geq
 d(  g_i(x_i), \partial X^0_{i+1}) - d(g_i(x_i), g^{(i+1)}(x))\\
 &\geq 2\,\varepsilon -  d(z_{i+1}^0, z_{i+1}^i) -
 d( z_{i+1}^0,  g^{(i+1)}(x))\\
  &\geq 2\,\varepsilon -  \sum_{l=0}^{i-1} d(z_{i+1}^l, z_{i+1}^{l+1}) -
 d( g^{(i+1)}(x_0),  g^{(i+1)}(x))\\
 &\geq 2\,\varepsilon -   (\delta+\kappa\, \delta +\ldots+\kappa^{i-1}\, \delta)
  -  \kappa^{i}\, d(x_0, x)  \\
 &\geq 2\,\varepsilon -  (\delta+ \kappa\, \delta +  \ldots+\kappa^{i-1}\, \delta)  - \kappa^{i}\, \varepsilon \\
 &\geq 2\,\varepsilon -  (1-\kappa)\,\varepsilon\,(1 + \kappa +  \ldots+\kappa^{i-1} ) - \kappa^{i}\, \varepsilon  
=  \varepsilon\;
\end{align*}
which proves that $g^{(i+1)}( B(x_0,\varepsilon))\subset  X^0_{i+1}(\varepsilon)$, and establishes the claim above.

Thus $g^{(n)}$ is well-defined on $B(x_0,\varepsilon)$,
and, because of assumption (b), $g^{(n)}$ is a $\kappa^n$ Lipschitz contraction on this ball.
This proves (1).

Item (2) follows by~\eqref{zij:claim}. In fact
$$
d(g_{n-1}(x_{n-1}), g^{(n)}(x_0) )  =
d(z^{n-1}_n , z^0_n ) \leq \sum_{l=1}^{n-1} d(z^{l}_n , z^{l-1}_n )  \leq \sum_{l=1}^{n-1} \kappa^{n-l-1}\,\delta \leq \frac{\delta}{1-\kappa}\;.
$$

Finally we prove (3). Assume   $x_0=g_{n-1}(x_{n-1})$.

It is enough to see that
$g^{(n)}( B(x_0,\varepsilon))\subset B(x_0,\varepsilon)$,
because by (1) $g^{(n)}$ acts as a $\kappa^n$-contraction in the closed ball $B(x_0,\varepsilon)$. The conclusion on the existence of a fixed point, as well as the proximity bound, follow from the classical fixed point theorem for Lipschitz contractions.

Given $x\in  B(x_0,\varepsilon)$,
we know from the previous calculation that \,
$$d(x_0, g^{(n)}(x_0))<\delta + \kappa\,\delta  + \ldots + \kappa^{n-2}\,\delta \;.$$
Hence
\begin{align*}
 d( g^{(n)}(x), x_0) &\leq
 d(g^{(n)}(x), g^{(n)}(x_0)) + d(g^{(n)}(x_0), x_0)\\
 &\leq \kappa^{n-1}\, d(x, x_0) + \delta + \kappa\,\delta  + \cdots + \kappa^{n-2}\,\delta\\
  &\leq \delta + \kappa\,\delta  + \cdots + \kappa^{n-2}\,\delta +
  \kappa^{n-1}\,\varepsilon \\
  &\leq (1-\kappa)\,\varepsilon\,(1 + \kappa + \cdots + \kappa^{n-2} ) +
  \kappa^{n-1}\,\varepsilon \\
  & =(1-\kappa)\,\varepsilon\, \frac{1-\kappa^{n-1}}{1-\kappa} +
  \kappa^{n-1}\,\varepsilon = \varepsilon\;.
\end{align*}
Thus $g^{(n)}(x)\in B(x_0,\varepsilon)$, which proves that
$g^{(n)}(B(x_0,\varepsilon))\subset B(x_0,\varepsilon)$.

\end{proof}

\begin{figure}
\begin{center}
\includegraphics*[scale={.6}]{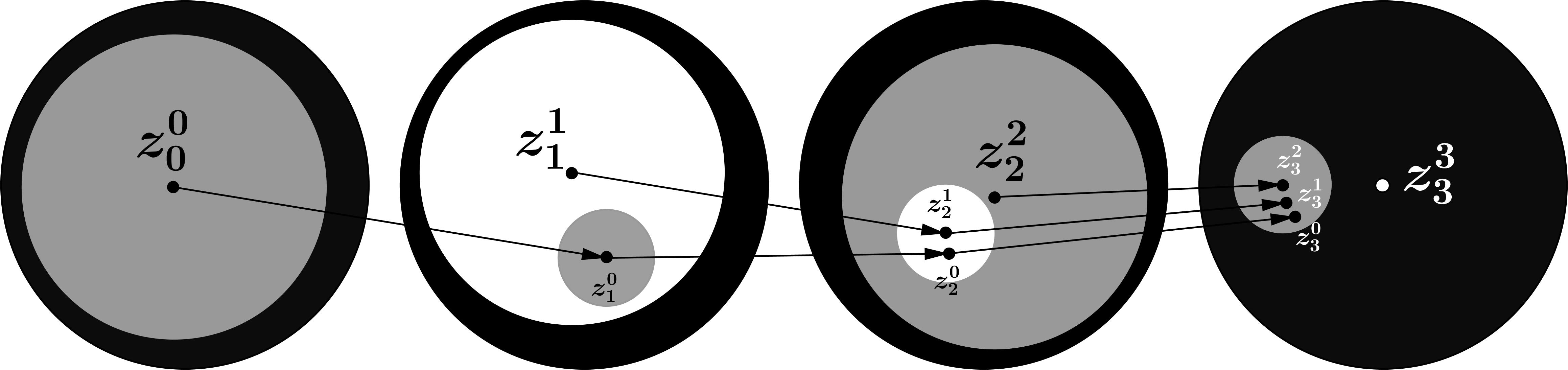} 
\end{center}
\caption{Shadowing property for a chain of contractive mappings}
\end{figure}

\subsection{Statement and proof of the AP}

In the AP's statement and proof we will use the notation introduced in subsection~\ref{subsection angles}.
Given a chain of linear mappings
$\{g_j:V_{j}\to V_{j+1}\}_{0\leq j\leq n-1}$ we denote the composition of the first $i$ maps by 
$g^{(i)}:= g_{i-1}\, \ldots \, g_1\, g_0$.

\begin{theorem}[Avalanche Principle] \label{Theorem:AP} There exists a constant
$c>0$ such that
 given   $0<\varepsilon<1$,  \;$0<\kappa\leq c\,\varepsilon^ 2$ 
and  a chain of linear mappings
$\{g_j:V_{j}\to V_{j+1}\}_{0\leq j\leq n-1}$ between Euclidean spaces $V_j$, \,   if 
\begin{enumerate}
\item[(a)] $\sgap(g_i)\leq \kappa$,\,  
for $0\leq i\leq n-1$, and 
\item[(b)] $\aangle(g_{i-1}, g_{i})\geq \varepsilon$,\; 
for $1\leq i\leq n-1$,
\end{enumerate}
then 
\begin{enumerate}

\item[(1)]  $d(\mostexp(g^{(n)}), \mostexp(g_{0})) 
 \lesssim    \kappa\,\varepsilon^{-1}$\,, 

\smallskip

\item[(2)]  $d(\mostexp(g^{(n)\ast}), \mostexp(g_{n-1}^\ast)) 
  \lesssim   \kappa\,\varepsilon^{-1}$\,,

\smallskip

\item[(3)]  $  \sgap(g^{(n)})  \leq   \left(\frac{\kappa\,(4+2\,\varepsilon)}{\varepsilon^{2}}\right)^n$\,,

\smallskip

\item[(4)] $\displaystyle 
 \abs{\log \norm{g^{(n)}} +
\sum_{i=1}^{n-2} \log \norm{g_i} -
\sum_{i=1}^{n-1}
\log  \norm{g_i\,g_{i-1}} } \lesssim  n\,\frac{\kappa}{\varepsilon^2}
\; .$
\end{enumerate}
\end{theorem}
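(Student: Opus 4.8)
The plan is to apply the contractive shadowing lemma (Lemma~\ref{shadow:lemma}) to the chain of projective maps $\{\varphi_{g_j}\}$ acting on copies of the projective spaces $\Pp(V_j)$, taking as pseudo-orbit the sequence of most expanding directions $x_j := \mostexp(g_j)$. The heuristic is that by~\eqref{varphi g:mostexp proj} we have $\varphi_{g_j}(\mostexp(g_j)) = \mostexp(g_j^\ast)$, and assumption~(b) forces $\mostexp(g_j^\ast)$ to be far from $\orthC{\mostexp(g_{j+1})}$, hence by Proposition~\ref{proj:contr} the map $\varphi_{g_{j+1}}$ is a sharp contraction on a $\delta$-ball around $\mostexp(g_j^\ast)$; so $x_{j+1}$ nearly lies on the image ball and we get a loose pseudo-orbit. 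First I would fix the metric spaces: work on $\Pp(g_j) = \{\hat w : \aangle(\hat w,\mostexp(g_j)) \text{ bounded away from }0\}$, or more precisely take $X_j^0 := B^{(\delta)}(\mostexp(g_j), r)$ with $r \asymp \varepsilon$, so that on this ball $\aangle(\cdot,\mostexp(g_j)) \geq \sqrt{1-r^2}$ and Proposition~\ref{proj:contr}(1),(3) apply with contraction factor $\kappa' \asymp \kappa/\varepsilon^2$. I would then verify hypotheses (a)--(d) of Lemma~\ref{shadow:lemma}: (a) is the normalization of the ball radius; (b) is Proposition~\ref{proj:contr}(3) with the Lipschitz constant $\lesssim \kappa/\varepsilon^2$; (c) and (d) follow from $\delta(\varphi_{g_j}(\mostexp(g_j)), \mostexp(g_{j+1})) = \delta(\mostexp(g_j^\ast), \mostexp(g_{j+1})) = \sqrt{1 - \aangle(g_j,g_{j+1})^2}$ — wait, that is close to $1$, not small, so one must be careful: the pseudo-orbit jump is measured \emph{inside the chart}, and what is small is the \emph{contracted} image, i.e. $\varphi_{g_{j+1}}$ maps the ball around $\mostexp(g_j^\ast)$ into a tiny ball, and $\mostexp(g_{j+1})$ is the center of the \emph{next} ball; the relevant estimate is that $\varphi_{g_{j+1}}(B(\mostexp(g_j^\ast), r))$ has radius $\lesssim \kappa r/(\varepsilon\sqrt{1-r^2})$ around $\varphi_{g_{j+1}}(\mostexp(g_j^\ast))$, and the latter is within $O(\kappa/\varepsilon)$ of $\mostexp(g_{j+1}^\ast)$ by a second application of Proposition~\ref{prop expansion aangle}(b). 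Getting these $\varepsilon$-dependencies to line up correctly with the constraint $\delta/(1-\kappa') < \varepsilon$ of the shadowing lemma is where the constant $c$ in $\kappa \leq c\varepsilon^2$ is consumed.

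With the shadowing lemma in force, conclusions~(1) and~(2) come from part~(2) of Lemma~\ref{shadow:lemma}: the true orbit of $x_0 = \mostexp(g_0)$ under $g^{(n)} = g_{n-1}\cdots g_0$ stays within $\delta/(1-\kappa') \lesssim \kappa/\varepsilon$ of the terminal pseudo-orbit point $\varphi_{g_{n-1}}(\cdots)(\mostexp(g_0))$ — but I need the true orbit point to be $\mostexp(g^{(n)\ast})$ and the starting direction to be $\mostexp(g^{(n)})$. For this I would argue separately that $\mostexp(g^{(n)})$ lies in the first ball $B^{(\delta)}(\mostexp(g_0), r)$: indeed $g^{(n)}$ has a first singular gap (this needs to be established — it follows once we know $\varphi_{g^{(n)}}$ contracts the ball into itself and hence has a unique fixed point which must be $\mostexp(g^{(n)\ast})$, while $\mostexp(g^{(n)})$ is the preimage direction, which one shows is $O(\kappa/\varepsilon)$-close to $\mostexp(g_0)$ by running the shadowing argument for the adjoint chain $g_0^\ast, g_1^\ast, \ldots$ reversed, using Proposition~\ref{pfwd-pbck:conjugation}-type duality and that $\aangle(g_{i-1},g_i) = \aangle(g_i^\ast, g_{i-1}^\ast)$ suitably interpreted). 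Concretely: (2) is the direct shadowing conclusion for the forward chain; (1) is the shadowing conclusion for the reversed adjoint chain $\{g_{n-1-j}^\ast\}$, whose composition is $(g^{(n)})^\ast$, so its most expanding direction is $\mostexp(g^{(n)\ast})$... I must be careful which is which, but the symmetry $\aangle(g,g') = \aangle(g'^\ast, g^\ast)$ handles it.

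For conclusions~(3) and~(4), I would iterate Proposition~\ref{prod:2:lemma} in the telescoped form of Proposition~\ref{svp:lemma:norm}: writing $\prod_{i=1}^{n-1}\aangle(g^{(i)},g_i) \leq \norm{g^{(n)}}/(\prod\norm{g_i}) \leq \prod_{i=1}^{n-1}\bangle(g^{(i)},g_i)$. The point is to replace the cumulative objects $\aangle(g^{(i)},g_i)$ by the \emph{local} data $\aangle(g_{i-1},g_i)$: since by~(1)-type reasoning $\mostexp(g^{(i)\ast})$ is $O(\kappa/\varepsilon)$-close to $\mostexp(g_{i-1}^\ast)$ (apply the already-proven conclusion~(2) to the truncated chain $g_0,\ldots,g_{i-1}$), Proposition~\ref{prop aangle continuity} gives $\aangle(g^{(i)},g_i) = \aangle(\mostexp(g^{(i)\ast}),\mostexp(g_i)) \geq \aangle(g_{i-1},g_i) - O(\kappa/\varepsilon) \geq \varepsilon - O(\kappa/\varepsilon) \gtrsim \varepsilon$. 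Likewise $\bangle(g^{(i)},g_i)^2 = \rgap(g^{(i)})^{-2} \oplus \aangle(g^{(i)},g_i)^2 \oplus \rgap(g_i)^{-2}$; using $\sgap(g_i) \leq \kappa$ and the (crude but sufficient) bound $\sgap(g^{(i)}) \leq 1$, or better the inductively improving bound from~(3) itself, one gets $\bangle(g^{(i)},g_i)^2 \leq \aangle(g_{i-1},g_i)^2 + O(\kappa/\varepsilon) + \kappa^2 \leq \aangle(g^{(i)},g_i)^2(1 + O(\kappa/\varepsilon^2))$, hence $\log[\bangle(g^{(i)},g_i)/\aangle(g^{(i)},g_i)] = O(\kappa/\varepsilon^2)$. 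Summing over $i$ and rewriting $\norm{g_i} = \norm{g_i g_{i-1}}/(\aangle(g_{i-1},g_i)\cdot\text{stuff})$ — more precisely taking logs of the telescoped inequality and using $\aangle(g^{(i)},g_i) \asymp \aangle(g_{i-1},g_i) \asymp \norm{g_ig_{i-1}}/(\norm{g_i}\norm{g_{i-1}})$ up to the same $O(\kappa/\varepsilon^2)$ error — yields~(4); and~(3), the bound $\sgap(g^{(n)}) = \norm{\wedge_2 g^{(n)}}/\norm{g^{(n)}}^2 \leq \big(\kappa(4+2\varepsilon)/\varepsilon^2\big)^n$, follows from applying the corollary after Proposition~\ref{prod:2:lemma} to $\wedge_2$ of the chain, since $\norm{\wedge_2 g^{(n)}} \leq \prod\norm{\wedge_2 g_i}\cdot\prod\bangle_2(\ldots)$ and $\norm{\wedge_2 g_i} = s_1(g_i)s_2(g_i) = \sgap(g_i)\norm{g_i}^2 \leq \kappa\norm{g_i}^2$, combined with the lower bound $\norm{g^{(n)}} \geq \prod\norm{g_i}\cdot\prod\aangle(g_{i-1},g_i) \gtrsim \prod\norm{g_i}\cdot(\varepsilon/2)^{n-1}$ from the left inequality of Proposition~\ref{svp:lemma:norm}; dividing gives roughly $(\kappa/\varepsilon^2)^n$ up to the stated explicit constant $(4+2\varepsilon)$.

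\textbf{Main obstacle.} The delicate point is the bookkeeping in setting up the shadowing lemma so that all hypotheses hold with $\kappa' = O(\kappa/\varepsilon^2) < 1$ and $\delta = O(\kappa/\varepsilon) < (1-\kappa')\varepsilon$ simultaneously — this is exactly what forces $\kappa \lesssim \varepsilon^2$ and fixes the absolute constant $c$. A secondary subtlety is that conclusions~(1) and~(2) are not symmetric under a single application of the lemma: (2) is the forward shadowing conclusion, but~(1) requires either running the argument on the reversed adjoint chain or showing directly that $\varphi_{g^{(n)}}$ maps $B^{(\delta)}(\mostexp(g_0),r)$ into itself (so its fixed point $\mostexp(g^{(n)\ast})$ exists and, pulling back, $\mostexp(g^{(n)})$ is near $\mostexp(g_0)$), and one must check these two routes give the same bound — I expect the cleanest path is the duality $\varphi_{g^\ast}$ vs. $\varphi_g^{-1}$ from Proposition~\ref{pfwd-pbck:conjugation}, together with the identity $\aangle(g,g')=\aangle(g^{\prime\ast},g^\ast)$ on most-expanding directions, to reduce~(1) to~(2) applied to the adjoint chain.
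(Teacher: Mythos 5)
Your overall architecture (the shadowing lemma on projective spaces plus the telescoping rift estimates of proposition~\ref{svp:lemma:norm}) is indeed the paper's, but two steps as written would fail. First, the domains: taking $X_j^0=B^{(\delta)}(\mostexp(g_j),r)$ with $r\asymp\varepsilon$ is inconsistent. On such a small ball proposition~\ref{proj:contr} gives a Lipschitz constant $\asymp\kappa$, not $\kappa/\varepsilon^2$ (the factor $\kappa/\varepsilon^{2}$ corresponds to $1-r^2\asymp\varepsilon^2$, i.e.\ $r$ close to $1$); more importantly, hypothesis (c) of lemma~\ref{shadow:lemma} would fail, because assumption (b) of the theorem only gives $\aangle(\mostexp(g_j^\ast),\mostexp(g_{j+1}))\geq\varepsilon$, i.e.\ $\delta(\mostexp(g_j^\ast),\mostexp(g_{j+1}))\leq\sqrt{1-\varepsilon^2}$, so the image point $\varphi_{g_j}(\mostexp(g_j))=\mostexp(g_j^\ast)$ is in general nowhere near a ball of radius $\asymp\varepsilon$ centered at $\mostexp(g_{j+1})$. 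The choice that makes the bookkeeping work is the paper's: $X^0_j=\{\hat v:\aangle(\hat v,\mostexp(g_j))>0\}$, whose $\varepsilon$-interior (in the normalized Riemannian metric, with $\varepsilon$ replaced by $\frac{1}{\pi}\arcsin\varepsilon$) is the $\delta$-ball of radius about $\sqrt{1-\varepsilon^2/4}$; there the contraction constant is $\asymp 4\kappa/\varepsilon^2$, the image of each domain is a $\delta$-ball of radius $\asymp\kappa/\varepsilon$ around $\mostexp(g_j^\ast)$, and the angle hypothesis says exactly that these centers lie $2\varepsilon$-deep inside the next domain.

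The more serious gap is how you extract conclusions (1) and (2). Conclusion (2) of lemma~\ref{shadow:lemma} for the forward chain only bounds $d(\mostexp(g_{n-1}^\ast),\,\varphi_{g^{(n)}}\mostexp(g_0))$, which is not yet statement (2) of the theorem; and your fallback, that ``$\varphi_{g^{(n)}}$ maps $B^{(\delta)}(\mostexp(g_0),r)$ into itself and hence has a unique fixed point which must be $\mostexp(g^{(n)\ast})$'', is not meaningful: $\varphi_{g^{(n)}}$ maps $\Pp(V_0)$ into $\Pp(V_n)$, and even when these spaces coincide its fixed points are eigendirections of $g^{(n)}$, not singular directions. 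The missing device is the paper's: concatenate the forward maps with the adjoint maps into the two closed chains $\varphi_{g_0},\ldots,\varphi_{g_{n-1}},\varphi_{g_{n-1}^\ast},\ldots,\varphi_{g_0^\ast}$ and $\varphi_{g_{n-1}^\ast},\ldots,\varphi_{g_0^\ast},\varphi_{g_0},\ldots,\varphi_{g_{n-1}}$, whose compositions are the projective actions of the symmetric maps $g^{(n)\ast}g^{(n)}$ and $g^{(n)}g^{(n)\ast}$; these do act on a single space, the contraction from part (1) of the shadowing lemma identifies their unique fixed points as $\mostexp(g^{(n)})$ and $\mostexp(g^{(n)\ast})$ (this is also what guarantees $g^{(n)}$ has a first gap), and it is part (3) of the shadowing lemma (the fixed-point estimate), not part (2), that yields both (1) and (2) with the bound $\lesssim\kappa/\varepsilon$. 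For conclusion (3) your route via $\sgap(g^{(n)})=\norm{\wedge_2 g^{(n)}}/\norm{g^{(n)}}^2$ and proposition~\ref{svp:lemma:norm} is a workable alternative to the paper's one-line derivative argument, and your treatment of (4) matches the paper; but both rest on $\aangle(g^{(i)},g_i)\gtrsim\varepsilon$, which presupposes that each truncated product $g^{(i)}$ has a first gap — in the paper this comes from applying (3) to the truncated chains, whereas in your scheme it must be supplied by an induction on the chain length that you do not set up.
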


\begin{remark}[On the assumptions]
\label{rmk on the assumptions}
Assumption (a) says that the (first) gap ratio of each $g_j$ is large,
$\rgap(g_j)\geq \kappa^{-1}$.
By  propositions~\ref{prod:2:lemma} and ~\ref{prop angle rift}, assumption (b) is equivalent to a condition on the rift,
$\rift(g_{j-1},g_j)\geq\varepsilon$\, for all $j=1,\ldots,n-1$.
\end{remark}

\begin{remark}[On the conclusions]
\label{rmk on the conclusions}
Conclusions (1) and (2) say that the most expanding direction $\mostexp(g^{(n)})$ of the product $g^{(n)}$, and its image 
$\varphi_{g^{(n)}} \mostexp(g^{(n)})$, are respectively $\kappa/\varepsilon$-close to the  most expanding direction $\mostexp(g_0)$ of $g_0$, and to the image $\varphi_{g_{n-1}} \mostexp(g_{n-1})$ of the  most expanding direction of $g_{n-1}$.
Conclusion (3) says that the composition map  $g^{(n)}$ has a large gap ratio.
Finally, conclusion (4) is equivalent to
$$  e^{-n\, C\,\kappa\,\varepsilon^{-2}}\leq \frac{\norm{g_{n-1}\, \ldots \, g_1 \,g_0} \, \norm{g_1}\, \ldots\, \norm{g_{n-2}} }{  \norm{g_1\,g_0}\, \ldots\, \norm{g_{n-1}\,g_{n-2}} }\leq  e^{n\, C\,\kappa\,\varepsilon^{-2}} \;, $$
for some universal constant $C>0$.
These inequalities describe the  asymptotic almost multiplicative behavior of the rifts
$$ \rho(g_0,g_1,\ldots, g_{n-1})\asp{ C\,\kappa/\varepsilon^{2}} \rho(g_0,g_1)\,\rho(g_1,g_2)\,\ldots\, \rho(g_{n-2},g_{n-1}) \;. $$
\end{remark}


\begin{proof}
The strategy of the proof is to look at the  contracting action  of linear mappings $g_j$ on the projective space.

For each $j=0,1,\ldots, n$ consider the compact metric space
$X_j=\Pp(V_j)$ with the normalized Riemannian distance,
$d(\hatu,\hatv)=\frac{2}{\pi}\,\rho(\hatu,\hatv)$, and define for
$0\leq j <n$
\begin{align*}
 X^0_j &:=\{\, \hatv\in X_j\,\colon\,
\aangle(\hatv,\mostexp(g_j))>0\,\}\;,\\
Y^0_j &:=\{\, \hatv\in X_j\,\colon\,
\aangle(\hatv,\mostexp(g_{j-1}^\ast))>0\,\}\;.
\end{align*}  
The domain  of the projective map $\varphi_{g_j}:\Pp(g_j)\subset X_j\to X_{j+1}$ clearly contains the open set $X^0_j$.
Analogously, the domain  of $\varphi_{g_{j-1}^\ast}:\Pp(g_j^\ast)\subset X_{j} \to X_{j-1}$   contains  $Y^0_j$.
We will apply lemma~\ref{shadow:lemma} to   chains
of projective maps formed out of the mappings $\varphi_{g_j}:X^0_j\to X_{j+1}$ and their adjoints
$\varphi_{g_{j-1}^\ast}:Y^0_j\to X_{j-1}$.

Take positive numbers $\varepsilon$ and $\kappa$ such that
$0<\kappa\ll \varepsilon^2$, let $r := \sqrt{1- \varepsilon^2/4 } $,
and define the following input parameters for the application of lemma~\ref{shadow:lemma},
\begin{align*}
\epsh &:= \frac{1}{\pi}\,\arcsin \varepsilon\;, \\
\kash &:= \kappa\,\frac{r+\sqrt{1-r^2}}{1-r^2}
 \asymp  \frac{ 4\,\kappa }{\varepsilon^2}\;, \\
 \desh &:=\frac{  \kappa\,r }{\sqrt{1-r^2}}
 \asymp  \frac{ 2\,\kappa }{\varepsilon}\;.
\end{align*}
A simple calculation shows that there exists
$0<c<1$ such that for any $0<\varepsilon<1$ and  $0<\kappa\leq c\,\varepsilon^2$, the pre-conditions
$0<\desh<\kash<1$  and $\frac{\desh}{1-\kash}<\epsh<1/2$ of the shadowing lemma are satisfied.

Define $x_j=\mostexp(g_j)$ and
 $x_j^\ast=\mostexp(g_{j-1}^\ast)$. This lemma is going to be applied to the following chains of maps and sequences of points
\begin{align*}
\text{(A)} \quad &\varphi_{g_0}, \ldots,
\varphi_{g_{n-1}}, \varphi_{g_{n-1}^\ast}, \ldots, \varphi_{g_0^\ast}\;, \quad
x_0, \ldots, x_{n-1}, x_n^\ast,
\ldots, x_1^\ast \;,\\
\text{(B)} \quad &\varphi_{g_{n-1}^\ast}, \ldots,  \varphi_{g_0^\ast}, \varphi_{g_0}, \ldots,
\varphi_{g_{n-1}} \;, \quad
x_n^\ast,
\ldots, x_1^\ast,x_0,\ldots, x_{n-1} \;,
\end{align*}
from which we will infer the conclusions (1) and (2).
Let us  check now that assumptions (a)-(d) of
lemma~\ref{shadow:lemma} hold in both cases (A) and (B).

By definition
$\partial X^0_j:=\{\, \hatv\in X_j\,\colon\, \aangle(\hatv, x_j)=0\,\}=\{\, \hatv\in X_j\,\colon\,  \hatv\perp x_j \,\}$.
Hence,   if $\hatv\in \partial X^0_j$ then  $d(x_j,\hatv)=1$, which proves that $d(x_j,\partial X^0_j)=1$.
Analogously, $\partial Y^0_j =\{\, \hatv\in X_j\,\colon\,  \hatv\perp  x_j^\ast \,\}$ and $d(x_j^\ast,\partial Y^0_j)=1$. Therefore assumption (a) holds.

By definition of $ X^0_j(\varepsilon)$,
\begin{align*}
\hatv\in X^0_j(\varepsilon) \; &\Leftrightarrow \;
d(\hatv, \partial X^0_j) \geq \varepsilon
\; \Leftrightarrow \;
\rho(\hatv, \partial X^0_j) \geq \frac{\pi}{2}\, \varepsilon \\
&\Leftrightarrow \; \delta(\hatv, \partial X^0_j) =\aangle(\hatv, x_j) \geq \sin\left(\frac{\pi}{2}\,\varepsilon\right)  \\
& \Leftrightarrow \; \delta(\hatv, x_j) \leq \cos\left(\frac{\pi}{2}\,\varepsilon\right) \;.
\end{align*}
Similarly, by definition of $Y^0_j(\varepsilon)$,
$$ \hatv\in Y^0_j(\varepsilon) \;  \Leftrightarrow \; \delta(\hatv, x_j^\ast) \leq \cos\left(\frac{\pi}{2}\,\varepsilon\right)\;. $$
Thus, because
$$ \cos\left( \frac{\pi}{2}\,\epsh\right)
=\cos\left(\frac{1}{2}\,\arcsin \varepsilon \right) \leq \sqrt{1-\frac{\varepsilon^2}{4}}= r\;,$$
we have \, 
$X^0_j(\epsh)\subset B^{(\delta)}(x_j,r)$
 \, and \, 
$Y^0_j(\epsh)\subset B^{(\delta)}(x_j^\ast,r)$, and assumption (b) holds by proposition~\ref{proj:contr} (3).

By the gap assumption,
\begin{align*}
\aangle(\varphi_{g_j}(x_j), x_{j+1})
= \aangle( \mostexp(g_j^\ast), \mostexp(g_{j+1}))
= \aangle(g_j,g_{j+1})\geq \varepsilon \;.
\end{align*}
Therefore
\begin{align*}
d(\varphi_{g_j}(x_j), \partial X^0_{j+1})
&= \frac{2}{\pi}\,\arcsin \delta(\varphi_{g_j}(x_j), \partial X^0_{j+1}) = \frac{2}{\pi}\,\arcsin \aangle(\varphi_{g_j}(x_j), x_{j+1})\\
&\geq \frac{2}{\pi}\,\arcsin \varepsilon = 2\,\epsh\;.
\end{align*}
Similarly, by the gap assumption,
\begin{align*}
\aangle(\varphi_{g_{j-1}^\ast}(x_j^\ast), x_{j-1}^\ast)
= \aangle( \mostexp(g_{j-1} ), \mostexp(g_{j-1}^\ast))
=  \aangle(g_{j+1}^\ast, g_j^\ast) = \aangle(g_j,g_{j+1})\geq \varepsilon \;,
\end{align*}
and in the same way we infer  that
\begin{align*}
d(\varphi_{g_{j-1}^\ast}(x_j^\ast), \partial Y^0_{j-1})\geq  \frac{2}{\pi}\,\arcsin \varepsilon =  2\,\epsh\;.
\end{align*}
This proves that   (c) of the shadowing lemma holds.
Notice that in both cases (A) and (B), the assumption (c) holds trivially for the middle points,  because
$\varphi_{g_{n-1}}(x_{n-1})=x_n^\ast\in Y_n^0(2\,\epsh)$ and  
$\varphi_{g_{0}^\ast}(x_{1}^\ast)=x_0\in X_0^0(2\,\epsh)$.

It was proved above that $X^0_j(\epsh)\subset B^{(\delta)}(x_j,r)$
 \, and \, 
$Y^0_j(\epsh)\subset B^{(\delta)}(x_j^\ast,r)$.
By~\eqref{metric equivalence} we have
$d(\hatu,\hatv)\leq \delta(\hatu,\hatv)$.
Thus by proposition~\ref{proj:contr} (1),
$$ \varphi_{g_j}( X_j^0(\epsh))\subset B^{(\delta)}(x_j^\ast,\desh)\subset B^{(d)}(x_j^\ast,\desh) 
\; \text{ with }\;
x_j^\ast = \varphi_{g_j}(x_j)\;,$$
and analogously,
$$ \varphi_{g_{j-1}^\ast}( Y_j^0(\epsh))\subset B^{(\delta)}(x_{j-1},\desh)\subset B^{(d)}(x_{j-1},\desh) \; \text{ with }\;
x_{j-1} = \varphi_{g_{j-1}^\ast}(x_j^\ast)\;.$$
Hence, (d) of lemma~\ref{shadow:lemma} holds.
 
Therefore,
because $\varphi_{g_0^\ast}(x_1^\ast)=x_0$
and  $\varphi_{g_{n-1}}(x_{n-1})=x_n^\ast$,
conclusion (2) of lemma~\ref{shadow:lemma} holds
for both chains (A) and (B).
The projective points $\mostexp(g^{(n)})$
and $\mostexp(g^{(n)\ast})$ are the unique fixed points of the chains of mappings (A) and (B), respectively. Hence, by the shadowing lemma both distances $d(x_0, \mostexp(g^{(n)}))$ and 
$d(x_n^\ast, \mostexp(g^{(n)\ast}))$ are bounded above by 
$$\frac{\desh}{(1-\kash)\,(1-\kash^{2n})}
 \asymp \desh \asymp\frac{\kappa}{\varepsilon}\;.$$
This proves conclusions (1) and (2) of the AP.

\medskip


From proposition~\ref{derivative varphig}
we infer that for any $g\in \mathcal{L}(V)$,
$$  \norm{ (D\varphi_g)_{\mostexp(g)} } =\frac{s_2(g)}{\norm{g}} = \sgap(g)\;.$$
Hence, by (1) of the shadowing lemma,
\begin{align*}
\sgap(g^{(n)}) &= \norm{ (D\varphi_{g^{(n)}})_{\mostexp(g^{(n)})} } \leq {\rm Lip}(\varphi_{g^{(n)}}\vert_{ B(\mostexp(g_0),\epsh)} ) \\
&\leq (\kash)^n \leq 
\left( \frac{\kappa\,(4+ 2\,\varepsilon)}{\varepsilon^2}\right)^n\;. 
\end{align*}
This proves conclusion (3) of the AP.

Before proving (4), notice that applying
(3) to the chain of
 linear maps $g_0,\ldots, g_{i-1}$  we get that  $g^{(i)}:=g_{i-1}\,\ldots \, g_0$ has a first gap ratio  for all $i=1,\ldots, n$.

We claim that
\begin{equation}\label{claim}
 \abs{\aangle(g^{(i)},g_i)-\aangle(g_{i-1},g_i)} \lesssim \kappa\,\varepsilon^{-1} \;.
\end{equation} 
By (2) of the AP,  on the chain of
 linear maps $g_0,\ldots, g_{i-1}$,   
 $$ d(\mostexp(g^{(i)\ast}), \mostexp(g_{i-1}^\ast) )\leq \frac{\desh}{(1-\kash)(1- \kash^{2i})} \lesssim \kappa\,\varepsilon^{-1} \;. $$
Hence, by proposition~\ref{prop aangle continuity}

\begin{align*}
\abs{ \aangle(g^{(i)},g_i)-\aangle(g_{i-1},g_i) } &=
\abs{ \aangle(\mostexp(g^{(i)\ast}), \mostexp(g_i) )-\aangle(\mostexp(g_{i-1}^\ast), \mostexp(g_i ) )   }\\
& \leq  d(\mostexp(g^{(i)\ast}),\mostexp(g_{i-1}^\ast))
 \lesssim \kappa\,\varepsilon^{-1} \;.
\end{align*}
For any  $i$, the logarithm of any ratio between the four factors
$\aangle( g^{(i)}, g_i)$, $\bangle( g^{(i)}, g_i)$,
$\aangle( g_{i-1}, g_i)$ and  $\bangle( g_{i-1}, g_i)$
 is of order $\kappa\,\varepsilon^{-2}$.
In fact, by~(\ref{claim}) 
\begin{align*}
\abs{\log\frac{ \aangle(g^{(i)},g_i) }{ \aangle(g_{i-1},g_i) } }
&\leq \frac{1}{\varepsilon}\,\abs{ \aangle(g^{(i)},g_i) -\aangle(g_{i-1},g_i) }
\lesssim \kappa\,\varepsilon^{-2} \;.
\end{align*}
By Lemma~\ref{alpha:beta:bound}, 
and since $\sigma_{\tau_j}(g_i)\leq \kappa$, 
the other ratios are of the same magnitude as this one.
Thus, for some universal constant $C>0$,  each of these four ratios is inside the interval
$[e^{-C\,\kappa\,\varepsilon^ {-2}},e^{C\,\kappa\,\varepsilon^ {-2}}]$.

Finally, applying proposition~\ref{svp:lemma:norm}
 to the rifts $\rift(g_0,\ldots, g_{n-1})$,
 $\rift(g_0,g_1)$, $\rift(g_1,g_2)$, etc, we have
 
 $$  e^{-n\, C\,\kappa\,\varepsilon^{-2}} \leq \prod_{i=1}^{n-1} \frac{\aangle(g^{(i)},g_i)}{\bangle(g_{i-1},g_i)} \leq
 \frac{\rift(g_0,\ldots, g_{n-1})}{\prod_{i=1}^{n-1}\rift(g_{i-1},g_i)} \leq 
 \prod_{i=1}^{n-1} \frac{\bangle(g^{(i)},g_i)}{\aangle(g_{i-1},g_i)} \leq
 e^{ n\, C\,\kappa\,\varepsilon^{-2}}\;, $$
which by  remark~\ref{rmk on the conclusions}  is equivalent to (4).

\end{proof}

\bigskip

Next proposition is a practical reformulation of
the Avalanche Principle. 

\begin{proposition} \label{AP-practical}
There exists $c>0$ such that
 given $0<\epsilon<1$,  $0<\kappa\leq c\,\epsilon^ 2$ 
and  \,  $g_0, g_1,\ldots, g_{n-1}\in\gl(m,\R)$, \,
 if
\begin{align*}
\rm{(gaps)} \  & \rgap (g_i) >  \frac{1}{\ka} &  \text{for all }  & \ \  0 \le i \le n-1  
\\
\rm{(angles)} \  & \frac{\norm{ g_i \cdot g_{i-1} }}{\norm{g_i}  \, \norm{ g_{i-1}}}  >  \ep  & \
 \text{for all }   & \  \ 1 \le i \le n-1  
\end{align*}
then  
\begin{align*} 
& \max\left\{ \, d(\mostexp(g^{(n)\ast}), \mostexp(g_{n-1}^\ast)),\,
d(\mostexp(g^{(n)}), \mostexp(g_{0})) \, \right\}
 \lesssim    \kappa\,\ep^{-1} \\
& \sabs{ \log \norm{ g^{(n)} } + \sum_{i=1}^{n-2} \log \norm{g_i} -  \sum_{i=1}^{n-1} \log \norm{ g_i \cdot g_{i-1}} } \less n \cdot \frac{\ka}{\ep^2} \;.
\end{align*}
 \end{proposition}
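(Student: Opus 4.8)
The plan is to deduce Proposition~\ref{AP-practical} from Theorem~\ref{Theorem:AP} by reconciling the two sets of hypotheses; no new geometric content is required. The gap hypothesis $\rgap(g_i)>\ka^{-1}$ is exactly hypothesis (a) of the theorem, since $\sgap(g_i)=\rgap(g_i)^{-1}$; in particular each $g_i$ has a $(1)$-gap pattern. The angle hypothesis is phrased through the norm ratio $\norm{g_i\cdot g_{i-1}}/(\norm{g_i}\,\norm{g_{i-1}})$, which by Definition~\ref{def rift} is precisely the rift $\rift(g_{i-1},g_i)$, so the only real step is to turn the lower bound $\rift(g_{i-1},g_i)>\ep$ into a lower bound on the angle $\aangle(g_{i-1},g_i)$ of the kind demanded by hypothesis (b).

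First I would apply Proposition~\ref{prop angle rift} to each consecutive pair $(g_{i-1},g_i)$. Writing its right-hand side as $\sqrt{\rift(g_{i-1},g_i)^2-\rgap(g_{i-1})^{-2}-\rgap(g_i)^{-2}}$, which is monotone increasing in $\rift(g_{i-1},g_i)$, and using $\sgap(g_{i-1}),\sgap(g_i)<\ka$ together with $\rift(g_{i-1},g_i)>\ep$, one gets
\[
 \aangle(g_{i-1},g_i)\;\ge\;\sqrt{\ep^2-2\,\ka^2}\;=\;\ep\,\sqrt{1-2\,\ka^2/\ep^2}\,.
\]
Since $\ka\le c\,\ep^2<c$ and $\ep<1$, we have $2\,\ka^2/\ep^2\le 2\,c^2\,\ep^2\le 2\,c^2\le \tfrac12$ as soon as $c\le\tfrac12$ (this also makes the radicand positive, since $\rgap(g_{i-1})^{-2}+\rgap(g_i)^{-2}<2\,\ka^2<\ep^2<\rift(g_{i-1},g_i)^2$). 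Hence $\aangle(g_{i-1},g_i)>\ep/\sqrt2=:\ep'$ for all $1\le i\le n-1$.

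Then I would invoke Theorem~\ref{Theorem:AP} for the same chain $g_0,\dots,g_{n-1}$ with parameters $\ep'\in(0,1)$ and $\ka$: hypothesis (a) holds, hypothesis (b) holds with $\ep'$ by the previous paragraph, and the smallness constraint $\ka\le c_0\,(\ep')^2$ (with $c_0$ the absolute constant furnished by the theorem) is met once the constant of the present proposition is taken to be $c:=\min\{\tfrac12,\,c_0/2\}$, since then $\ka\le c\,\ep^2\le (c_0/2)\,\ep^2=c_0\,(\ep')^2$. Conclusions (1) and (2) of the theorem give $d(\mostexp(g^{(n)}),\mostexp(g_0))\lesssim\ka\,(\ep')^{-1}$ and $d(\mostexp(g^{(n)\ast}),\mostexp(g_{n-1}^\ast))\lesssim\ka\,(\ep')^{-1}$, and since $(\ep')^{-1}=\sqrt2\,\ep^{-1}$, taking the maximum yields the first displayed conclusion. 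Conclusion (4) gives
\[
 \Bigl|\log\norm{g^{(n)}}+\sum_{i=1}^{n-2}\log\norm{g_i}-\sum_{i=1}^{n-1}\log\norm{g_i\cdot g_{i-1}}\Bigr|\;\lesssim\; n\,\ka\,(\ep')^{-2}\;=\;2\,n\,\ka\,\ep^{-2}\;\lesssim\; n\,\ka/\ep^2\,,
\]
which is the second displayed conclusion (using $\norm{g_i\cdot g_{i-1}}=\norm{g_i g_{i-1}}$).

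The only delicate point — and it is bookkeeping rather than a genuine obstacle — is this passage between the two equivalent ways of quantifying the alignment of consecutive maps, the norm ratio $\rift(g_{i-1},g_i)$ and the angle $\aangle(g_{i-1},g_i)$: one must check that the loss in Proposition~\ref{prop angle rift}, which is quadratically small in $\ka/\ep$, does not spoil the constraint $\ka\lesssim\ep^2$ on which the AP rests, and this is precisely what dictates the choice of $c$ above. Everything else is a direct transcription of the conclusions of Theorem~\ref{Theorem:AP}.
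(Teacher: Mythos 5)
Your proposal is correct and follows essentially the same route as the paper: both treat (gaps) as identical to hypothesis (a), use Proposition~\ref{prop angle rift} to convert the rift lower bound $\ep$ into an angle lower bound $\ep'$ comparable to $\ep$, and then shrink the constant $c$ so that $\ka\leq c\,\ep^2$ still guarantees $\ka\leq c_0\,(\ep')^2$ before citing conclusions (1), (2) and (4) of Theorem~\ref{Theorem:AP}. The only cosmetic difference is your choice $\ep'=\ep/\sqrt2$ with $c=\min\{\tfrac12,c_0/2\}$ versus the paper's $\ep'=\ep\sqrt{1-2c^2\ep^2}$ with $c'=c(1-2c^2)$, which changes nothing of substance.
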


 \begin{proof} 
 Consider the constant $c>0$ in theorem~\ref{Theorem:AP},
 let $c':=c\,(1-2\,c^2)$ and assume $0<\kappa \le c'\,\epsilon^2$.

 Assumption (gaps) here is equivalent to assumption (a) of
 theorem~\ref{Theorem:AP}. By  proposition~\ref{prop angle rift}, the assumption (angles) here implies
\begin{align*}
\aangle(g_{i-1}, g_i) & \geq \rift(g_{i-1}, g_i)\,
\sqrt{1-\frac{2\,\kappa^2}{\rift(g_{i-1}, g_i)^2} } \\
& \geq \epsilon\,
\sqrt{1-\frac{2\,\kappa^2}{\epsilon^2} }  \geq \epsilon\,
\sqrt{1- 2\,c^2\,\epsilon^2  } =:\epsilon'\;,
\end{align*}
Since  $0<\kappa\leq c'\,\epsilon^2$, \, and \,
$ c'\,\epsilon^2 \leq c\,(1-2\,c^2\,\epsilon^2)\,\epsilon^2 
= c\,(\epsilon')^2$ we have
 $0<\kappa\leq c \,(\epsilon')^2$.
Thus, because $\epsilon\asymp \epsilon'$, this proposition follows from conclusions (1), (2) and (4) of
theorem~\ref{Theorem:AP}.

 \end{proof}

\subsection{Consequences of the AP}

Given a chain of linear maps
$\{g_j:V_j\to V_{j+1}\}_{0\leq j\leq n-1}$ between Euclidean spaces $V_j$, and integers $0\leq i<j\leq n$ we 
define
$$ g^{(j,i)}:= g_{j-1}\circ \ldots \circ g_{i+1}\circ g_i\;. $$
With this notation the following relation holds for 
$0\leq i <k <j\leq n$,
$$ g^{(j,i)} =  g^{(j,k)}\circ  g^{(k,i)} \;.$$

Next proposition states, in a quantified way, that the most expanding directions $\mostexp(g^{n,i)})\in\Pp(V_i)$ are almost invariant under the adjoints of the chain mappings.

\begin{proposition} \label{prop::almost invariance}
Under the assumptions of  theorem~\ref{Theorem:AP}, where $0<\kappa\ll \varepsilon^2$,
$$ d( \varphi_{g_i^\ast}\,\mostexp(g^{(n,i+1)}), \mostexp(g^{(n,i)})) \lesssim \frac{\kappa}{\varepsilon}\, (\frac{\kappa\,(4+2\,\varepsilon)}{\varepsilon^2})^{n-i}\;.$$
\end{proposition}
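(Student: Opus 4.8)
The plan is to apply the Avalanche Principle (Theorem~\ref{Theorem:AP}) to the sub-chain $g_i, g_{i+1},\ldots, g_{n-1}$, which by the hypotheses of the theorem still satisfies assumptions (a) and (b), and then to compare the most expanding direction of $g^{(n,i)} = g^{(n,i+1)}\circ g_i$ with that of $g_i$, and finally to track how $\varphi_{g_i^\ast}$ moves the relevant points. Concretely, write $v_i := \mostexp(g^{(n,i)})\in\Pp(V_i)$ and $v_{i+1} := \mostexp(g^{(n,i+1)})\in\Pp(V_{i+1})$. Applying conclusion (1) of Theorem~\ref{Theorem:AP} to the chain starting at index $i$ gives $d(v_i,\mostexp(g_i))\lesssim \kappa/\varepsilon$. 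Similarly, applying conclusion (2) of the same theorem to the chain starting at index $i$, and noting that $g^{(n,i)} = g^{(n,i+1)} g_i$, shows that $v_{i+1}$ is close to $\mostexp(g_i^\ast) = \varphi_{g_i}(\mostexp(g_i))$; more precisely one should use that $\mostexp(g^{(n,i)\ast})$ and $\mostexp(g_i^\ast)$ coincide only up to the AP error, combined with the fact that the image chain $(g^{(n,i+1)})\cdot g_i$ has its last-matrix-adjoint most expanding direction equal to $\mostexp(g^{(n,i+1)\ast})$ living in $V_n$, not $V_{i+1}$ — so the correct comparison is: $v_{i+1}=\mostexp(g^{(n,i+1)})$ is $\kappa/\varepsilon$-close to $\mostexp(g_{i+1})$, but we actually want it near $\varphi_{g_i}(\mostexp(g_i))$, and this follows from the gap/angle assumption $\aangle(g_i,g_{i+1})\ge\varepsilon$ together with (1) and (2). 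Let me instead organise it as follows.

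The cleaner route: apply conclusion (2) of Theorem~\ref{Theorem:AP} to the \emph{two-matrix} chain is too weak; instead use the almost-invariance directly. By conclusion (1) applied to the chain $g_{i+1},\ldots,g_{n-1}$ we have $d(v_{i+1},\mostexp(g_{i+1}))\lesssim\kappa/\varepsilon$, hence $\aangle(v_{i+1},\mostexp(g_i^\ast))$ is, by Lipschitz continuity of $\aangle$ (Proposition~\ref{prop aangle continuity}) and assumption (b), at least $\varepsilon - O(\kappa/\varepsilon)\gtrsim\varepsilon$. Now $v_i=\mostexp(g^{(n,i)})$ is the most expanding direction of $g^{(n,i+1)} g_i$, and by the defining relation $\varphi_{g^{(n,i)}}\mostexp(g^{(n,i)}) = \mostexp(g^{(n,i)\ast})$ together with injectivity of $\varphi_{g^{(n,i+1)}}$ on a neighbourhood of $v_{i+1}$, one has $\varphi_{g_i}(v_i)$ close to $v_{i+1}$. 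Quantitatively: applying Proposition~\ref{proj:contr}(1) to $g^{(n,i+1)}$ (which has gap ratio $\ge(\varepsilon^2/(\kappa(4+2\varepsilon)))^{\,n-i-1}$ by conclusion (3)) forces $\varphi_{g^{(n,i+1)}}$ to contract by a factor $\lesssim (\kappa(4+2\varepsilon)/\varepsilon^2)^{\,n-i-1}$ on a ball around $\mostexp(g^{(n,i+1)})$, and since $\varphi_{g^{(n,i+1)}}(\varphi_{g_i}(v_i)) = \varphi_{g^{(n,i)}}(v_i) = \mostexp(g^{(n,i)\ast})$ is $\kappa/\varepsilon$-close to $\mostexp(g^{(n,i+1)\ast})=\varphi_{g^{(n,i+1)}}(v_{i+1})$, the preimages satisfy $d(\varphi_{g_i}(v_i), v_{i+1})\lesssim \frac{\kappa}{\varepsilon}\,(\kappa(4+2\varepsilon)/\varepsilon^2)^{n-i-1}$ — here I use that on the relevant $\delta$-ball the map is invertible with controlled \emph{lower} modulus (the contraction estimate run backwards), which is exactly what Proposition~\ref{proj:contr} provides. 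Finally apply $\varphi_{g_i^\ast}$: since $\varphi_{g_i^\ast}$ restricted to a neighbourhood of $\mostexp(g_i^\ast)$ is Lipschitz with a constant bounded in terms of $\aangle(\cdot,\mostexp(g_i^\ast))^{-1}\lesssim\varepsilon^{-1}$ and the gap ratio (Proposition~\ref{proj:contr}(3) applied to $g_i^\ast$, whose most expanding direction is $\mostexp(g_i^\ast)$ and whose image is $\mostexp(g_i)$), and since $\varphi_{g_i^\ast}(v_{i+1})$ is close to $\mostexp(g_i)\approx v_i$, we get
\[
d(\varphi_{g_i^\ast}(v_{i+1}), v_i) \;\lesssim\; \operatorname{Lip}(\varphi_{g_i^\ast})\cdot d\big(v_{i+1}, \varphi_{g_i}(v_i)\big) \;+\; (\text{lower order}) \;\lesssim\; \frac{\kappa}{\varepsilon}\,\Big(\frac{\kappa(4+2\varepsilon)}{\varepsilon^2}\Big)^{n-i},
\]
absorbing the extra $\kappa/\varepsilon^2 \lesssim 1$ factor into the constant (using $\kappa\ll\varepsilon^2$), which is the claimed bound.

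The main obstacle is making the "run the contraction backwards" step rigorous: I need that $\varphi_{g^{(n,i+1)}}$ is not merely a contraction near $\mostexp(g^{(n,i+1)})$ but that its restriction to the relevant $\delta$-ball is a bi-Lipschitz homeomorphism onto its image with the \emph{inverse} having Lipschitz constant $\lesssim(\kappa(4+2\varepsilon)/\varepsilon^2)^{n-i-1}$, so that $\delta$-closeness of the images $\mostexp(g^{(n,i)\ast})$ and $\varphi_{g^{(n,i+1)}}(v_{i+1})$ transfers to $\delta$-closeness of the preimages $\varphi_{g_i}(v_i)$ and $v_{i+1}$. This requires checking that $\varphi_{g_i}(v_i)$ indeed lies in the domain ball $B^{(\delta)}(\mostexp(g^{(n,i+1)}),r)$ on which Proposition~\ref{proj:contr} applies — which follows because $\varphi_{g_i}(v_i)$ is $O(\kappa/\varepsilon)$-close to $\varphi_{g_i}(\mostexp(g_i)) = \mostexp(g_i^\ast)$, itself within angle $\ge\varepsilon$ of $\mostexp(g_{i+1})$, hence within $\delta$-distance $\le r$ of $v_{i+1}\approx\mostexp(g_{i+1})$. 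Everything else is a chain of applications of Propositions~\ref{proj:contr}, \ref{prop aangle continuity} and the conclusions of Theorem~\ref{Theorem:AP}, with the bookkeeping of constants carried out using $\kappa\ll\varepsilon^2$.
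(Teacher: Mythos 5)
The step you yourself flag as the main obstacle is in fact fatal. Proposition~\ref{proj:contr} only provides an \emph{upper} Lipschitz (contraction) bound for $\varphi_{g^{(n,i+1)}}$ near $\mostexp(g^{(n,i+1)})$; it gives no lower modulus of comparable size. On that ball $\varphi_{g^{(n,i+1)}}$ contracts by a factor of order $\sgap(g^{(n,i+1)})$, so its inverse \emph{expands} distances by a factor of order $\sgap(g^{(n,i+1)})^{-1}$, exponentially large in $n-i$; hence the $\kappa/\varepsilon$-closeness of the images $\mostexp(g^{(n,i)\ast})$ and $\mostexp(g^{(n,i+1)\ast})$ gives essentially no control on the preimages, and "running the contraction backwards" inverts the inequality you need. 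Indeed your intermediate claim $d(\varphi_{g_i}(v_i),v_{i+1})\lesssim \frac{\kappa}{\varepsilon}\,\bigl(\kappa(4+2\varepsilon)/\varepsilon^2\bigr)^{n-i-1}$ is false in general: $\varphi_{g_i}(v_i)$ lies near $\mostexp(g_i^\ast)$ and $v_{i+1}$ lies near $\mostexp(g_{i+1})$, while assumption (b) only guarantees $\aangle(\mostexp(g_i^\ast),\mostexp(g_{i+1}))\geq\varepsilon$, i.e.\ a $\delta$-distance possibly as large as $\sqrt{1-\varepsilon^2}$; these two points are typically order-one apart. A second, independent problem is your final display: it implicitly uses $\varphi_{g_i^\ast}\circ\varphi_{g_i}\approx{\rm id}$ near $v_i$, but $\varphi_{g_i^\ast}\circ\varphi_{g_i}=\varphi_{g_i^\ast g_i}$ only returns $v_i$ to within $\approx\kappa/\varepsilon$ of itself (it fixes $\mostexp(g_i)$, which is merely $\kappa/\varepsilon$-close to $v_i$), so the "lower order" additive term is of size $\kappa/\varepsilon$ and already exceeds the target bound $\frac{\kappa}{\varepsilon}\,\bigl(\kappa(4+2\varepsilon)/\varepsilon^2\bigr)^{n-i}$ as soon as $n-i$ is large.

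The correct assembly avoids inverting anything by writing \emph{both} points in the statement as images of a single contracting map, used forwards, on the adjoint side. Since $g^{(n,i)\ast}=g_i^\ast\,g^{(n,i+1)\ast}$ and $\varphi_h\,\mostexp(h)=\mostexp(h^\ast)$, one has $\varphi_{g_i^\ast}\,\mostexp(g^{(n,i+1)})=\varphi_{g_i^\ast}\varphi_{g^{(n,i+1)\ast}}\,\mostexp(g^{(n,i+1)\ast})=\varphi_{g^{(n,i)\ast}}\,\mostexp(g^{(n,i+1)\ast})$ and $\mostexp(g^{(n,i)})=\varphi_{g^{(n,i)\ast}}\,\mostexp(g^{(n,i)\ast})$. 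The shadowing argument behind conclusion (3) of theorem~\ref{Theorem:AP}, applied to the adjoint chain $g_{n-1}^\ast,\ldots,g_i^\ast$, shows $\varphi_{g^{(n,i)\ast}}$ is Lipschitz with constant at most $\bigl(\kappa(4+2\varepsilon)/\varepsilon^2\bigr)^{n-i}$ on a ball of radius comparable to $\varepsilon$ about $\mostexp(g_{n-1}^\ast)$, and conclusion (2) of the AP applied to the subchains $g_i,\ldots,g_{n-1}$ and $g_{i+1},\ldots,g_{n-1}$ places both $\mostexp(g^{(n,i)\ast})$ and $\mostexp(g^{(n,i+1)\ast})$ in that ball at mutual distance $\lesssim\kappa/\varepsilon$. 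Applying the contraction forwards then yields the stated bound at once; your ingredients (AP conclusions for subchains, the angle hypothesis) are the right ones, but they must be combined this way rather than through an inverse of $\varphi_{g^{(n,i+1)}}$.
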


\begin{proof}
Consider $\kappa$, $\varepsilon$, $\kash$ and $\epsh$
as in  theorem~\ref{Theorem:AP}.
From the proof of item (3) of the AP, applied to the chain of mappings $g_{n-1}^\ast,\ldots, g_{i}^\ast$,
we conclude that the composition
$g^{(n,i)}=g_{i}^\ast\circ \ldots \,\circ g_{n-1}^\ast$
is a $(\kash)^{n-i}$-Lipschitz contraction on the ball 
$B(\mostexp(g_{n-1}^\ast),\epsh)$.
On the other hand, by (2) of the AP we have
$d( \,\mostexp(g^{(n,i+1)\ast} , \mostexp(g_{n-1}^\ast)   \,)\lesssim\kappa\,\varepsilon^{-1}$  and \, $d( \,\mostexp(g_{n-1}^\ast) , \mostexp(g^{(n,i)\ast}   \,)\lesssim\kappa\,\varepsilon^{-1}$.
Since $\kappa\,\varepsilon^{-1}\ll \varepsilon \asymp\epsh$,
both projective points $\mostexp(g^{(n,i)\ast})$ and $\mostexp(g^{(n,i+1)\ast})$ belong to the ball $B(\mostexp(g_{n-1}^\ast),\epsh)$.
Thus, 
\begin{align*}
&d( \varphi_{g_i^\ast}\, \mostexp(g^{(n,i+1)}), \mostexp(g^{(n,i)})  )  =\\
&\qquad = d(\,   \varphi_{g_i^\ast}\circ\varphi_{g^{(n,i+1)\ast} } \,\mostexp(g^{(n,i+1)\ast} ),
\, \varphi_{g^{(n,i)\ast} }\,  \mostexp(g^{(n,i)\ast}  )\, ) \\
&\qquad = d(\,   \varphi_{g^{(n,i)\ast} } \,\mostexp(g^{(n,i+1)\ast} ),
\, \varphi_{g^{(n,i)\ast} }\,  \mostexp(g^{(n,i)\ast}  )\, ) \\
&\qquad   \leq   (\kash)^{n-i}  \, d( \,\mostexp(g^{(n,i+1)\ast} , \mostexp(g^{(n,i)\ast}   \,) \\
&\qquad   \leq   (\frac{\kappa\, (4+2\,\varepsilon)}{\varepsilon^2})^{n-i}  \, \left( d( \,\mostexp(g^{(n,i+1)\ast} , \mostexp(g_{n-1}^\ast)   \,)  +  d( \,\mostexp(g_{n-1}^\ast) , \mostexp(g^{(n,i)\ast}   \,)  \right)\\
&\qquad   \lesssim  \frac{2\,\kappa}{\varepsilon}\, (\frac{\kappa\, (4+2\,\varepsilon)}{\varepsilon^2})^{n-i} \;. 
\end{align*}
which proves the proposition.

\end{proof}

Most expanding directions and norms of products of chains matrices under an application of the AP
admit the following modulus of continuity.

\begin{proposition}
Let $c>0$ be the universal constant in theorem~\ref{Theorem:AP}.
Given numbers $0<\varepsilon<1$ and $0<\kappa<c\,\varepsilon^2$,
and given two chains of matrices $g_0,\ldots, g_{n-1}$ and  $g_0',\ldots, g_{n-1}'$ in $\Mat(m,\R)$,  both satisfying the assumptions of the AP for the given parameters $\kappa$ and $\varepsilon$, \, if \,
 $\drel(g_i,g_i')<\delta$ \, for all $i=0,1,\ldots, n-1$,
then
\begin{enumerate}
\item[(a)] $d( \, \mostexp(g_{n-1}\,\ldots\, g_0), \,
\mostexp(g_{n-1}'\,\ldots\, g_0')\, ) \lesssim  \frac{ \kappa}{\varepsilon} +  8\,\delta   $,
\item[(b)]\; 
$\displaystyle  \abs{\log \frac{\norm{g_{n-1}\,\ldots\, g_0}}{\norm{g_{n-1}'\,\ldots\, g_0'}} }\lesssim n\,\left( \frac{\kappa}{\varepsilon^2} + \frac{ \delta  }{\varepsilon} \right) $.

\end{enumerate}

\end{proposition}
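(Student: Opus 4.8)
The plan is to derive both bounds from theorem~\ref{Theorem:AP} applied separately to each of the two chains, glued together by elementary $\drel$-perturbation estimates. Throughout write $g^{(n)}=g_{n-1}\,\ldots\, g_0$ and $(g')^{(n)}=g'_{n-1}\,\ldots\, g'_0$; by hypothesis both chains satisfy the assumptions of the AP for the parameters $\kappa<c\,\varepsilon^2$ and $\varepsilon$.

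For (a) I would use the triangle inequality
\[ d(\mostexp(g^{(n)}),\mostexp((g')^{(n)})) \leq d(\mostexp(g^{(n)}),\mostexp(g_0)) + d(\mostexp(g_0),\mostexp(g'_0)) + d(\mostexp(g'_0),\mostexp((g')^{(n)})) \;. \]
The first and third terms are $\less \kappa\,\varepsilon^{-1}$ by conclusion (1) of theorem~\ref{Theorem:AP} applied to each chain. For the middle term, the gap assumption forces $\rgap(g_0)\geq\kappa^{-1}$, so $g_0,g'_0\in\mathcal{L}_\kappa$; since $\kappa<c<1$ the Lipschitz constant in proposition~\ref{lipschitz:eigendir} is bounded by a universal constant, and provided $\delta$ is small enough that $\drel(g_0,g'_0)<\delta$ lies below the threshold $\varepsilon_0$ of that proposition --- which may be chosen uniform over all admissible $\kappa\leq c$ --- we get $d(\mostexp(g_0),\mostexp(g'_0))\less\delta$. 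Summing the three contributions gives (a).

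For (b) I would apply conclusion (4) of theorem~\ref{Theorem:AP} to each chain separately, obtaining
\begin{align*}
\abs{\log\norm{g^{(n)}}+\sum_{i=1}^{n-2}\log\norm{g_i}-\sum_{i=1}^{n-1}\log\norm{g_i\,g_{i-1}}} &\less n\,\frac{\kappa}{\varepsilon^2}\;, \\
\abs{\log\norm{(g')^{(n)}}+\sum_{i=1}^{n-2}\log\norm{g'_i}-\sum_{i=1}^{n-1}\log\norm{g'_i\,g'_{i-1}}} &\less n\,\frac{\kappa}{\varepsilon^2}\;.
\end{align*}
Subtracting, $\log(\norm{g^{(n)}}/\norm{(g')^{(n)}})$ equals the difference of these two bracketed quantities (each $\less n\,\kappa/\varepsilon^2$) together with $\sum_{i=1}^{n-1}\bigl(\log\norm{g_i g_{i-1}}-\log\norm{g'_i g'_{i-1}}\bigr)-\sum_{i=1}^{n-2}\bigl(\log\norm{g_i}-\log\norm{g'_i}\bigr)$, so it remains to estimate these sums termwise. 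From $\drel(g_i,g'_i)<\delta$ one gets $\abs{\norm{g_i}-\norm{g'_i}}\leq\norm{g_i-g'_i}<\delta\max\{\norm{g_i},\norm{g'_i}\}$, hence $\abs{\log(\norm{g_i}/\norm{g'_i})}\less\delta$ for $\delta<1/2$. For the composed pairs, $\norm{g_i g_{i-1}-g'_i g'_{i-1}}\leq\norm{g_i}\,\norm{g_{i-1}-g'_{i-1}}+\norm{g_i-g'_i}\,\norm{g'_{i-1}}\less\delta\,\norm{g_i}\,\norm{g_{i-1}}$, while the angle hypothesis gives $\norm{g_i g_{i-1}}\geq\varepsilon\,\norm{g_i}\,\norm{g_{i-1}}$; hence $\drel(g_i g_{i-1},g'_i g'_{i-1})\less\delta/\varepsilon$ and $\abs{\log(\norm{g_i g_{i-1}}/\norm{g'_i g'_{i-1}})}\less\delta/\varepsilon$ once $\delta/\varepsilon$ is small. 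Summing the $O(n)$ terms yields $\less n\,\delta/\varepsilon$, and adding the two $\less n\,\kappa/\varepsilon^2$ contributions gives (b).

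No idea beyond the AP is needed. The points that require care are: a uniform choice of the threshold $\varepsilon_0$ in proposition~\ref{lipschitz:eigendir} for the middle term of (a) (and, if one insists on the precise constant $8$ in the statement rather than merely $\less\delta$, a slight refinement of the Lipschitz estimate for $\mostexp$ near matrices with very large gap ratio); and, in (b), the bookkeeping that passes from relative-distance bounds to bounds on log-ratios of norms --- in particular tracking how the angle lower bound $\varepsilon$ enters the estimate for the pairs $g_i g_{i-1}$, which is where the factor $\delta/\varepsilon$ rather than $\delta$ appears and where a constant slip is most likely. I expect this last step to be the main, if still routine, obstacle.
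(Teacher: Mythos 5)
Your proposal is correct and follows essentially the same route as the paper: part (a) is the triangle inequality through $\mostexp(g_0)$ and $\mostexp(g_0')$, combining conclusion (1) of theorem~\ref{Theorem:AP} with proposition~\ref{lipschitz:eigendir} (which yields the $\frac{16\,\delta}{1-\kappa^2}\lesssim 8\,\delta$ middle term), and part (b) applies conclusion (4) of the AP to each chain and controls the termwise differences via $\abs{\log(\norm{g_i}/\norm{g_i'})}\lesssim\delta$ and $\drel(g_i g_{i-1},g_i'g_{i-1}')\lesssim\delta/\varepsilon$, the latter using the angle (rift) lower bound $\varepsilon$ exactly as in the paper.
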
 

\begin{proof}
Item (a) follows from conclusion (1) of theorem~\ref{Theorem:AP}, and
proposition~\ref{lipschitz:eigendir},  
\begin{align*}
d(\, \mostexp( g_{n-1}\,\ldots\, g_0),\,
\mostexp( g_{n-1}'\,\ldots\, g_0')\,) &\leq 
d(\, \mostexp( g_{n-1}\,\ldots\, g_0),\,
\mostexp(g_0)\,) \\
&\qquad + d(\mostexp(g_0),\mostexp(g_0')) + 
d(\, \mostexp(g_0'),\,
\mostexp( g_{n-1}'\,\ldots\, g_0')\,)\\
&\lesssim 2\,\frac{\kappa}{\varepsilon} +
\frac{16\,\delta}{1-\kappa^2} \lesssim  \frac{ \kappa}{\varepsilon} +  8\,\delta \;.
\end{align*}

Assuming $\norm{g_i}\geq \norm{g_i'}$, we have
$$ \frac{\norm{g_i}}{\norm{g_i'}}
\leq 1+ \frac{\norm{g_i-g_i'}}{\norm{g_i'}}
\leq 1+ \frac{\norm{g_i}}{\norm{g_i'}}\, \drel(g_i,g_i') 
\leq  1+   \delta\,\frac{\norm{g_i}}{\norm{g_i'}}
$$
which implies
$$ \frac{\norm{g_i}}{\norm{g_i'}}\leq \frac{1}{1-\delta} \;. $$
Because the case $\norm{g_i}\leq \norm{g_i'}$ is analogous,
we conclude that
$$ \abs{\log \frac{\norm{g_i}}{\norm{g_i'}} }
\leq \log\left( \frac{1}{1-\delta}\right)
\leq \frac{\delta}{1-\delta} \asymp \delta \;. $$
Since the two chains of matrices  satisfy the assumptions of the AP we have
$$ \frac{\norm{g_i\,g_{i-1}}}{\norm{g_i}\,\norm{g_{i-1}}}
\geq \aangle(g_{i-1},g_i)\geq \varepsilon \quad \text{ and }\quad
\frac{\norm{g_i'\,g_{i-1}'}}{\norm{g_i'}\,\norm{g_{i-1}'}}
\geq \aangle(g_{i-1}',g_i')\geq \varepsilon \;. $$
A simple calculation gives
$$ \drel(\,g_i\,g_{i-1}, \, g_i'\,g_{i-1}' \,)\leq
\frac{2}{(1-\delta)^2}\, \frac{\delta}{\varepsilon}
\asymp \frac{\delta}{\varepsilon} \;.$$
Therefore
$$ \abs{\log \frac{\norm{g_i\,g_{i-1}}}{\norm{g_i'\,g_{i-1}'}} }
\lesssim \frac{\delta}{\varepsilon} \;.$$
Hence, by conclusion (4) of the AP we have
\begin{align*}
\abs{\log \frac{\norm{g_{n-1}\,\ldots \,g_{0}}}{\norm{g_{n-1}'\,\ldots \, g_{0}'}} } &\leq
\abs{\log \frac{\norm{g_{n-1}\,\ldots \,g_{0}}\,\norm{g_1}\,\ldots\, \norm{g_{n-2}}}{ \norm{g_1\,g_0}\,\ldots\,\norm{g_{n-1}\,g_{n-2}} }  } \\
&\qquad +  \abs{\log \frac{ \norm{g_1'\,g_0'}\,\ldots\,\norm{g_{n-1}'\,g_{n-2}'} }{\norm{g_{n-1}'\,\ldots \,g_{0}'}\,\norm{g_1'}\,\ldots\, \norm{g_{n-2}'}}  } \\
&\qquad +  \sum_{i=1}^{n-2} \abs{ \log \frac{\norm{g_i'} }{\norm{g_i}  } } 
+  \sum_{i=1}^{n-1} \abs{ \log \frac{\norm{g_i\,g_{i-1} } }{\norm{g_i'\,g_{i-1}' }}  }\\
&\lesssim 2\,n\,\frac{\kappa}{\varepsilon^2}
+(n-2)\,\delta + (n-1)\,\frac{\delta}{\varepsilon} \\
&\lesssim n\,\left( \frac{\kappa}{\varepsilon^2} + \frac{ \delta  }{\varepsilon} \right)\;,
\end{align*}
which proves (b).

\end{proof}

\smallskip

Next proposition is a flag version of the  AP.

Let $\tau=(\tau_1,\ldots, \tau_k)$ be a signature
with $0<\tau_1<\tau_2 <\ldots <\tau_k <m $.

We call $\tau$-block product to any of  the functions
$\pi_{\tau,j}:\Mat(m,\R)\to \R$,
$$
\pi_{\tau,j}(g):=  s_{\tau_{j-1}+1}(g) \,  \ldots  \, s_{\tau_j}(g)\;,\qquad 1\leq j\leq k \;,
$$
where by convention $\tau_0=0$.
A $\tau$-singular value product, abbreviated $\tau$-s.v.p., is any product of distinct $\tau$-block products.
By definition, $\tau$-block products are $\tau$-singular value products. Other examples of   $\tau$-singular value products are the functions 
$$p_{\tau_j} (g) = s_1 (g) \, \ldots  \, s_{\tau_j} (g)
= \norm{\wedge_{\tau_j} g} \;. $$
Note that for every $1 \le j \le k$ we have:
$$\pi_{\tau,j}(g) = \frac{p_{\tau_j} (g)}{p_{\tau_{j-1}} (g)}\;,$$
and
$$p_{\tau_j} (g) = \pi_{\tau,1}(g) \, \ldots \, \pi_{\tau,j}(g)\;. $$

\begin{proposition} 
\label{Flag:AP} 
Let $c>0$ be the universal constant in theorem~\ref{Theorem:AP}.
Given numbers  $0<\varepsilon<1$,  $0<\kappa\leq c\,\varepsilon^ 2$ 
and  a chain of matrices
$g_j\in \Mat(m,\R)$, with $j=0,1,\ldots, n-1$, \,   if 
\begin{enumerate}
\item[(a)] $\sgap_\tau(g_i)\leq \kappa$,\, 
for $0\leq i\leq n-1$, and 
\item[(b)] $\aangle_\tau(g_{i-1}, g_{i})\geq \varepsilon$,\; 
for $1\leq i\leq n-1$,
\end{enumerate}
then 
\begin{enumerate}
\item[(1)]  $d(\mostexp_\tau(g^{(n)\ast}), \mostexp_\tau(g_{n-1}^\ast)) 
  \lesssim   \kappa\,\varepsilon^{-1}$   
\item[(2)]  $d(\mostexp_\tau(g^{(n)}), \mostexp_\tau(g_{0})) 
 \lesssim    \kappa\,\varepsilon^{-1}$  
 \item[(3)]
$  \sigma_\tau(g^{(n)})  \leq   \left(\frac{\kappa\,(4+ 2\,\varepsilon)}{\varepsilon^{2}}\right)^n$

\item[(4)]  for any $\tau$-s.v.p. function $\pi$,
$$
 \abs{\log \pi(g^{(n)}) +
\sum_{i=1}^{n-2} \log \pi(g_i) -
\sum_{i=1}^{n-1}
\log  \pi(g_i\,g_{i-1}) } \lesssim  n\,\frac{\kappa}{\varepsilon^2}
\; . $$
\end{enumerate}
\end{proposition}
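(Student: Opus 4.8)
The plan is to reduce Proposition~\ref{Flag:AP} to the Grassmannian version, Theorem~\ref{Theorem:AP}, by passing to exterior powers. Fix an index $1\le j\le k$ and consider the chain of linear maps $\wedge_{\tau_j}g_i\colon\wedge_{\tau_j}V_i\to\wedge_{\tau_j}V_{i+1}$, $0\le i\le n-1$, between the Euclidean spaces $\wedge_{\tau_j}V_i$. First I would check that hypotheses (a), (b) transfer to this chain with the \emph{same} $\varepsilon$ and $\kappa$. By Proposition~\ref{sing vals and ext pws} and its corollary, $\rgap(\wedge_{\tau_j}g)=\rgap_{\tau_j}(g)$, hence $\sgap(\wedge_{\tau_j}g_i)=\sgap_{\tau_j}(g_i)\le\sgap_\tau(g_i)\le\kappa$ (using $\sgap_\tau=\rgap_\tau^{-1}=\max_l\sgap_{\tau_l}$); and by the corollary to Proposition~\ref{prod:2:lemma}, $\aangle(\wedge_{\tau_j}g_{i-1},\wedge_{\tau_j}g_i)=\aangle_{\tau_j}(g_{i-1},g_i)\ge\aangle_\tau(g_{i-1},g_i)\ge\varepsilon$, since $\aangle_\tau=\min_l\aangle_{\tau_l}$. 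Thus Theorem~\ref{Theorem:AP} applies to the chain $\{\wedge_{\tau_j}g_i\}$, with the same universal constant $c$.

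Next I would translate the four conclusions of Theorem~\ref{Theorem:AP} back to the flag setting, using functoriality of exterior powers ($\wedge_{\tau_j}g^{(n)}=(\wedge_{\tau_j}g_{n-1})\cdots(\wedge_{\tau_j}g_0)$, $\wedge_{\tau_j}(g_ig_{i-1})=(\wedge_{\tau_j}g_i)(\wedge_{\tau_j}g_{i-1})$, and $\wedge_{\tau_j}(g^\ast)=(\wedge_{\tau_j}g)^\ast$), together with two facts from Section~\ref{grassmann}: the Pl\"ucker embedding $\Psi\colon\Gr_{\tau_j}(V)\to\Pp(\wedge_{\tau_j}V)$ is a $d$-isometry with $\Psi(\mostexp_{\tau_j}(h))=\mostexp(\wedge_{\tau_j}h)$ (Definition~\ref{def medir k}), and $\norm{\wedge_{\tau_j}h}=p_{\tau_j}(h)=s_1(h)\cdots s_{\tau_j}(h)$ (Proposition~\ref{sing vals and ext pws}). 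Conclusions (1) and (2) of Theorem~\ref{Theorem:AP} then give $d(\mostexp_{\tau_j}(g^{(n)\ast}),\mostexp_{\tau_j}(g_{n-1}^\ast))\lesssim\kappa\varepsilon^{-1}$ and $d(\mostexp_{\tau_j}(g^{(n)}),\mostexp_{\tau_j}(g_0))\lesssim\kappa\varepsilon^{-1}$; conclusion (3) gives $\sgap_{\tau_j}(g^{(n)})\le(\kappa(4+2\varepsilon)/\varepsilon^2)^n$; and conclusion (4) gives $\bigl|\log p_{\tau_j}(g^{(n)})+\sum_{i=1}^{n-2}\log p_{\tau_j}(g_i)-\sum_{i=1}^{n-1}\log p_{\tau_j}(g_ig_{i-1})\bigr|\lesssim n\kappa/\varepsilon^2$. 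Taking the maximum over $j=1,\ldots,k$ and recalling $d_\tau=\max_j d$, $\mostexp_\tau=(\mostexp_{\tau_1},\ldots,\mostexp_{\tau_k})$, and $\sigma_\tau=\max_j\sgap_{\tau_j}$, this already yields items (1), (2), (3).

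For item (4) I would first pass from the $p_{\tau_j}$ estimates to the $\tau$-block products $\pi_{\tau,j}$: for $j=1$ we have $\pi_{\tau,1}=p_{\tau_1}$, and for $j\ge2$ subtracting the order-$\tau_{j-1}$ estimate from the order-$\tau_j$ one and using $\log\pi_{\tau,j}(h)=\log p_{\tau_j}(h)-\log p_{\tau_{j-1}}(h)$ gives $\bigl|\log\pi_{\tau,j}(g^{(n)})+\sum_{i=1}^{n-2}\log\pi_{\tau,j}(g_i)-\sum_{i=1}^{n-1}\log\pi_{\tau,j}(g_ig_{i-1})\bigr|\lesssim n\kappa/\varepsilon^2$. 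Then, since an arbitrary $\tau$-s.v.p.\ $\pi$ is a product $\prod_{l\in S}\pi_{\tau,l}$ over some subset $S\subseteq\{1,\ldots,k\}$, summing these estimates over $l\in S$ yields the claimed bound with an implied constant larger by a factor at most $k\le m-1$; as the ambient dimension $m$ is fixed, this is still $\lesssim n\kappa/\varepsilon^2$. The argument is entirely a transfer through exterior powers, so there is no genuinely hard analytic step; the only place demanding care is the bookkeeping in item (4) — writing every $\tau$-s.v.p.\ as a product of block products and keeping the combinatorial factor under control — together with checking the two exterior-power identities ($\rgap(\wedge_{\tau_j}g)=\rgap_{\tau_j}(g)$ and $\aangle(\wedge_{\tau_j}g,\wedge_{\tau_j}g')=\aangle_{\tau_j}(g,g')$) that make hypotheses (a), (b) pass to each chain $\{\wedge_{\tau_j}g_i\}$.
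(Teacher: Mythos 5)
Your proposal is correct and follows essentially the same route as the paper: reduce to Theorem~\ref{Theorem:AP} applied to each exterior-power chain $\{\wedge_{\tau_j}g_i\}_i$ (hypotheses transferring with the same $\kappa,\varepsilon$ via $\sgap(\wedge_{\tau_j}g)=\sgap_{\tau_j}(g)$ and $\aangle(\wedge_{\tau_j}g,\wedge_{\tau_j}g')=\aangle_{\tau_j}(g,g')$), translate back through the Pl\"ucker embedding and take maxima over $j$ for (1)--(3), and obtain (4) first for $p_{\tau_j}=\norm{\wedge_{\tau_j}\cdot}$, then for block products by differencing, then for general $\tau$-s.v.p.'s as finite products of block products. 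Your added bookkeeping on the dimensional constant in (4) is slightly more explicit than the paper's, but the argument is the same.
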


\begin{proof}
For each $j=1,\ldots, k$, consider the chain of matrices 
$\wedge_{\tau_j}g_0, \wedge_{\tau_j}g_1, \ldots, \wedge_{\tau_j} g_{n-1}$.
Assumptions (a) and (b) here imply the corresponding assumptions of theorem~\ref{Theorem:AP} for all these chains of exterior power matrices.
Hence, by (1) of the AP
\begin{align*}
d(\mostexp_{\tau_j}(g^{(n)\ast}), \mostexp_{\tau_j}(g_{n-1}^\ast)) &=
d(\Psi(\mostexp_{\tau_j}( g^{(n)\ast})), \Psi(\mostexp_{\tau_j}( g_{n-1}^\ast)))\\
&=d( \mostexp(\wedge_{\tau_j} g^{(n)\ast}),  \mostexp(\wedge_{\tau_j} g_{n-1}^\ast)) \lesssim
 \kappa\,\varepsilon^{-1}\;.
\end{align*}
Thus, taking the maximum in $j$ we get
$d(\mostexp_{\tau}(g^{(n)\ast}), \mostexp_{\tau}(g_{n-1}^\ast))\lesssim
 \kappa\,\varepsilon^{-1}$, which proves (1).
Conclusion (2) follows in the same way.

Similarly, from (3) of theorem~\ref{Theorem:AP},
we infer the corresponding conclusion here
$$
\sgap_{\tau}(g^{(n)}) =
\max_{1\leq j\leq k} \sgap_{\tau_j}(g^{(n)})
= \max_{1\leq j\leq k} \sgap(\wedge_{\tau_j} g^{(n)}) \leq   \left(\frac{\kappa\,(4+ 2\,\varepsilon)}{\varepsilon^{2}}\right)^n\;.
$$

Let us now prove (4).

For the $\tau$-s.v.p. $\pi(g)=p_{\tau,j}(g)=\norm{\wedge_{\tau_j} g}$ conclusion (4) is a consequence of the corresponding conclusion of theorem~\ref{Theorem:AP}.

For the $\tau$-block product
$\pi=\pi_{\tau,j}$, since
$$ \log \pi(g) = \log \norm{\wedge_{\tau_j}  g }
- \log \norm{\wedge_{\tau_{j-1}}  g } \;, $$
 conclusion (4) follows again from theorem~\ref{Theorem:AP} (4).

Finally, since any  $\tau$-s.v.p. is a finite product of $\tau$-block products we can  reduce (4) to the previous case.

\end{proof}

We finish this section with a version of the AP for complex matrices.

The  {\em singular values} of a complex matrix $g\in\Mat(m,\C)$ are defined to be the eigenvalues of the positive semi-definite hermitian matrix
$g^\ast\,g$, where $g^\ast$ stands for the transjugate of $g$, i.e., the conjugate transpose of $g$. Similarly, the 
 {\em singular vectors} of $g$
 are defined as the eigenvectors of $g^\ast\,g$.
The sorted singular values of $g\in \Mat(m,\C)$ are denoted  by
\, $ s_1(g)\geq s_2(g) \geq \ldots \geq s_m(g) $.
The top singular value of $g$ coincides with its norm,
$s_1(g)=\norm{g}$.

The (first) gap ratio of $g$ is the quotient
$\sgap(g):=s_2(g)/s_1(g)\leq 1$. We say that $g\in\Mat(m,\C)$ has a  (first) gap ratio  when
$\sgap(g)<1$. When this happens the complex eigenspace
$$ \{\, v\in\C^m\,\colon\, g^\ast\,g\,v=\norm{g}\,v\,\}
= \{\, v\in\C^m\,\colon\, \norm{g\,v}=\norm{g}\,\norm{v} \,\} $$
has complex dimension one, and determines a point
in $\Pp(\C^m)$, denoted by $\mostexp(g)$ and referred as the $g$-most expanding direction.


Given points $\hatv,\hatu\in\Pp(\C^m)$, we set
\begin{equation}\label{complex aangle def} 
 \aangle(\hatv,\hatu):= \frac{\abs{\langle v,u \rangle}}{\norm{v}\,\norm{u}} \qquad \text{ where } \quad 
v\in\hatv,\; u\in\hatu\;.
\end{equation}

Given $g,g'\in \Mat(m,\C)$, both with (first) gap ratios,
we define the {\em angle between} $g$ and $g'$ to be
$$ \aangle(g,g'):= \aangle(\mostexp(g^\ast),\mostexp(g'))\;. $$

With these definitions,  the real version of the AP leads in a straightforward manner to a slightly weaker complex version, stated and proved below.
However, adapting the original proof to the complex case,
 replacing each real concept by its complex analog,
would lead  to the same stronger estimates as in theorem~\ref{Theorem:AP}.

\begin{proposition}[Complex AP]
\label{complex:AP} 
Let $c>0$ be the universal constant in theorem~\ref{Theorem:AP}.
Given numbers  $0<\varepsilon<1$,  $0<\kappa\leq c\,\varepsilon^ 4$ 
and  a chain of matrices
$g_j\in \Mat(m,\C)$, with $j=0,1,\ldots, n-1$, \,   if 
\begin{enumerate}
\item[(a)] $\sgap(g_i)\leq \kappa$,\, 
for $0\leq i\leq n-1$, and 
\item[(b)] $\aangle(g_{i-1}, g_{i})\geq \varepsilon$,\; 
for $1\leq i\leq n-1$,
\end{enumerate}
then 
\begin{enumerate}
\item[(1)]  $d(\mostexp(g^{(n)\ast}), \mostexp(g_{n-1}^\ast)) 
  \lesssim   \kappa\,\varepsilon^{-2}$   
\item[(2)]  $d(\mostexp(g^{(n)}), \mostexp(g_{0})) 
 \lesssim    \kappa\,\varepsilon^{-2}$  
 \item[(3)]
$  \sigma(g^{(n)})  \leq   \left(\frac{\kappa\,(4+ 2\,\varepsilon^2)}{\varepsilon^{4}}\right)^n$

\item[(4)]  
$\displaystyle
 \abs{\log \norm{ g^{(n)} } +
\sum_{i=1}^{n-2} \log \norm{g_i} -
\sum_{i=1}^{n-1}
\log  \norm{ g_i\,g_{i-1} } } \lesssim  n\,\frac{\kappa}{\varepsilon^4}
\; . $ 
\end{enumerate}
\end{proposition}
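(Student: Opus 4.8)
The plan is to reduce the complex AP to the real AP (Theorem~\ref{Theorem:AP}) by the standard device of realification: identify $\C^m$ with $\R^{2m}$ via the $\R$-linear isomorphism $J:\C^m\to\R^{2m}$, $v\mapsto (\operatorname{Re} v,\operatorname{Im} v)$, which is an isometry for the natural Euclidean structures since $\langle v,u\rangle_{\C^m}$ has real part equal to $\langle Jv,Ju\rangle_{\R^{2m}}$. A complex matrix $g\in\Mat(m,\C)$ then corresponds to a real matrix $g_{\R}\in\Mat(2m,\R)$, and $(g^\ast)_{\R}=(g_{\R})^\ast$ because the transjugate becomes the transpose under $J$. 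The composition is respected: $(g'g)_{\R}=g'_{\R}g_{\R}$, and norms agree, $\norm{g_{\R}}=\norm{g}$.

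The key structural fact to record first is how singular data transforms. Since $g^\ast g$ realifies to $(g_{\R})^\ast g_{\R}$ and $J$ is an isometry, each eigenvalue $\lambda$ of $g^\ast g$ with complex eigenspace of (complex) dimension $d$ becomes an eigenvalue of $(g_{\R})^\ast g_{\R}$ with real eigenspace of dimension $2d$. Hence the sorted singular values of $g_{\R}$ are exactly $s_1(g)\ge s_1(g)\ge s_2(g)\ge s_2(g)\ge\cdots$, each appearing twice. Two consequences: $\norm{g_{\R}}=s_1(g)=\norm{g}$, and $s_2(g_{\R})=s_1(g)$ while $s_3(g_{\R})=s_2(g)$. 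Therefore a first complex gap $\sgap(g)<1$ does \emph{not} give a first real gap, but it does give a \emph{second} real gap: $\rgap_2(g_{\R})=s_2(g_{\R})/s_3(g_{\R})=s_1(g)/s_2(g)=\rgap(g)$. This is why one must invoke the flag version Proposition~\ref{Flag:AP} with signature $\tau=(2)$, rather than the scalar AP; and this shift from dimension $1$ to dimension $2$ is the source of the weaker exponents ($\varepsilon^4$ instead of $\varepsilon^2$) in the statement, because the angle between $g_{\R}$ and $g'_{\R}$ at the level of $\wedge_2$ will be controlled by $\aangle(g,g')^2$, as I explain next.

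The next step is to relate the real angle $\aangle_2((g_{i-1})_{\R},(g_i)_{\R})$ to the complex angle $\aangle(g_{i-1},g_i)$. The complex most expanding direction $\mostexp(g)$ is a complex line $\C v\subset\C^m$; under $J$ it becomes a \emph{real $2$-plane} $J(\C v)=\operatorname{span}_{\R}\{Jv, J(iv)\}\in\Gr_2(\R^{2m})$, and one checks this real $2$-plane is precisely the $g_{\R}$-most expanding $2$-subspace $\mostexp_2(g_{\R})$ (its Plücker point is $Jv\wedge J(iv)$, normalized). Similarly $\mostexp_2((g^\ast)_{\R})=J(\C v^\ast)$. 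Now for two complex lines $\C v,\C u$ with unit representatives, writing $a=\langle v,u\rangle\in\C$, one computes $\aangle_2(J(\C v),J(\C u))=\abs{\det \pi_{J(\C v),J(\C u)}}$ by Proposition~\ref{prop: alpha = det Pi(E F)}(b); the $2\times 2$ Gram matrix of $\{Jv,J(iv)\}$ against $\{Ju,J(iu)\}$ is $\begin{pmatrix}\operatorname{Re}a & \operatorname{Im}a\\ -\operatorname{Im}a & \operatorname{Re}a\end{pmatrix}$ so its determinant is $\abs{a}^2=\aangle(v,u)^2$. Hence $\aangle_\tau((g_{i-1})_{\R},(g_i)_{\R})=\aangle(g_{i-1},g_i)^2\ge\varepsilon^2$ and $\sgap_\tau((g_i)_{\R})=\sgap(g_i)\le\kappa$.

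It then remains to run Proposition~\ref{Flag:AP} with signature $\tau=(2)$, parameters $\varepsilon'=\varepsilon^2$ and $\kappa'=\kappa$, which is legitimate once $\kappa\le c\,\varepsilon^4=c\,(\varepsilon')^2$ (this is exactly the hypothesis $0<\kappa\le c\,\varepsilon^4$). Conclusions (1)--(2) of Proposition~\ref{Flag:AP} give $d(\mostexp_2(g^{(n)\ast}_{\R}),\mostexp_2((g_{n-1})^\ast_{\R}))\lesssim\kappa/\varepsilon^2$; pulling back through the isometry $J$ (which identifies $\mostexp_2$ of the realification with $\mostexp$ of the complex matrix, and is distance-nonincreasing in $d$ on the relevant Grassmannians) yields conclusions (1)--(2) here. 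Conclusion (3) of Proposition~\ref{Flag:AP} gives $\sgap(g^{(n)})=\sgap_\tau(g^{(n)}_{\R})\le(\kappa(4+2\varepsilon^2)/\varepsilon^4)^n$, which is (3). For (4), apply conclusion (4) of Proposition~\ref{Flag:AP} to the $\tau$-s.v.p. $\pi(g)=p_{\tau_1}(g)=\norm{\wedge_2 g}=s_1(g)s_2(g)$; since $s_1(g_{\R})=s_2(g_{\R})=s_1(g)=\norm{g}$ we get $\norm{\wedge_2 g_{\R}}=\norm{g}^2$, so the resulting estimate is $\abs{2\log\norm{g^{(n)}}+\sum 2\log\norm{g_i}-\sum 2\log\norm{g_ig_{i-1}}}\lesssim n\kappa/\varepsilon^4$, and dividing by $2$ gives (4). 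The only genuinely delicate point, and the one I would write out carefully, is the identification of $\mostexp(g)$ with $\mostexp_2(g_{\R})$ together with the Gram-determinant computation above showing $\aangle_\tau((g_{i-1})_{\R},(g_i)_{\R})=\aangle(g_{i-1},g_i)^2$; everything else is bookkeeping about the realification functor.
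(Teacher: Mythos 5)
Your proposal is correct and follows essentially the same route as the paper: realify to $\Mat(2m,\R)$, observe that singular values double so that $\sgap_{(2)}(g^{\R})=\sgap(g)$, identify $\mostexp(g)$ with $\mostexp_{(2)}(g^{\R})$, show $\aangle_{(2)}(g^{\R},(g')^{\R})=\aangle(g,g')^{2}$ (your Gram-determinant computation is just the paper's identity $\abs{\det{}_{\R}(\pi_{u,v})}=\abs{\det{}_{\C}(\pi_{u,v})}^{2}$), and apply Proposition~\ref{Flag:AP} with $\tau=(2)$ and parameters $(\kappa,\varepsilon^{2})$, using the $(2)$-s.v.p. $\pi(g)=\norm{\wedge_{2}g^{\R}}=\norm{g}^{2}$ for conclusion (4). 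The only stylistic difference is bookkeeping; the argument matches the paper's proof.
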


\begin{proof}
Make the identification $\C^m\equiv \R^{2m}$,
and given  $g\in\Mat(m,\C^m)$
 denote by $g^\R\in\Mat(2m,\R)$ the matrix representing the linear operator $g:\R^{2m}\to\R^{2m}$ in the canonical basis. 
 
We make explicit the relationship between gap ratios and angles of the complex matrices and $g,g'\in\Mat(m,\C)$, and the gap ratios and angles of their real analogues   $g^\R$ and $(g')^\R$.

Given $g\in\Mat(m,\C)$, for each eigenvalue $\lambda$ of $g$,
the matrix $g^\R$ has a corresponding pair of eigenvalues $\lambda,\overline{\lambda}$.
Since $g\mapsto g^\R$ is a $C^\ast$-algebra homomorphism, we have
$(g^\ast\,g)^\R=(g^\R)^\ast\,(g^\R)$.
Therefore, for all $i=1,\ldots, m$,
 $s_i(g)=s_{2i-1}(g^\R) = s_{2i}(g^\R)$.
 In particular, considering the signature $\tau=(2)$,
\begin{equation}
\label{sgap real-complex}
\sgap_{(2)}(g^\R)
  = \frac{s_3(g^\R)}{s_1(g^\R)}
  =\frac{s_2(g)}{s_1(g)} = \sgap(g)
  \;. 
\end{equation} 
The $g$-most expanding direction 
$\mostexp(g)\in\Pp(\C^m)$ is a complex line
which we can identify with the real $2$-plane
$\mostexp_{(2)}(g^\R)$.
This identification, $\mostexp(g)\equiv \mostexp_{(2)}(g^\R)$,  comes from a natural isometric embedding
$\Pp(\C^m) \hookrightarrow \Gr_2(\R^{2m})$.

Consider two points $\hatv,\hatu\in\Pp(\C^m)$
and take unit vectors $v\in \hatv$ and $u\in \hatu$. Denote by $U,V\subset \C^m$ the complex lines spanned by these vectors, which are planes in $\Gr_{2}(\R^{2m})$.
Consider  the complex orthogonal projection  onto the complex line $V$,
$\pi_{u,v}:U\to V$, defined by
$\pi_{u,v}(x):=\langle x,v\rangle\,v$.
By~\eqref{complex aangle def} we have  $\aangle(\hatv,\hatu)=\norm{\pi_{u,v}}$.
On the other hand, since $\pi_{u,v}\circ \pi_{u,v}=\pi_{u,v}$ and
$\langle x- \pi_{u,v}(x), v\rangle =0$ for all $x\in U$, it follows that $\pi_{u,v}$ is the restriction to $U$ of the (real) orthogonal projection onto the $2$-plane $V$.
Thus, by  proposition~\ref{prop: alpha = det Pi(E F)}(b),
$$ \aangle_2(U,V)=\abs{\det{}_\R(\pi_{u,v})} =\abs{\det{}_\C(\pi_{u,v})}^2 = \norm{\pi_{u,v}}^2=\aangle(\hatv,\hatu)^2\;.  $$
In particular,
\begin{equation}
\label{angle real-complex}
\aangle_{(2)}(g^\R, (g')^\R) =
\aangle_{(2)}(\mostexp((g^\R)^\ast) , \mostexp((g')^\R)) =
\aangle(\mostexp(g^\ast) , \mostexp(g'))^2 = \aangle(g,g')^2\;.
\end{equation}

Take $\kappa,\varepsilon>0$ such that
$\kappa <c\,\varepsilon^4$, $0<\varepsilon<1$, and consider a chain of matrices $g_j\in\Mat(m,\C)$, $j=0,1,\ldots, n-1$ satisfying the assumptions (a) and (b) of the complex AP.
By~\eqref{sgap real-complex} and~\eqref{angle real-complex},
the assumptions (a) and (b) of proposition~\ref{Flag:AP} hold for the chain of real matrices
 $ g_j^\R\in\Mat(2m,\R)$, $j=0,1,\ldots, n-1$,   with parameters $\kappa$ and $\varepsilon^2$, and with $\tau=(2)$.
Therefore conclusions (1)-(4) of the complex AP follow from the corresponding conclusions of proposition~\ref{Flag:AP}.
In conclusion (4) we use the $(2)$-singular value product  $\pi(g):=\norm{g}^2 = \norm{\wedge_2 g^\R}$.

\end{proof}

\subsection*{Acknowledgments}

\bigskip

The first author was supported by 
Funda\c{c}\~{a}o  para a  Ci\^{e}ncia e a Tecnologia, 
UID/MAT/04561/2013.

The second author was supported by the Norwegian Research Council project no. 213638, "Discrete Models in Mathematical Analysis".

\bigskip

\bibliographystyle{amsplain} 

\begin{thebibliography}{1}

\bibitem{AJS}
Artur \'{A}vila, Svetlana Jitomirskaya, and Christian Sadel, \emph{Complex
  one-frequency cocycles}, J. Eur. Math. Soc. (JEMS) \textbf{16} (2014), no.~9,
  1915--1935. \MR{3273312}

\bibitem{Rota}
Marilena Barnabei, Andrea Brini, and Gian-Carlo Rota, \emph{On the exterior
  calculus of invariant theory}, J. Algebra \textbf{96} (1985), no.~1,
  120--160. \MR{808845 (87j:05002)}

\bibitem{B-d}
J.~Bourgain, \emph{Positivity and continuity of the {L}yapounov exponent for
  shifts on {$\Bbb T^d$} with arbitrary frequency vector and real analytic
  potential}, J. Anal. Math. \textbf{96} (2005), 313--355. \MR{2177191
  (2006i:47064)}

\bibitem{B-J}
J.~Bourgain and S.~Jitomirskaya, \emph{Continuity of the {L}yapunov exponent
  for quasiperiodic operators with analytic potential}, J. Statist. Phys.
  \textbf{108} (2002), no.~5-6, 1203--1218, Dedicated to David Ruelle and Yasha
  Sinai on the occasion of their 65th birthdays. \MR{1933451 (2004c:47073)}

\bibitem{DK2}
Pedro Duarte and Silvius Klein, \emph{Continuity of the {L}yapunov {E}xponents
  for {Q}uasiperiodic {C}ocycles}, Comm. Math. Phys. \textbf{332} (2014),
  no.~3, 1113--1166. \MR{3262622}

\bibitem{LEbook}
\bysame, \emph{Lyapunov exponents of linear cocycles, continuity via large
  deviations}, Atlantis Studies in Dynamical Systems, Atlantis Press, to appear
  in 2016.

\bibitem{GS-Holder}
Michael Goldstein and Wilhelm Schlag, \emph{H\"older continuity of the
  integrated density of states for quasi-periodic {S}chr\"odinger equations and
  averages of shifts of subharmonic functions}, Ann. of Math. (2) \textbf{154}
  (2001), no.~1, 155--203. \MR{1847592 (2002h:82055)}

\bibitem{Schlag}
Wilhelm Schlag, \emph{Regularity and convergence rates for the {L}yapunov
  exponents of linear cocycles}, J. Mod. Dyn. \textbf{7} (2013), no.~4,
  619--637. \MR{3177775}

\bibitem{Stenberg-book}
Shlomo Sternberg, \emph{Lectures on differential geometry}, Prentice-Hall,
  Inc., Englewood Cliffs, N.J., 1964. \MR{0193578 (33 \#1797)}

\end{thebibliography}

\providecommand{\bysame}{\leavevmode\hbox to3em{\hrulefill}\thinspace}
\providecommand{\MR}{\relax\ifhmode\unskip\space\fi MR }
\providecommand{\MRhref}[2]{%
  \href{http://www.ams.org/mathscinet-getitem?mr=#1}{#2}
}
\providecommand{\href}[2]{#2}

\end{document}